
\documentclass[aop]{imsart}


\RequirePackage{amsthm,amsmath,amsfonts,amssymb}
\RequirePackage[numbers]{natbib}
\RequirePackage{color}

\startlocaldefs
\numberwithin{equation}{section}
\theoremstyle{plain}
\newtheorem{thm}{Theorem}[section]
\newtheorem{prop}[thm]{Proposition}
\newtheorem{lem}[thm]{Lemma}
\newtheorem{cor}[thm]{Corollary}
\theoremstyle{remark}
\newtheorem*{rem}{Remark}
\newtheorem{defn}{Definition}[section]
\newtheorem{cond}{Condition}[section]

\endlocaldefs
\def\op{\mathbf{R}}
\def\act{\mathbb{A}}
\def\I{\mathcal{I}}
\def\e{\mathbb{E}}

\def\g{\zeta}
\def\t{\mathbb{T}^2}
\def\r{\mathbb{R}}
\def\d{\boldsymbol{D}}
\def\p{\mathcal{P}}

\def\lr{\left\langle}
\def\k{\mathcal{K}}

\def\n{\mathcal{N}}
\def\rr{\right\rangle}
\def\x{\mathcal{X}}
\def\curl{{\rm curl}\,}
\def\div{{\rm div}\,}

\begin{document}
\title{Sample-path large deviation principle for a 2-D stochastic  interacting vortex dynamics with singular kernel}
\runtitle{Sample-path LDP for a stochastic
point-vortex model}
\begin{aug}
\author{\fnms{Chenyang} \snm{Chen}\ead[label=e1]{1801110039@pku.edu.cn}},
\author{\fnms{Hao} \snm{Ge}\ead[label=e3,mark]{haoge@pku.edu.cn}}
\address{Beijing International Center for Mathematical Research, Peking University}
\end{aug}

 \let\thefootnote\relax\footnotetext{\printead{e1,e3}}

\begin{abstract}
We consider a stochastic interacting vortex system of $N$ particles, approximating the vorticity formulation of 2-D Navier-Stokes equation on torus. The singular interaction kernel is given by the Biot-Savart law. We only require the initial state to have finite energy, and obtain a sample-path large deviation principle for the empirical measure when the number of vortices goes to infinity. The rate function is characterized by an explicit formula
supporting on sample paths with finite energy and finite integral of $L^2$ norms over time. The proof utilizes a symmetrization technique for the representation of singular kernel, together with a detailed regularity analysis of the sample path with finite rate function. The key step is to prove that the singular term after symmetrization can be bounded by the integral of $L^2$ norms along sample paths.
\end{abstract}

\begin{keyword}[class=MSC]
\kwd[Primary ]{60K35}
\kwd{60F10}
\kwd{76D05}
\kwd[; secondary ]{60H10}
\kwd{60B10}
\end{keyword}

\begin{keyword}
\kwd{2D Navier-Stokes equation}
\kwd{Stochastic vortex dynamics}
\kwd{Large deviation principle}
\kwd{Energy dissipation structure}
\end{keyword}

\tableofcontents

\section{Introduction}
The subject of this paper is the sample-path large deviation principle (LDP) for a stochastic interacting vortex dynamics of 2D Navier-Stokes equation starting from initial data with finite energy.

Define $$
r(x,y):=\inf_{k\in \mathbb{Z}^2}|x-y-k|,\quad \forall x,y\in \r^2.
$$
Let $\t=\r^2\backslash \mathbb{Z}^2$ be the two dimensional torus, i.e. the unit cube $[-\frac{1}{2},\frac{1}{2})^2$ with metric $r$, and $\n(x)$ be the Green function on $\t$, i.e.
\begin{equation}\label{equ_green}
-\Delta \n(x)=\delta_0(x)-1,\quad \int_{\t}\n(x)dx=0,
\end{equation}
in which $\delta_0$ means Dirac measure with singularity at $0$. We define a $n-$particle stochastic vortex dynamics by stochastic differential equations on 2D torus for any given $\nu>0$:
\begin{equation}\label{def_sde}
dX_i(t)=\frac{1}{n}\sum_{j\neq i}\k(X_i(t)-X_j(t))dt+\sqrt{2\nu}dB_i(t), \;i=1,2,\cdots,n,\\
\end{equation}
with random initial data $\{X_i(0)\}_{i=1,\cdots,n}$, independent of the family
$(B_i(t))_{i=1,\cdots,n}$ of $i.i.d.$ 2D-Brownian motions, where the Biot-Savart kernel $\k$ is given by $\k=-\nabla ^{\perp}\n=(-\partial_2\n,\partial_1 \n)$. Supposing the initial energy, i.e.
$$
\frac{1}{2n^2}\sum_{i\neq j}\n(X_i(0)-X_j(0)),
$$
is finite, we are interested in the sample-path LDP of the empirical measure process $\rho_n:=\rho_n(t)=\frac{1}{n}\sum_{i=1}^n\delta_{X_i(t)}$ when $n$ goes to $+\infty$.

The model without noise, i.e. $\nu=0$, was originally introduced by Helmholtz \cite{RN208} and then studied by Kirchhoff \cite{RN251}, describing the motion of vortices. Schochet \cite{RN243} proved its convergence to the periodic weak solutions of Euler equation. (\ref{def_sde}) with $\nu=0$ is exactly a Hamiltonian system with the singular potential $\n$ and is closely related to Onsager's theory \cite{RN278} for two dimensional fluids, which was latter developed by Joyce and Montgomery \cite{RN211}. The associated thermodynamic limit of microcanonical/canonical distributions was made rigorous by Caglioti, Lions, Marchioro and Pulvirenti \cite{RN207,RN224} as well as Eyink and Spohn \cite{RN206} in 1990s. The idea behind these theories is exactly LDP for a sequence of distributions. Since then, from the perspective of equilibrium statistical physics, the large deviation principle for more general Gibbs measures with nonsingular and singular Hamiltonians have be extensively studied \cite{RN271,RN270,RN273,RN274,RN269}.

It's well known that the empirical measure process $\{\rho_n(t)\}$ of the corresponding model in $\r^2$ converges to the vorticity form of two dimensional Navier-Stokes equations as $n\rightarrow\infty$.

Consider the incompressible Navier-Stokes equations on $\r^2$:
\begin{equation}\label{equ_ns}
\begin{aligned}
&\partial_t u+(u\cdot \nabla) u+\nabla p=\nu\Delta u, \quad \div u=0,\\
&u(0,x)=u_0,\quad \div u_0=0,
\end{aligned}\end{equation}
Its vorticity $\rho(t):=\curl u(t)=\partial_2 u_1-\partial_1 u_2$ satisfies a continuity equation
\begin{equation}\label{equ_curl1}
\partial_t \rho+\div(\rho u)-\nu\Delta \rho=0,
\end{equation}
with a velocity field driven by the vorticity itself $u(t):=\k*\rho(t)$, in which $\k(x)=\frac{1}{2\pi|x|^2}(-x_2,x_1)$ for the case of $\r^2$. (\ref{equ_curl1}) is called the voricity form of two-dimensional Navier-Stokes equations.

Osada \cite{RN218,RN254} investigated the well posedness of (\ref{def_sde}) in $\r^2$ and obtained a propagation of chaos for the equation (\ref{equ_curl1}) for large $\nu$. For general positive $\nu$,  using the cutoff kernels $\k_n$ converging to the original one with singularity, Marchioro and Pulvirenti \cite{RN258} and M\'el\'eard \cite{RN188} also proved propagation of chaos. Fournier, Hauray and Mischler \cite{RN192} proved a stronger "entropic chaos" result not requiring $\nu$ large and without cutoff when the distributions of initial data are regular enough.

Back to $\t$, the well posedness for this SDEs with the singular kernel $\k$ is firstly studied in \cite{RN244}. In this case, as a classical result, $\n$ are smooth even function on $\t\backslash (0,0)$ and
$$
\n(x)=-\frac{1}{2\pi}\log(|x|)+\n_0(x),
$$
where $\n_0$ is a bounded correction to periodize $\n$ on $\t$. In our paper, we need a refined result that
\begin{equation}\label{equ_N}
\begin{aligned}
&\n(x)=-\frac{\psi(x)}{2\pi}\log |x|+\sigma_1(x),\\
&\k(x)=\frac{\psi(x)}{2\pi|x|^2}(-x_2,x_1)+(\sigma_2(x),\sigma_3(x)),
\end{aligned}
\end{equation}
for certain smooth function $0\leq \psi\leq 1$ satisfying $$
\psi(x)=\left\{
\begin{aligned}
&1,& |x|<\frac{1}{4},\\
&0,&|x|>\frac{1}{3},
\end{aligned}
\right.
$$ and $\sigma_i\in C^\infty(\t)$. The method of \cite{RN192} also works and the propagation of chaos holds for (\ref{equ_curl1}) on $\t$. The slight difference is that $\rho(t):=\curl u(t)+1$ instead of $\curl u(t)$ in the case of $\t$ in which $u(t)$ is the solution of Navier Stokes equation (\ref{equ_ns}) on $\t$.

\subsection{Weakly interacting stochastic particle system with singular kernel}
The 2-D vortex dynamics (\ref{def_sde}) is a weakly interacting stochastic particle model with independent noise. It is called "weak" because when the number of particles tends to infinity, the force between any two particles tends to zero.

For the mean-field limit, the case in the presence of smooth interaction kernels has been well studied since McKean's work \cite{RN255}. But there is only few references in the case of singular interactions for a long time. Recently, based on the estimation for relative entropy, Jabin and Wang \cite{RN242} proved the mean-field limit and obtained the optimal convergent rate of distributions for a general class of singular kernels including the Biot-Savart law. Meanwhile, based on modulated energy, Serfaty \cite{RN229} and Duernckx \cite{RN241} proved the mean-field limit for deterministic systems with more singular kernels. Combining the two methods, Bresch, Jabin and Wang \cite{RN221} derived the mean-field limit for the Patlak-Keller-Segel kernel and gave the corresponding convergent rate.

To quantify the difference between empirical measure processes and mean-field limit more precisely, some considered the fluctuation in path space, usually called the central limit theorem (CLT) for Gaussian fluctuations. Results of this type were firstly obtained by Tanaka
and Hitsuda \cite{RN265} and by Tanaka \cite{RN266} for one dimensional non-singular interacting diffusions. For more general case with regular enough coefficients, it was proved by Fernandez and M\'el\'eard \cite{RN264}. Recently, Wang, Zhao, and Zhu \cite{RN263} proved CLT for a special class of singular interaction including (\ref{def_sde}).

A further topic is the sample-path large deviation principle for empirical measures. For non-singular interaction kernels only in drift coefficients, Dawson and G\"artner \cite{RN257} proved the sample-path LDP for general weakly interacting stochastic particles model using measure transformation from independent diffusions. Budhiraja, Dupuis and Fischer \cite{RN260} further developed this result, allowing the non-singular interaction kernels in both drift and diffusion terms. The method by Feng and Kurtz, based on Hamiltonian-Jocabi theory, also works for smooth kernel (see Section 13.3 of \cite{RN90}).

For singular interaction kernels, Fontbona \cite{RN261} proved the sample-path LDP for diffusing particles with electrostatic repulsion in one dimension. Applying the method in \cite{RN90}, at least formally, Feng and \'Swiech \cite{RN171} gave the rate function of sample-path LDP for empirical measures of (\ref{def_sde}), starting from initial data with finite entropy. However, making this approach rigorous is really subtle, requiring an uniqueness theory for infinite-dimensional Hamiltonian-Jocabi equation, especially when only having initial data with finite energy in hand.

In the present paper, we aim to study the sample-path large deviation principle of (\ref{def_sde}) starting from more rough initial data only with finite energy. We follow the standard approach used in \cite{RN257,RN5,RN96}, i.e., applying an exponential martingale inequality to prove the upper bound and approximating trajectories with finite action (rate function) by a series of mildly perturbed systems for the proof of lower bound. The main difficulty here comes from the term in the rate function containing the singular kernel $\k$.

We use the idea from \cite{RN267,RN143,RN136}, introducing an auxiliary functional as a modification of the rate function to make the approximating strategy for the proof of lower bound work. Moreover, during the proof of large deviation upper bound, this auxiliary functional ensures the continuity of singular terms in the Markov generator with respect to weak topology, which is not a problem for the case of non-singular kernels.

As a price, we need a sharp prior estimation for the auxiliary functional for the upper bound establishment. We finally choose the supremum of energy functional and the integral of $L^2$ norm along time as the auxiliary functional in our paper. Both in the procedure of nice trajectory approximation during the proof of lower bound, and in the prior estimation of auxiliary functional during the proof of upper bound, the singular terms are shown to be bound by the integral of $L^2$ norm over time, which is our key observation and main tools throughout this paper.

\subsection{Notations and definitions}
We write $C^\infty(\Omega)$ and $\mathcal{D}'(\Omega)$ for the collection of infinitely differentiable functions on $\Omega$ and generalized functions on $\Omega$.

We use $\lr\cdot,\cdot\rr$ to represent pairing of linear space and its dual space, such as $L^2$ inner product, continuous function integral with respect to a measure, and so on.

Denote by $\p(\t)$ the space of probability measures on $\t$. For $\gamma,\eta\in \p(\t),$ we define the Wasserstein$-2$ metric
\begin{equation}\label{equ_wassp}
d(\gamma,\eta):=\inf_{\pi\in\Pi(\gamma,\eta)}\int_{\t\times \t}r(x,y)^2\pi(dx,dy),
\end{equation}
where $$\Pi(\gamma,\eta)=\{\pi\in\p(\t\times\t):\pi(dx,\t)=\gamma(dx),\pi(\t,dy)=\eta(dy)\}.$$

Then $(\p(\t),d)$ is a complete separable metric space. See Chapter 7 of \cite{RN217} or Chapter 7 of \cite{RN187} for properties
of this metric.
Given $\mu\in \p(\t)$, for any $m\in \mathcal{D}'(\t)$ we define
\begin{equation}\label{equ_m1mu}
\|m\|_{-1,\mu}^2=\sup_{\phi \in C^\infty(\t)}\left\{2\lr\phi,m\rr-\int_{\t}|\nabla\phi(x)|^2d\mu\right\}.
\end{equation}
Let $AC((s,t);\p(\t))$ be the set of all $\rho\in C([s,t];\p(\t))$ satisfying that there exists $m\in L^1[s,t]$ such that
$$
d(\rho(p),\rho(q))\leq \int_{p}^{q}m(r)dr,\quad \forall s<p\leq q<t.
$$
We refer to smooth mollifier throughout this paper as a non-negative even function that is smooth and its integral over $\t$ is one.

\subsection{Statement of the main result}\label{sec_LDP}
We introduce the state space $$\x_n:=\left\{\rho=\frac{1}{n}\sum_{i=1}^{n}\delta_{x_i}:x_1,\cdots,x_n\in \t,x_i\neq x_j,\forall 1\leq i<j\leq n\right\}.$$
As shown by \cite{RN244}, for the case $X_i(0)=x_i$, the system (\ref{def_sde}) is well-posed for almost every $(x_i)_{i=1,\cdots,n}$, but without an explicit characterization. Actually, we need a well-posed result like Theorem 2.10 in \cite{RN192}, and \cite{RN250} gives a probabilistic proof which can be directly applied to the case of torus and lead to the following lemma.
\begin{lem}\label{lem_wellpose}
Consider $(X_{i}(0))_{i=1,\cdots,n}$ as $\t$-valued random variables, independent of the family
$(B_i(t))_{i=1,\cdots,n}$ of $i.i.d.$ 2D-Brownian motions. Suppose that $P(\rho_n(0)\in\x_n)=1$, then there exists an unique strong solution to (\ref{def_sde02}) and $$P\left(\inf\left\{t:\rho_n(t)\notin \x_n\right\}<\infty\right)=0.$$
\end{lem}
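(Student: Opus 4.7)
The plan is to construct a unique strong solution up to the first collision time of the particle system, and then to use an energy-based a priori bound to show that this collision time is almost surely infinite.

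For each $\varepsilon>0$, set $\tau_\varepsilon:=\inf\{t\geq 0:\min_{i\neq j}r(X_i(t),X_j(t))\leq \varepsilon\}$. On $[0,\tau_\varepsilon)$ the drift in (\ref{def_sde}) evaluates $\k$ only at points with $r(\cdot,0)>\varepsilon$, where the decomposition (\ref{equ_N}) shows $\k$ to be smooth with Lipschitz constant depending only on $\varepsilon$. Standard Lipschitz SDE theory therefore yields a pathwise-unique strong solution on $[0,\tau_\varepsilon)$, and sending $\varepsilon\downarrow 0$ gives the unique strong solution up to $\tau_0:=\lim_{\varepsilon\downarrow 0}\tau_\varepsilon=\inf\{t:\rho_n(t)\notin\x_n\}$. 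Everything then reduces to showing $P(\tau_0<\infty)=0$.

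The crux is a Hamiltonian-type cancellation for the energy $F(X):=\sum_{i<j}\n(X_i-X_j)$. Because $\k=-\nabla^\perp\n$, the drift of $X_i$ in (\ref{def_sde}) is exactly $-\frac{1}{n}\nabla^\perp_{X_i}F(X)$, which is pointwise orthogonal to $\nabla_{X_i}F(X)$; the singular drift therefore contributes nothing to $dF$. Using in addition $\Delta\n(x)=1$ for $x\neq 0$ (see (\ref{equ_green})), Itô's formula applied on $[0,t\wedge\tau_\varepsilon]$ gives
\begin{equation*}
F(X(t\wedge\tau_\varepsilon))=F(X(0))+M_{t\wedge\tau_\varepsilon}+\nu n(n-1)(t\wedge\tau_\varepsilon),
\end{equation*}
where $M$ is a continuous local martingale whose quadratic variation is bounded before $\tau_\varepsilon$ (since $|\nabla\n|\leq C/\varepsilon$ there), hence a true martingale. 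Conditional expectation then yields $\e[F(X(t\wedge\tau_\varepsilon))\mid X(0)]\leq F(X(0))+\nu n(n-1)t$.

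Since $\n$ is bounded below on $\t$ and blows up only at $0$ like $-\frac{1}{2\pi}\log|x|$, on $\{\tau_\varepsilon\leq t\}$ some colliding pair forces $F(X(\tau_\varepsilon))\geq -\frac{1}{2\pi}\log\varepsilon - C_n$, with $C_n$ independent of $\varepsilon$. A Markov-type inequality applied to $F$ minus its lower bound then gives
\begin{equation*}
P(\tau_\varepsilon\leq t\mid X(0))\;\leq\;\frac{F(X(0))+\nu n(n-1)t+C_n}{-\frac{1}{2\pi}\log\varepsilon-C_n},
\end{equation*}
which tends to $0$ as $\varepsilon\downarrow 0$ on the almost-sure event $\{\rho_n(0)\in\x_n\}=\{F(X(0))<\infty\}$. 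Hence $P(\tau_0\leq t)=0$ for every $t$, finishing the proof. The conceptual heart of the argument is the Hamiltonian cancellation $\nabla F\cdot\nabla^\perp F\equiv 0$, which neutralizes the $1/|x|$ singularity of $\k$ and turns a would-be singular Itô expansion into a manageable linear bound; once this is in place, the martingale localization and Markov estimate are routine on the stopped interval.
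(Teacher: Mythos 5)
Your proposal is correct, but it takes a genuinely different route from the paper, which does not prove Lemma \ref{lem_wellpose} at all: the authors import it from the literature, citing \cite{RN244} for a.e.\ well-posedness and asserting that the probabilistic proof of \cite{RN250}, in the spirit of Theorem 2.10 of \cite{RN192}, transfers directly to the torus. What you wrote is essentially the classical self-contained Lyapunov/energy argument underlying those references: the exact Hamiltonian cancellation $\nabla_{X_i}F\cdot\nabla^{\perp}_{X_i}F=0$ removes the singular drift from $dF$, the identity $\Delta\n\equiv 1$ off the diagonal (from (\ref{equ_green})) turns the It\^o correction into the linear term $\nu n(n-1)t$, and the $-\frac{1}{2\pi}\log\varepsilon$ blow-up of $\n$ in (\ref{equ_N}) combined with a Markov inequality forces $P(\tau_\varepsilon\le t)\to 0$; this is the same mechanism the paper exploits later at the mollified level in the It\^o computation for $e(\g_n*\rho_n)$ in Lemma \ref{lem_Q1plus}, where the cancellation is only approximate and must be controlled by Lemma \ref{lem_rzq2}. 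Your route buys a self-contained paper; the citation route buys brevity and the stronger statements of \cite{RN192} that the authors reuse elsewhere. A few routine points deserve explicit care but are not gaps: construct the solution on $[0,\tau_\varepsilon]$ by solving the SDE with a globally Lipschitz kernel agreeing with $\k$ on $\{r\ge\varepsilon\}$ and patching by pathwise uniqueness, so that proving $\lim_{\varepsilon\downarrow 0}\tau_\varepsilon=\infty$ a.s.\ simultaneously yields global existence, uniqueness, and $\rho_n(t)\in\x_n$ for all $t$; condition on $X(0)$ (as you do), since $P(\rho_n(0)\in\x_n)=1$ only gives $F(X(0))<\infty$ a.s.\ and not $\e F(X(0))<\infty$, the independence of $X(0)$ from $(B_i)$ making the stopped $M$ a conditional martingale; and apply the Markov bound to $F$ plus the constant $C\binom{n}{2}$ that makes it nonnegative, taking $\varepsilon$ smaller than both $1/4$ (so $\psi\equiv 1$ in (\ref{equ_N})) and the initial minimal inter-particle distance of the fixed configuration.
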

Now we introduce the energy functional for empirical measures
$$
e_0(\gamma):=\frac{1}{2}\int_{(\t)^2\backslash \d}\n(x-y)\gamma(dx)\gamma(dy),\quad \forall \gamma\in \bigcup_{n\geq1}\x_n,
$$
where $\d=\{(x,y)\in (\t)^2:x=y\}$ is the diagonal set. We impose the following superexponential finite energy condition for initial data.

\begin{cond}\label{cond_ldp_1}
$$\lim_{R\to\infty}\limsup_{n\to\infty}\frac{1}{n}\log P\left(e_0(\rho_n(0))>R\right)=-\infty.$$
\end{cond}

The removal of diagonal set is not convenient for estimations, so we also introduce the energy functional containing the diagonal set,
$$
e(\gamma):=\frac{1}{2}\int_{(\t)^2}\n(x-y)\gamma(dx)\gamma(dy),\quad \forall \gamma\in \p(\t).
$$
It's always well defined since $\n$ is bounded below. By Fatou's Lemma, $e$ is lower semi-continuous functional under the weak topology, and we left more properties of $e$ into Appendix \ref{sec_hm1}.

Let $Q_T:C([0,T];\p(\t))\mapsto \r$ be defined by
$$Q_T(\rho)=\sup_{0\leq t\leq T}\left(e(\rho(t))+\frac{\nu}{2}\int_{0}^{t}\|\rho(s)-1\|_2^2ds\right),$$
and for $\rho\in AC((0,T);\p(\t))$ and $Q_T(\rho)<\infty$, we define
$$\act_T(\rho)=\frac{1}{4\nu}\int_{0}^{T}\|\partial_t \rho(t)-\div[\rho(t) (\k*\rho)(t)]+\nu\Delta \rho(t)\|_{-1,\rho(t)}^2dt.$$

To check $\act_T(\rho)$ is well defined, by Theorem 8.3.1 of \cite{RN217}, for $\rho\in AC((0,T);\p(\t))$, we can define $\partial_t \rho\in \mathcal{D}'(\t)$ for almost every $t\in [0,T]$. In addition, $Q_T(\rho)<\infty$ implies that $\rho(t)\in L^2(\t)$ for almost every $t$. As a consequence of (\ref{equ_N}) and Young's inequality, if $\gamma\in L^2(\t)$, then $\k*\gamma\in L^2(\t)$, so $$\lr\phi,\div[\gamma(\k*\gamma)]\rr:=-\int_{\t}\nabla \phi(x)\cdot (\k*\gamma)(x) \gamma(x))dx$$ is well defined for any smooth function $\phi$. Thus $\div[\rho(t)(\k*\rho(t))]\in D'(\t)$ for almost every $t$.

Our main result is
\begin{thm}[Large deviation principle]\label{thm_LDP} Under the condition of Lemma \ref{lem_wellpose},
suppose $\rho_n(0)$, as random variables on $\p(\t)$, satisfies a large deviation principle with the rate function $\I_0$ and Condition \ref{cond_ldp_1} holds. Then for each $T>0$, the stochastic empirical process $\{\rho_n(t):0\leq t\leq T\}_{n=1,2,\cdots}$ satisfies a sample-path large deviation principle in the space $C([0,T];\p(\t))$ with a good rate function (usually called action)
\begin{equation}\label{equ_ratefunction}
\begin{aligned}
\I_T(\rho)=\left\{
\begin{aligned}
&\I_0(\rho(0))+\act_T(\rho),&\text{if} ~\rho\in AC((0,T);\p(\t))~ \text{and}~ Q_T(\rho)<\infty,\\
&\infty, &otherwise,
\end{aligned}
\right.
\end{aligned}
\end{equation}
that is equivalent to the following two conditions:

1. (Large deviation upper bound) For any closed set $A$ in $C([0,T];\p(\t))$,
$$
\begin{aligned}
\limsup_{n\to \infty}\frac{1}{n}\log P(\rho_n\in A)\leq -\inf_{\rho\in A}\I_{T}(\rho);
\end{aligned}
$$

2. (Large deviation lower bound) For any open set $G$ in $C([0,T];\p(\t))$,
$$
\begin{aligned}
\liminf_{n\to \infty}\frac{1}{n}\log P(\rho_n\in G)\geq -\inf_{\rho\in G}\I_{T}(\rho).
\end{aligned}
$$

\end{thm}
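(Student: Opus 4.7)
My plan is to prove the upper and lower bounds separately by the classical martingale/Girsanov approach of Dawson--G\"artner, with the auxiliary functional $Q_T$ acting as a regularity constraint that restores continuity of the singular drift on the level sets of interest. The key mechanism for identifying the rate function is symmetrization: Itô's formula applied to $\lr\phi,\rho_n(t)\rr$ for smooth $\phi\in C^\infty(\t)$ produces a martingale whose drift contains the pair integral $\frac{1}{n^2}\sum_{i\neq j}\k(X_i-X_j)\cdot\nabla\phi(X_i)$, which is singular term-by-term. Because $\k$ is odd, symmetrizing in the indices rewrites this as $\frac{1}{2n^2}\sum_{i\neq j}\k(X_i-X_j)\cdot(\nabla\phi(X_i)-\nabla\phi(X_j))$, whose summands are bounded by $\|\nabla^2\phi\|_\infty |\k(x-y)|\,r(x,y)\lesssim 1$ in view of (\ref{equ_N}). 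This symmetric representation both passes to the limit in $(\p(\t),d)$ and produces a quadratic-variation term $\int_0^T\!\int|\nabla\phi|^2\,d\rho_n(t)\,dt$ matching the $\|\cdot\|_{-1,\rho}$-norm in $\act_T$.

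For the upper bound I would first establish exponential tightness in $C([0,T];\p(\t))$. Beyond the standard modulus-of-continuity estimate, this requires a superexponential bound on $\sup_{t\leq T} e(\rho_n(t))$ and on $\int_0^T\|\rho_n(s)-1\|_2^2\,ds$; combined with Condition \ref{cond_ldp_1} these give superexponential tightness of $Q_T(\rho_n)$. Both bounds should follow from an Itô expansion of $e(\rho_n(t))$ whose drift, again via the symmetrization above, produces a dissipation $-\nu\|\rho_n-1\|_2^2$ plus a controllable singular correction, while the martingale part admits an exponential bound. Once $Q_T$ is controlled superexponentially, the singular pair integrals become continuous on its sublevel sets, so the Chebyshev-exponential argument applied to $\exp(n M_n^\phi)$, together with a variational representation of $\act_T$, yields the upper bound via the usual projective limit and Legendre-transform machinery.

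For the lower bound and an open set $G$, I fix $\rho^*\in G$ with $\I_T(\rho^*)<\infty$ and approximate it by mollified paths $\rho^{*,\epsilon}$ that are smooth, bounded away from zero, converge to $\rho^*$ in $C([0,T];\p(\t))$, and satisfy $\act_T(\rho^{*,\epsilon})\to \act_T(\rho^*)$. Writing the Fisher-type identity $\partial_t\rho^{*,\epsilon}-\div[\rho^{*,\epsilon}(\k*\rho^{*,\epsilon})]+\nu\Delta\rho^{*,\epsilon}=-\div(\rho^{*,\epsilon}h^\epsilon)$ defines a velocity field $h^\epsilon$ with $\int_0^T\int|h^\epsilon|^2\rho^{*,\epsilon}\,dx\,dt=4\nu\act_T(\rho^{*,\epsilon})$. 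A Girsanov transform adding $h^\epsilon(t,X_i(t))$ to the drift of (\ref{def_sde}) produces a perturbed particle system whose empirical measure converges in probability to $\rho^{*,\epsilon}$ by a propagation-of-chaos argument in the spirit of \cite{RN192}; the Radon--Nikodym derivative, combined with the local LDP along the approximation, delivers the lower bound with rate $-\I_T(\rho^{*,\epsilon})$, and sending $\epsilon\downarrow 0$ concludes.

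The hardest step, and the observation flagged in the introduction, is uniformly controlling the singular bilinear term $\div[\rho(\k*\rho)]$ in terms of $Q_T(\rho)$ throughout both arguments. A generic $\rho$ with $Q_T(\rho)<\infty$ only lies in $L^2_tL^2_x$, so one cannot estimate $\rho\cdot(\k*\rho)$ pointwise at the critical 2D Sobolev scaling. The key lemma I would aim for is an estimate of the form $\|\div[\rho(\k*\rho)]\|_{-1,\rho}^2\lesssim 1+\|\rho\|_2^2$, derived by symmetrizing the bilinear form against the admissible test functions in the variational definition (\ref{equ_m1mu}), exactly in analogy with the particle-level symmetrization of the first paragraph. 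Proving this inequality, verifying that it remains stable along the mollification so that $\act_T(\rho^{*,\epsilon})\to \act_T(\rho^*)$, and coupling it with the energy-dissipation expansion of $e(\rho_n(t))$ to close the superexponential prior estimate for the upper bound, is where the bulk of the technical work will lie.
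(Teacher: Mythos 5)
Your overall architecture (symmetrization of the singular pair term, exponential martingale upper bound with a superexponential prior bound on $Q_T$, Girsanov lower bound through regular perturbations) matches the paper, but the proposal has a genuine gap at the prior estimate. You propose to bound $\sup_{t\le T}e(\rho_n(t))$ and $\int_0^T\|\rho_n(s)-1\|_2^2ds$ superexponentially by an It\^o expansion of $e(\rho_n(t))$. For an empirical measure both quantities are identically $+\infty$: $e$ includes the diagonal, where $\n$ blows up, and $\rho_n(t)\notin L^2(\t)$, so $Q_T(\rho_n)=\infty$ for every $n$ and the claimed expansion with dissipation $-\nu\|\rho_n-1\|_2^2$ cannot even be written. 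The paper's resolution is to run the It\^o/energy-dissipation argument on the mollified measure $\g_n*\rho_n$, with mollifiers shrinking slowly ($nm_n^{-2}\to\infty$), to prove $-\Delta(G_n*\n)=G_n-1$ so that the dissipation becomes the finite quantity $-\nu\|\g_n*\rho_n-1\|_2^2$, and then to establish the sharp estimate (Lemma \ref{lem_rzq2}, via the small-ball bound of Lemma \ref{lem_pest_fen}) that the mollified singular term $\lr\nabla G_n*\n*\rho_n,\op(\rho_n)\rr$ is bounded by $c_n\|\g_n*\rho_n\|_2^2$ with $c_n\downarrow 0$, so it is absorbed by the dissipation; lower semicontinuity of $Q_T$ then transfers the bound to limit points. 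Your key inequality $\|\div[\rho(\k*\rho)]\|_{-1,\rho}^2\lesssim 1+\|\rho\|_2^2$ is indeed the paper's Proposition \ref{prop_hm1r4} plus Lemma \ref{lem_B_6}, but by itself it does not close the upper bound: the missing idea is precisely the mollification at a particle-dependent scale together with the $o(1)$ control of the mollified singular term.

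There is a second, smaller gap in the lower bound. Approximating $\rho^*$ by spatially mollified paths changes the initial datum to $J_\epsilon*\gamma\neq\gamma$, so the term $\I_0(\rho^*(0))$ is lost ($\I_0$ is an arbitrary rate function, with no continuity under mollification), and a compactly supported mollifier need not give densities bounded away from zero, which you need for the velocity field $h^\epsilon=\nabla p$ to lie in the admissible class with $\|\nabla p\|_\infty+\|\nabla^2 p\|_\infty<\infty$. The paper instead keeps $\rho(0)=\gamma$ exactly by following the unperturbed dynamics $\rho^{\gamma,0}$ on a short interval $[0,t_1]$, inserting a heat-kernel smoothing of length $t_2$ (the heat kernel is strictly positive), and then following the heat-mollified original path; controlling the action of this construction requires the entropy and Fisher-information production estimates (Lemmas \ref{lem_decay}, \ref{lem_ns2}) and the commutator-type bound of Lemma \ref{lem_hm1r5}, none of which follow from mollification alone. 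Your Girsanov and law-of-large-numbers steps are otherwise in line with the paper, though note the LLN there is proved via tightness plus uniqueness of the perturbed mean-field equation in the class $Q_T<\infty$ (Lemma \ref{lem_uniq}), with stability in the initial data, rather than by an entropy-method propagation of chaos.
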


\subsection{Outline of proof}

As in the seminar works \cite{RN257,RN5,RN96}, the sample-path large deviation upper bound can be obtained through an exponential martingale estimation and exponential tightness. Additional treatment is required in the presence of singular terms, i.e. a prior energy estimation to make sure the generator is continuous under weak typology.

The basic strategy in \cite{RN5,RN96} to prove lower bound is to compute the relative entropy between a model with regular perturbation and the original one, which actually provides a lower bound on the probability of the neighborhood of the mean-field limit path of the perturbed process. However, this strategy for lower bound estimation would fail in general when the rate function is not convex, let alone in the presence of singular interaction kernels, unless there is an additional modification for the rate function, as have been done in \cite{RN267,RN143,RN136}. As we have stated in the main theorem, we choose $Q_T<\infty$ as the modification, which implies finite energy and finite integral of $L^2$ norm of the distributions over time.

Besides the modification by $Q_T<\infty$, the proof for the lower bound still needs several new inequalities, which can bound the singular terms such as $\div [\rho (\k*\rho)]$ by the integral of $L^2$ norms along sample path. These inequalities are originated from an energy dissipation structure similar to the classical result for 2D Navier-Stokes equation (e.g. see (3.20) in \cite{RN275}), in which the integral of $L^2$ norms along sample path naturally emerges. We can not expect the $L^2$ norms to be bounded from above since the initial $L^2$ norm  can be infinite, but the integral of $L^2$ norm along sample path with finite action is proven to be finite, utilizing such an energy dissipation structure. As one may expect, the finite energy condition is required for the energy dissipation structure, and as far as we know, this is the weakest condition that makes the mean-field limit holds for (\ref{def_sde}) with general $\nu>0$.

As stated in \cite{RN136}, the rate function with modification makes it harder to prove the upper bound, requiring a prior estimation of $Q_T$. However, $Q_T$ always takes $\infty$ on the empirical measure $\rho_n\in\x_n$, even if their limit as $n\to\infty$ has finite $Q_T$. Thus we make use of the convolution method with a sequence of smooth mollifiers. By taking specific mollifiers $\g_n$ converging to Dirac measure with a suitable slow rate, we yield an energy dissipation structure for $\g_n*\rho_n$ assuming $e(\g_n*\rho_n(0))$ is finite, which gives the control for $Q_T(\g_n*\rho_n)$. The key ingredient in the proof is a sharp estimation for the singular term in order to show its vanishing in the energy dissipation structure. Moreover, the energy dissipation structure, as well as related inequalities, can also help to provide a more explicit formula of the rate function than the variational one directly derived from the exponential martingale inequality.

Furthermore, in order to characterize the limit of infintesimal operators of the SDEs with singular terms, we utilize a symmetrization technique to first represent the limit operator defined on arbitrary probability measure, then perform the regularity analysis, and finally come back to the original representation without symmetrization.

In more detail, take $\g:\r\mapsto \r^+:$$$\g(x)=\left\{
\begin{aligned}
&Ce^{-\frac{1}{1-4|x|^2}},& |x|<\frac{1}{2},\\
&0,& |x|\geq \frac{1}{2},
\end{aligned}\right.
$$ where the constant is to make $\int_{0}^{\infty}2\pi r \g(r)dr=1$. $\g(x)$ is a non-negative smooth function on $\r$ with a compact support. Then let $\g_n:\r^2\mapsto \r$ be the specific smooth mollifiers generated by $\g$ and defined by
$$
\g_n(x)=m_n^2\g(m_n|x|),
$$
where $m_n\uparrow \infty,nm_n^{-2}\to \infty$. Note that the slow convergent rate of $m_n$ to infinity is crucial for the prior estimation that we will show later.

In Section \ref{sec_proof} we will illustrate the proof for the main theorem under the following superexponential finite energy condition for $\g_n*\rho_n$ that we will show later.
\begin{cond}\label{cond_ldp_2}
$$\lim_{R\to\infty}\limsup_{n\to\infty}\frac{1}{n}\log P\left(e(\g_n*\rho_n(0))>R\right)=-\infty.$$
\end{cond}
\begin{thm}[Large deviation principle \uppercase\expandafter{\romannumeral2}]\label{thm_LDPplus}  Under the condition of Lemma \ref{lem_wellpose}, suppose $\rho_n(0)$ satisfies a large deviation principle on $\p(\t)$ with the rate function $\I_0$ and Condition \ref{cond_ldp_2} holds. Then for each $T>0$, the stochastic empirical process $\{\rho_n(t):0\leq t\leq T\}_{n=1,2,\cdots}$ satisfies a sample-path large deviation principle in the space $C([0,T];\p(\t))$ with a good rate function given by (\ref{equ_ratefunction}).
\end{thm}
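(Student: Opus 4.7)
The plan is to follow the classical Dawson--G\"artner--Donsker--Varadhan strategy, treating the upper and lower bounds separately, with the singular kernel $\k$ handled throughout by antisymmetrization and by a priori control via $Q_T$. Because $\g_n*\rho_n$ now has the role that $\rho_n$ plays in the non-singular case --- it is the natural smooth surrogate where It\^o's formula on nonlinear functionals such as $e(\cdot)$ or $\|\cdot\|_2^2$ can be applied --- Condition \ref{cond_ldp_2} on its initial energy will be the entry point for all exponential estimates.

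For the large deviation upper bound I would first apply It\^o's formula to $\lr\phi,\rho_n(t)\rr$ for $\phi\in C^\infty(\t)$, producing the interaction term $\frac{1}{n^2}\sum_{i\neq j}\nabla\phi(X_i)\cdot\k(X_i-X_j)$. Symmetrizing using the antisymmetry of $\k$ gives $\frac{1}{2n^2}\sum_{i\neq j}(\nabla\phi(X_i)-\nabla\phi(X_j))\cdot\k(X_i-X_j)$, whose summand is bounded uniformly in $(X_i,X_j)$ because the Lipschitz vanishing of $\nabla\phi(X_i)-\nabla\phi(X_j)$ cancels the $|X_i-X_j|^{-1}$ singularity of $\k$. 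This yields honest exponential martingales $M_n^\phi$, and the usual variational manipulation --- taking suprema over $\phi$ and invoking the definition of $\|\cdot\|_{-1,\rho}^2$ --- identifies $\act_T$ as the natural rate, while the same martingale family provides exponential tightness in $C([0,T];\p(\t))$ through a modulus-of-continuity estimate on $\lr\phi,\rho_n\rr$.

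To restrict attention to paths with $Q_T<\infty$ (where the variational functional is lower semicontinuous and equals $\act_T$), I would derive a prior superexponential bound on $Q_T(\g_n*\rho_n)$. Applying It\^o's formula to $e(\g_n*\rho_n(t))$ and using $-\Delta\n=\delta_0-1$ produces an energy-dissipation identity of the form
\begin{equation*}
e(\g_n*\rho_n(t))+\nu\int_0^t\|\g_n*\rho_n(s)-1\|_2^2\,ds=e(\g_n*\rho_n(0))+R_n(t)+M_n(t),
\end{equation*}
where $R_n(t)$ is the antisymmetrized singular residual and $M_n(t)$ is a martingale. The crucial technical estimate is that $R_n(t)$ is controlled by $\int_0^t\|\g_n*\rho_n(s)\|_2^2\,ds$ up to a factor that vanishes provided $m_n\uparrow\infty$ with $nm_n^{-2}\to\infty$; together with Condition \ref{cond_ldp_2} and an exponential Gr\"onwall inequality this yields the desired superexponential control on $Q_T(\g_n*\rho_n)$.

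For the lower bound, I take $\rho^*\in AC((0,T);\p(\t))$ with $\I_T(\rho^*)<\infty$ and write $\act_T(\rho^*)=\frac{1}{4\nu}\int_0^T\|v^*\|_{L^2(\rho^*)}^2\,dt$ for the minimizing drift $v^*$ satisfying $\partial_t\rho^*=-\div[\rho^*(\k*\rho^*+v^*)]+\nu\Delta\rho^*$. Following \cite{RN257,RN5,RN96} I would introduce a Girsanov-perturbed SDE with extra drift $v^*$, compute the relative entropy against the original law, and conclude via propagation of chaos for the perturbed system that its empirical measure concentrates on a neighborhood of $\rho^*$ at exponential cost $\act_T(\rho^*)+\I_0(\rho^*(0))$. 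Since $\rho^*$ may be only $L^2$ and $v^*$ only $L^2(\rho^*)$, I would first regularize $\rho^*$ in space and time to obtain smooth approximations $\rho^{*,\varepsilon}$ on which the perturbed dynamics is classical, prove the lower bound along the $\rho^{*,\varepsilon}$, and then pass $\varepsilon\to 0$ using that $Q_T<\infty$ makes the singular functionals $\int\div[\rho(\k*\rho)]$ etc.\ continuous under smoothing --- precisely via the same inequalities bounding singular terms by $\int\|\rho\|_2^2$ that drive the upper bound. The main obstacle throughout is this single inequality: after symmetrization, the residual singular term must be controlled by the integral of $L^2$ norms along the path, with a coefficient that is either absolute (for the continuity in the lower bound) or exponentially small in $n$ (for the prior estimate in the upper bound), and establishing both versions at the required sharpness --- which is where the slow divergence $nm_n^{-2}\to\infty$ enters --- is the technical heart of the proof.
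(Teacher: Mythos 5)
Your overall architecture coincides with the paper's: upper bound via the Delort-type symmetrization, exponential martingales, exponential tightness, and a superexponential prior bound on $Q_T(\g_n*\rho_n)$ obtained by applying It\^o's formula to $e(\g_n*\rho_n(t))$ and controlling the symmetrized singular residual by $\int\|\g_n*\rho_n\|_2^2$ with a vanishing constant (this is exactly the paper's Lemmas \ref{lem_rzq2} and \ref{lem_Q1plus}, where $nm_n^{-2}\to\infty$ enters); lower bound via Girsanov tilting, a law of large numbers for the tilted system, relative entropy, and approximation of a general finite-action path by paths driven by regular drifts. On the upper bound your sketch is faithful to the paper (the paper replaces your ``exponential Gr\"onwall'' by a stopping-time argument combined with a maximal inequality for positive local martingales, but this is the same mechanism).

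There is, however, a genuine gap in your lower bound: the step ``regularize $\rho^*$ in space and time to obtain smooth approximations $\rho^{*,\varepsilon}$ on which the perturbed dynamics is classical'' does not work as stated, and constructing a correct substitute is the hardest part of the proof. Two constraints are violated by naive space-time mollification. First, the approximating paths must keep the \emph{same} initial datum $\gamma=\rho^*(0)$, because the initial cost $\I_0(\gamma)$ and the initial energy bound of Condition \ref{cond_ldp_2} are attached to $\gamma$; mollifying in time moves the starting point, and mollifying only in space still changes $\gamma$. Second, to realize the approximation as $\rho^{\gamma,v}$ with $v=\nabla p\in\mathcal{J}$ one must solve the continuity equation for $p$, i.e.\ a second-order elliptic equation $\div(\tilde\rho\nabla p)=-\partial_t\tilde\rho+\nu\Delta\tilde\rho-\div(\tilde\rho\,\k*\tilde\rho)$, and this requires $\tilde\rho(t)$ to be smooth \emph{and bounded away from zero} with regular coefficients; a mollification of a merely finite-energy, $L^2$-in-time path gives neither uniform ellipticity near $t=0$ nor control of $\partial_t\tilde\rho$ there, and the commutator between mollification and the nonlinear term has to be absorbed into $v$ and shown small in $L^2(\rho)$. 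The paper resolves this (Lemma \ref{lem_density}) by prepending a short run of the \emph{unperturbed} dynamics $\rho^{\gamma,0}$ on $[0,t_1]$ --- which instantaneously smooths the density and makes it strictly positive (Lemma \ref{lem_ns1}, strong maximum principle) while keeping the initial datum equal to $\gamma$ and costing nothing in the action --- then interpolates by heat-kernel smoothing and runs a time-shifted, $\Phi_{\nu t_2}$-mollified copy of $\rho^*$, recovering $p$ from the resulting uniformly elliptic equation; showing that the action of this construction converges to at most $\act_T(\rho^*)$ further needs the entropy and Fisher-information decay estimates (Lemma \ref{lem_decay}) and the energy identity (Lemma \ref{lem_ns2}), not only the $L^2$-in-time bounds you invoke. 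Without an argument of this kind your passage $\varepsilon\to0$ in the lower bound cannot be closed.
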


To conclude the proof, we will also show that finite $e_0(\rho(0))$ is stronger than finite $e(\g_n*\rho(0))$, i.e. the Condition \ref{cond_ldp_1} implies the Condition \ref{cond_ldp_2}, in Section \ref{sec_ener}.

In Section \ref{sec_ener}, we also prove several crucial inequalities for bounding the singular term $\div \rho(\k*\rho)$ under convolution and give the prior estimation of $Q_T$ for the upper bound proof. Section \ref{sec_regular} is about regularities of trajectories with finite rate function, which helps us to express the rate function in a more explicit way. In Section \ref{sec_lln}, we prove the law of large number for the model with certain regular perturbation and perform the "nice" trajectory approximation strategy for the lower bound proof.

\section{Proof of the main theorem}\label{sec_proof}
In this section we present the main proof procedure of Theorem \ref{thm_LDPplus}, and we place all the proofs of lemmas into later sections.
\subsection{Exponential martingale problem}
Our proof of large deviation upper bound is based on the analyses of the exponential martingale problem.

Recall that the dynamics of $X(t)$ are determined by the stochastic differential equations
\begin{equation}\label{def_sde02}
dX_i(t)=\frac{1}{n}\sum_{j\neq i}\k(X_i-X_j)dt+\sqrt{2\nu}dB_i(t), \;i=1,2,\cdots,n.\\
\end{equation}

By Ito's formula, for each smooth function $\phi$ on $\t$, we have
$$\begin{aligned}
&d\left[\lr\phi,\rho_n(t)\rr\right]=\frac{1}{n}\sum_{i=1}^nd\phi(X_i(t))=\frac{1}{n}\sum_{i=1}^n\nabla \phi(X_i(t))dX_i(t)
+\frac{\nu}{n}\sum_{i=1}^n\Delta \phi(X_i(t))dt\\
&=\frac{1}{n^2}\sum_{i\neq j}\nabla \phi(X_i(t))\cdot\k(X_i(t)-X_j(t))dt+\frac{\sqrt{2\nu}}{n}\sum_{i=1}^{n}\nabla \phi(X_i(t))dB_i(t)\\
&+\frac{\nu}{n}\sum_{i=1}^n \Delta \phi(X_i(t))dt
\end{aligned}
$$

To write the generator of $\rho_n$, we need to write the singular term $\frac{1}{n^2}\sum_{i\neq j}\nabla \phi(X_i(t))\cdot\k(X_i(t)-X_j(t))$ as a $\rho_n(t)$ dependent term. Here we use the observation from Delort \cite{RN262}, proposing an alternative representation for singular term which can be easily extended to general measure.

Recalling $\d$ is the diagonal set, noting $\k(x)=-\k(-x),$
\begin{equation}\label{equ_Delort}
\begin{aligned}
&\frac{1}{n^2}\sum_{i\neq j}\nabla \phi(X_i(t))\cdot\k(X_i(t)-X_j(t))\\
&=\frac{1}{2n^2}\sum_{i\neq j}\left[\nabla \phi(X_i(t))-\nabla \phi(X_j(t))\right]\cdot\k(X_i(t)-X_j(t))dt\\
&=\frac{1}{2}\int_{(\t)^2\backslash \d}(\nabla \phi(x)-\nabla \phi(y))\cdot \k(x-y) \rho_n(t,dx)\rho_n(t,dy)\\
&=\frac{1}{2}\int_{(\t)^2\backslash \d}\frac{\nabla\phi(x)-\nabla\phi(y)}{r(x,y)}\cdot r(x,y)\k(x-y) \rho_n(t,dx)\rho_n(t,dy)
\end{aligned}
\end{equation}
is only dependent on $\nabla\phi$ and $\rho_n(t)$ and linear of $\nabla \phi.$  In view of (\ref{equ_N}), $w(x,y)=r(x,y)\k(x-y)$ is a bounded function so that for each $\varphi\in C^\infty(\t;\r^2)$,
\begin{equation}
|(\varphi(x)-\varphi(y))\cdot \k(x-y)|\leq C_{\k}\|\nabla \varphi\|_{\infty}
\end{equation}
is bounded. Hence the final expression of (\ref{equ_Delort}) is always well defined and can be extended to any $\gamma\in \p(\t)$. Now we introduce the symmetrization operator $\op:\p(\t)\mapsto \mathcal{D}'(\t)$ satisfying
\begin{equation}\label{equ_rucv1}
\lr\varphi,\op(\gamma)\rr:=\frac{1}{2}\int_{(\t)^2\backslash \d}(\varphi(x)-\varphi(y))\cdot \k(x-y) \gamma(dx)\gamma(dy),\,\forall \varphi\in C^1(\t;\r^2).
\end{equation}
As stated above, (\ref{equ_rucv1}) is always well-defined and
\begin{equation}\label{equ_opbound}
\left|\lr\varphi,\op(\gamma)\rr\right|\leq \frac{1}{2}C_{\k}\|\nabla \varphi\|_{\infty}.
\end{equation}
In fact $\op(\gamma)$ is an extension for $\gamma (\k*\gamma).$ If
$\gamma\in L^2(\t),$ as stated before Theorem \ref{thm_LDP}, $\k*\gamma\in L^2(\t)$. Then for each $\varphi\in C^1(\t;\r^2)$, $$
\lr\varphi,\gamma (\k*\gamma)\rr=\int_{\t}\varphi(x)\cdot\k(x-y)\gamma(dx)\gamma(dy)=-\int_{\t}\varphi(y)\cdot\k(x-y)\gamma(dx)\gamma(dy).
$$
Hence
\begin{equation}\label{equ_rucv}
\op(\gamma)=\gamma (\k*\gamma),\quad \forall \gamma\in L^2(\t).
\end{equation}

With this notation, after further calculations, we can write the generator for $\rho_n$ as
\begin{equation}\label{equ_An1}
A_n f(\gamma):=\sum_{i=1}^{k}\lr \nabla \varphi_i,\op(\gamma)\rr\partial_i \psi+\nu\sum_{i=1}^{k}\lr\Delta \varphi_i,\gamma\rr\partial_i \psi
+\frac{\nu}{n}\sum_{i,j=1}^{k}\lr \nabla \varphi_i \cdot \nabla \varphi_j,\gamma\rr \partial_{ij}\psi,
\end{equation}
for each $f\in D_0:=\{f:\p(\t)\mapsto \r, f(\gamma)=\psi(\lr\varphi_1,\gamma\rr,\cdots,\lr\varphi_k,\gamma\rr),\psi\in C^2(\r^{k}),\varphi_i\in C^\infty(\t)\}$ such that $$
f(\rho_n(t))-\int_0^t A_nf(\rho_n(s))ds=\text{martingale.}
$$
Then we define
\begin{equation}\label{equ_Hn1}
\begin{aligned}
&H_n f(\gamma):=\frac{1}{n} e^{-nf}A_ne^{nf}(\gamma)=\lr\nu\Delta\gamma-\div \op(\gamma),\frac{\delta f}{\delta \gamma}\rr
+\nu\int_{\t}\left|\nabla \frac{\delta f}{\delta \gamma}\right|^2d\gamma\\
&+\frac{\nu}{n}\sum_{i,j=1}^{k}\lr \nabla \varphi_i \cdot \nabla \varphi_j,\gamma\rr \partial_{ij}\psi,
\end{aligned}
\end{equation}
in which $\Delta \gamma$ as well as $\div \op(\gamma)$ is defined in distribution sense and the variational derivative $$\frac{\delta f}{\delta \gamma}(x):=\sum_{i=1}^k\partial_i\psi(\lr\varphi_1,\gamma\rr,\cdots,\lr\varphi_k,\gamma\rr)\varphi_i(x)$$ is a smooth function on $\t$.

At least formally by Ito's formula, for $(f_t)_{t\geq 0}$ such that $f_t\in D_0$, the exponential form
$$
\exp\left\{n\left[f_t(\rho_n(t))-f_t(\rho_n(0))-\int_0^t \partial_s f_s(\rho_n(s))ds-\int_0^t H_nf_s(\rho_n(s))ds\right]\right\}
$$
is always considered to be a martingale. In fact, the proof of \cite{RN5,RN96} only use the case when $\phi\in C^\infty([0,T]\times \t)$ and $f_t(\rho)=\lr \phi(t),\rho\rr.$ To adapt their proof to the more general initial data, we shall use a bigger domain $D_0$, which is sufficient to determine the rate function (see Section 3.2 of \cite{RN90}).

\subsection{Proof of upper bound}
First, as a standard step to prove large deviation principle, we prove the exponential tightness of $\rho_n$, with which we only need to prove the upper bound for compact sets.
\begin{prop}[Exponentially tightness]\label{prop_expc}
Under the condition of Theorem \ref{thm_LDPplus}, for each $a,T>0,$ there exists a compact set $\hat{K}_{a,T}\subset C([0,T];\p(\t))$ such that
\begin{equation}\label{equ_expc}
\limsup_{n\to\infty}\frac{1}{n}\log P(\rho_n \notin \hat{K}_{a,T})\leq -a.
\end{equation}
\end{prop}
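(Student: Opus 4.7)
The strategy is to reduce exponential tightness to a super-exponential modulus-of-continuity estimate for the scalar processes $t\mapsto\lr\phi,\rho_n(t)\rr$ obtained from the It\^o decomposition displayed just before the statement, exploiting that $\p(\t)$ is already compact because $\t$ is. By Arzel\`a--Ascoli in $C([0,T];\p(\t))$ it suffices to produce uniformly equicontinuous paths with super-exponentially small complement under $P$. Since $\t$ is compact the metric $d$ metrises the weak topology on $\p(\t)$, so fixing a countable family $\{\phi_k\}\subset C^\infty(\t)$ whose linear span is uniformly dense in $C(\t)$, it is enough to control the probability that the modulus of continuity of $\lr\phi_k,\rho_n(\cdot)\rr$ exceeds a prescribed threshold, uniformly in $k$ within a fast-decaying schedule.

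For each smooth $\phi_k$ the It\^o calculation yields
$$\lr\phi_k,\rho_n(t)-\rho_n(s)\rr=\int_s^t\lr\nabla\phi_k,\op(\rho_n(r))\rr dr+\nu\int_s^t\lr\Delta\phi_k,\rho_n(r)\rr dr+\bigl(M^n_k(t)-M^n_k(s)\bigr),$$
where $M^n_k$ is a continuous martingale whose quadratic variation increment on $[s,t]$ is at most $\tfrac{2\nu\|\nabla\phi_k\|_\infty^2}{n}(t-s)$. The symmetrization bound (\ref{equ_opbound}) gives $|\lr\nabla\phi_k,\op(\rho_n(r))\rr|\leq\tfrac12 C_{\k}\|\nabla^2\phi_k\|_\infty$ \emph{uniformly} in $r$ and in $\rho_n$, so both drift integrals are at most $C_{\phi_k}(t-s)$ in absolute value. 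A Bernstein-type exponential martingale inequality applied on a partition of $[0,T]$ of mesh $\delta$ then yields
$$\frac{1}{n}\log P\!\left(\sup_{|t-s|\leq\delta}|M^n_k(t)-M^n_k(s)|>\epsilon\right)\leq\frac{\log(2T/\delta)}{n}-\frac{c\,\epsilon^2}{\|\nabla\phi_k\|_\infty^2\,\delta},$$
which tends to $-\infty$ as first $n\to\infty$ and then $\delta\downarrow 0$. Combined with the linear-in-$\delta$ drift contribution this delivers super-exponential equicontinuity of $\lr\phi_k,\rho_n(\cdot)\rr$ for each $k$ individually.

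To assemble $\hat K_{a,T}$, for each $a>0$ choose $\epsilon_{k,a}\downarrow 0$ slowly and $\delta_{k,a}\downarrow 0$ fast enough that the set of $\rho\in C([0,T];\p(\t))$ whose modulus of continuity against every $\phi_k$ stays below $\epsilon_{k,a}$ at scale $\delta_{k,a}$ is relatively compact by Arzel\`a--Ascoli, while the countable union bound over $k$ of the single-$\phi_k$ exponential estimates still leaves the complement with probability $\leq e^{-an+o(n)}$; no prior estimate on the marginals $\rho_n(t)$ themselves is needed because $\p(\t)$ is compact. The only genuinely delicate point in the whole argument is the uniform-in-$\rho_n$ bound on the drift: working directly with $\nabla\phi_k(x)\cdot\k(x-y)$ is hopeless because $\k$ is singular on the diagonal, but Delort's symmetrization identity (\ref{equ_Delort}) replaces this by $\tfrac12(\nabla\phi_k(x)-\nabla\phi_k(y))\cdot\k(x-y)$, which together with (\ref{equ_N}) gives the bounded kernel $w(x,y)=r(x,y)\k(x-y)$ and hence a deterministic uniform drift bound. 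Once this substitution is in place, the remainder of the argument is the standard Dawson--G\"artner/Feng--Kurtz exponential tightness machinery.
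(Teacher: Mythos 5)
Your proposal is correct, but it takes a genuinely different route from the paper. The paper's proof is essentially a one-line appeal to the Feng--Kurtz machinery: since $D_0$ is closed under addition and separates points, and since the bound (\ref{equ_Hnbound}) shows $\sup_n\sup_\gamma|H_nf(\gamma)|<\infty$ for every $f\in D_0$ (with the compactness of $\p(\t)$ handling tightness of the marginals), exponential tightness follows from Corollary 4.17 of \cite{RN90}. You instead run the explicit Dawson--G\"artner/Kipnis--Landim argument: reduce compactness in $C([0,T];\p(\t))$ to uniform equicontinuity of $\lr\phi_k,\rho_n(\cdot)\rr$ against a countable uniformly dense family (legitimate here because $\p(\t)$ is compact, so any metric for the weak topology, including the Wasserstein metric $d$, is uniformly equivalent to a test-function metric), bound the drift increments deterministically and the martingale increments by an exponential (Bernstein-type) inequality with quadratic variation of order $n^{-1}$, and assemble $\hat K_{a,T}$ by a union bound over a suitable schedule of scales. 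Both proofs hinge on exactly the same key estimate — the Delort symmetrization giving the uniform bound (\ref{equ_opbound}) on $\lr\nabla\phi,\op(\gamma)\rr$, which is what makes the drift (respectively $H_nf$) bounded uniformly in the configuration despite the singular kernel — so you have correctly identified the one genuinely model-specific ingredient. What the paper's route buys is brevity and the fact that the same verification of bounded $H_nf$ on a separating, addition-closed domain is reused elsewhere in the Feng--Kurtz framework; what your route buys is a self-contained, quantitative super-exponential modulus-of-continuity estimate that does not require importing Corollary 4.17. One small point to tighten in a final write-up: in the assembly step, organize the constraints as a countable intersection over accuracy levels, each involving only finitely many $\phi_k$ and one scale $\delta$, and choose the parameters so that the (countably many) exceptional events have probabilities summing to at most $e^{-an}$ for large $n$; as stated, "choose $\epsilon_{k,a}\downarrow0$ slowly and $\delta_{k,a}\downarrow0$ fast enough" is the right idea but should be made explicit in this standard form.
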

\begin{proof}
Note $D_0$ is closed under addition and separates points. In addition, recalling (\ref{equ_Hn1}) and (\ref{equ_opbound}), for
$f(\gamma)=\psi(\lr\varphi_1,\gamma\rr,\cdots,\lr\varphi_k,\gamma\rr),\psi\in C^2(\r^{k}),\varphi_i\in C^\infty(\t)$
\begin{equation}\label{equ_Hnbound}
\begin{aligned}
|H_nf(\gamma)|\leq \nu \left\|\Delta \frac{\delta f}{\delta \gamma}\right\|_{\infty}+\nu
\left\|\nabla \frac{\delta f}{\delta \gamma}\right\|^2_{\infty}+C\left\|\nabla^2 \frac{\delta f}{\delta \gamma}\right\|_{\infty}\\+\frac{\nu}{n}\sum_{i,j=1}^{k}\| \nabla \varphi_i\|_{\infty}  \|\nabla \varphi_j\|_{\infty} \sup_{|x_i|\leq \|\varphi_i\|_{\infty}}|\partial_{ij}\psi(x_1,\cdots,x_k)|
\end{aligned}
\end{equation}
is bounded. Since $\p(\t)$ is a compact space, we conclude the proof by Corollary 4.17 of \cite{RN90}.
\end{proof}

Formally, there are three steps to prove the large deviation upper bound for compact sets.
First, by Ito's formula, for each $f\in D_0$, $\phi\in C^\infty([0,T]\times \t)$ and $t_0\in(0,T)$,
$$
\begin{aligned}
&M_n(t)
:=\exp\bigg\{n\bigg[f(\rho_n(t_{0}\wedge t))-\int_{0}^{t_0\wedge t} H_nf(\rho_n(s))ds
-\int_{t_0\wedge t}^{t}\lr\partial_t \phi(s),\rho_n(s)\rr ds\\
&+\lr\phi(t),\rho(t)\rr-\lr \phi(t_0\wedge t),\rho(t_0\wedge t)\rr
-\int_{t_0\wedge t}^{t}H_n (\lr\phi(s),\cdot\rr)(\rho(s))ds
\bigg]\bigg\}
\\&=\exp\bigg\{n\bigg[f(\rho_n(t_{0}\wedge t))-\int_{0}^{t_0\wedge t} H_nf(\rho_n(s))ds
-\int_{t_0\wedge t}^{t}\lr\partial_t \phi(s),\rho_n(s)\rr ds\\
&+\lr\phi(t),\rho(t)\rr-\lr \phi(t_0\wedge t),\rho(t_0\wedge t)\rr
-\nu\int_{t_0\wedge t}^{t}\lr\Delta \phi(s),\rho_n(s)\rr ds\\
&-\int_{t_0\wedge t}^{t}\lr\nabla \phi(s),\op(\rho_n(s))\rr ds
-\nu\int_{t_0\wedge t}^{t}\int_{\t}|\nabla \phi(s)|^2d\rho_n(s)ds
\bigg]\bigg\}
\end{aligned}
$$
is a non-negative martingale, hence for each measurable set $B\subset C([0,T];\p(\t))$,
\begin{equation}\label{equ_upbdout1}
\begin{aligned}
&\frac{1}{{n}}\log P(\rho_{n}\in B)=\frac{1}{n}\log \e \left[M_{n}(T)(M_{n}(T))^{-1}\chi_{B}(\rho_{n})\right] \\
&\leq \frac{1}{n}\log \e M_n(0)-\inf_{\rho \in B}\frac{1}{n}\log M_n(T)\\
&=\frac{1}{n}\log \e e^{nf(\rho_n(0))}-\inf_{\rho \in B}\left(\mathcal{A}_{n,T}(\rho,f,t_0,\phi)+f(\rho(0))\right),
\end{aligned}
\end{equation}
where $\chi$ is the indicator function and
\begin{equation}\label{equ_oatred}
\begin{aligned}
&\mathcal{A}_{n,T}(\rho,f,t_0,\phi):=f(\rho(t_0))-f(\rho(0))-\int_{0}^{t_0} H_nf(\rho(t))dt
-\int_{t_0}^{T}\lr\partial_t \phi(t),\rho(t)\rr dt\\
&+\lr\phi(T),\rho(T)\rr-\lr \phi(t_0),\rho(t_0)\rr
-\nu\int_{t_0}^{T}\lr\Delta \phi(t),\rho(t)\rr dt\\
&-\int_{t_0}^{T}\lr\nabla \phi(t),\op(\rho(t))\rr dt
-\nu\int_{t_0}^{T}\int_{\t}|\nabla \phi(t)|^2d\rho(t)dt.
\end{aligned}
\end{equation}

Second, obtain the limit of the right-hand-side of (\ref{equ_upbdout1}) when $n$ goes to infinity (write it as $F_{t_0,T,f,\phi}$). Taking the supremum over $\{(f,t_0,\phi):f\in D_0,0<t_0<T,\phi\in C^\infty([0,T]\times \t)\}$, and then we have
$$
\begin{aligned}
\limsup_{n\to\infty}\frac{1}{n}\log P(\rho_n\in B)\leq -\sup_{f,t_0,\phi}\inf_{\rho\in B}F_{t_0,T,f,\phi}(\rho).
\end{aligned}
$$

Finally, use Lemma $A2.3.3$ in \cite{RN79} to exchange the supremum and infimum to obtain the upper bound for any compact set $K$, i.e.
$$
\begin{aligned}
\limsup_{n\to\infty}\frac{1}{n}\log P(\rho_n\in K)\leq -\inf_{\rho\in K}\sup_{f,t_0,\phi}F_{t_0,T,f,\phi}(\rho).
\end{aligned}
$$
However, the exchange of the supremum and infimum requires that $F$ is lower semi-continuous under the topology of $C([0,T];\p(\t))$. Hence we do need the singular term  $\op$ is continuous under weak topology. However, that is not true. Fortunately, $\op$ is continuous under weak topology with finite energy, stated in the following lemma.

\begin{lem}\label{lem_rucv}
If $\gamma_n,\gamma \in \p(\t), \lim_{n\to\infty}d(\gamma_n,\gamma)=0$, and $\sup_n e(\g_n*\gamma_n)<\infty$ (or $\sup_n e(\gamma_n)<\infty$), then for each $\varphi_n,\varphi\in C^1(\t,\r^2)$ such that $\sup_{n} \|\nabla \varphi_n\|_{\infty}<\infty$ and $\lim_{n\to\infty}\|\varphi_n-\varphi\|_{\infty}=0$, we have
$$\lim_{n\to \infty}\lr\varphi_n,\op(\gamma_n)\rr=\lr \varphi,\op(\gamma)\rr,\,\forall \varphi\in C^1(\t,\r^2).$$
\end{lem}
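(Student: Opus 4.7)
The plan is to split the integral defining $\lr\varphi_n,\op(\gamma_n)\rr$ into a far-from-diagonal piece, handled by weak convergence, and a near-diagonal piece, controlled via the finite-energy hypothesis. Set
$$\phi_n(x,y):=(\varphi_n(x)-\varphi_n(y))\cdot\k(x-y),\qquad \phi(x,y):=(\varphi(x)-\varphi(y))\cdot\k(x-y).$$
The expansion (\ref{equ_N}) shows that $r(x,y)\k(x-y)$ is bounded on $(\t)^2$, and combined with $|\varphi_n(x)-\varphi_n(y)|\leq\|\nabla\varphi_n\|_\infty r(x,y)$ this gives the uniform bound $|\phi_n|\leq M:=C_\k\sup_n\|\nabla\varphi_n\|_\infty$. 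Pick a smooth cutoff $\chi_\epsilon:(\t)^2\to[0,1]$ vanishing on $\{r(x,y)\leq\epsilon/2\}$ and equal to one on $\{r(x,y)\geq\epsilon\}$, and write
$$\lr\varphi_n,\op(\gamma_n)\rr=\tfrac12\!\int\chi_\epsilon\phi_n\,d(\gamma_n\otimes\gamma_n)+\tfrac12\!\int_{(\t)^2\setminus\d}(1-\chi_\epsilon)\phi_n\,d(\gamma_n\otimes\gamma_n).$$

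For the first piece, $\chi_\epsilon\phi_n$ extends to a bounded continuous function on $(\t)^2$ (zero across $\d$). Since $\varphi_n\to\varphi$ uniformly and $\gamma_n\otimes\gamma_n\to\gamma\otimes\gamma$ weakly (implied by $d(\gamma_n,\gamma)\to 0$), this piece converges to $\tfrac12\int\chi_\epsilon\phi\,d(\gamma\otimes\gamma)$ as $n\to\infty$; and by dominated convergence (integrand bounded by $M$, with $\chi_\epsilon\uparrow 1$ pointwise off $\d$) the latter converges to $\lr\varphi,\op(\gamma)\rr$ as $\epsilon\to 0$. The second piece is bounded by $M\cdot(\gamma_n\otimes\gamma_n)\{r(x,y)<\epsilon\}$, so the remaining task is to show the diagonal-mass bound
$$\limsup_{n\to\infty}(\gamma_n\otimes\gamma_n)\{r(x,y)<\epsilon\}\longrightarrow 0,\qquad \epsilon\to 0.$$

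The diagonal mass is controlled by the logarithmic singularity of $\n$. A uniform estimate $\n(x)\geq\tfrac{1}{2\pi}\log(1/r(x,0))-C$ on $\t$, obtained from (\ref{equ_N}) together with the global lower bound on $\n$, gives
$$e(\gamma_n)\geq \tfrac{1}{4\pi}\log(1/\epsilon)\cdot(\gamma_n\otimes\gamma_n)\{r<\epsilon\}-C',$$
which directly yields the desired bound in the case $\sup_n e(\gamma_n)<\infty$. In the alternative case $\sup_n e(\g_n*\gamma_n)<\infty$, Fubini plus the evenness of $\g_n$ gives
$$2e(\g_n*\gamma_n)=\int(\n*\g_n*\g_n)(a-b)\,\gamma_n(da)\gamma_n(db),$$
and the key technical step is the pointwise estimate $(\n*\g_n*\g_n)(z)\geq\tfrac{1}{2\pi}\log(1/\epsilon)-C$ whenever $|z|<\epsilon$ and $\epsilon\geq 1/m_n$, proved by noting that $\g_n*\g_n$ is a probability density supported in a ball of radius $\leq 1/m_n$, so any point in the convolution has distance at most $\epsilon+1/m_n\leq 2\epsilon$ from $z$, where the logarithmic lower bound on $\n$ applies. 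Since $m_n\to\infty$, for each fixed $\epsilon>0$ the condition $\epsilon\geq 1/m_n$ holds for all large $n$, yielding $(\gamma_n\otimes\gamma_n)\{r<\epsilon\}\lesssim 1/\log(1/\epsilon)$ in the limit. Combining the two pieces and letting $n\to\infty$ first, then $\epsilon\to 0$, closes the proof. The principal obstacle is the near-diagonal lower bound on $\n*\g_n*\g_n$, which depends crucially on the fact that the mollification scale $1/m_n$ shrinks faster than any fixed $\epsilon$.
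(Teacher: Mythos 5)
Your proposal is correct and follows essentially the same route as the paper: cut off a neighborhood of the diagonal, pass to the limit in the far piece by weak convergence of $\gamma_n\otimes\gamma_n$ together with the boundedness of $r(x,y)\k(x-y)$, and control the near-diagonal mass through the logarithmic lower bound on $\n$ (respectively on $\n*\g_n*\g_n$, using that $\g_n*\g_n$ is supported in a ball of radius $1/m_n\leq\epsilon$ for large $n$), which is exactly the content of the paper's Lemma \ref{lem_pest_fen}. The only differences are cosmetic: you use a smooth cutoff and reprove the diagonal-mass estimate inline where the paper uses a Tietze extension and cites its appendix lemma, and you treat the varying test functions $\varphi_n$ within the same decomposition rather than in a separate step.
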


The proof of this lemma will be given in Sec.5, since it will also be used for the proof of large deviation lower bound.

Thus, we need a prior energy estimation to bound the energy of the trajectories.
\begin{lem}[Prior energy estimation]\label{lem_Q1}
Under the condition of Theorem \ref{thm_LDPplus},
\begin{equation}\label{equ_lem_Q1}
\begin{aligned}
\lim_{R\to\infty}\limsup_{n\to\infty}&\frac{1}{n}\log P (Q_T(\g_n*\rho_n)>R)=-\infty.
\end{aligned}
\end{equation}
\end{lem}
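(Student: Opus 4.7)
\textbf{Proof plan for Lemma \ref{lem_Q1}.} The plan is to derive a stochastic energy--dissipation identity for the smoothed energy $t\mapsto e(\g_n*\rho_n(t))$ via Ito's formula, combine it with the sharp Biot--Savart singular--term bounds to be established in Section \ref{sec_ener}, and close the argument with an exponential martingale estimate. Set $N_n:=\g_n*\n*\g_n$, a smooth even function on $\t$ satisfying $-\Delta N_n=\g_n*\g_n-1$, so that
\begin{equation*}
e(\g_n*\rho_n(t))=\frac{1}{2n^2}\sum_{i,j=1}^{n}N_n(X_i(t)-X_j(t)).
\end{equation*}
Applying Ito's formula to this smooth symmetric function of the particle trajectories solving (\ref{def_sde02}) produces three contributions: a drift $D_n(t)$ from the Biot--Savart interaction, an Ito correction $\nu\Lambda_n(t)$ from the noise, and a martingale $M_n(t)$.

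A direct computation using $\Delta N_n=1-\g_n*\g_n$ and the identity $\sum_{i,j}\g_n*\g_n(X_i-X_j)=n^2\|\g_n*\rho_n\|_2^2$ gives
\begin{equation*}
\Lambda_n(t)=\frac{n-1}{n}-\|\g_n*\rho_n(t)\|_2^2+\frac{\g_n*\g_n(0)}{n}.
\end{equation*}
Since $\g_n*\g_n(0)=\|\g_n\|_2^2\le Cm_n^2$ and $nm_n^{-2}\to\infty$, the last summand is a vanishing constant $\varepsilon_n\to 0$. Substituting $\|\g_n*\rho_n-1\|_2^2=\|\g_n*\rho_n\|_2^2-1$ transforms the Ito identity into
\begin{equation*}
e(\g_n*\rho_n(t))+\nu\int_0^t\|\g_n*\rho_n(s)-1\|_2^2\,ds=e(\g_n*\rho_n(0))+\int_0^t D_n(s)\,ds+\varepsilon_n T+M_n(t).
\end{equation*}
The drift $D_n(s)=n^{-3}\sum_{i}\sum_{j\neq i}\sum_{l\neq i}\nabla N_n(X_i-X_j)\cdot\k(X_i-X_l)$ is the discrete analogue of the continuum integral $\int\nabla(N_n*\rho)\cdot(\k*\rho)\,d\rho$, which would vanish identically if $N_n$ were replaced by $\n$ and $\rho$ were a smooth density, by the 2D incompressibility identity $\nabla^{\perp}\Psi\cdot\nabla\Psi=0$. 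My plan is to invoke the sharp singular--term inequalities of Section \ref{sec_ener} to bound this mollification defect by a fraction of the dissipation: $|D_n(s)|\le\tfrac{\nu}{2}\|\g_n*\rho_n(s)-1\|_2^2+R_n(s)$, with $\int_0^T R_n(s)\,ds$ bounded deterministically and uniformly in $n$.

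Absorbing the fractional dissipation to the left--hand side leaves
\begin{equation*}
e(\g_n*\rho_n(t))+\frac{\nu}{2}\int_0^t\|\g_n*\rho_n(s)-1\|_2^2\,ds\le e(\g_n*\rho_n(0))+C_{T,\nu}+M_n(t),
\end{equation*}
so taking the supremum in $t\in[0,T]$ yields $Q_T(\g_n*\rho_n)\le e(\g_n*\rho_n(0))+C_{T,\nu}+\sup_{t\le T}M_n(t)$. The quadratic variation $\langle M_n\rangle_T$, expressible through $\nabla N_n*\rho_n$ evaluated along the particles, is bounded analogously by $\frac{C'}{n}\bigl(1+\int_0^T\|\g_n*\rho_n(s)\|_2^2\,ds\bigr)$ via the same singular--term estimates; feeding this into the exponential martingale $\exp\{\lambda n M_n(t)-\tfrac{1}{2}\lambda^2 n^2\langle M_n\rangle_t\}$, together with Doob's maximal inequality and an optimization in $\lambda$, produces a superexponential estimate of the form $P(\sup_{t\le T}|M_n(t)|\ge R/2)\le e^{-cnR}$ for large $R$. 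Combining this with Condition \ref{cond_ldp_2} on the initial energy then yields (\ref{equ_lem_Q1}). The principal obstacle is the drift absorption: the mollified incompressibility cancellation must be controlled uniformly over the empirical configuration on the exponential scale, and the slow scaling $nm_n^{-2}\to\infty$ together with the refined decomposition of $\n$ and $\k$ in (\ref{equ_N}) are precisely what render this mollification defect subcritical in both $R_n$ and $\langle M_n\rangle_T$.
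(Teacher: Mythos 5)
Your overall skeleton is the same as the paper's (Lemma 3.7): apply Ito's formula to the mollified energy $e(\g_n*\rho_n(t))=\tfrac12\lr G_n*\n*\rho_n,\rho_n\rr$, use $-\Delta(G_n*\n)=G_n-1$ to produce the dissipation $\nu\int_0^t\|\g_n*\rho_n-1\|_2^2ds$, treat the self-interaction constant $\omega_n=\tfrac{\nu}{n}(G_n(0)-1)$ with $nm_n^{-2}\to\infty$, and close with an exponential (super)martingale estimate plus Condition \ref{cond_ldp_2}. However, there is a genuine gap at the step you yourself flag as the principal obstacle, the drift absorption. You assert $|D_n(s)|\le\tfrac{\nu}{2}\|\g_n*\rho_n(s)-1\|_2^2+R_n(s)$ with $\int_0^T R_n(s)\,ds$ bounded \emph{deterministically and uniformly in $n$}. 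No such bound is available: the paper's sharp estimate (Lemma \ref{lem_rzq2}) gives $|\lr\nabla(G_n*\n)*\rho_n,\op(\rho_n)\rr|\le c_n\|\g_n*\rho_n\|_2^2$ with $c_n\downarrow0$, but $c_n$ depends on an a priori energy bound (through the small-ball concentration estimate of Lemma \ref{lem_pest_fen}), and the inequality is only usable while $e(\g_n*\rho_n(s))\le l$ --- which is precisely the quantity being estimated. Without that constraint the only deterministic bound on the drift comes from (\ref{equ_opbound}) and scales like $\|\nabla^2(G_n*\n)\|_\infty\sim m_n^2$, not uniformly in $n$. This is why the paper needs the self-consistent stopping-time/continuity argument (the event $E_{n,l,\varepsilon}$, the time $\tau_l$, and the contradiction showing $\tau_l=T$ on that event): the smallness of the drift is bootstrapped from the energy bound it is meant to prove. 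Your plan, as written, skips exactly this mechanism.

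A second, related defect is the martingale step. Bounding $\langle M_n\rangle_T\le\tfrac{C'}{n}\bigl(1+\int_0^T\|\g_n*\rho_n(s)\|_2^2ds\bigr)$ and then applying Doob's inequality with an optimization in $\lambda$ is circular, since the quadratic variation contains the same unbounded random quantity you are trying to control; you cannot extract $P(\sup_t|M_n(t)|\ge R/2)\le e^{-cnR}$ from it directly. The paper avoids this by never separating the martingale from its compensator: it works with the exponential supermartingale containing the correction $\lambda\nu\int_0^t\lr|\nabla(G_n*\n)*\rho_n|^2,\rho_n\rr ds$, bounds that correction pointwise in time by $C_2\|\g_n*\rho_n-1\|_2^2$ via the Ladyzhenskaya-type Lemma \ref{lem_B_6} and Jensen (inequality (\ref{equ_b6})), fixes a small $\lambda$ with $C_2\lambda$ a definite fraction, and absorbs it into the dissipation before invoking the maximal inequality for positive supermartingales (Lemma \ref{lem_ville}). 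If you rework your drift step with the stopping-time bootstrap and fold the quadratic variation into the exponential supermartingale in this way, your argument becomes the paper's proof.
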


\begin{prop}[Upper bound for compact sets]\label{prop_upperbound1}
Under the condition of Theorem \ref{thm_LDPplus}, for any compact set $K$ in $C([0,T];\p(\t))$,
$$
\begin{aligned}
\limsup_{n\to \infty}\frac{1}{n}\log P(\rho_n\in K)\leq -\inf_{\rho\in K}\left(\mathcal{I}_0(\rho(0))+ \overline{\act}_{T}(\rho)+\infty\cdot\chi_{Q_T(\rho)=\infty}\right),
\end{aligned}
$$
where
\begin{equation}\label{equ_oat}
\begin{aligned}
&\overline{\act}_{T}(\rho):=\sup_{\phi\in C^\infty([0,T]\times \t)}\bigg(
\lr\phi(T),\rho(T)\rr-\lr\phi(0),\rho(0)\rr
-\int_{0}^{T}\lr\partial_t \phi(s),\rho(s)\rr ds\\
&-\nu\int_{0}^{T}\lr\Delta \phi(s),\rho(s)\rr ds
-\int_{0}^{T}\lr\nabla \phi(s),\op(\rho(s))\rr ds
-\nu\int_{0}^{T}\int_{\t}|\nabla \phi(s)|^2d\rho(s)ds\bigg).
\end{aligned}
\end{equation}
\end{prop}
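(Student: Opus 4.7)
The plan is to carry out the three-step strategy displayed immediately before Lemma \ref{lem_rucv}: (i) an exponential martingale estimate produces the bound (\ref{equ_upbdout1}); (ii) I pass to $n\to\infty$ and take the supremum over $(f,t_0,\phi)$; (iii) I exchange the resulting sup and inf via Kipnis--Landim's minimax lemma. The genuine work lies in justifying step (iii), for which the singular term in $\mathcal{A}_{n,T}$ must be lower semi-continuous in $\rho$.

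First, since $\p(\t)$ is compact and every $f \in D_0$ is bounded and continuous, Varadhan's lemma applied to the LDP for $\rho_n(0)$ gives $\lim_n \frac{1}{n}\log \e\, e^{nf(\rho_n(0))} = \sup_{\gamma \in \p(\t)}(f(\gamma)-\I_0(\gamma))$. The bound (\ref{equ_Hnbound}) further shows that the $1/n$-quadratic piece inside $H_n f$ is uniformly negligible in $\rho$, so $\mathcal{A}_{n,T}\to \mathcal{A}_T$ uniformly, where $\mathcal{A}_T$ replaces $H_n f$ by its limit $Hf$ (the first two summands of (\ref{equ_Hn1})). Combining this with (\ref{equ_upbdout1}) yields, for every fixed $(f,t_0,\phi)$,
\[
\limsup_n \frac{1}{n}\log P(\rho_n \in K) \leq -\inf_{\rho \in K}\bigl(\mathcal{A}_T(\rho,f,t_0,\phi) + f(\rho(0)) - \sup_\gamma(f(\gamma) - \I_0(\gamma))\bigr).
\]
Using the tautology $f(\rho(0)) - \sup_\gamma(f(\gamma) - \I_0(\gamma)) \leq \I_0(\rho(0))$ and invoking Lemma A2.3.3 of \cite{RN79} to swap $\inf_{\rho \in K}$ with $\sup_{f,t_0,\phi}$, I obtain
\[
\limsup_n \frac{1}{n}\log P(\rho_n \in K) \leq -\inf_{\rho \in K}\Bigl(\I_0(\rho(0)) + \sup_{f,t_0,\phi} \mathcal{A}_T(\rho,f,t_0,\phi)\Bigr),
\]
provided $\rho \mapsto \mathcal{A}_T(\rho, f, t_0, \phi)$ is lower semi-continuous on $K$.

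The crucial verification of lower semi-continuity is where the supporting lemmas enter. Every term of $\mathcal{A}_T$ from (\ref{equ_oatred}) except the singular piece $\int_{t_0}^T \lr \nabla \phi(t), \op(\rho(t))\rr dt$ is weakly continuous in $\rho$, and by Lemma \ref{lem_rucv} the symmetrized operator $\op$ is weakly continuous only when the measures have uniformly bounded energy. I therefore split $\{\rho_n \in K\} = \{\rho_n \in K,\,Q_T(\g_n*\rho_n) \leq R\} \cup \{\rho_n \in K,\,Q_T(\g_n*\rho_n) > R\}$; Lemma \ref{lem_Q1} ensures the second event contributes $e^{-n h(R)}$ with $h(R) \to \infty$. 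On the first event, $\g_n*\rho_n$ has bounded energy uniformly in $n$, and weak convergence $\rho_n \to \rho$ together with $\g_n \to \delta_0$ forces $\g_n*\rho_n \to \rho$; lower semi-continuity of $Q_T$ under weak convergence (a consequence of the Fatou property of $e$) then implies $Q_T(\rho) \leq R$, so $\rho(t) \in L^2(\t)$ for a.e.\ $t$ with uniformly controlled norms. On this restricted class, Lemma \ref{lem_rucv} combined with dominated convergence delivers continuity of the integrated singular term, hence lower semi-continuity of $\mathcal{A}_T$. Letting $R \to \infty$ after the minimax exchange forces any $\rho \in K$ with $Q_T(\rho)=\infty$ to be charged by $+\infty$, producing exactly the indicator $\infty \cdot \chi_{Q_T(\rho)=\infty}$.

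Finally, I identify $\sup_{f,t_0,\phi} \mathcal{A}_T$ with $\overline{\act}_T(\rho)$. Sending $t_0 \to 0$ erases the $f$-dependent contributions: $f(\rho(t_0)) - f(\rho(0)) \to 0$ by path continuity of $\rho$, and $\int_0^{t_0} Hf(\rho(t))\,dt \to 0$ because the integrand is bounded via (\ref{equ_Hnbound}). What remains is the supremum over $\phi \in C^\infty([0,T]\times \t)$ of the $\phi$-dependent terms in (\ref{equ_oatred}), which is precisely $\overline{\act}_T(\rho)$ defined in (\ref{equ_oat}). The chief obstacle throughout is this lower semi-continuity step: without Lemma \ref{lem_Q1} I could not localize to a level set of $Q_T$, and without Lemma \ref{lem_rucv} the singular term would remain discontinuous even after the localization.
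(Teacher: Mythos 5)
Your route is the paper's own: the exponential martingale bound (\ref{equ_upbdout1}), localization to the level sets $\{Q_T(\g_n*\rho_n)\le R\}$ via Lemma \ref{lem_Q1}, continuity of the singular term on those level sets via Lemma \ref{lem_rucv} (note that the $Hf$-part of $\mathcal{A}_T$ also contains $\op$ and is handled by the same lemma), the minimax exchange from Lemma A2.3.3 of \cite{RN79}, and $R\to\infty$ to produce $\infty\cdot\chi_{Q_T(\rho)=\infty}$. That part of your proposal is sound and matches the paper.

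The genuine gap is the extraction of $\I_0(\rho(0))$. After the minimax exchange you must show, for each $\rho$, that $\sup_{f,t_0,\phi}\bigl(\mathcal{A}_T(\rho,f,t_0,\phi)+f(\rho(0))-\Lambda_0(f)\bigr)\ge \I_0(\rho(0))+\overline{\act}_T(\rho)$, where $\Lambda_0(f)=\sup_\gamma\bigl(f(\gamma)-\I_0(\gamma)\bigr)$. Your ``tautology'' $f(\rho(0))-\Lambda_0(f)\le\I_0(\rho(0))$ points the wrong way: it only shows the left-hand side is \emph{at most} $\I_0(\rho(0))+\sup\mathcal{A}_T$, and replacing $f(\rho(0))-\Lambda_0(f)$ by the larger quantity $\I_0(\rho(0))$ inside $-\inf_{\rho\in K}(\cdots)$ produces a smaller, hence stronger and unproven, bound. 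The needed direction, $\sup_{f\in D_0}\bigl(f(\rho(0))-\Lambda_0(f)\bigr)\ge\I_0(\rho(0))$, is not a tautology: it is the statement that the cylinder-function class $D_0$ is rate-function determining, which the paper obtains from Proposition 3.17 of \cite{RN90} (its (\ref{equ_atata1})), choosing $f_n\in D_0$ with $f_n(\rho(0))-\Lambda_0(f_n)\to\I_0(\rho(0))$. Moreover, the same $f$ enters $\mathcal{A}_T$ through $f(\rho(t_0))-f(\rho(0))-\int_0^{t_0}Hf$, so the supremum over $f$ cannot simply be split off from the supremum over $\phi$; the paper decouples them by taking $t_0\to0+$ along these near-optimal $f_n$, using the bound (\ref{equ_opbound}) to kill $\int_0^{t_0}Hf_n$ and path continuity to kill $f_n(\rho(t_0))-f_n(\rho(0))$. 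You perform the $t_0\to0+$ step, but only to identify $\sup\mathcal{A}_T$ with $\overline{\act}_T$, after having already separated $\I_0(\rho(0))$ by the invalid inequality. The fix is exactly the paper's (\ref{equ_atatat})--(\ref{equ_atata2}): cite Proposition 3.17 of \cite{RN90} (or otherwise prove that the Legendre-type transform of $\Lambda_0$ over $D_0$ dominates $\I_0$) and take the joint limit $f=f_n$, $t_0\to0+$ inside the supremum.
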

\begin{proof}
Take $\hat{K}_{a,T}$ in Proposition \ref{prop_expc}. For any measurable set $B\subset C([0,T];\p(\t))$, we define $$B_{n,R,a}=\{\rho:Q_T(\g_n*\rho)\leq R\}\cap B\cap \hat{K}_{a,T}.$$
By Lemma \ref{lem_Q1}, as $R\to \infty,$
\begin{equation}\label{equ_Bnr}
a_{R}':=-\limsup_{n\to\infty}\frac{1}{n}\log P(Q_T(\g_n*\rho_n)>R)\to \infty.
\end{equation}
In view of (\ref{equ_upbdout1}), $$
\frac{1}{n}\log(\rho_n\in B_{n,R,a})\leq \frac{1}{n}\log \e e^{nf(\rho_n(0))}-\inf_{\rho \in B_{n,R,a}}\left(\mathcal{A}_{n,T}(\rho,f,t_0,\phi)+f(\rho(0))\right).
$$

On the one hand, for $f\in D_0$, define $Hf(\gamma)=\lr\nu\Delta\gamma-\div \op(\gamma),\frac{\delta f}{\delta \gamma}\rr+\nu\int_{\t}\left|\nabla\frac{\delta f}{\delta \gamma}\right|^2d\gamma$ and it's easy to obtain that $|H_nf(\gamma)-Hf(\gamma)|\leq \frac{C_f}{n}$ for certain constant $C_f$. On the other hand, by Varadhan's lemma,
$$
\Lambda_0(f):=\lim_{n\to \infty}\frac{1}{n}\log \e e^{f(\rho_n(0))}
$$
exists. Hence,
\begin{equation}\label{equ_m1nlogP}
\begin{aligned}
&\limsup_{n\to \infty}\frac{1}{{n}}\log P(\rho_{n}\in B_{n,R,a})
\leq-\liminf_{n\to\infty}\inf_{\rho \in B_{n,R,a}}\bigg\{\mathcal{A}_{T}(\rho,f,t_0,\phi)+f(\rho(0))-\Lambda_0(f)\bigg\},
\end{aligned}
\end{equation}
where
$$
\begin{aligned}
&\mathcal{A}_{T}(\rho,f,t_0,\phi):=f(\rho(t_{0}))-f(\rho(0))-\int_{0}^{t_0} Hf(\rho(s))ds
+\lr\phi(T),\rho(T)\rr\\
&-\lr\phi(t_0),\rho(t_0)\rr
-\int_{t_0}^{T}\lr\partial_t \phi(s),\rho(s)\rr ds
-\nu\int_{t_0}^{T}\lr\Delta \phi(s),\rho(s)\rr ds\\
&-\int_{t_0}^{T}\lr\nabla \phi(s),\op(\rho(s))\rr ds
-\nu\int_{t_0}^{T}\int_{\t}|\nabla \phi(s)|^2d\rho(s)ds.
\end{aligned}
$$
Now we want to prove that
\begin{equation}\label{equ_m1nlogP1}
\begin{aligned}
\limsup_{n\to \infty}\frac{1}{{n}}&\log P(\rho_{n}\in B_{n,R,a})
\\&\leq-\inf_{\rho \in B}\bigg\{\mathcal{A}_{T}(\rho,f,t_0,\phi)+f(\rho(0))-\Lambda_0(f)+\infty\cdot\chi_{Q_T(\rho)> R}\bigg\}.
\end{aligned}
\end{equation}
It is trivial if there exists $n_0$ such that $B_{n,R,a}=\emptyset$ for $n\geq n_0$. Otherwise, pick $\tilde{\rho}_{n}\in B_{n,R,a}$ approximating the infimum in (\ref{equ_m1nlogP}) for each $n$ satisfying $B_{n,R,a}\neq \emptyset$, i.e.
$$
\mathcal{A}_{T}(\tilde{\rho}_{n},f,t_0,\phi)+f(\tilde{\rho}_{n}(0))\leq
\inf_{\rho \in B_{n,R,a}}\bigg\{\mathcal{A}_{T}(\rho,f,t_0,\phi)+f(\rho(0))\bigg\}+\frac{1}{n}.
$$
Since $B_{n,R,a}\subset \hat{K}_{a,T}\cap \overline{B}$ and $Q_T$ is lower semi-continuous, we can find $\tilde{\rho}\in \overline{B}\cap \hat{K}_{a,T}$, a limiting point of $\tilde{\rho}_{n}$, with $Q_T(\tilde{\rho})\leq R$. Then by Lemma \ref{lem_rucv} and dominated convergence theorem,
$$
\begin{aligned}
&\mathcal{A}_{T}(\tilde{\rho},f,t_0,\phi)+f(\tilde{\rho}(0))=
\liminf_{n\to\infty}\left(\mathcal{A}_{T}(\tilde{\rho}_{n},f,t_0,\phi)+f(\tilde{\rho}_{n}(0))\right)\\
&\leq  \liminf_{n\to\infty}\inf_{\rho \in B_{n,R,a}}\bigg\{\mathcal{A}_{T}(\rho,f,t_0,\phi)+f(\rho(0))\bigg\},
\end{aligned}
$$
which combining with (\ref{equ_m1nlogP}) implies
\begin{equation}
\begin{aligned}
\limsup_{n\to \infty}\frac{1}{{n}}&\log P(\rho_{n}\in B_{n,R,a})
\\
&\leq-\inf_{\rho \in \overline{B}}\bigg\{\mathcal{A}_{T}(\rho,f,t_0,\phi)+f(\rho(0))-\Lambda_0(f)+\infty\cdot\chi_{Q_T(\rho)> R}\bigg\}.
\end{aligned}
\end{equation}
Use the fact $\mathcal{A}_{T}(\rho,f,t_0,\phi)$ is continuous when $Q_T(\rho)\leq R$, which can be obtained by Lemma \ref{lem_rucv} and dominated convergence theorem, then we arrive at (\ref{equ_m1nlogP1})

So far, by (\ref{equ_expc}), (\ref{equ_Bnr}) and (\ref{equ_m1nlogP1}), for each $a,R>0$,
$$
\begin{aligned}
&\limsup_{n\to \infty}\frac{1}{{n}}\log P(\rho_{n}\in B)
\leq \max\bigg\{\limsup_{n\to \infty}\frac{1}{{n}}\log P(\rho_{n}\in B_{n,R,a}),\\
&\limsup_{n\to \infty}\frac{1}{{n}}\log P(\rho_{n}\in \hat{K}_{a,T}^c),\limsup_{n\to \infty}\frac{1}{{n}}\log P(Q_T(\g_n*\rho_n)>R)
\bigg\}\\
&\leq -\inf_{\rho \in B}\min\bigg\{\mathcal{A}_{T}(\rho,f,t_0,\phi)+f(\rho(0))-\Lambda_0(f)+\infty\cdot\chi_{Q_T(\rho)>R},a,a'_R\bigg\}.
\end{aligned}
$$
Taking $a\to \infty$, we obtain \begin{equation}\label{equ_m1nlogP2}
\begin{aligned}
&\limsup_{n\to \infty}\frac{1}{{n}}\log P(\rho_{n}\in B) \\
&\leq -\inf_{\rho \in B}\min\bigg\{\mathcal{A}_{T}(\rho,f,t_0,\phi)+f(\rho(0))-\Lambda_0(f)+\infty\cdot\chi_{Q_T(\rho)>R},a'_R\bigg\},
\end{aligned}
\end{equation}
where the expression inside infimum is lower semi-continuous with respect to $\rho$. Taking the infimum with respect to $f,t_0$ and $\phi$ in the RHS of (\ref{equ_m1nlogP2}) over $\{(f,t_0,\phi):f\in D_0,0<t_0<T,\phi\in C^\infty([0,T]\times \t)\}$, by Lemma A2.3.3 in \cite{RN79}, we can exchange the order of infimum and supremum for compact sets. As consequence, for any compact set $K$, by (\ref{equ_Bnr}), we have
\begin{equation}\label{equ_atatat1}
\begin{aligned}
&\limsup_{n\to \infty}\frac{1}{n}\log P(\rho_n\in K)\leq-\lim_{R\to \infty}
\inf_{\rho \in K}\sup_{f,t_0,\phi}\min\big\{\mathcal{A}_{T}(\rho,f,t_0,\phi)+f(\rho(0))-\Lambda_0(f)\\&+\infty\cdot\chi_{Q_T(\rho)>R},a'_R\big\}\\
&\leq-\lim_{R\to \infty}
\inf_{\rho \in K}\sup_{f,t_0,\phi}\min\big\{\mathcal{A}_{T}(\rho,f,t_0,\phi)+f(\rho(0))-\Lambda_0(f)+\infty\cdot\chi_{Q_T(\rho)=\infty},a'_R\big\}\\
&= -\inf_{\rho \in K}\sup_{f,t_0,\phi}\big\{\mathcal{A}_{T}(\rho,f,t_0,\phi)+f(\rho(0))-\Lambda_0(f)+\infty\cdot\chi_{Q_T(\rho)=\infty}\big\}.
\end{aligned}
\end{equation}

Now we only remain to show for each $\rho\in C([0,T];\p(\t)),$
\begin{equation}\label{equ_atatat}
\begin{aligned}
\sup_{f,t_0,\phi}\left(\mathcal{A}_{T}(\rho,f,t_0,\phi)+f(\rho(0))-\Lambda_0(f)\right)
\geq \I_0(\rho(0))+\overline{\act}_T(\rho).
\end{aligned}
\end{equation}
By Proposition 3.17 of \cite{RN90}, we can find $f_n\in D_0$ such that
\begin{equation}\label{equ_atata1}
\lim_{n\to\infty} \left(f_n(\rho(0))-\Lambda_0(f_n)\right)= \I_0(\rho(0)).
\end{equation}
By (\ref{equ_opbound}), we know $|Hf(\rho(t))|$ is bounded. Thus, for $\rho\in C([0,T];\p(\t)),$
\begin{equation}\label{equ_atata2}
\sup_{\phi\in C^\infty([0,T]\times \t)}\lim_{n\to\infty} \lim_{t\to 0+}\mathcal{A}_{T}(\rho,f_n,t,\phi)=\overline{\act}_T(\rho).
\end{equation}
Combining (\ref{equ_atata1}) with (\ref{equ_atata2}), we have
$$
\begin{aligned}
&\text{LHS of (\ref{equ_atatat})}\geq \sup_{\phi\in C^\infty([0,T]\times \t)} \lim_{n\to\infty}\lim_{t\to 0+}\left(\mathcal{A}_{T}(\rho,f_n,t,\phi)+f_n(\rho(0))-\Lambda_0(f_n)\right)\\
&\geq\overline{\act}_T(\rho)+\I_0(\rho(0)),
\end{aligned}
$$
which together with (\ref{equ_atatat1}) completes the proof.
\end{proof}

With the prior estimation $Q_T(\rho)<\infty$, we can obtain the upper bound is, as shown in the lemma below, actually equal to $\I_T(\rho)$ in (\ref{equ_ratefunction}).
\begin{lem}[Variational representation of the rate function]\label{lem_vrotrf}
If $\rho\in C([0,T];\p(\t))$ such that  $Q_T(\rho)<\infty$ and $\overline{\act}_T(\rho)<\infty$, then $\rho\in AC((0,T);\p(\t))$. Conversely, if $\rho\in AC((0,T);\p(\t))$ with $Q_T(\rho)<\infty$, then $\act_T(\rho)=\overline{\act}_T(\rho).$=
\end{lem}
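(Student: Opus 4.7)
I plan to prove the two implications separately, starting with the equality $\act_T(\rho) = \overline{\act}_T(\rho)$ (under $\rho\in AC((0,T);\p(\t))$ and $Q_T(\rho)<\infty$), which is essentially a duality rescaling, and then the absolute-continuity direction, which requires additional regularity input.

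For the equality, the idea is to rewrite both quantities as the same squared dual norm $\sup_\phi (\tilde L(\phi)-X(\phi))^2/B(\phi)$ of a single linear functional on $\phi\in C^\infty([0,T]\times\t)$, where $\tilde L(\phi):=\lr\phi(T),\rho(T)\rr-\lr\phi(0),\rho(0)\rr-\int_0^T\lr\partial_t\phi,\rho\rr dt$, $X(\phi):=\int_0^T\lr\nabla\phi,\op(\rho)\rr dt+\nu\int_0^T\lr\Delta\phi,\rho\rr dt$, and $B(\phi):=\int_0^T\int|\nabla\phi|^2\, d\rho\, dt$. The key steps are: use the AC hypothesis to integrate by parts in time, $\int_0^T\lr\phi,\partial_t\rho\rr dt = \tilde L(\phi)$; use $\op(\rho(t))\in L^1$ (consequence of $Q_T<\infty$ together with Lemma~\ref{lem_rucv}) to integrate by parts in space so that the pointwise $\|\cdot\|_{-1,\rho(t)}^2$ becomes a quadratic form in $\phi$; interchange the pointwise supremum with the time integral by a measurable-selection argument using lower semicontinuity of the integrand in $\phi$; then apply the scaling $\phi\to\lambda\phi$ ($\lambda\in\r$) in each variational formula to collapse it to the stated squared norm.

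For the AC direction, assume $Q_T(\rho)<\infty$ and $\overline{\act}_T(\rho)<\infty$. The scaling $\phi\to\lambda\phi$ applied to $\overline{\act}_T(\rho)\geq\tilde L(\phi)-X(\phi)-\nu B(\phi)$ yields $|\tilde L(\phi)-X(\phi)|^2\leq 4\nu\overline{\act}_T\cdot B(\phi)$, so $\phi\mapsto\tilde L(\phi)-X(\phi)$ extends to a bounded linear functional on the Hilbert closure of gradients in $L^2(d\rho\,dt;\r^2)$. Riesz representation then gives $v\in L^2(d\rho\,dt;\r^2)$ with $\nu\int_0^T\int|v|^2\, d\rho\,dt\leq\overline{\act}_T$ such that $\rho$ weakly satisfies the Fokker--Planck equation $\partial_t\rho+\div[\rho(\k*\rho+2\nu v)]=\nu\Delta\rho$. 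Writing $b:=\k*\rho+2\nu v$, one identifies $\rho(t)$ with the marginal law of the SDE $dX_t=b(t,X_t)dt+\sqrt{2\nu}dB_t$ (with $X_0\sim\rho(0)$), and coupling $(X_p,X_q)$ yields $d(\rho(p),\rho(q))=W_2^2(\rho(p),\rho(q))\leq \e|X_q-X_p|^2\leq 2(q-p)\int_p^q\|b(t)\|_{L^2(\rho_t)}^2 dt+4\nu(q-p)$, which is bounded by $\int_p^q m(r)\,dr$ for $m(r):=2T\|b(r)\|_{L^2(\rho_r)}^2+4\nu\in L^1[0,T]$, establishing the paper's AC property.

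The main obstacle is verifying $b\in L^2(d\rho\,dt;\r^2)$, which reduces to $\int_0^T\int|\k*\rho|^2\, d\rho\,dt<\infty$; this is not immediate from $Q_T(\rho)<\infty$ because the 2D Biot--Savart kernel is singular. I plan to use the 2D identity $\|\k*\rho\|_2^2=2e(\rho)$ (obtained from $\div(\k*\rho)=0$ and $\curl(\k*\rho)=\rho-1$) together with Ladyzhenskaya's inequality $\|\k*\rho\|_4^2\lesssim\|\k*\rho\|_2\|\rho-1\|_2$ to obtain $\int|\k*\rho|^2\, d\rho\lesssim\sqrt{e(\rho)}\|\rho\|_2^2$, which integrates in $t$ to a finite quantity exactly under $Q_T<\infty$. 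These are the sharp energy-based estimates developed in Section~\ref{sec_ener}; the same inputs will also be needed to make the Fokker--Planck-to-SDE identification rigorous.
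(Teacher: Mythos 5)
Your proposal is correct in substance, but it reaches the absolute-continuity half by a genuinely different route than the paper. For the identity $\act_T(\rho)=\overline{\act}_T(\rho)$ you use the same duality/scaling mechanism as the paper; beyond the easy inequality $\overline{\act}_T(\rho)\le\act_T(\rho)$, the only real content is the interchange of the time integral with the pointwise supremum, which you propose to handle by measurable selection and which the paper instead obtains by combining the global Riesz representer $\hat{\nabla}p$ on $H^1_\rho([0,T]\times\t)$ (giving $\act_T(\rho)\le\frac{1}{4\nu}\int_0^T\int_{\t}|\hat{\nabla}p|^2d\rho\,dt=\overline{\act}_T(\rho)$ via Lemma \ref{lem_hm1r1}) with the pointwise representation of Lemma D.34 of \cite{RN90}; both are standard, yours just needs the selection spelled out. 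The real divergence is in proving $\rho\in AC((0,T);\p(\t))$: the paper writes $\partial_t\rho=\nu\Delta\rho-\div(\rho u)-\div(\rho\hat{\nabla}p)$ and verifies the Ambrosio--Gigli--Savar\'e criterion $\int_0^T\|\partial_t\rho(t)\|_{-1,\rho(t)}dt<\infty$, which forces it to control the Laplacian term through $\|\Delta\rho\|^2_{-1,\rho}=I(\rho)$ (Proposition \ref{prop_hm1r4}) and hence to establish the entropy and Fisher-information estimates of Lemma \ref{lem_decay}; you instead invoke a superposition principle (Figalli/Trevisan) to realize $\rho(t)$ as the time-marginals of $dX_t=b(t,X_t)dt+\sqrt{2\nu}dB_t$ with $b=\k*\rho+2\nu v\in L^2(d\rho\,dt)$ and bound $d(\rho(p),\rho(q))\le\e|X_q-X_p|^2$ by a direct coupling, so the diffusion contributes only $O(q-p)$ and no entropy or Fisher information is needed (note the paper's $d$ is the squared Wasserstein distance, so your bound is exactly of the required form $\int_p^q m(r)dr$). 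What each approach buys: yours bypasses Lemma \ref{lem_decay} entirely, at the price of importing the superposition principle for drifts that are merely square-integrable against $\rho\,dt$ and of the routine extension of the weak formulation to time-localized test functions; the paper's route is self-contained, and the regularity it produces is reused later (Section \ref{sec_density}), so it cannot be dispensed with globally. Your integrability check $\int_0^T\int_{\t}|\k*\rho|^2d\rho\,dt<\infty$ under $Q_T(\rho)<\infty$ is fine and is in fact already Lemma \ref{lem_B_6} (your extra $\sqrt{e(\rho)}$ factor is harmless since $e(\rho(t))\le Q_T(\rho)$); only the small misattribution to Lemma \ref{lem_rucv} of the fact $\op(\rho(t))=\rho(t)(\k*\rho(t))\in L^1(\t)$ (this is (\ref{equ_rucv}) together with Young's inequality) should be corrected.
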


Finally, we obtain the large deviation upper bound.
\begin{prop}[Upper bound]\label{prop_upper}
Under the condition of Theorem \ref{thm_LDPplus}, for each closed set $A$ in $C([0,T];\p(\t)),$
\begin{equation}\label{equ_upbd200}
\limsup_{n\to\infty}\frac{1}{n}\log P(\rho_n\in A)\leq -\inf_{\rho\in A}\I_T(\rho).
\end{equation}
\end{prop}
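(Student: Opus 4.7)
The plan is to assemble the proof from three ingredients already established: Proposition \ref{prop_expc} (exponential tightness), Proposition \ref{prop_upperbound1} (upper bound on compact sets), and Lemma \ref{lem_vrotrf} (identification of $\overline{\act}_T$ with $\act_T$ on absolutely continuous paths). All the analytic substance sits in those results, so what remains is a standard unpacking argument.

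First I would reduce to compact sets. Fix $a>0$ and let $\hat K_{a,T}\subset C([0,T];\p(\t))$ be the compact set from Proposition \ref{prop_expc}. For a closed set $A$, the intersection $K_a:=A\cap \hat K_{a,T}$ is compact. From $P(\rho_n\in A)\leq P(\rho_n\in K_a)+P(\rho_n\notin \hat K_{a,T})$ together with the elementary inequality $\limsup \tfrac1n\log(p_n+q_n)\leq \max(\limsup \tfrac1n\log p_n,\limsup \tfrac1n\log q_n)$,
$$\limsup_{n\to\infty}\frac{1}{n}\log P(\rho_n\in A)\leq \max\!\left(\limsup_{n\to\infty}\frac{1}{n}\log P(\rho_n\in K_a),\,-a\right).$$
Proposition \ref{prop_upperbound1} applied to the compact set $K_a$ then bounds the first argument of the maximum by $-\inf_{\rho\in K_a}J(\rho)$, where for brevity
$$J(\rho):=\I_0(\rho(0))+\overline{\act}_T(\rho)+\infty\cdot\chi_{Q_T(\rho)=\infty}.$$

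The substantive step is to show $J\equiv \I_T$ pointwise. If $Q_T(\rho)=\infty$, both sides equal $\infty$. If $Q_T(\rho)<\infty$ and $\overline{\act}_T(\rho)<\infty$, the first half of Lemma \ref{lem_vrotrf} gives $\rho\in AC((0,T);\p(\t))$ and the second half yields $\overline{\act}_T(\rho)=\act_T(\rho)$, so $J(\rho)=\I_0(\rho(0))+\act_T(\rho)=\I_T(\rho)$. In the remaining case $Q_T(\rho)<\infty$, $\overline{\act}_T(\rho)=\infty$, either $\rho\notin AC((0,T);\p(\t))$ and hence $\I_T(\rho)=\infty$ by definition, or $\rho\in AC((0,T);\p(\t))$ and the second half of Lemma \ref{lem_vrotrf} forces $\act_T(\rho)=\overline{\act}_T(\rho)=\infty$, so again $\I_T(\rho)=\infty$. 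Therefore $J=\I_T$, and since $K_a\subset A$ we have $\inf_{\rho\in K_a}\I_T(\rho)\geq \inf_{\rho\in A}\I_T(\rho)$, whence
$$\limsup_{n\to\infty}\frac{1}{n}\log P(\rho_n\in A)\leq \max\!\left(-\inf_{\rho\in A}\I_T(\rho),\,-a\right).$$
Letting $a\to\infty$ produces (\ref{equ_upbd200}).

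The main obstacle is really just the case analysis $J=\I_T$, which requires invoking both directions of Lemma \ref{lem_vrotrf} in order to avoid undercounting the rate function on paths with $Q_T<\infty$ but $\rho\notin AC((0,T);\p(\t))$; once that identification is in place, the rest is the standard passage from compact sets to closed sets via exponential tightness.
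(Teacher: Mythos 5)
Your proposal is correct and follows essentially the same route as the paper: Proposition \ref{prop_upperbound1} for compact sets, Lemma \ref{lem_vrotrf} to identify the variational bound with $\I_T$ (your case analysis makes explicit what the paper leaves implicit), and exponential tightness from Proposition \ref{prop_expc} to pass to closed sets, where you spell out by hand the standard argument the paper cites as Lemma 1.2.18 of \cite{RN226}.
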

\begin{proof}
In view of Proposition \ref{prop_upperbound1}, we have proved the upper bound for  compact sets. By Lemma \ref{lem_vrotrf}, we verified this upper bound is equal to $-\inf_{\rho\in B}\I_T(\rho)$. Finally we conclude the proof by Proposition \ref{prop_expc} and Lemma 1.2.18 in \cite{RN226},
\end{proof}
\subsection{Proof of lower bound}
We firstly study the law of large numbers of a regularly perturbed model, which can be obtained by measure transformation from the original model. For any given $v\in L^\infty([0,T]\times\t;\r^2)$, take $$Z^v(t):=\exp\left[\sum_{i=1}^n\int_{0}^{t}\frac{ v(s,X_i(s))}{\sqrt{2\nu}}dB_i(s)-\frac{n}{4\nu}\int_{0}^t \int_{\t}| v(s)|^2d\rho_n(s)ds\right].$$
By Proposition 5.12 of \cite{RN245}, it's a martingale. Thus we can apply Girsanov formula. Let $\mathcal{F}_T$ be the augmented filtration given by initial data and the Brownian motion (see (2.3) in Section 5.2 of \cite{RN245}). For $A\in \mathcal{F}_T$, let $P^v(A)(=P^v_T(A)):=\e [\chi_{A}Z^v(T)].$ Let $W_i(t)=B_i(t)-\frac{v(t,X_i(t))}{\sqrt{2\nu}}$ and then $\{W_t\}_{0\leq t\leq T}$ is a $2n$-dimensional Brownian motion under $P^v$. In addition, we have
\begin{equation}\label{equ_sde1}
dX_i(t)=\frac{1}{n}\sum_{i\neq j}\k(X_i-X_j)dt+ v(t,X_i)dt+\sqrt{2\nu}dW_i(t).
\end{equation}
Formally, the empirical distribution $\rho_n$  of (\ref{equ_sde1}) would converge to the solution of
\begin{equation}\label{def_weaks}
\partial_t \rho-\nu\Delta \rho+\div(\op(\rho))+\div(\rho v)=0.
\end{equation}
Hence a definition of weak solution is required for characterizing the mean-field limit.
\begin{defn}\label{def_2_1}
For $v\in L^\infty([0,T]\times\t;\r^2)$, we say $\rho\in C([0,T];\p(\t))$ is a weak solution of (\ref{def_weaks})
if $Q_T(\rho)<\infty$ and for each $\phi\in C^\infty([0,T]\times \t)$ and $0\leq s<t\leq T$,
$$
\begin{aligned}
&\lr \phi(t),\rho(t)\rr-\lr \phi(s),\rho(s)\rr=\int_{s}^{t}\lr \partial_t \phi(r),\rho(r)\rr dr
+\nu\int_{s}^{t}\lr \Delta \phi(r),\rho(r)\rr dr\\
&+\int_{s}^{t}\lr \nabla \phi(r),\op(\rho(r))\rr dr+\int_{s}^{t}\int_{\t}\nabla \phi(r)\cdot  v(r)d\rho(r)dr.
\end{aligned}
$$
\end{defn}

Then we will prove the law of large numbers under $P^v$.
\begin{lem}\label{lem_meanfield}
Let $\gamma\in\p(\t)$ with $e(\gamma)<\infty$ and $ v\in L^\infty([0,T]\times\t;\r^2)$. Then there exists an unique weak solution $\rho^{\gamma,v}$ of (\ref{def_weaks}) such that $\rho(0)=\gamma$ and $$Q_T(\rho^{\gamma,v})\leq e(\gamma)+C_v T,
$$
where $C_v$ is a $v$-dependent constant.
In addition, for each $\varepsilon>0,R>0$, if a sequence $(\gamma_n)$ satisfies $\gamma_n\in \mathcal{X}_n$, $e(\g_n*\gamma_n)\leq R$ and $\lim_{n\to\infty}d(\gamma_n,\gamma)=0$, then
$$
\lim_{n\to \infty}P^v_{\gamma_n}\left(\sup_{0\leq t\leq T} d\left(\rho_n(t),\rho^{\gamma,v}(t)\right)>\varepsilon\right)= 0.
$$
\end{lem}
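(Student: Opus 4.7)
The plan is to split the argument into (i) well-posedness of $\rho^{\gamma,v}$ together with the claimed $Q_T$ bound, and (ii) the law of large numbers for $\rho_n$ under $P^v_{\gamma_n}$. For (i), I would approximate the initial datum by smooth mollifications $\gamma^\varepsilon=\zeta_\varepsilon*\gamma$, for which classical 2D parabolic theory with the bounded additional drift $v$ produces a smooth solution $\rho^\varepsilon$ on $[0,T]$. The crucial a priori bound comes from testing the equation against $\mathcal{N}*\rho^\varepsilon$: using $\lr\mathcal{N}*\rho,\rho\rr=2e(\rho)$, $-\Delta\mathcal{N}*\rho=\rho-1$, and, crucially, the pointwise orthogonality $u\perp\nabla(\mathcal{N}*\rho)$ with $u=-\nabla^\perp\mathcal{N}*\rho$, the singular convection term drops out and one obtains the energy identity
\begin{equation*}
\frac{d}{dt}e(\rho^\varepsilon)+\nu\|\rho^\varepsilon-1\|_2^2=\int_{\t}\nabla(\mathcal{N}*\rho^\varepsilon)\cdot v\,\rho^\varepsilon\,dx.
\end{equation*}
Cauchy--Schwarz with $\|\nabla\mathcal{N}*\rho\|_2^2=2e(\rho)$ together with Young's inequality absorbs a fraction of the dissipation and, after a Gr\"onwall step, produces the claimed bound $Q_T(\rho^\varepsilon)\leq e(\gamma^\varepsilon)+C_v T$. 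Aubin--Lions compactness yields a weak limit $\rho^{\gamma,v}$; uniqueness in the class $\{Q_T<\infty\}$ follows from the same energy method applied to the difference of two solutions, which is classical for 2D Navier--Stokes with bounded drift.

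For (ii), I would follow the martingale-problem strategy of \cite{RN5,RN96}. Ito's formula under $P^v_{\gamma_n}$ shows that for every $\phi\in C^\infty(\t)$,
\begin{equation*}
M^{\phi}_n(t):=\lr\phi,\rho_n(t)\rr-\lr\phi,\rho_n(0)\rr-\int_0^t\left[\nu\lr\Delta\phi,\rho_n\rr+\lr\nabla\phi,\op(\rho_n)\rr+\int_{\t}\nabla\phi\cdot v\,d\rho_n\right]ds
\end{equation*}
is a martingale with quadratic variation of order $1/n$. Combined with (\ref{equ_opbound}) and an Aldous--Kurtz-type criterion, this gives tightness of $\{\rho_n\}$ in $C([0,T];\p(\t))$, and Prokhorov extracts a weakly converging subsequence with limit $\tilde\rho$ satisfying $\tilde\rho(0)=\gamma$.

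The main obstacle is identifying $\tilde\rho$ with $\rho^{\gamma,v}$, which requires passing to the limit in the singular integral $\int_0^t\lr\nabla\phi,\op(\rho_n(s))\rr\,ds$. For this I need the analogue of Lemma \ref{lem_Q1} under the perturbed measure: re-running the energy-dissipation argument of Section \ref{sec_ener} on $\zeta_n*\rho_n$, the extra contribution induced by $v$ takes the form $\int\nabla(\mathcal{N}*(\zeta_n*\rho_n))\cdot v\,d(\zeta_n*\rho_n)$ and is absorbed by the same Cauchy--Schwarz plus Young step as in (i) thanks to $\|v\|_\infty<\infty$; together with the hypothesis $e(\zeta_n*\gamma_n)\leq R$, this furnishes $\sup_{t\leq T}e(\zeta_n*\rho_n(t))\leq C(R,v)$ with probability tending to one under $P^v_{\gamma_n}$. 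Lemma \ref{lem_rucv} then yields $\lr\nabla\phi,\op(\rho_n(t))\rr\to\lr\nabla\phi,\op(\tilde\rho(t))\rr$ for a.e.\ $t$ along the subsequence, and dominated convergence (justified by (\ref{equ_opbound})) passes the limit through the time integral. Hence $\tilde\rho$ satisfies Definition \ref{def_2_1} with $Q_T(\tilde\rho)<\infty$ and initial datum $\gamma$; uniqueness from (i) forces $\tilde\rho=\rho^{\gamma,v}$, and since every subsequential limit coincides with this deterministic trajectory, convergence in probability in the uniform metric on $C([0,T];\p(\t))$ follows.
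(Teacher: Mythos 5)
Your part (ii) is essentially the paper's own argument: Ito's formula and the bound (\ref{equ_opbound}) give tightness, a perturbed version of the prior energy estimate (this is exactly Lemma \ref{lem_Q1plus1}, proved via Lemma \ref{lem_ville}, Lemma \ref{lem_rzq2} and Corollary \ref{cor_B_6}) gives uniform finiteness of $e(\g_n*\rho_n(t))$ under $P^v_{\gamma_n}$, Lemma \ref{lem_rucv} identifies the singular term in the limit, the $O(1/n)$ quadratic variation kills the martingale part, and uniqueness makes the limit deterministic so that convergence in law upgrades to convergence in probability; the paper routes the last steps through Skorokhod representation and a variance computation, but that is the same proof. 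Where you genuinely diverge is part (i): the paper does \emph{not} construct $\rho^{\gamma,v}$ by PDE methods at all — existence falls out of the particle system itself, using Lemma \ref{lem_mfd1} to produce empirical initial data $\gamma_n\in\x_n$ with $e(\g_n*\gamma_n)\to e(\gamma)$ and then taking the subsequential limit in part (ii), with $Q_T(\rho^{\gamma,v})\le e(\gamma)+C_vT$ inherited from Lemma \ref{lem_Q1plus1} and lower semicontinuity of $Q_T$ — while uniqueness is proved separately and in detail (Lemma \ref{lem_uniq}). Your mollification/Aubin--Lions construction is a legitimate alternative, and it buys independence of the existence statement from the probabilistic machinery, at the cost of redoing parabolic well-posedness for rough data.

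Two steps of your part (i) are asserted rather than proved and should be repaired. First, the constant: estimating $\int_{\t}\nabla(\n*\rho^\varepsilon)\cdot v\,\rho^\varepsilon\,dx$ by unweighted Cauchy--Schwarz against $\|\nabla\n*\rho^\varepsilon\|_2^2=2e(\rho^\varepsilon)$ and closing with Gr\"onwall produces a bound of the form $e^{C_vT}\bigl(e(\gamma^\varepsilon)+C_vT\bigr)$, not the stated $e(\gamma)+C_vT$; to get the claimed inequality you should instead use the weighted estimate $\bigl|\int\nabla(\n*\rho)\cdot v\,\rho\bigr|\le\|v\|_\infty\bigl(\int|\k*\rho|^2d\rho\bigr)^{1/2}$ together with the second inequality of Lemma \ref{lem_B_6} (or Corollary \ref{cor_B_6}), which absorbs the term into the dissipation with no Gr\"onwall factor multiplying $e(\gamma)$. (The applications only need a uniform bound, but the lemma as stated claims the sharper form.) Second, uniqueness in the class $\{Q_T<\infty\}$ with a merely bounded, non-divergence-free drift $v$ and measure-valued data is not an off-the-shelf citation: the difference argument requires testing with $\n*(\rho-\rho')$, the identity $\div(\rho u)=\curl(u\cdot\nabla u)$, a Ladyzhenskaya/Gagliardo--Nirenberg interpolation to control $\|u-u'\|_4^2\|\nabla u\|_2$, and a Gr\"onwall lemma with the only-integrable weight $\|\rho(s)-1\|_2^2$; this is precisely the content of the paper's Lemma \ref{lem_uniq}, and your proposal should either reproduce that argument or point to a reference covering exactly this setting rather than calling it classical.
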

\begin{rem}
It might happen that $\mathcal{X}_n\cap\{\eta:e(\g_n*\eta)\leq R\}=\emptyset$ for small $n$, but it would not happen when $n,R$ big enough (see Lemma \ref{lem_mfd1}). Hence, we allow $\gamma_n$ to be defined only for large $n$.
\end{rem}

Generally speaking, this result would imply that if the initial data $\gamma_n$ converging to $\gamma$ in weak topology, then the limit of relative entropy of $P^v_T$ with respect to $P_T$ under the scaling $1/n$ would provide a large deviation lower bound, i.e.
$$
\liminf_{n\to\infty}\frac{1}{n}\log P(\rho\in B)\geq-\lim_{n\to\infty}\frac{1}{n}H\left(P^{v}_{\gamma_n}|P_{\gamma_n}\right)=-\frac{1}{4\nu}\int_{0}^T\int_{\t}|v(t)|^2\rho^{\gamma,v}(t)dt
$$
for any open set $B$ containing $\rho^{\gamma,v}$,
in which
$$H\left(P^{v}_{\gamma_n}|P_{\gamma_n}\right):=\e^v_{\gamma_n}\left(\log \frac{dP^v_{\gamma_n}}{dP_{\gamma_n}}\right)
=\e^v_{\gamma_n}\left(\log Z^v(T)\right)=\frac{n}{4\nu}\e^v_{\gamma_n}\left[\int_0^T\int_{\t} |v(t)|^2 d\rho_n(t)dt\right].$$
Moreover, at least formally, by taking the infimum for $v$ we will obtain exactly the rate function, i.e.
\begin{equation}\label{equ_infi}
\act_T(\rho)=\inf_{v':\rho=\rho^{\gamma,v'}}\frac{1}{4\nu}\int_{0}^T\int_{\t}|v'(t)|^2\rho(t)dt.
\end{equation}

Define $$\mathcal{J}=\{v=\nabla p: p:[0,T]\times \t\mapsto \r, p(t)\in C^2(\t),\|\nabla p\|_{\infty}+ \|\nabla^2 p\|_{\infty}<\infty\}.$$
In our paper, for $\rho$ regular enough, i.e. $\rho=\rho^{\gamma,v}$ for certain $v\in\mathcal{J}$, the infimum in (\ref{equ_infi}) is always taken in the case that $v'=v$.

Since our initial data is more general, we need a stronger result stated below involving the stability for initial values.
\begin{lem}\label{lem_lmfd}
Let $\gamma\in\p(\t)$ with $e(\gamma)<\infty$ and $v\in L^\infty([0,T]\times\t;\r^2)$. For any $\varepsilon>0,R>0,$ there exists $\delta \in (0,\varepsilon)$ such that for each sequence $(\gamma_n)$ satisfying $\gamma_n\in B_\delta(\gamma)\cap\mathcal{X}_n$ and $e(\g_n*\gamma_n)\leq R$,
\begin{equation}\label{equ_mfd}
\lim_{n\to \infty}P^v_{\gamma_n}\left(\sup_{0\leq t\leq T} d(\rho_n(t),\rho^{\gamma,v}(t))>\varepsilon\right)= 0,
\end{equation}
where $B_\delta(\gamma)=\{\eta\in \p(\t):d(\eta,\gamma)<\delta\}.$ In addition, if $v\in \mathcal{J}$, then there exists a constant $C_v$ dependent on $v$ such that
\begin{equation}\label{equ_mfd1}
\limsup_{n\to\infty}\left|\frac{1}{4\nu}\e^v_{\gamma_n}\left[\int_0^T \int_{\t} |v(t)|^2 d\rho_n(t)dt\right]
-\act_T(\rho^{\gamma,v})\right|\leq C_v\varepsilon.
\end{equation}
\end{lem}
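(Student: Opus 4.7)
The plan is to deduce Lemma~\ref{lem_lmfd} from Lemma~\ref{lem_meanfield} by combining compactness of $\p(\t)$ with qualitative continuous dependence of the PDE~(\ref{def_weaks}) on its initial datum. For~(\ref{equ_mfd}) I exploit the real-analysis criterion that $\lim_n a_n = 0$ holds iff every subsequence admits a further sub-subsequence on which $a_n \to 0$, applied to $a_n := P^v_{\gamma_n}(\sup_t d(\rho_n(t), \rho^{\gamma,v}(t)) > \varepsilon)$. Along any subsequence, compactness of $\p(\t)$ produces a sub-subsequence with $\gamma_{n_k} \to \tilde\gamma \in \overline{B_\delta(\gamma)}$; since the mollified measures $\g_{n_k}\!*\gamma_{n_k}$ then also converge weakly to $\tilde\gamma$, lower semi-continuity of $e$ (together with $e(\g_{n_k}\!*\gamma_{n_k}) \leq R$) forces $e(\tilde\gamma) \leq R$. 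Lemma~\ref{lem_meanfield} applied along this sub-subsequence with target $\tilde\gamma$ then yields $\sup_t d(\rho_{n_k}(t), \rho^{\tilde\gamma,v}(t)) \to 0$ in $P^v$-probability.

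The key additional ingredient is a stability statement: for every $\varepsilon_1>0$ there exists $\delta_1>0$ such that whenever $d(\tilde\gamma,\gamma) \leq \delta_1$ and $e(\tilde\gamma) \leq R$, one has $\sup_t d(\rho^{\tilde\gamma,v}(t), \rho^{\gamma,v}(t)) \leq \varepsilon_1$. I would establish this by a compactness--uniqueness argument: the uniform bound $Q_T(\rho^{\tilde\gamma,v}) \leq e(\tilde\gamma) + C_vT \leq R+C_vT$ supplied by Lemma~\ref{lem_meanfield} makes $\{\rho^{\tilde\gamma,v}\}$ relatively compact in $C([0,T];\p(\t))$, Lemma~\ref{lem_rucv} allows passage to the weak limit in the singular drift $\op(\cdot)$, and any limit point thus solves~(\ref{def_weaks}) with initial datum $\gamma$ and finite $Q_T$. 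Uniqueness of such weak solutions (from Lemma~\ref{lem_meanfield}) forces it to equal $\rho^{\gamma,v}$, contradicting any assumed separation. Taking $\delta = \min(\delta_1,\varepsilon/2)$ with $\varepsilon_1 = \varepsilon/2$ and applying the triangle inequality $d(\rho_{n_k}, \rho^{\gamma,v}) \leq d(\rho_{n_k}, \rho^{\tilde\gamma,v}) + d(\rho^{\tilde\gamma,v}, \rho^{\gamma,v})$ closes the subsequential step, proving~(\ref{equ_mfd}).

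For~(\ref{equ_mfd1}), I use the identity $\act_T(\rho^{\gamma,v}) = \frac{1}{4\nu}\int_0^T\!\int_\t |v|^2\, d\rho^{\gamma,v}\,dt$ from~(\ref{equ_infi}) to reduce the task to comparing $\e^v_{\gamma_n}\bigl[\int_0^T\!\int |v|^2 d\rho_n\,dt\bigr]$ with $\int_0^T\!\int |v|^2 d\rho^{\gamma,v}\,dt$. Split by the good event $G_n := \{\sup_t d(\rho_n,\rho^{\gamma,v}) \leq \varepsilon\}$: on $G_n^c$ the integrand is bounded by $T\|v\|_\infty^2$ and the probability vanishes by~(\ref{equ_mfd}); on $G_n$, since $v \in \mathcal{J}$ makes $|v|^2$ Lipschitz, the Kantorovich--Rubinstein inequality with $W_1 \leq W_2 = \sqrt{d}$ yields $\left|\int |v|^2 d\rho_n - \int |v|^2 d\rho^{\gamma,v}\right| \leq C_v\sqrt{\varepsilon}$ pointwise in $t$. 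Integrating in $t$ (and, if necessary, further shrinking $\delta$ to correspond to $\varepsilon^2$ upstream in~(\ref{equ_mfd})) delivers the announced $C_v\varepsilon$. The main obstacle is the stability step in the previous paragraph: the singular kernel $\op$ precludes any elementary Lipschitz dependence of $\rho^{\tilde\gamma,v}$ on $\tilde\gamma$, and it is precisely the weak-continuity of $\op$ under a uniform energy bound (Lemma~\ref{lem_rucv}) that allows the compactness--uniqueness framework to substitute for a quantitative estimate.
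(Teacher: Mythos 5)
Your proposal is correct and follows essentially the same route as the paper's proof: the stochastic part rests on Lemma \ref{lem_meanfield}, the deterministic stability in the initial datum is obtained by the identical compactness--uniqueness mechanism (uniform $Q_T$ bound, passage to the limit in the singular term $\op$ via Lemma \ref{lem_rucv}, identification of the limit by the uniqueness Lemma \ref{lem_uniq}), and (\ref{equ_mfd1}) comes from the same split into the event $\{\sup_t d(\rho_n,\rho^{\gamma,v})\le\varepsilon\}$ and its complement with the Lipschitz bound for $|v|^2$. The only cosmetic differences are that the paper packages the stability step as a single contradiction argument with $\delta=1/n$ instead of a separate lemma plus the subsequence criterion, and that it invokes a cited Wasserstein estimate directly where you more carefully account for $d$ being the squared $W_2$ distance by running (\ref{equ_mfd}) at scale $\varepsilon^2$.
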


As one may expect, the set of "nice" trajectories, consisting of all $\rho^{\gamma,v}$ for $v\in \mathcal{J}$, plays a key role in proof of lower bound of LDP. We want to prove each $\rho$ with finite rate function can be approximated by a series of $\rho^{\gamma,v},v\in \mathcal{J}$ in the sense stated below.
\begin{lem}[Density of nice trajectory]\label{lem_density}
For each $\gamma\in \p(\t)$ with $e(\gamma)<\infty$, define $F^{\gamma}_{reg}=\{\rho\in AC((0,T);\p(\t)):Q_T(\rho)<\infty,\rho(0)=\gamma\}.$ If $\rho\in F_{reg}^{\gamma}$ with $\act_T(\rho)<\infty$, there exists $v_n\in \mathcal{J}$ such that
$$
\lim_{n\to\infty}\sup_{0\leq t\leq T}d(\rho^{\gamma,v_n}(t),\rho(t))=0, \quad \limsup_{n\to\infty}\act_T(\rho^{\gamma,v_n})\leq \act_T(\rho).
$$
\end{lem}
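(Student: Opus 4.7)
The plan is to prove the lemma in three stages: extract a gradient velocity field representing $\rho$ as a weak solution, approximate this field by elements of $\mathcal{J}$, and pass to the limit at the level of both trajectories and actions.

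First, since $\act_T(\rho) < \infty$, the residual $R(t) := \partial_t \rho - \div[\rho(\k*\rho)] + \nu\Delta\rho$ has finite $\|\cdot\|_{-1,\rho(t)}$-norm for a.e. $t$, and $Q_T(\rho) < \infty$ ensures $\rho(t) \in L^2(\t)$ a.e., so that the singular term coincides classically with $\div \op(\rho(t))$. The finiteness of $\|R(t)\|_{-1,\rho(t)}$ means the functional $\phi \mapsto \lr\phi, R(t)\rr$ satisfies $|\lr\phi, R(t)\rr| \leq C\|\nabla\phi\|_{L^2(d\rho(t))}$ on smooth test functions, so it extends to the closure of $\{\nabla\phi : \phi \in C^\infty(\t)\}$ inside $L^2(d\rho(t); \r^2)$. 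A Riesz-type representation then yields a measurable family of gradient vector fields $v^*(t)$ with $R(t) = \div(\rho(t) v^*(t))$ in $\mathcal{D}'(\t)$ and
$\act_T(\rho) = \tfrac{1}{4\nu}\int_0^T \int_\t |v^*(t)|^2\, d\rho(t)\, dt$.
Thus $\rho$ is itself a weak solution of (\ref{def_weaks}) with drift $v^*$ in the sense of Definition \ref{def_2_1}, except that $v^*$ is only in $L^2(d\rho\,dt)$ rather than $\mathcal{J}$.

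Second, we approximate $v^*$ by $v_n = \nabla p_n \in \mathcal{J}$ with $\int_0^T \int_\t |v_n - v^*|^2\, d\rho(t)\, dt \to 0$. Since $v^*(t)$ lies in the $L^2(d\rho(t))$-closure of smooth gradients, the strategy is to truncate the size of $v^*$ to enforce an $L^\infty$ bound on a nearly-full subset in $t$, mollify jointly in $(t,x)$ by a smooth kernel to gain $C^2$ regularity in $x$ with uniform control of $\nabla p_n$ and $\nabla^2 p_n$, and then restore the gradient structure by replacing the mollified field by its gradient potential. The $L^2_t L^2_x$-regularity of $\rho$ supplied by $Q_T(\rho) < \infty$ is what lets us convert the spatial mollification convergence in $L^2(dx\,dt)$ on the support of $\rho$ into convergence in $L^2(d\rho\,dt)$.

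For the final stage, Lemma \ref{lem_meanfield} produces a unique weak solution $\rho^{\gamma, v_n}$ starting from $\gamma$ with $Q_T(\rho^{\gamma, v_n}) \leq e(\gamma) + C_{v_n} T$. Testing the weak formulations of $\rho^{\gamma, v_n}$ and $\rho$ against a common smooth $\phi$ and invoking Lemma \ref{lem_rucv} together with the $L^2$-based estimates of $\op$ derived in Section \ref{sec_ener}, we deduce $\sup_{0\leq t\leq T} d(\rho^{\gamma, v_n}(t), \rho(t)) \to 0$. For the action, since $v_n \in \mathcal{J}$ is a gradient, Lemma \ref{lem_vrotrf} identifies $\act_T(\rho^{\gamma, v_n}) = \tfrac{1}{4\nu}\int_0^T \int_\t |v_n|^2\, d\rho^{\gamma, v_n}\, dt$; a joint weak-strong passage to the limit using $\rho^{\gamma, v_n} \to \rho$ and $v_n \to v^*$ in $L^2(d\rho\,dt)$ then gives $\limsup_n \act_T(\rho^{\gamma, v_n}) \leq \act_T(\rho)$.

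The hard part will be the third stage: the nonlinearity in (\ref{def_weaks}) is singular, and the approximate drifts $v_n$ are only $L^\infty$ with $n$-dependent norm, so stability in the uniform Wasserstein metric is not automatic. The key tool is the uniform-in-$n$ bound on $Q_T(\rho^{\gamma, v_n})$ supplied by the energy-dissipation inequalities of Section \ref{sec_ener}; this both prevents $\op(\rho^{\gamma, v_n})$ from degenerating in the limit (allowing Lemma \ref{lem_rucv} to be applied) and supplies the $L^2$-controls needed to close a Gronwall-type argument for the difference $\rho^{\gamma, v_n} - \rho$.
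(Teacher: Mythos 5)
Your Stage 1 is fine (it is essentially the Riesz representation already carried out in Section 4.1 of the paper), but Stages 2--3 carry the real content of the lemma and the route you choose does not close. After approximating the optimal field $v^*=\hat{\nabla}p$ by $v_n\in\mathcal{J}$ only in $L^2(d\rho\,dt)$, you need the controlled trajectories $\rho^{\gamma,v_n}$ to converge to $\rho$ and their costs to converge, and each ingredient you invoke for this is missing. First, the uniform-in-$n$ bound on $Q_T(\rho^{\gamma,v_n})$ you claim is not available: Lemma \ref{lem_meanfield} gives $Q_T(\rho^{\gamma,v_n})\le e(\gamma)+C_{v_n}T$ with $C_{v_n}$ depending on $\|v_n\|_\infty$, which blows up along any approximating sequence of the unbounded $v^*$, and the energy-dissipation estimate controls $Q_T(\rho^{\gamma,v_n})$ only through $\int_0^T\int_{\t}|v_n|^2\,d\rho^{\gamma,v_n}\,dt$, a quantity weighted by the unknown solution rather than by $\rho$. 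Second, even granting compactness, passing to the limit in the weak formulation requires handling $\int\nabla\phi\cdot v_n\,d\rho^{\gamma,v_n}$, a product of two merely weakly convergent objects with $v_n\to v^*$ only in $\rho$-weighted $L^2$; and even if you identify a subsequential limit as a weak solution with drift $v^*$, the paper's uniqueness theorem (Lemma \ref{lem_uniq}) is proved only for $v\in L^\infty([0,T]\times\t;\r^2)$, so you cannot conclude that this limit equals $\rho$. Third, the cost passage $\int|v_n|^2\,d\rho^{\gamma,v_n}\to\int|v^*|^2\,d\rho$ requires controlling $\int|v_n|^2\,d(\rho^{\gamma,v_n}-\rho)$, i.e. integrands with exploding sup-norm against a difference converging only in the Wasserstein sense; your ``joint weak-strong passage'' is exactly the step that fails.

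These obstructions are precisely what the paper's proof is built to avoid: it never perturbs the drift of the limiting equation. Instead it modifies the trajectory itself, running the unperturbed flow $\rho^{\gamma,0}$ on a short initial layer $[0,t_1]$ (so that by Lemma \ref{lem_ns1} the density becomes smooth and strictly positive), inserting a short heat-flow bridge, and then following the heat-kernel mollification $\Phi_{\nu t_2}*\rho(\cdot-2t_1-t_2)$ of the original path; the control $\nabla p(t)$ is then read off pointwise in $t$ as the solution of a uniformly elliptic equation, which is what guarantees $\nabla p\in\mathcal{J}$ with uniform $C^2$ bounds. The inequality $\limsup\act_T(\tilde{\rho})\le\act_T(\rho)$ is obtained directly from the convolution estimate for $\|\cdot\|_{-1,\mu}$ (Lemma \ref{lem_hm1r5}), the energy/entropy dissipation identities (Lemma \ref{lem_ns2}) and Fisher-information bounds, and the uniform-in-time convergence follows from the compactness criterion of Lemma \ref{lem_rcomp}; by uniqueness for bounded drifts, $\tilde{\rho}=\rho^{\gamma,\nabla p}$, so no stability theory for unbounded drifts is ever needed. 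To salvage your approach you would have to prove uniqueness/stability of (\ref{def_weaks}) for drifts merely in $\rho$-weighted $L^2$ together with uniform energy bounds for the associated solutions, which is a substantial new result not contained in the paper.
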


Now we are ready to prove the large deviation lower bound. By classical result of large deviation principle (see \cite{RN150}, Proposition 1.15), to prove lower bound for open set, it suffices to prove that lower bound holds for any open ball $B_\varepsilon(\rho)$ in $C([0,T];\p(\t))$, which is stated below.
\begin{prop}[Lower bound]\label{prop_lowerbound}
Under the condition of Theorem \ref{thm_LDPplus}, for any $\rho \in C([0,T];\p(\t))$ and $\varepsilon>0$,
$$
\liminf_{n\to \infty}\frac{1}{n}\log P\left(\sup_{0\leq t\leq T} d(\rho_n(t),\rho(t))<\varepsilon \right)\geq -\I_T(\rho).
$$
\end{prop}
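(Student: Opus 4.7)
We may assume $\I_T(\rho) < \infty$, so $\rho \in AC((0,T);\p(\t))$, $Q_T(\rho) < \infty$, $\act_T(\rho) < \infty$, and $\I_0(\gamma) < \infty$ with $\gamma := \rho(0)$ (in particular $e(\gamma) < \infty$). The plan is a classical Girsanov/entropy lower bound, with the singular kernel absorbed into the earlier lemmas. Fix auxiliary tolerances $\theta > 0$ and $\varepsilon^* \in (0,\varepsilon/4]$, to be ordered below. Apply Lemma \ref{lem_density} to choose $v \in \mathcal{J}$ with $\sup_t d(\rho^{\gamma,v}(t),\rho(t)) < \varepsilon/4$ and $\act_T(\rho^{\gamma,v}) \leq \act_T(\rho) + \theta$, and set $A_n := \{\sup_t d(\rho_n(t),\rho^{\gamma,v}(t)) < \varepsilon^*\}$, so that $A_n \subset \{\sup_t d(\rho_n(t),\rho(t)) < \varepsilon\}$.

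Perform Girsanov with this $v$. For initial data $\eta \in \mathcal{X}_n$, $dP^v_\eta/dP_\eta = Z^v(T)$ and
$$\frac{1}{n} H(P^v_\eta | P_\eta) \;=\; \frac{1}{4\nu} \e^v_\eta\!\left[\int_0^T\!\!\int_\t |v|^2 \, d\rho_n(t)\, dt\right].$$
The classical entropy inequality yields
$$\log P_\eta(A_n) \;\geq\; \log P^v_\eta(A_n) \;-\; \frac{H(P^v_\eta|P_\eta) + \log 2}{P^v_\eta(A_n)}.$$
Apply Lemma \ref{lem_lmfd} with trajectory radius $\varepsilon^*$ and some $R > 0$: there exists $\delta \in (0, \varepsilon^*)$ such that for $\eta = \gamma_n \in G_n := B_\delta(\gamma) \cap \mathcal{X}_n \cap \{\eta': e(\g_n*\eta') \leq R\}$, one has $P^v_{\gamma_n}(A_n) \to 1$ and $\limsup_n |\tfrac{1}{n}H(P^v_{\gamma_n}|P_{\gamma_n}) - \act_T(\rho^{\gamma,v})| \leq C_v \varepsilon^*$. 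A standard contradiction argument promotes these to uniformity in $\eta \in G_n$.

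For the initial data, choose $R$ large via Condition \ref{cond_ldp_2} so that $\limsup_n \tfrac{1}{n}\log P(e(\g_n*\rho_n(0)) > R) < -\I_0(\gamma) - 1$. Combining this with the assumed LDP lower bound for $\rho_n(0)$ (shrinking $\delta$ further if needed so that $\inf_{B_\delta(\gamma)} \I_0 \leq \I_0(\gamma) + \theta$, by lower semicontinuity) and the principle of the largest term,
$$\liminf_n \frac{1}{n}\log P\bigl(\rho_n(0) \in G_n\bigr) \geq -\I_0(\gamma) - \theta,$$
using also $P(\rho_n(0) \in \mathcal{X}_n) = 1$ from Lemma \ref{lem_wellpose}. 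Conditioning on $\rho_n(0)$ yields
$$P(\rho_n \in B_\varepsilon(\rho)) \;\geq\; P(\rho_n(0) \in G_n) \cdot \inf_{\eta \in G_n} P_\eta(A_n).$$
Substituting the entropy bound and taking $\liminf_n$,
$$\liminf_n \frac{1}{n}\log P(\rho_n \in B_\varepsilon(\rho)) \;\geq\; -\I_0(\gamma) - \act_T(\rho^{\gamma,v}) - 2\theta - C_v \varepsilon^* \;\geq\; -\I_T(\rho) - 3\theta - C_v \varepsilon^*.$$
Letting $\varepsilon^* \to 0$ first with $v$ (hence $C_v$) fixed, then $\theta \to 0$ (taking a new $v$), the right-hand side tends to $-\I_T(\rho)$.

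The main obstacle is the ordering of approximation parameters: the constant $C_v$ in (\ref{equ_mfd1}) depends on $v$ and need not remain bounded along the nice-trajectory sequence of Lemma \ref{lem_density}, so for each fixed $v$ the trajectory radius $\varepsilon^*$ must shrink to zero before $v$ is refined. All the analytical difficulty tied to the singular Biot--Savart kernel is packaged into the earlier lemmas---continuity of $\op$ under weak convergence with an energy cap (Lemma \ref{lem_rucv}), the LLN under the perturbed dynamics (Lemma \ref{lem_lmfd}), and the density of nice trajectories (Lemma \ref{lem_density})---each relying on the $L^2$-in-time control provided by $Q_T < \infty$ along sample paths with finite action.
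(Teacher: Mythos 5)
Your proposal is correct and follows essentially the same route as the paper: tilt by a nice perturbation $v\in\mathcal{J}$ via Girsanov, invoke Lemma \ref{lem_lmfd} for the LLN and the entropy cost on the energy-restricted initial set $\{e(\g_n*\cdot)\leq R\}$ (with $R$ from Condition \ref{cond_ldp_2}), use Lemma \ref{lem_density} to approximate $\rho$, and send the trajectory radius to zero before refining $v$ — exactly the paper's ordering of $\varepsilon_1\to 0$ then $m\to\infty$. The only differences are cosmetic: you package the tilting step as the standard entropy inequality $P(A)\geq \tilde P(A)\exp\left(-\left(H(\tilde P|P)+\log 2\right)/\tilde P(A)\right)$ and get uniformity over initial data by a contradiction argument, whereas the paper applies Jensen's inequality directly to $Z^{v}(T)$ (with a variance estimate for the stochastic-integral term) along a near-infimizing sequence $\overline{\gamma}_n$.
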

\begin{proof}
If $\I_T(\rho)=\infty$, the result is trivial. Otherwise, $\rho\in AC((0,T);\p(\t))$ and $Q_T(\rho)<\infty$. By Condition \ref{cond_ldp_2}, we can pick $R$ big enough such that \begin{equation}\label{equ_lower5}
\limsup_{n\to \infty}\frac{1}{n}\log P(e(\g_n*\rho_n(0))>R)<-\I_0(\gamma)-1.
\end{equation}
By LDP of initial data, for each $\delta>0$,
$$
\liminf_{n\to \infty}\frac{1}{n}\log P(\rho_n(0)\in B_{\delta}(\gamma))\geq -\mathcal{I}_0(\gamma),
$$
which in combination with (\ref{equ_lower5}) leads to
\begin{equation}\label{equ_lower10}
\liminf_{n\to \infty}\frac{1}{n}\log P(\rho_n(0)\in B_{\delta}(\gamma),e(\g_n*\rho_n(0))\leq R)\geq -\mathcal{I}_0(\gamma)>-\infty.
\end{equation}
Let $\mathcal{O}=B_\varepsilon (\rho),$ and assume $\rho(0)=\gamma.$
By Lemma \ref{lem_density}, we can select $v_m\in \mathcal{J}$ such that $\rho^{\gamma,v_m}\in B_{\varepsilon}(\rho)$ and
\begin{equation}\label{equ_lower2}
\lim_{m\to\infty}\sup_{0\leq t\leq T} d(\rho^{\gamma,v_m}(t),\rho(t))=0,\quad \limsup_{m\to\infty}\act_T(\rho^{\gamma,v_m})\leq\act_T(\rho).
\end{equation}
We claim that for each $m$,
\begin{equation}\label{equ_lower1}
\liminf_{n\to \infty}\frac{1}{n}\log P\left(\rho_n \in \mathcal{O}\right)\geq -\act_T(\rho^{\gamma,v_m})-\I_0(\gamma).
\end{equation}
Then by (\ref{equ_lower2}) we have $$
\liminf_{n\to \infty}\frac{1}{n}\log P\left(\rho_n \in \mathcal{O}\right)\geq -\act_T(\rho)-\I_0(\gamma)=-\I_T(\rho),
$$
and arrive at the lower bound.

So we just need to verify (\ref{equ_lower1}). For any small enough $\varepsilon_1<\varepsilon$ such that $\mathcal{O}_1=B_{\varepsilon_1} (\rho^{\gamma,v_m})\subset \mathcal{O}$, take $\delta<\varepsilon_1$ in Lemma \ref{lem_lmfd} such that (\ref{equ_mfd}) and (\ref{equ_mfd1}) holds replacing $\varepsilon$ by $\varepsilon_1$. Note that
\begin{equation}\label{equ_lowerm1}
\begin{aligned}
 P(\rho_n\in \mathcal{O}&)\geq \inf_{\gamma_n\in B_\delta(\gamma)\cap \mathcal{X}_n,e(\g_n*\gamma_n)\leq R}P_{\gamma_n}(\rho_n\in \mathcal{O})\\
 &\cdot P(\rho_n(0)\in B_{\delta}(\gamma),e(\g_n*\rho_n(0))\leq R).
 \end{aligned}
\end{equation}
By (\ref{equ_lower10}), there exists $n_0$ such that when $B_\delta(\gamma)\cap \mathcal{X}_n\neq \emptyset$ for $n\geq n_0$. Hence for $n\geq n_0$, take $\overline{\gamma}_n\in B_\delta(\gamma)\cap \mathcal{X}_n$ such that $e(\g_n*\overline{\gamma}_n)\leq R$ and
\begin{equation}\label{equ_lower6}
\frac{1}{n}\log P_{\overline{\gamma}_n}(\rho_n\in \mathcal{O})\leq
\inf_{\gamma_n\in B_\delta(\gamma)\cap \mathcal{X}_n,e(\g_n*\gamma_n)\leq R}
 \frac{1}{n}
\log P_{\gamma_n}(\rho_n\in \mathcal{O})+\frac{1}{n}.
\end{equation}
In view of (\ref{equ_lower10}), (\ref{equ_lowerm1}) and (\ref{equ_lower6}),
\begin{equation}\label{equ_lower3}
\begin{aligned}
&\liminf_{n\to \infty}\frac{1}{n}\log P(\rho_n\in \mathcal{O})\geq \liminf_{n\to \infty}\frac{1}{n}\log P_{\overline{\gamma}_n}(\rho_n\in \mathcal{O})\\
&+\liminf_{n\to \infty}\frac{1}{n}\log P(\rho_n(0)\in B_{\delta}(\gamma),e(\g_n*\rho_n(0))\leq R)\\
&\geq \liminf_{n\to \infty}\frac{1}{n}\log P_{\overline{\gamma}_n}(\rho_n\in \mathcal{O})-\I_0(\gamma).
\end{aligned}
\end{equation}
Since $\mathcal{O}_1\in \mathcal{F}_T$,
$$
P_{\overline{\gamma}_n}(\rho_n\in \mathcal{O}_1)=P_{\overline{\gamma}_n}^{v_m}(\mathcal{O}_1)\e_{\overline{\gamma}_n}^{v_m}\bigg[ \left(Z^{v_m}(T)\right)^{-1} \frac{\chi_{\mathcal{O}_1}}{P_{\overline{\gamma}_n}^{v_m}(\mathcal{O}_1)}\bigg].
$$
By Jensen's inequality,
$$
\begin{aligned}
&\frac{1}{n}\log P_{\overline{\gamma}_n}(\rho_n\in \mathcal{O}_1)\geq  -(P_{\overline{\gamma}_n}^{v_m}(\rho_n \in \mathcal{O}_1))^{-1}E_{\overline{\gamma}_n}^{v_m}\bigg[\frac{1}{n}\log Z^{v_m}(T);\mathcal{O}_1\bigg]\\
&+ \frac{1}{n}\log P_{\overline{\gamma}_n}^{v_m}(\rho_n\in \mathcal{O}_1).
\end{aligned}
$$
By Lemma \ref{lem_lmfd}, the second expression on the RHS of the above inequality converges to $0$. The first one is equal to
\begin{equation}\label{equ_lower7}
-\frac{1}{P_{\overline{\gamma}_n}^{v_m}(\rho_n \in \mathcal{O}_1)}\bigg\{
E_{\overline{\gamma}_n}^{v_m}\bigg[\frac{1}{n}\log Z^{v_m}(T)\bigg]-E_{\overline{\gamma}_n}^{v_m}\bigg[\frac{1}{n}\log Z^{v_m}(T);\mathcal{O}_1^c\bigg]
\bigg\}.
\end{equation}

Noting
$$
\begin{aligned}
&Z^{v_m}(T)=\exp\left[\sum_{i=1}^n\int_{0}^{T}\frac{ v_m(s,X_i(t))}{\sqrt{2\nu}}dW_i(t)+\frac{n}{4\nu}\int_{0}^T \int_{\t}| v_m(t)|^2d\rho_n(t)dt\right],\\
&\text{Var}^{v_m}_{\overline{\gamma}_n}\left[\frac{1}{n}\sum_{i=1}^n\int_{0}^{T}\frac{ v_m(t,X_i(t))}{\sqrt{2\nu}}dW_i(t)\right]\leq\frac{1}{2n\nu}\| v_m\|^2_{\infty}T,\\
&\bigg|\frac{1}{4\nu}\int_{0}^T \int_{\t}| v_m(t)|^2d\rho_n(t)\bigg|\leq\frac{1}{4\nu}\| v_m\|^2_{\infty}T,
\end{aligned}
$$
and by (\ref{equ_mfd}), the second term in (\ref{equ_lower7}) converges to zero, and the denominator in the first terms converges to one, so we have
$$
\lim_{n\to\infty}\bigg|(\ref{equ_lower7})+\e_{\overline{\gamma}_n}^{v_m}\bigg[\frac{1}{4\nu}\int_{0}^{T}\int_{\t}| v_m(t)|^2 d\rho_n(t)dt\bigg]\bigg| =0.
$$
Therefore, so far we have,
\begin{equation}
\begin{aligned}
&
\liminf_{n\to \infty}\frac{1}{n}\log P_{\overline{\gamma}_n}(\rho_n\in \mathcal{O})\geq \liminf_{n\to \infty}\frac{1}{n}\log P_{\overline{\gamma}_n}(\rho_n\in \mathcal{O}_1)\\
&\geq-\limsup_{n\to\infty} \e_{\overline{\gamma}_n}^{v_m}\bigg[\frac{1}{4\nu}\int_{0}^{t}\int_{\t}| v_m(t)|^2 d\rho_n(t)\bigg].
\end{aligned}
\end{equation}
Then by (\ref{equ_mfd1}),
\begin{equation}\label{equ_lower4}
\liminf_{n\to \infty}\frac{1}{n}\log P_{\overline{\gamma}_n}(\rho_n\in \mathcal{O})\geq -\act_T(\rho^{\gamma,p_m})-C'_{v_m} \varepsilon_1.
\end{equation}
So (\ref{equ_lower1}) follows from (\ref{equ_lower3}) and (\ref{equ_lower4}) by taking $\varepsilon_1\to 0$.
\end{proof}

\section{Prior energy estimation and crucial inequalities}\label{sec_ener}
This section investigates some estimations related to the energy functional along with $\op(\rho)$. In Section \ref{sec_molg}, we prove Condition \ref{cond_ldp_1} implies Condition \ref{cond_ldp_2}. Section \ref{sec_conest} contains some crucial inequalities, which will be used throughout the paper. The proof of Lemma \ref{lem_Q1} is provided in Section \ref{sec_prfQ1}.
\subsection{Energy with mollification}\label{sec_molg}
Recall for $x\in \r,$ $$\g(x)=\left\{
\begin{aligned}
&Ce^{-\frac{1}{1-4|x|^2}},& |x|<\frac{1}{2},\\
&0,& |x|\geq \frac{1}{2},
\end{aligned}\right.
$$
and $\g_n:\r^2\mapsto \r,$
\begin{equation}\label{def_gn}
\g_n(x)=m_n^2\g(m_n|x|),
\end{equation}
where $m_n\uparrow \infty,nm_n^{-2}\to \infty$. For convenience, we take $m_1\geq 5$.
In fact, the estimations in this section hold for each $\g$ supported on $\left[-\frac{1}{2},\frac{1}{2}\right]$ and satisfying the Condition 3.1 below.
\begin{cond}\label{cond_molg}
{\rm(1)} $\g$ is a non-negative smooth even function.\\
{\rm(2)} For each $x>0,$ $\g'(x)\leq 0$.\\
{\rm(3)} $\int_{0}^{\infty}2\pi r \g(r)dr=1.$\\
{\rm(4)} There exists a constant $C_{\g}$ such that
$\g(r)\leq -C_{\g}\g'(r).$
\end{cond}

Define $G_n:=\g_n*\g_n$, which also are smooth mollifiers. It is straightforward to show that $G_n(x)=m_n^2G(m_n|x|)$ for certain function $G$ satisfying Condition \ref{cond_molg} and being supported on $[-1,1]$, and
\begin{equation}\label{mollifier-property1}
    (G_n*f)(x)=\int_{0}^{\frac{1}{m_n}}m_n^2G(m_nr)\left[\int_{\partial B_r(x)} fdS\right]dr.
\end{equation}

\begin{lem}
Condition \ref{cond_ldp_1} implies Condtion \ref{cond_ldp_2}.
\end{lem}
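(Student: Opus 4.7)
The plan is to reduce the energy $e(\g_n*\rho_n(0))$ of the mollified empirical measure to the diagonal-free energy $e_0(\rho_n(0))$, modulo an additive constant that is bounded uniformly in $n$. Writing $\rho_n(0)=\frac{1}{n}\sum_i \delta_{X_i(0)}$ and setting $G_n:=\g_n*\g_n$, one computes directly
$$e(\g_n*\rho_n(0))=\frac{1}{2n^2}\sum_{i\neq j}(G_n*\n)(X_i(0)-X_j(0))+\frac{1}{2n}(G_n*\n)(0).$$
Thus it will be enough to control the diagonal term $\frac{1}{2n}(G_n*\n)(0)$ uniformly in $n$, and to establish the pointwise bound $(G_n*\n)(x)\leq \n(x)+C$ for all $x\neq 0$, with $C$ independent of $n$.

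For the diagonal term, a polar-coordinates change of variables with $s=m_n r$ applied to $\int G_n(y)(-\log|y|)\,dy$ (using $G_n(x)=m_n^2 G(m_n|x|)$ and $\int G=1$) gives $(G_n*\n)(0)=\frac{\log m_n}{2\pi}+O(1)$. Since $nm_n^{-2}\to\infty$ in particular forces $\log m_n/n\to 0$, the diagonal contribution $\frac{\log m_n}{4\pi n}+O(n^{-1})$ is uniformly bounded. For the off-diagonal bound, I would split into two regimes, exploiting the decomposition $\n(x)=-\frac{\psi(x)}{2\pi}\log|x|+\sigma_1(x)$ from (\ref{equ_N}). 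When $|x|\geq 1/m_n$, the point $x$ lies outside the support of the translates of $G_n$, so the function $z\mapsto -\log|x-z|$ is harmonic on $\mathrm{supp}(G_n)$, and the mean-value property (combined with the radial symmetry of $G_n$ and $\int G_n=1$) gives the exact identity $G_n*(-\log|\cdot|)(x)=-\log|x|$. In the region where $\psi\equiv 1$ on $x-\mathrm{supp}(G_n)$ this yields $(G_n*\n)(x)-\n(x)=(G_n*\sigma_1)(x)-\sigma_1(x)$, which is bounded by $2\|\sigma_1\|_\infty$; in the transition annulus $1/4\leq|x|\leq 1/3$ and in $|x|\geq 1/3+1/m_n$ the function $\n$ is smooth and bounded, so the same bound holds trivially. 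When $|x|<1/m_n$, the sharp lower bound $\n(x)\geq \frac{\log m_n}{2\pi}-C_2$ (since $\psi(x)=1$ for $m_n\geq 5$) combined with $\|G_n*\n\|_\infty\leq (G_n*\n)(0)=\frac{\log m_n}{2\pi}+O(1)$ (the maximum being attained at the origin by the radial-monotonicity of $\n$ near zero) again yields $(G_n*\n)(x)\leq \n(x)+C_3$.

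Assembling these estimates gives $e(\g_n*\rho_n(0))\leq e_0(\rho_n(0))+C'$ for some $C'$ independent of $n$, whence $P(e(\g_n*\rho_n(0))>R)\leq P(e_0(\rho_n(0))>R-C')$, and taking $\limsup_{n\to\infty}\frac{1}{n}\log$ followed by $R\to\infty$ converts Condition \ref{cond_ldp_1} directly into Condition \ref{cond_ldp_2}.

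The main obstacle is Step 3, the uniform-in-$n$ pointwise inequality $(G_n*\n)(x)\leq \n(x)+C$. The delicate point is reconciling two a priori incompatible regimes: for $|x|\geq 1/m_n$ the mean-value identity gives an exact cancellation of the logarithmic part, whereas for $|x|<1/m_n$ the mollifier smears the singularity to height $\frac{\log m_n}{2\pi}$, a quantity that grows with $n$. The bound survives only because in the latter regime $\n(x)$ itself is already at least of that size, so the excess is bounded; verifying this cleanly requires care with the cut-off $\psi$ and with the dependence of constants on $m_n$.
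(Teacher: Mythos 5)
Your proposal is correct and follows essentially the paper's own route: the same split of $e(\g_n*\rho_n(0))$ into the diagonal term $\frac{1}{2n}(G_n*\n)(0)=\frac{\log m_n}{4\pi n}+O(n^{-1})\to 0$ and the off-diagonal sum controlled by a uniform pointwise bound $(G_n*\n)(x)\le \n(x)+C$, both resting on circular averages of the logarithmic potential. The paper obtains the off-diagonal bound (with the sharper error $C/m_n^2$) in one stroke, for all $x$ simultaneously, from the identity $F(r,x)=\n(x)+\frac{r^2}{4}-\frac{1}{2\pi}\max\{\log r-\log|x|,0\}$ derived by integrating $-\Delta\n=\delta_0-1$ over balls, which subsumes your case analysis with the cutoff $\psi$ (and avoids the slightly loose ``maximum attained at the origin'' claim, which is only true up to an $O(1)$ correction since $\sigma_1$ is not radial); your $O(1)$ version of the bound nonetheless suffices for transferring the superexponential estimate.
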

\begin{proof}
Let $$F(r,x)=\frac{1}{2\pi r}\int_{\partial B_r(x)}\n(y) dS.$$
Then for $0<r<\frac{1}{2}$ and $x=(x_1,x_2)\in \left[-\frac{1}{2},\frac{1}{2}\right)^2,|x|\neq r$,
\begin{equation}\label{equ_parF}
\begin{aligned}
&\partial_r F(r,x)=
\frac{1}{2\pi}\int_0^{2\pi}\partial_r\n(x_1+r\cos \theta,x_2+r \sin \theta)d\theta\\
&=\frac{1}{2\pi}\int_0^{2\pi}\nabla \n(x_1+r\cos \theta,x_2+r \sin \theta)\cdot (\cos \theta,\sin \theta)d\theta
\\
&=\frac{1}{2\pi r}\int_{\partial B_{r}(x)}\nabla \n(y)\cdot \vec{n}dS=
\frac{1}{2\pi r}\int_{B_{r}(x)}\Delta \n(x)dx=\frac{r}{2}-\frac{1}{2\pi r}\chi_{r>|x|},
\end{aligned}
\end{equation}
where we used (\ref{equ_green}). It's easy to check $\lim_{r\to 0+}F(r,x)=\n(x)$. Hence, for $x\in \left[-\frac{1}{2},\frac{1}{2}\right)^2$ and $r\in [0,\frac{1}{2})$, integrate (\ref{equ_parF}) along $[0,r]$, then we have
$$
F(r,x)=\n(x)+\frac{r^2}{4}-\frac{1}{2\pi}\max\{\log(r)-\log(|x|),0).
$$
By (\ref{mollifier-property1}),
\begin{equation}\label{equ_gnn1}
\begin{aligned}
&(G_n*\n)(x)-\n(x)
=\int_{0}^{\frac{1}{m_n}}2\pi r m_n^2G(m_nr)\big[F(r,x)-\n(x)\big]dr\\
&=-\int_{0}^{\frac{1}{m_n}} r m_n^2G(m_nr)\max\{\log(r)-\log(|x|),0)dr+\frac{1}{2}
\int_{0}^{\frac{1}{m_n}}\pi r^3 m_n^2G(m_nr)dr.
\end{aligned}
\end{equation}

Since $\int_{0}^{\frac{1}{m_n}}\pi r^3 m_n^2G(m_nr)dr=m_n^{-2}\int_{0}^{1}\pi r^3G(r)dr$, then there exists a constant $C$ such that
\begin{equation}\label{equ_gnn3}
(G_n*\n)(x)\leq \n(x)+\frac{C}{m_n^2},\quad \forall x\notin\mathbb{Z}^2.
\end{equation}
By (\ref{equ_N}), for $0<r<\frac{1}{2}$, $F(r,0)\leq -\frac{1}{2\pi}\log r+C$ is well defined. Then also due to (\ref{mollifier-property1}),
$$
\begin{aligned}
&(G_n*\n)(0)=\int_{0}^{\frac{1}{m_n}}2\pi r m_n^2 G(m_n r)F(r,0)dr\leq  \frac{1}{2\pi}\log(m_n)+C.
\end{aligned}
$$
Hence, $$
\begin{aligned}
&e(\g_n*\rho_n(0))=\frac{1}{2}\int_{\t}(G_n*\n)(x-y)\rho_n(0,dx)\rho_n(0,dy)\\
&=\frac{1}{2n^2}\sum_{i,j=1}^n(G_n*\n)(X_i(0)-X_j(0))\\
&=\frac{1}{2n}(G_n*\n)(0)+\frac{1}{2n^2}\sum_{i\neq j}^n(G_n*\n)(X_i(0)-X_j(0))\\
&\leq \frac{1}{2n}(G_n*\n)(0)+\frac{1}{2n^2}\sum_{i\neq j}^n\n(X_i(0)-X_j(0))+\frac{C}{2m_n^2}\quad (\text{by  (\ref{equ_gnn3})})\\
&= \frac{1}{2n}(G_n*\n)(0)+e_0(\rho_n(0))+\frac{C}{2m_n^2}
\leq e_0(\rho_n(0))+\frac{\log (m_n)}{2\pi n}+\frac{C}{2n}+\frac{C}{2m_n^2},
\end{aligned}
$$
and the conclusion follows.
\end{proof}

\subsection{Convolution estimations}\label{sec_conest}

\begin{lem}\label{lemm1mf}
There exists a constant $C_0$ such that for $x\in \left[-\frac{1}{2},\frac{1}{2}\right)^2$, $|x||(G_n*\k)(x)- \k(x)|\leq C_0G(m_n|x|).$
\end{lem}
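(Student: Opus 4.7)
The plan is to reduce the bound to a gradient computation on $E_n(x) := (G_n * \n)(x) - \n(x)$, whose explicit form is available from the proof of the preceding lemma. Since $\k = -\nabla^\perp \n$ and $G_n$ is smooth, convolution commutes with differentiation, so
$$(G_n * \k)(x) - \k(x) = -\nabla^\perp E_n(x),$$
whose Euclidean magnitude coincides with $|\nabla E_n(x)|$. From the preceding computation,
$$E_n(x) = -\int_{0}^{1/m_n} r m_n^2 G(m_n r)\,\max\{\log r - \log|x|, 0\}\, dr + C_n,$$
where $C_n$ is a constant independent of $x$. I would then split into cases according to whether $|x| \geq 1/m_n$ or $|x| < 1/m_n$.

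For $|x| \geq 1/m_n$ the max vanishes pointwise in $r$ (because $r \leq 1/m_n \leq |x|$), so $E_n \equiv C_n$ on a neighborhood of $x$ within the fundamental domain, and $\nabla E_n(x) = 0$. Simultaneously $G$ is supported in $[-1, 1]$, so $G(m_n|x|) = 0$, and the inequality is trivial here.

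For $|x| < 1/m_n$ I would differentiate using Leibniz's rule. The boundary term at $r = |x|$ vanishes because the integrand $r m_n^2 G(m_n r)(\log r - \log|x|)$ carries the factor $\log r - \log|x|$ that is zero there, so only the dependence on $-\log|x|$ inside the integrand contributes, yielding
$$\nabla E_n(x) = \frac{x}{|x|^2} \int_{|x|}^{1/m_n} r m_n^2 G(m_n r)\, dr.$$
After the substitution $u = m_n r$ this gives
$$|x|\,|\nabla E_n(x)| = \int_{m_n|x|}^{1} u\, G(u)\, du.$$

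The final step is to show $\int_s^1 u\, G(u)\, du \leq C_0 G(s)$ for $s = m_n|x| \in [0, 1]$. Since $\g$ is radially non-increasing by Condition 3.1(2), so is its $2$D radial extension, and the convolution in $\r^2$ of two non-negative, radially symmetric, non-increasing functions is itself non-increasing (a classical rearrangement fact). Hence $G$ is non-increasing on $[0, 1]$, and
$$\int_s^1 u\, G(u)\, du \leq G(s) \int_s^1 u\, du \leq \tfrac{1}{2} G(s),$$
giving the claim with $C_0 = 1/2$. The main technical point is the Leibniz differentiation and the careful verification that the boundary term vanishes; the monotonicity of $G$ is standard, and one could alternatively invoke Condition 3.1(4) together with integration by parts to get a (different) admissible constant.
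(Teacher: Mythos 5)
Your proof is correct and takes essentially the same route as the paper: differentiate the explicit expression for $(G_n*\n)(x)-\n(x)$ from the preceding lemma to obtain $|x|\,|(G_n*\k)(x)-\k(x)|=\int_{\min\{m_n|x|,1\}}^{1}uG(u)\,du$, then bound this integral by a constant times $G(m_n|x|)$. The only divergence is in that last inequality, where you use the monotonicity of $G$ (Condition 3.1(2), justified by your rearrangement remark) to get $C_0=\tfrac{1}{2}$, whereas the paper invokes Condition 3.1(4) and integrates by parts to get $C_0=C_G+C_G^2$; both are valid since the paper already records that $G$ satisfies Condition 3.1.
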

\begin{proof}
Take the gradient of (\ref{equ_gnn1}), and we have
$$
\nabla (G_n*\n)(x)-\nabla \n(x)
=\frac{x}{|x|^2}\int_{\min\{|x|,\frac{1}{m_n}\}}^{\frac{1}{m_n}}m_n^2 r G(m_nr)dr.$$
So
\begin{equation}\label{equ_gnn2}
|x|\big[\nabla (G_n*\n)(x)-\nabla \n(x)\big]
=\frac{x}{|x|}\int_{\min\{m_n|x|,1\}}^{1} r G(r)dr.
\end{equation}
By (4) of Condition \ref{cond_molg}, there exists $C_G>0$ such that
$$
\begin{aligned}
&\int_{s}^{1} r G(r)dr
\leq C_{G} \int_{s}^{1} \left[-r G'(r)-G(r)\right] dr+C_{G} \int_{s}^{1} G(r)dr\\
&\leq C_{G}sG(s)-C_{G}^2 \int_{s}^{1} G'(r)dr\leq\left(C_{G}+C_{G}^2\right)G(s),
\end{aligned}
$$
which means there exists $C_0$ such that $\int_{s}^{1} r G(r)dr\leq C_0G(s).$ By (\ref{equ_gnn2}) and noting that $\k=-\nabla^{\perp} \n$, we conclude the proof.
\end{proof}

Similar to the way of obtaining (\ref{equ_rucv}), for $\varphi\in C^1(\t;\r^2)$,
\begin{equation}\label{equ_ce0}
\lr \varphi,\gamma (G_n*\k*\gamma)\rr=\frac{1}{2}\int_{(\t)^2\backslash \d}[\varphi(x)-\varphi(y)]\cdot (G_n*\k)(x-y) \gamma(dx)\gamma(dy),
\end{equation}
so we can estimate the difference between  $\op(\gamma)$ and $\gamma (G_n*\k*\gamma)$.

\begin{lem}\label{lem_rzq2}
For any $R>0$, there exists a sequence $c_n\downarrow 0$ such that for each $\gamma\in \p(\t)$, satisfying $e(\gamma)\leq R<\infty$,
$$\big|\lr \nabla G_n*\n*\gamma, \op(\gamma)\rr\big|\leq
c_n\|\g_n*\gamma\|_{2}^2.$$
\end{lem}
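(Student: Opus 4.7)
The plan is to exploit the pointwise Hamiltonian identity $\nabla u_n\cdot\nabla^\perp u_n=0$ together with the quantitative estimate in Lemma~\ref{lemm1mf}. Writing $u_n:=G_n*\n*\gamma$, the symmetrised representation of $\op(\gamma)$ yields
\[
\lr\nabla u_n,\op(\gamma)\rr=\frac{1}{2}\iint[\nabla u_n(x)-\nabla u_n(y)]\cdot\k(x-y)\,\gamma(dx)\gamma(dy).
\]
First I would split $\k=G_n*\k-\tilde k_n$ with $\tilde k_n:=G_n*\k-\k$. Since $G_n*\k$ is smooth and odd, antisymmetrising the corresponding piece collapses it to $\int\nabla u_n\cdot(G_n*\k*\gamma)\,d\gamma$; but $G_n*\k*\gamma=-\nabla^\perp u_n$, so this equals $-\int\nabla u_n\cdot\nabla^\perp u_n\,d\gamma=0$ by the pointwise Hamiltonian cancellation. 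We are left with
\[
\lr\nabla u_n,\op(\gamma)\rr=-J_2,\qquad J_2:=\frac{1}{2}\iint[\nabla u_n(x)-\nabla u_n(y)]\cdot\tilde k_n(x-y)\,\gamma(dx)\gamma(dy).
\]

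To estimate $J_2$, Lemma~\ref{lemm1mf} tells us that $\tilde k_n$ is supported on $\{r(x,y)\le 1/m_n\}$ with $|\tilde k_n(z)|\le C_0 G(m_n|z|)/|z|$. Combined with the mean-value bound $|\nabla u_n(x)-\nabla u_n(y)|\le r(x,y)\int_0^1|\nabla^2 u_n(y+t(x-y))|\,dt$, this yields
\[
|J_2|\le C\int_0^1\iint G(m_n r(x,y))\,|\nabla^2 u_n(y+t(x-y))|\,\gamma(dx)\gamma(dy)\,dt.
\]
The natural companion is the key identity $\iint G_n(x-y)\,\gamma(dx)\gamma(dy)=\|\g_n*\gamma\|_2^2$, which follows from $G_n=\g_n*\g_n$ and the evenness of $\g_n$, and translates $\iint G(m_n r(x,y))\,\gamma(dx)\gamma(dy)$ into a factor of $m_n^{-2}\|\g_n*\gamma\|_2^2$. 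Cauchy--Schwarz then reduces the task to controlling a weighted $L^2$-average of $|\nabla^2 u_n|$; using $\nabla^2 u_n=\g_n*\nabla^2(\n*\g_n*\gamma)$, the Calder\'on--Zygmund bound $\|\nabla^2(\n*\g_n*\gamma)\|_2\le C\|\g_n*\gamma\|_2$ and Young's inequality on $\g_n$ give estimates whose dependence on $m_n$ one can track.

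The main obstacle is to obtain $c_n\downarrow 0$ rather than the bounded constant the naive Lipschitz bound $|J_2|\le C\|\nabla^2 u_n\|_\infty m_n^{-2}\|\g_n*\gamma\|_2^2$ provides, since $\|\nabla^2 u_n\|_\infty=O(m_n^2)$ in the worst case. The extra decay must come from a Friedrichs-type commutator argument: rewriting $J_2$ after moving one factor of $\g_n$ off $\nabla u_n=\g_n*\nabla\tilde w$ (with $\tilde w:=\n*\g_n*\gamma$) produces the pairing of $\nabla\tilde w$ against $\g_n*(\nabla^\perp u\,\gamma)-\nabla^\perp\tilde w\,(\g_n*\gamma)$, and a second invocation of the pointwise cancellation $\nabla\tilde w\cdot\nabla^\perp\tilde w=0$ removes the leading term, leaving a genuine commutator whose $L^2$ size can be shown to tend to zero as $m_n\uparrow\infty$. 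Alternatively, a Fourier/paraproduct analysis exploiting $\hat G_n(k)\to 1$ as $m_n\to\infty$ would give the same gain. In either route the hypothesis $e(\gamma)\le R$ is consumed through the uniform bounds $\|\k*\gamma\|_2^2=2e(\gamma)\le 2R$ and $\|\nabla\tilde w\|_2^2=2e(\g_n*\gamma)\le 2R+o(1)$, while the decay $c_n\downarrow 0$ originates from the shrinking support of $\tilde k_n$.
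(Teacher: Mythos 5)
Your first half matches the paper exactly: the mollified piece collapses because $G_n*\k*\gamma=-\nabla^\perp(G_n*\n*\gamma)$ and $\nabla u_n\cdot\nabla^\perp u_n=0$ pointwise, and the remainder is a near-diagonal term controlled through Lemma \ref{lemm1mf} and the identity $\iint G_n(x-y)\gamma(dx)\gamma(dy)=\|\g_n*\gamma\|_2^2$. But the step you yourself flag as "the main obstacle" -- upgrading the resulting $O(1)\cdot\|\g_n*\gamma\|_2^2$ bound to $c_n\|\g_n*\gamma\|_2^2$ with $c_n\downarrow 0$ uniformly over $\{e(\gamma)\le R\}$ -- is left as an assertion ("a Friedrichs-type commutator argument\dots can be shown to tend to zero", or "a Fourier/paraproduct analysis") with no estimate behind it, and this is precisely where the real work lies. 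The difficulty is uniformity in $\gamma$: $\|\g_n*\gamma\|_2$ may diverge as $n\to\infty$, so DiPerna--Lions/Friedrichs commutator lemmas or the pointwise fact $\hat G_n\to 1$ (which only helps at fixed frequencies) do not by themselves give smallness \emph{relative to} $\|\g_n*\gamma\|_2^2$; the shrinking support of $\tilde k_n$ alone only yields boundedness, as you note. The paper's actual source of decay is a quantitative non-concentration estimate that you never invoke: by Lemma \ref{lem_pest_fen}, $e(\gamma)\le R$ gives $\sup_z\gamma(B_\delta(z))\le\big((C_{\n}+4\pi R)/\log(1/(2\delta))\big)^{1/2}$. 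One splits the mass seen by the Lipschitz quotient of $\nabla u_n$ at scale $(k+1)/m_n$: the near part is bounded by $Cm_n^2$ times this small-ball mass (using $\partial_i\n\in\dot W^{-1,\infty}$, i.e. $\partial_i\n=\sum_j\partial_jA_{ij}$ with $A_{ij}\in L^\infty$), the far part by $Cm_n^2/k^2$ from $|\nabla^2\n(w)|\le C|w|^{-2}$, and optimizing over $1\le k\le(m_n-3)/2$ yields $c_n\to 0$. So the energy hypothesis enters through small-ball masses, not through $\|\k*\gamma\|_2^2=2e(\gamma)$ as in your sketch.

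A secondary weak point: your Cauchy--Schwarz reduction to "a weighted $L^2$-average of $|\nabla^2u_n|$" is not justified as stated, because $\gamma$ is merely a finite-energy measure, and an $L^2(dx)$ Calder\'on--Zygmund bound on $\nabla^2(\n*\g_n*\gamma)$ does not control integrals of $|\nabla^2u_n|$ along segments between $\gamma\otimes\gamma$-generic points. The paper avoids this by bounding the local Lipschitz quotient of $\nabla u_n$ pointwise (by $Cm_n^2$ near, $Cm_n^2/k^2$ far) and pairing it directly against $\gamma\otimes\gamma$ restricted to $\{r(x,y)\le 1/m_n\}$, whose weighted mass is $m_n^{-2}\|\g_n*\gamma\|_2^2$. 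As it stands, your proposal has a genuine gap at the decisive step.
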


\begin{proof}
Step 1.\\
First, we firstly prove that for each $\eta,\gamma\in \p(\t)$, there exists a constant $C_1$ not dependent on $\eta,\gamma,k$ and $n$ such that for $1\leq k\leq \frac{m_n-3}{2}$,
\begin{equation}\label{equ_105}
\begin{aligned}
&\big|\lr G_n*\n*\gamma,\div \op(\eta)-\div[\eta (G_n*\k*\eta)]\rr\big|\\
&\leq C_1\bigg[\frac{1}{k^2}+\sup_{z}\gamma\left(B_{\frac{k+1}{m_n}}(z)\right)\bigg]\|\g_n*\eta\|_{2}^2.
\end{aligned}
\end{equation}
Noticing that $G_n(x)=0$ if $|x|\geq\frac{1}{m_n}$ and
$$\int_{(\t)^2} G_n(x-y)\eta(dx)\eta(dy)=\|\g_n*\eta\|_{2}^2,$$
together with (\ref{equ_rucv1}), (\ref{equ_ce0}) and Lemma \ref{lemm1mf}, it's enough to show there exists a constant $C_1$ such that for each $x\neq y,$ $r(x,y)<\frac{1}{m_n}$,
\begin{equation}\label{equ_rzq1_1}
\frac{|\nabla (G_n*\n*\gamma)(y)-\nabla (G_n*\n*\gamma)(x)|}{r(x,y)}\leq C_1m_n^2\bigg[\frac{1}{k^2}+\sup_{z}\gamma\left(B_{\frac{k+1}{m_n}}(z)\right)\bigg].
\end{equation}
Since $\n$ can be seen as a periodic function, without loss of generality, we assume $x_i< y_i<x_i+\frac{1}{2}$ $(i=1,2)$, so that $r(x,y)=|x-y|$.
Let $z_0=\frac{x+y}{2}$. Then
\begin{equation}\label{equ_rzq1_2}
\begin{aligned}
&\frac{|\nabla (G_n*\n*\gamma)(y)-\nabla (G_n*\n*\gamma)(x)|}{|x-y|}\\
& \leq  \int_{B_{\frac{k+1}{m_n}}(z_0)}\frac{|(G_n*\nabla \n)(y-z)-(G_n*\nabla \n)(x-z)|}{|x-y|}\gamma(dz)\\
& +\int_{B_{\frac{k+1}{m_n}}(z_0)^c}\frac{|(G_n*\nabla \n)(y-z)-(G_n*\nabla \n)(x-z)|}{|x-y|}\gamma(dz)\\
&\leq  \sup_{z\in B_{\frac{k+1}{m_n}}(z_0)} \frac{|(G_n*\nabla \n)(y-z)-(G_n*\nabla \n)(x-z)|}{|x-y|} \gamma\left(B_{\frac{k+1}{m_n}}(z_0)\right)\\
& +\sup_{z\notin B_{\frac{k+1}{m_n}}(z_0)} \frac{|\nabla \n(y-z)-\nabla \n(x-z)|}{|x-y|},
\end{aligned}
\end{equation}
where we used (\ref{equ_gnn2}) to obtain $$
\nabla \n(x)=G_n*\nabla \n(x),\text{ if } r(x,\mathbb{Z}^2)>\frac{1}{m_n}.
$$
As mentioned in \cite{RN242}, $\partial_i \n\in \dot{W}^{-1,\infty}(\t)$, i.e. there exist $A_{i,j}\in L^\infty(\t)(i,j=1,2)$ such that $\partial_i\n=\sum_{j=1,2}\partial_j A_{i,j}$. Therefore,
$$[(G_n*\nabla \n)(y-z)-(G_n*\nabla \n)(x-z)]_i=
\sum_{j}\left[\partial_j G_n* A_{i,j}(y-z)-\partial_j G_n* A_{i,j}(x-z)\right],
$$
So $$
\begin{aligned}
&\frac{|(G_n*\nabla \n)_i(y-z)-(G_n*\nabla \n)_i(x-z)|}{|x-y|}\\
&\leq \int_{B_{\frac{2}{m_n}}(z_0-z)}\frac{
\sum_{j}|\partial_j G_n(y-z-w)-\partial_j G_n(x-z-w)|
}{|x-y|}A_{i,j}(w)dw\\
&\leq \frac{C \sup |\nabla^2 G_n|}{m_n^2}\leq Cm_n^2.
\end{aligned}
$$
Since $|x-z_0|=|y-z_0|<\frac{1}{2m_n}$, for $z\in B^c_{\frac{k+1}{m_n}}(z_0)$ we have
$$
x,y\in B_{\frac{k+3/2}{m_n}}(z)\backslash \overline{B}_{\frac{k+1/2}{m_n}}(z)\subset  B_{1/2}(z)\backslash \overline{B}_{\frac{k}{m_n}}(z),
$$
and thus by (\ref{equ_N}),
$$
|\nabla \n(y-z)-\nabla \n(x-z)|\leq |x-y|\sup_{\frac{1}{2}>|w|>\frac{k}{m_n}}|\nabla^2\n(w)|\leq C\frac{m_n^2}{k^2}|x-y|.
$$
Therefore (\ref{equ_rzq1_2}) can deduce (\ref{equ_rzq1_1}) and we arrive at (\ref{equ_105}).\\
Step 2.\\
Take $\eta=\gamma$ in (\ref{equ_105}). Noting that $(G_n*\k*\gamma)\cdot (G_n*\nabla \n*\gamma)=0$, for $1\leq k\leq \frac{m_n-3}{2}$, we have
$$
\big|\lr \nabla G_n*\n*\gamma, \op(\gamma)\rr\big|\leq C_1\bigg[\frac{1}{k^2}+\sup_{z}\gamma\left(B_{\frac{k+1}{m_n}}(z)\right)\bigg]\|\g_n*\gamma\|_{2}^2.
$$
By (\ref{equ_pest1}),
$$
\sup_{z}\gamma\left(B_{\frac{k+1}{m_n}}(z)\right)\leq \bigg(\frac{C_{\n}+4\pi e(\gamma)}{\log(m_n/2)-\log(k+1)}\bigg)^{\frac{1}{2}}.
$$
Taking
$$
c_n=C_1\min_{1\leq k<\frac{m_n-3}{2}}\left[\frac{1}{k^2}
+\bigg(\frac{C_{\n}+4\pi R}{\log(m_n/2)-\log(k+1)}\bigg)^{\frac{1}{2}}\right],
$$
then $$\big|\lr \nabla G_n*\n*\gamma,\op(\gamma)\rr\big|\leq
c_n\|\g_n*\gamma\|_{2}^2.$$
Through a simple calculation, we can check $\lim_{n\to\infty}c_n=0.$\\
\end{proof}
\begin{rem}
The rate for $m_n\to\infty$ is not needed in the proof of Lemmas \ref{lemm1mf} and \ref{lem_rzq2}, i.e., we haven't used the fact that $nm_n^{-2}\to\infty$.
\end{rem}

We also need a generalised version of Ladyzhenskaya’s inequality, which is often used to study two-dimensional Navier-Stokes equation.
\begin{lem}\label{lem_B_6}
There exists $C_1,C_2$ such that
for each $\gamma,\eta\in \p(\t)\cap L^2(\t)$,
$$\|\k*(\gamma-\eta)\|_{4}^2=\|\nabla \n*(\gamma-\eta)\|_{4}^2\leq C_1\|\gamma-\eta\|_{2},$$
$$\int_{\t}|\nabla\n*\gamma|^2d\gamma=\int_{\t}|\k*\gamma|^2d\gamma\leq C_2 \|\gamma-1\|_{2}^2.$$
\end{lem}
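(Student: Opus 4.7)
Both estimates are 2D Ladyzhenskaya-type bounds for the Biot--Savart-type field $u:=\nabla\n*f$; the identities $\|\k*f\|_4=\|\nabla\n*f\|_4$ and $\int|\k*\gamma|^2\,d\gamma=\int|\nabla\n*\gamma|^2\,d\gamma$ are immediate from $\k=-\nabla^\perp\n$, so only the right-hand inequalities need attention. My strategy is to prove the first estimate by combining a Hardy--Littlewood--Sobolev (HLS) type bound on the torus with a Hölder interpolation that exploits $\|\gamma-\eta\|_1\leq 2$, then deduce the second from the first by splitting $\gamma=g+1$ against the constant background density.

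For the first inequality, I set $f:=\gamma-\eta$ (mean zero, in $L^2$) and $v:=\n*f$, so (\ref{equ_green}) gives $-\Delta v=f$ on $\t$. Periodic Calder\'on--Zygmund regularity plus the 2D Sobolev embedding $W^{2,4/3}(\t)\hookrightarrow W^{1,4}(\t)$ give
\[
\|u\|_4=\|\nabla v\|_4\leq C\|v\|_{W^{2,4/3}}\leq C\|f\|_{4/3},
\]
which, after decomposing $\nabla\n$ via (\ref{equ_N}) into a compactly supported piece with $|x|^{-1}$ singularity and a smooth remainder, is equivalent to the order-one Riesz-potential HLS bound in dimension two. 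I then interpolate
\[
\|f\|_{4/3}\leq\|f\|_1^{1/2}\|f\|_2^{1/2}\leq\sqrt{2}\,\|f\|_2^{1/2},
\]
using $\|f\|_1\leq\|\gamma\|_1+\|\eta\|_1=2$, and square to obtain $\|u\|_4^2\leq C_1\|f\|_2$.

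For the second inequality, let $g:=\gamma-1$, which is mean zero and in $L^2$. Because $\int_\t\nabla\n=0$ by the evenness of $\n$, we have $u:=\nabla\n*\gamma=\nabla\n*g$, and I split
\[
\int_\t|u|^2\,d\gamma=\int_\t|u|^2 g\,dx+\|u\|_2^2.
\]
Parseval on $\t$ (equivalently, Poincar\'e applied to $(-\Delta)^{-1}g$) gives $\|u\|_2^2\leq C\|g\|_2^2$. Cauchy--Schwarz plus the first inequality applied with $\eta$ equal to the uniform probability density control the cross term:
\[
\Big|\int_\t|u|^2 g\,dx\Big|\leq\|u\|_4^2\,\|g\|_2\leq C_1\|g\|_2\cdot\|g\|_2=C_1\|g\|_2^2.
\]
Adding the two contributions yields $\int_\t|u|^2\,d\gamma\leq C_2\|\gamma-1\|_2^2$.

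The only genuinely delicate step is the HLS bound $\|u\|_4\lesssim\|f\|_{4/3}$ on the torus. Either I invoke standard periodic $L^p$--$W^{2,p}$ elliptic regularity for $-\Delta$ on the mean-zero sector ($1<p<\infty$), or I work more explicitly using (\ref{equ_N}): the singular piece of $\nabla\n$ is compactly supported in $\t$ and, after zero extension, reduces to the classical HLS theorem on $\r^2$; the smooth remainder satisfies $\|\mathrm{smooth}*f\|_4\leq\|\mathrm{smooth}\|_\infty\|f\|_1\leq C\|f\|_{4/3}$ by Young's and Hölder's inequalities on the compact torus. Every other step (Parseval, interpolation, Cauchy--Schwarz) is elementary.
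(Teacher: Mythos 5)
Your proof is correct, and for the second inequality it coincides with the paper's argument: both split $\int_{\t}|u|^2d\gamma$ into $\|u\|_2^2$ (controlled by Young/Parseval, using $\int_{\t}\k=0$) plus the cross term $\int|u|^2(\gamma-1)$, which is handled by H\"older together with the first inequality applied to $\gamma-1$. For the first inequality, however, your route is genuinely different from the paper's. The paper stays elementary: it truncates the kernel at scale $B$, writes $\k=\phi_B\k+(1-\phi_B)\k$, estimates $\|\phi_B\k\|_{4/3}\lesssim B^{1/2}$ and $\|(1-\phi_B)\k\|_4\lesssim B^{-1/2}$, applies Young's inequality to the two pieces (pairing the near piece with $\|\gamma-\eta\|_2$ and the far piece with $\|\gamma-\eta\|_1\le 2$), and then optimizes over $B$, with a separate trivial case when $\|\gamma-\eta\|_2\le 2$. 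You instead invoke the Hardy--Littlewood--Sobolev / periodic Calder\'on--Zygmund bound $\|\nabla\n*f\|_4\lesssim\|f\|_{4/3}$ and then the Lebesgue interpolation $\|f\|_{4/3}\le\|f\|_1^{1/2}\|f\|_2^{1/2}\le\sqrt{2}\,\|f\|_2^{1/2}$; note that the $L^1$ bound on the difference of probability densities enters your argument through this interpolation exactly where it enters the paper's through the far-field term. Your approach is shorter and more conceptual but leans on heavier (though standard) machinery, which you justify adequately on the torus either by periodic elliptic regularity on the mean-zero sector or by the decomposition (\ref{equ_N}) into a compactly supported $|x|^{-1}$ singularity (classical HLS after zero extension) plus a smooth remainder handled by Young's inequality; the paper's kernel-truncation argument is self-contained and amounts to a hands-on proof of the same endpoint bound. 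Both yield the stated constants, so either proof is acceptable.
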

\begin{proof}
Let $\phi\in C^\infty_c(\r^2)$ be a radial function such that $0\leq \phi\leq 1,\phi(x)=1$ for $|x|\leq \frac{1}{2}$ and $\phi(x)=0$ for $|x|\geq 1.$ We define for $\frac{1}{2}>B>0,$ $\phi_B(x)=\phi(x/B)$, and $\k_{1,B}=\phi_B\k,\k_{2,B}=(1-\phi_B)\k.$
We start with proving that for $1\leq p<2, q>2,$ there exist constants $C_{q},C'_{p}$, such that
\begin{equation}\label{equ_K12}
\|\k_{2,B}\|_{q}\leq C_{q}B^{\frac{2}{q}-1},\quad \|\k_{1,B}\|_{p}\leq C'_{p}B^{\frac{2}{p}-1}.
\end{equation}

By (\ref{equ_N}), we can find $C_0>0$ such that for each $x\in B_{\frac{1}{2}}((0,0))\backslash \{(0,0)\},$
\begin{equation}\label{equ_lem_KB1}
|\k(x)|\leq C_0|x|^{-1},\quad |\nabla \k(x)|\leq C_0 |x|^{-2}.
\end{equation}
So there exists $C_{q}$ such that
$$
\|\k_{2,B}\|_{q}
\leq C_0\left(\int_{|x|>\frac{B}{2}}|x|^{-q}dx\right)^{1/q} \leq C_{q} B^{2/q-1}.
$$
Also we have
$$
\|\k_{1,B}\|_{p}^p\leq C_0^p\int_{[-1/2,1/2]^2}\phi(x/B)^p |x|^{-p}dx
\leq C_0^pB^{2-p}\int_{|y|\leq 1}\phi(y)^p |y|^{-p}dy.
$$
Let $C'_{p}=C_0\left(\int_{|y|\leq 1} |y|^{-p}dy\right)^{\frac{1}{p}}$. Then we have
$$
\| \k_{1,B}\|_{p}\leq C'_{p}B^{\frac{2}{p}-1}.
$$

Turn to the proof of the desired inequalities. A consequence of (\ref{equ_K12}) along with Young's inequality implies that there exists a constant $C$, such that
$$
\|(\phi_B\k)*(\gamma-\eta)\|_{4}\leq \|\k_{1,B}\|_{{\frac{4}{3}}}\|\gamma-\eta\|_{2}\leq CB^{\frac{1}{2}}\|\gamma-\eta\|_{2},
$$
and
$$
\|[(1-\phi_B)\k]*(\gamma-
\eta)\|_{4}\leq  \|\k_{2,B}\|_{4}\|\gamma-\eta\|_{1}\leq C/B^{\frac{1}{2}}.
$$
If $\|\gamma-\eta\|_{2}>2$, we take $B=\|\gamma-\eta\|_{2}^{-1}$ and have $$\|\k*(\gamma-\eta)\|_{4}\leq
 \|(\phi_B\k)*(\gamma-\eta)\|_{4}+\|[(1-\phi_B)\k]*(\gamma-
\eta)\|_{4}\leq 2C\|\gamma-\eta\|_{2}^\frac{1}{2}.$$
If $\|\gamma-\eta\|_{2}\leq 2$, by Young's inequality,
$$\|\k*(\gamma-\eta)\|_{4}\leq \|\k\|_{{\frac{4}{3}}}\|\gamma-\eta\|_{2}\leq \sqrt{2} \|\k\|_{{\frac{4}{3}}} \|\gamma-\eta\|_{2}^\frac{1}{2},$$
in which $\|\k\|_{\frac{4}{3}}<\infty$ due to (\ref{equ_N}).
So we can pick $C_1=\max\left\{\sqrt{2}\|\k\|_{{\frac{4}{3}}},2C\right\}$ such that
$$\|\k*(\gamma-\eta)\|_{4}^2\leq C_1\|\gamma-\eta\|_{2}.$$
To obtain the second inequality, noting that $\int_{\t}\k(x)dx=0$ and by young's inequality, we have $$
\|\k*\gamma\|_{2}=\|\k*(\gamma-1)\|_{2}\leq \|\k\|_{1}\|\gamma-1\|_{2}.
$$
Hence by Holder's inequality,
$$
\begin{aligned}
&\int_{\t}|\k*\gamma|^2d\gamma= \int_{\t}|\k*\gamma|^2dx+\int_{\t}|\k*\gamma|^2d(\gamma-1)
\leq \|\k*\gamma\|_2^2+\|\k*\gamma\|_4^2\|\gamma-1\|_2\\
&\leq (\|\k\|_{1}^2+C_1)\|\gamma-1\|_{2}^2.
\end{aligned}
$$
\end{proof}

\begin{cor}\label{cor_B_6}
For each $\delta>0$, $\gamma\in \p(\t)$, smooth mollifier $J$ and $\gamma-$measurable function $\varphi$ satisfying $\int_{\t}|\varphi|^2d\gamma<\infty$, one has
$$
\left|\int_{\t}\varphi \cdot(J*J*\k*\gamma) d\gamma\right|\leq
\delta \|J*\gamma-1\|_{2}^2+\frac{C_1}{4\delta} \int_{\t}|\varphi|^2 d\gamma,
$$
where $C_1$ is the constant in Lemma \ref{lem_B_6}.
\end{cor}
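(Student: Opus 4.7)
The plan is to derive the inequality by combining a Cauchy--Schwarz estimate against the measure $\gamma$ with a pointwise Jensen bound that transfers the integration onto the smoothed density $\tilde\gamma:=J*\gamma$, so that the second inequality of Lemma~\ref{lem_B_6} can be applied directly. The reason we cannot simply Cauchy--Schwarz and bound $\int|J*J*\k*\gamma|^2\,d\gamma$ is that $\gamma$ may be singular; the outer $J$ must be exploited first.

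First I would use associativity/commutativity of convolution to rewrite $J*J*\k*\gamma=J*(\k*\tilde\gamma)$, where $\tilde\gamma=J*\gamma$ is a smooth probability density lying in $\p(\t)\cap L^2(\t)$, and apply Cauchy--Schwarz with respect to $\gamma$:
\begin{equation*}
\left|\int_{\t}\varphi\cdot(J*(\k*\tilde\gamma))\,d\gamma\right|^{2}
\leq \left(\int_{\t}|\varphi|^{2}\,d\gamma\right)\left(\int_{\t}|J*(\k*\tilde\gamma)|^{2}\,d\gamma\right).
\end{equation*}
Since $J\geq0$ and $\int_{\t}J=1$, Jensen's inequality applied to the convex map $t\mapsto t^{2}$ and the probability kernel $J(x-\cdot)$ gives the pointwise bound $|J*f|^{2}(x)\leq (J*|f|^{2})(x)$ for any $f$. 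Taking $f=\k*\tilde\gamma$, integrating against $\gamma$, and using Fubini together with the evenness of $J$, I would obtain
\begin{equation*}
\int_{\t}|J*(\k*\tilde\gamma)|^{2}\,d\gamma \leq \int_{\t}(J*|\k*\tilde\gamma|^{2})\,d\gamma = \int_{\t}|\k*\tilde\gamma|^{2}(J*\gamma)\,dx = \int_{\t}|\k*\tilde\gamma|^{2}\tilde\gamma\,dx.
\end{equation*}

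The key step is then to invoke the second inequality of Lemma~\ref{lem_B_6} for the smooth density $\tilde\gamma\in\p(\t)\cap L^{2}(\t)$, which bounds $\int_{\t}|\k*\tilde\gamma|^{2}\tilde\gamma\,dx$ by a constant multiple of $\|\tilde\gamma-1\|_{2}^{2}=\|J*\gamma-1\|_{2}^{2}$. Combining the estimates yields
\begin{equation*}
\left|\int_{\t}\varphi\cdot(J*J*\k*\gamma)\,d\gamma\right|\leq \sqrt{C}\,\|J*\gamma-1\|_{2}\left(\int_{\t}|\varphi|^{2}\,d\gamma\right)^{1/2},
\end{equation*}
with $C$ the constant from Lemma~\ref{lem_B_6}. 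A final application of the elementary AM--GM inequality $ab\leq \delta a^{2}+\tfrac{1}{4\delta}b^{2}$, taking $a=\|J*\gamma-1\|_{2}$ and $b=\sqrt{C\int|\varphi|^{2}\,d\gamma}$, delivers the claimed bound.

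There is no substantial obstacle; the one conceptual ingredient is the pointwise Jensen trick on the outer $J$, which converts the $\gamma$-integration --- where $\gamma$ could be singular --- into an integration against the smooth density $\tilde\gamma$, placing us precisely in the regime covered by Lemma~\ref{lem_B_6}.
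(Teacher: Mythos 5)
Your proof is correct and follows essentially the same route as the paper's: the pointwise Jensen bound to shift the outer $J$ onto $\gamma$, Fubini with the evenness of $J$, and the second inequality of Lemma \ref{lem_B_6} applied to the smooth density $J*\gamma$. The only cosmetic difference is that you apply Cauchy--Schwarz and then AM--GM at the level of norms, whereas the paper applies the weighted inequality $ab\leq \frac{\delta}{C_1}a^2+\frac{C_1}{4\delta}b^2$ pointwise under the integral first; the two orderings yield the same bound.
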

\begin{proof}
By Jensen's inequality $$
|J*J* \k*\gamma|^2(x) \leq (J*|J* \k*\gamma|^2)(x).
$$
Then using $ab\leq \frac{\delta}{C_1}a^2+\frac{C_1}{4\delta}b^2$ and by Lemma \ref{lem_B_6}, we have $$
\begin{aligned}
&\left|\int_{\t}\varphi\cdot (J*J*\k*\gamma) d\gamma\right|\leq \frac{\delta}{C_1}\int_{\t} |J*J*\k*\gamma|^2 d\gamma
+\frac{C_1}{4\delta} \int_{\t}|\varphi|^2 d\gamma\\
&\leq \frac{\delta}{C_1} \int_{\t} |J*\k*\gamma|^2 d(J*\gamma)
+\frac{C_1}{4\delta} \int_{\t}|\varphi|^2 d\gamma\\
&\leq \delta \|J*\gamma-1\|_{2}^2
+\frac{C_1}{4\delta} \int_{\t}|\varphi|^2 d\gamma.
\end{aligned}
$$
\end{proof}

\subsection{Proof of Lemma \ref{lem_Q1}}\label{sec_prfQ1}
We need a generalization of the Doob submartingale inequality.
\begin{lem}\label{lem_ville}
If $M(t)$ is a positive continuous local martingale, then for each $l\in \r,$
$$
P\left(\sup_{0\leq t\leq T}\log M(t)\geq l\right)\leq \frac{\e M(0)}{e^l}.
$$
\end{lem}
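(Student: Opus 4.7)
The plan is to reduce the statement to Doob's maximal inequality for nonnegative supermartingales, handling the "local" aspect by stopping.

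First, I would rewrite the event in terms of $M$ itself rather than $\log M$: since $M(t) > 0$, monotonicity of $\log$ gives
$$\left\{\sup_{0\leq t\leq T}\log M(t)\geq l\right\}=\left\{\sup_{0\leq t\leq T} M(t)\geq e^l\right\},$$
so it suffices to prove the maximal inequality $P(\sup_{0\leq t\leq T} M(t)\geq a)\leq \e M(0)/a$ for any $a>0$.

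Second, I would pick a localizing sequence $\tau_k\uparrow\infty$ such that each stopped process $M^{\tau_k}(t):=M(t\wedge \tau_k)$ is a genuine (nonnegative) martingale. By continuity of $M$, $\sup_{0\leq t\leq T} M^{\tau_k}(t)\to \sup_{0\leq t\leq T} M(t)$ pointwise as $k\to\infty$, and the events $A_k:=\{\sup_{0\leq t\leq T} M^{\tau_k}(t)\geq a\}$ increase to $A:=\{\sup_{0\leq t\leq T} M(t)\geq a\}$ up to a set where equality in the supremum is attained only in the limit. To avoid a boundary issue, I would first prove the inequality for strict inequality $>a$ (where the events are genuinely monotone increasing) and then let $a\downarrow$ its value, or equivalently apply the inequality at $a-\varepsilon$ and send $\varepsilon\to 0$.

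Third, for each $k$ the stopped process $M^{\tau_k}$ is a nonnegative continuous martingale on $[0,T]$, so Doob's maximal inequality yields
$$P(A_k)=P\Bigl(\sup_{0\leq t\leq T} M^{\tau_k}(t)\geq a\Bigr)\leq \frac{\e M^{\tau_k}(T)}{a}=\frac{\e M^{\tau_k}(0)}{a}=\frac{\e M(0)}{a},$$
the middle equality using the martingale property of $M^{\tau_k}$. Passing to the limit $k\to\infty$ with the continuity of probability along increasing events, and substituting $a=e^l$, gives exactly the stated bound. The main (and only) subtlety is the boundary event $\{\sup M=a\}$ in the monotonicity of $A_k\uparrow A$; this can be dispatched by the $\varepsilon$-trick above, using that the bound is continuous in $a$. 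No deep probabilistic input beyond Doob's inequality and the supermartingale property of nonnegative local martingales (via Fatou) is required.
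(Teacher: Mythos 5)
Your argument is correct, but it runs along a slightly different track than the paper's. The paper uses the fact that a positive continuous local martingale is automatically a supermartingale (via Fatou), then applies optional sampling once, at the single stopping time $\tau=\inf\{t:M(t)>e^l\}\wedge T$, obtaining $e^l P(\sup_{0\leq t\leq T}M(t)\geq e^l)\leq \e(M(\tau)\chi_{M(\tau)\geq e^l})\leq \e M(\tau)\leq \e M(0)$ in one line, with no limiting procedure. You instead localize with $\tau_k\uparrow\infty$, apply the classical Doob maximal inequality to each genuine nonnegative martingale $M^{\tau_k}$ on $[0,T]$, and pass to the limit in $k$; this buys you a proof that needs only Doob's inequality for true martingales and elementary continuity of measure, at the cost of an extra limit. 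Two small remarks: (i) your worry about the boundary event is not actually an issue, because $\tau_k\to\infty$ a.s.\ implies that for a.e.\ $\omega$ one has $\tau_k(\omega)>T$ eventually, so $\sup_{0\leq t\leq T}M^{\tau_k}(t)$ \emph{equals} $\sup_{0\leq t\leq T}M(t)$ for large $k$ and the events $A_k$ increase exactly to $A$ up to a null set; the $\varepsilon$-trick is a harmless but unnecessary safeguard (interestingly, the paper's own choice of strict inequality in the definition of $\tau$ is where a genuine boundary subtlety hides, which your formulation avoids); (ii) if $\e M(0)=\infty$ the claim is vacuous, and the identity $\e M^{\tau_k}(0)=\e M(0)$ is immediate since $M^{\tau_k}(0)=M(0)$, so no integrability issue arises in your chain of equalities.
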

\begin{proof}
Since $M(t)$ is a positive local martingale, it's a supermartingale.
Let $\tau=\inf\{t:M(t)>e^l\}\wedge T$. Then $M(t\wedge \tau)$ is a non-negative supermartingale. Hence
$$
e^lP\left(\sup_{0\leq t\leq T}M(t)\geq e^l\right)\leq \e(M(\tau)\chi_{M(\tau)\geq e^l})\leq \e(M(\tau))\leq \e (M(0)).
$$
\end{proof}

Now we are ready to give a quantitative version of Lemma \ref{lem_Q1}, which together with Condition \ref{cond_ldp_2} implies Lemma \ref{lem_Q1}.
\begin{lem}\label{lem_Q1plus}
There exists constants $\lambda>0$ such that for each sequence $(\eta_n)$ satisfying $\eta_n\in \x_n$ and $e(\g_n*\eta_n)\leq R,$
\begin{equation}\label{iequ_Q1plus}
\begin{aligned}
\limsup_{n\to\infty}&\frac{1}{n}\log P_{\eta_n}\bigg(\sup_{0<t\leq T} \bigg(e(\g_n*\rho_n(t))\\
&+ \frac{\nu}{2}\int_0^t \|\g_n*\rho_n(s)-1\|_{2}^2ds\bigg)
>l\bigg)\leq -\lambda(l-R)
\end{aligned}
\end{equation}
for any $l\in \r$.
\end{lem}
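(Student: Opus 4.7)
The plan is to apply Ito's formula to the functional $F_n(X) := e(\g_n*\rho_n) = \frac{1}{2n^2}\sum_{i,j}(G_n*\n)(X_i-X_j)$ (with $G_n = \g_n*\g_n$), extract an energy dissipation identity, control the singular drift via the inequalities of Section \ref{sec_conest}, and finish with an exponential martingale argument built on Lemma \ref{lem_ville}.

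First I would compute the Ito differential. Because $\nabla_{x_k}F_n = n^{-1}(\nabla(G_n*\n)*\rho_n)(X_k)$ and $\Delta(G_n*\n) = 1-G_n$, a direct calculation followed by the Delort-type symmetrization (\ref{equ_Delort}) applied to the scalar field $\phi(x) = (G_n*\n*\rho_n)(x)$ gives
\begin{equation*}
    de(\g_n*\rho_n(t)) = \lr\nabla(G_n*\n*\rho_n(t)),\op(\rho_n(t))\rr dt - \nu\|\g_n*\rho_n(t)-1\|_2^2\, dt + \tfrac{\nu(G_n(0)-1)}{n}\,dt + dM_n(t),
\end{equation*}
where the error $\nu(G_n(0)-1)/n = O(m_n^2/n)\to 0$ by the slow-growth assumption $nm_n^{-2}\to\infty$. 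Using Lemma \ref{lem_B_6} (whose bound applies to $\nabla\n$ since $|\nabla\n|=|\k|$) together with Jensen's inequality, the quadratic variation of $M_n$ satisfies $d\langle M_n\rangle_t \leq (C/n)\|\g_n*\rho_n(t)-1\|_2^2 dt$.

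The next and most delicate step is to bound the singular drift $\lr\nabla(G_n*\n*\rho_n),\op(\rho_n)\rr$. The natural tool is Lemma \ref{lem_rzq2}, but it assumes $e(\gamma)<\infty$, which fails for atomic $\rho_n$. However, Step 1 of the proof of Lemma \ref{lem_rzq2} only requires an estimate on $\sup_z\rho_n(B_{(k+1)/m_n}(z))$ and uses no a priori energy bound. Since $\g_n$ is supported in $B_{1/(2m_n)}$,
\begin{equation*}
    \rho_n(B_{(k+1)/m_n}(z)) \leq (\g_n*\rho_n)\bigl(B_{(2k+3)/(2m_n)}(z)\bigr),
\end{equation*}
so inserting (\ref{equ_pest1}) for the \emph{mollified} measure $\g_n*\rho_n$ (which has finite energy) and optimizing in $k$ yields, for each $l>0$ and each $\rho_n$ with $e(\g_n*\rho_n)\leq l$,
\begin{equation*}
    \bigl|\lr\nabla(G_n*\n*\rho_n),\op(\rho_n)\rr\bigr| \leq c_n(l)\,\|\g_n*\rho_n\|_2^2,\qquad c_n(l)\downarrow 0.
\end{equation*}

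With this in hand, introduce $Y(t) := e(\g_n*\rho_n(t)) + (\nu/2)\int_0^t\|\g_n*\rho_n(s)-1\|_2^2\,ds$ and the stopping time $\tau_l := \inf\{t:Y(t)>l\}$. Using $\|\g_n*\rho_n\|_2^2 = \|\g_n*\rho_n-1\|_2^2 + 1$, for $n$ large enough (so $c_n(l)\leq\nu/4$) the drift of $Y$ on $[0,\tau_l]$ is $\leq \epsilon_n(l)\,dt + dM_n$ with $\epsilon_n(l)\to 0$. The exponential local martingale $\mathcal{E}_\lambda(t) := \exp[\lambda n M_n(t) - (\lambda n)^2\langle M_n\rangle_t/2]$ has $\mathcal{E}_\lambda(0)=1$. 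Choosing $\lambda$ small enough (uniformly in $l$) so that the term $(\lambda n)^2\langle M_n\rangle_t/2$ is absorbed by the remaining dissipation $\lambda n\cdot(\nu/4)\int\|\g_n*\rho_n-1\|_2^2 ds$ yields, path-wise,
\begin{equation*}
    \lambda n\bigl(Y(t\wedge\tau_l) - Y(0) - \epsilon_n(l)T\bigr) \leq \log\mathcal{E}_\lambda(t\wedge\tau_l),\qquad t\leq T.
\end{equation*}
Since $Y(\tau_l)=l$ on $\{\tau_l\leq T\}$ and $Y(0)=e(\g_n*\eta_n)\leq R$, applying Lemma \ref{lem_ville} to $\mathcal{E}_\lambda$ gives $P(\tau_l\leq T)\leq\exp[-\lambda n(l-R-\epsilon_n(l)T)]$, and $\epsilon_n(l)\to 0$ produces the claimed bound.

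The main obstacle is the singular-drift estimate above: one must transfer small-ball control from $\rho_n$ (which has infinite energy) to $\g_n*\rho_n$ (finite energy), and the slow-growth condition $nm_n^{-2}\to\infty$ plays two indispensable roles — it makes the unavoidable diagonal error $\nu(G_n(0)-1)/n$ in the Ito formula negligible, and it is exactly what forces $c_n(l)\downarrow 0$ in the singular-drift bound.
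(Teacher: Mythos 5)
Your proposal is correct and follows essentially the same route as the paper: Ito's formula applied to the mollified energy $e(\g_n*\rho_n)$, the quadratic-variation bound via Jensen and Lemma \ref{lem_B_6}, the singular-drift bound of Lemma \ref{lem_rzq2} with the finite-energy hypothesis transferred to $\g_n*\rho_n$, and an exponential-martingale argument concluded with Lemma \ref{lem_ville}. Your stopping-time localization at level $l$ and the explicit exponential $\mathcal{E}_\lambda$ are only cosmetic variants of the paper's contradiction argument on the event $E_{n,l,\varepsilon}$ and its built-in compensator term $\lambda\nu\int\lr|\nabla \n_n*\rho_n|^2,\rho_n\rr ds$, and your small-ball transfer to the mollified measure is precisely the adjustment the paper's application relies on (cf. (\ref{equ_pest2})).
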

\begin{proof}
Define $\n_n:=G_n*\n$, $\omega_n:=\frac{\nu}{n}(G_n(0)-1)$. By Ito's formula and using the fact $-\Delta (G_n*\n)=G_n-1$, we have
$$
\begin{aligned}
&d\left[\frac{1}{2}\lr\n_n*\rho_n(t),\rho_n(t)\rr\right]=\frac{1}{2n^2}\sum_{i\neq j}d\n_n(X_i(t)-X_j(t))\\
&=\frac{1}{2n^2}\sum_{i\neq j}\nabla \n_n(X_i(t)-X_j(t))(dX_i(t)-dX_j(t))
+\frac{\nu}{n^2}\sum_{i\neq j}\Delta \n_n(X_i(t)-X_j(t))dt\\
&=\frac{1}{n^2}\sum_{i\neq j}\nabla \n_n(X_i(t)-X_j(t))dX_i(t)
+\frac{\nu}{n^2}\sum_{i\neq j}\Delta \n_n(X_i(t)-X_j(t))dt\\
&=\frac{1}{n^2}\sum_{i=1}^n\sum_{j=1}^n\sum_{k\neq i}\nabla \n_n(X_i(t)-X_j(t))\left(\k(X_i(t)-X_k(t))dt+\sqrt{2\nu}dB_i(t)\right)\\
&+\frac{\nu}{n^2}\sum_{i,j=1}^n \Delta \n_n(X_i(t)-X_j(t))dt
-\frac{\nu}{n}\Delta \n_n(0)dt\\
&=\lr \nabla \n_n*\rho_n(t),\op(\rho_n(t)) \rr dt-\nu\|\g_n*\rho_n(t)-1\|_{2}^2dt\\
&+\frac{\sqrt{2\nu}}{n^2}\sum_{i,j=1}^{n}\nabla \n_n(X_i(t)-X_j(t))dB_i(t)+\omega_n dt.
\end{aligned}
$$
After further calculation, for any $\lambda>0,$
\begin{equation}
\begin{aligned}
&\exp\bigg\{
n\lambda\bigg[e(\g_n*\rho_n(t))-e(\g_n*\rho_n(0))
+\nu\int_{0}^{t}\|\g_n*\rho_n(s)-1\|_{2}^2ds\\&-\int_{0}^{t}\lr\nabla \n_n*\rho_n(s),\op(\rho_n(s))\rr ds-\lambda\nu\int_{0}^{t} \lr|\nabla \n_n*\rho_n(s)|^2,\rho_n(s)\rr ds-\omega_nt
\bigg]
\bigg\}
\end{aligned}
\end{equation}
is a positive continuous martingale. By Lemma \ref{lem_ville}, for each $\eta_n\in \x_n$ with $e(\g_n*\eta_n)\leq R,$
\begin{equation}\label{equ_doob1plus}
\begin{aligned}
&P_{\eta_n}\bigg\{\sup_{0<t\leq T}\bigg[e(\g_n*\rho_n(t))-\int_{0}^{t}\lr\nabla \n_n*\rho_n(s),\op(\rho_n(s))\rr ds\\
&+\nu\int_{0}^{t}\|\g_n*\rho_n(s)-1\|_{2}^2ds-\nu \lambda\int_{0}^{t}\int_{\t}|\nabla \n_n*\rho_n(s)|^2\rho_n(s,dx)  ds\bigg]>l\bigg\}\\
&\leq e^{-n\lambda(l-R-\omega_nT)}.
\end{aligned}
\end{equation}
By Jensen inequality $$
|\nabla \n_n*\rho_n(t,x)|^2=|\g_n*\g_n*\nabla \n*\rho_n(t,x)|^2\leq \left(\g_n*|\g_n*\nabla \n*\rho_n(t,\cdot)|^2\right)(x),
$$
Then by Lemma \ref{lem_B_6}, there exists $C_2>0$ such that
\begin{equation}\label{equ_b6}
\int_{\t}|\nabla \n_n*\rho_n(t)|^2\rho_n(t,dx) \leq \int_{\t}|\nabla \n*\g_n*\rho_n(t)|^2(\g_n*\rho_n)(t,dx) \leq C_2\|\g_n*\rho_n(t)-1\|_{2}^2,
\end{equation}
Combining (\ref{equ_b6}) with (\ref{equ_doob1plus}) and taking $\lambda$ such that $C_2 \lambda=\frac{1}{3}$, we have
$$
\begin{aligned}
&P_{\eta_n}\bigg\{\sup_{0<t\leq T}\bigg(e(\g_n*\rho_n(t))-\int_{0}^{t}\lr\nabla \n_n*\rho_n(s),\op(\rho_n(s))\rr ds\\
&+\frac{2\nu}{3}\int_{0}^{t}\|\g_n*\rho_n(s)-1\|_{2}^2 ds\bigg)>l\bigg\}\leq e^{-n\lambda(l-R-\omega_nT)}.
\end{aligned}
$$
For each $\varepsilon>0$, define $E_{n,l,\varepsilon}$ as a subset of $C([0,T];\p(\t))$ by
$$
\begin{aligned}
&E_{n,l,\varepsilon}:=\bigg\{\rho:\sup_{0<t\leq T}\bigg(e(\g_n*\rho(t))-\int_{0}^{t}\lr\nabla \n_n*\rho(s),\op(\rho(s))\rr ds\\
&+\frac{2\nu}{3}\int_{0}^{t}\|\g_n*\rho(s)-1\|_{2}^2 ds\bigg)\leq l-\varepsilon\bigg\}.
\end{aligned}
$$
By Lemma \ref{lem_rzq2}, there exists a sequence $c_n\downarrow 0$ only dependent on $l$ such that if $e(\g_n*\rho(s))\leq l$, then
\begin{equation}\label{equ_lem_Q1plus_1}
\big|\lr \nabla\n_n*\rho(s), \op(\rho(s))\big|\rr\leq c_n\|\g_n*\rho(s)\|_{2}^2=c_n\|\g_n*\rho(s)-1\|_{2}^2+c_n.
\end{equation}
Take $n$ big enough such that $\frac{2\nu}{3}-c_n>\frac{\nu}{2}$ and $c_n<\varepsilon$. Define $$\tau_l:=\inf\left\{t:e(\g_n*\rho(t))+\frac{\nu}{2}\int_{0}^{t}\|\g_n*\rho(s)-1\|_{2}^2 ds>l\right\}\wedge T.$$ We claim that if $\rho\in E_{n,l,\varepsilon}$ and $e(\g_n*\rho(0))\leq l$, then $\tau_l=T$.

To prove it by contradiction, suppose $\tau_l<T$. Since $\rho \in C([0,T];\p(\t))$,
\begin{equation}\label{equ_lem_Q1plus_21}
e(\g_n*\rho(\tau_l))+\frac{\nu}{2}\int_{0}^{\tau_l}\|\g_n*\rho(s)-1\|_2^2ds=l.
\end{equation}
Noting $\rho\in E_{n,l,\varepsilon}$, we have
\begin{equation}\label{equ_lem_Q1plus_2}
e(\g_n*\rho(\tau_l))-\int_{0}^{\tau_l}\lr\nabla \n_n*\rho(s),\op(\rho(s))\rr ds+\frac{2\nu}{3}\int_{0}^{\tau_l}\|\g_n*\rho(s)-1\|_{2}^2 ds\leq l-\varepsilon.
\end{equation}
However, (\ref{equ_lem_Q1plus_21}), (\ref{equ_lem_Q1plus_1}) and (\ref{equ_lem_Q1plus_2}) imply $$
l-c_n\leq e(\g_n*\rho(\tau_l))+\left(\frac{2\nu}{3}-c_n\right)\int_{0}^{\tau_l}\|\g_n*\rho(s)-1\|_{2}^2 ds-c_n\leq l-\varepsilon,
$$
which is a contradiction.
Hence, for each $\rho\in E_{n,l,\varepsilon}$ with $e(\g_n*\rho(0))\leq l$, there exists $n_1$ such that if $n>n_1$ then $Q_T(\g_n*\rho_n)\leq l.$

Therefore, $$P_{\eta_n}\left(Q_T(\g_n*\rho_n)>l\right)\leq P_{\eta_n}\left(\g_n*\rho_n\in \left(E_{n,l,\varepsilon}\right)^c\right)\leq e^{-n\lambda(l-\varepsilon-R-\omega_nT)}.$$
Recall $nm_n^{-2}\to\infty,$ so $\limsup_{n\to\infty}\omega_n\leq \lim_{n\to\infty}\frac{\nu m_n^2\|G\|_{\infty}\|\n\|_{1}}{n}=0$. By the arbitrariness of $\varepsilon$ the conclusion follows.
\end{proof}

\section{Regularity of trajectories with finite rate function}\label{sec_regular}
In this section, we study the regularity of trajectories with finite rate function to give a more direct expression for the rate function and prove Lemma \ref{lem_vrotrf}. These regularity results are also preparations for the proof of subsequent lemmas in the next section.

\subsection{Weighted Sobolev space and Riesz representation}

To obtain the explicit form of rate function, we need a notation of weighted Sobolev space $H^{1}_\rho([0,T]\times\t).$
For $\phi\in C^\infty([0,T]\times\t)$ and $\rho\in C([0,T];\p(\t))$, define the norm
$$\|\phi\|_{1,\rho,T}^2=\int_{0}^{T}\int_{\t}|\nabla \phi(t)|^2d\rho(t)dt.$$
Define $H^{1}_\rho([0,T]\times\t)$ as the completion of $C_0^\infty([0,T]\times\t)=\{\phi\in C^\infty([0,T]\times\t):\int_0^T\int_{\t} \phi(t,x)dxdt=0\}$ under $\|\cdot\|_{1,\rho,T}.$
That is a Hilbert space with inner product
$$
\lr p_1,p_2\rr_{1,\rho,T}=\frac{1}{4}\left(\| p_1+p_2\|^2_{1,\rho,T}-\| p_1-p_2\|^2_{1,\rho,T}\right).
$$
Then for any $p\in H^{1}_\rho([0,T]\times\t)$, there exists a function $\hat{\nabla}p$ defined on $[0,T]\times\t$ such that
$$
\int_0^T \int_{\t}|\hat{\nabla}p(t)|^2d\rho(t)dt<\infty,
$$
and
$$
\lr p,\phi\rr_{1,\rho,T}=\int_{0}^{T}\int_{\t}\hat{\nabla}p(t,x)\cdot \nabla \phi(t,x)d\rho(t)dt,\quad \forall \phi\in C^\infty([0,T]\times\t).
$$
The inner product thus can be written as
$$
\lr p_1,p_2\rr_{1,\rho,T}=\int_{0}^{T}\int_{\t}\hat{\nabla}p_1(t,x)\cdot \hat{\nabla}p_2(t,x)d\rho(t)dt,\quad \forall p_1,p_2\in  H^{1}_\rho([0,T]\times\t).
$$
Recall that
$$
\begin{aligned}
&\overline{\act}_{T}(\rho):=\sup_{\phi\in C^\infty([0,T]\times \t)}\bigg(
\lr\phi(T),\rho(T)\rr-\lr\phi(0),\rho(t_0)\rr
-\int_{0}^{T}\lr\partial_t \phi(s),\rho(s)\rr ds\\
&-\nu\int_{0}^{T}\lr\Delta \phi(s),\rho(s)\rr ds
-\int_{0}^{T}\lr\nabla \phi(s),\op(\rho(s))\rr ds
-\nu\int_{0}^{T}\int_{\t}|\nabla \phi(s)|^2d\rho(s)ds\bigg).
\end{aligned}
$$
We claim that $\overline{\act}_{T}(\rho)<\infty$ means $\partial_t\rho-\nu\Delta \rho+\div \op(\rho)$ can be seen as a bounded linear operator on $H^{1}_\rho([0,T]\times\t)$. To obtain that, for $\phi\in C^\infty([0,T]\times \t)$ define
$$
\begin{aligned}
&L_\rho\phi:=
\lr\partial_t \rho-\nu\Delta \rho +\div \op(\rho),\phi\rr=\lr\phi(T),\rho(T)\rr-\lr\phi(0),\rho(0)\rr\\
&-\int_0^T \lr\partial_t \phi(t),\rho(t)\rr dt
-\nu\int_{0}^{T}\lr\Delta \phi(t),\rho(t)\rr dt
-\int_{0}^{T}\lr \nabla \phi(t),\op(\rho(t))\rr dt.
\end{aligned}
$$
By the definition of $\overline{\act}_T$, for a test function $\phi$, taking $\phi_k=k\phi$, we have
$$\sup_{k}\left(L_{\rho}(k\phi)-\nu\|k\phi\|_{1,\rho,T}^2\right)=\frac{(L_{\rho}\phi)^2}{4\nu \|\phi\|_{1,\rho,T}^2}\leq \overline{\act}_T(\rho),$$ implying $L_{\rho}$ is a bounded operator on $H^{1}_\rho([0,T]\times\t)$.

By Riesz representation theorem, if $\overline{\act}_{T}(\rho)<\infty$, there exists $p\in H^{1}_{\rho}([0,T]\times\t)$ such that $$
\int_{0}^{T}\int_{\t}|\hat{\nabla} p(t)|^2d\rho(t)dt\leq 4\nu\overline{\act}_T(\rho)$$ and $L_{\rho}\phi=\lr\phi,p\rr_{1,\rho,T}$, i.e. for each $\phi\in C^\infty([0,T]\times \t),$
\begin{equation}\label{equ_std_m1}
\begin{aligned}
&\lr \phi(T),\rho(T)\rr-\lr \phi(0),\rho(0)\rr=\int_{0}^{T}\lr\partial_t\phi(r),\rho(r)\rr dr +\nu\int_{s}^{t}\lr \Delta \phi(r),\rho(r)\rr dr\\
&+\int_{0}^{T}\lr\nabla \phi(r)\cdot \hat{\nabla}p(r),\rho(r)\rr dr+\int_{0}^{T}\lr \nabla \phi(r),\op(\rho(r))\rr dr.
\end{aligned}
\end{equation}
In addition, by (\ref{equ_std_m1}), taking smooth test function approximating $\frac{1}{2\nu}\hat{\nabla} p$ in the definition of $\overline{\act}_T(\rho)$, we finally have
\begin{equation}\label{equ_ovlact}
\overline{\act}_{T}(\rho)=\frac{1}{4\nu}\int_{0}^{T}\int_{\t}|\hat{\nabla} p(t)|^2d\rho(t)dt.
\end{equation}

More properties of weighted Sobolev space is included in Appendix A.

\subsection{Production estimations of energy and entropy}
In this subsetion we will prove production estimations of energy and entropy.
\begin{lem}\label{lem_etrest}
Suppose $\rho \in C([0,T];\p
(\t))$, $Q_T(\rho)<\infty$ and $\overline{\act}_T(\rho)<\infty.$ Then there exists $\hat{\nabla}p:[0,T]\times \t\mapsto \r,$ such that (\ref{equ_ovlact}) holds and for each $\phi\in C^\infty([0,T]\times \t)$,
\begin{equation}\label{equ_std_1}
\begin{aligned}
&\lr \phi(t),\rho(t)\rr-\lr \phi(s),\rho(s)\rr=\int_{s}^{t}\lr\partial_t\phi(r),\rho(r)\rr dr +\nu\int_{s}^{t}\lr \Delta \phi(r),\rho(r)\rr dr\\
&+\int_{s}^{t}\lr\nabla \phi(r)\cdot \hat{\nabla}p(r),\rho(r)\rr dr+\int_{s}^{t}\lr \nabla \phi(r),\op(\rho(r))\rr dr.
\end{aligned}
\end{equation}
In addition, for $0\leq s<t\leq T,$
\begin{equation}\label{equ_etrest1}
e(\rho(t))-e(\rho(s))=-\nu\int_{s}^{t}\|\rho(r)-1\|_{2}^2dr+\int_{s}^{t}\int_{\t}(\nabla\n*\rho)(r,x)\cdot
\hat{\nabla}p(r,x)\rho(r,x)dr.
\end{equation}
\end{lem}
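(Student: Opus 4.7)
The identity (\ref{equ_std_m1}) coincides with (\ref{equ_std_1}) for $s=0$ and $t=T$; the Riesz-representation argument preceding the lemma produces $\hat{\nabla}p$ and already delivers (\ref{equ_ovlact}). I would extend to arbitrary $0\le s<t\le T$ by testing (\ref{equ_std_m1}) with $\phi(r,x)\chi_\varepsilon(r)$, where $\chi_\varepsilon\in C_c^\infty(0,T)$ is a smooth approximation of $\chi_{[s,t]}$. As $\varepsilon\to 0$, the $\phi\partial_r\chi_\varepsilon$ contribution to the time integral in (\ref{equ_std_m1}) yields $\lr\phi(t),\rho(t)\rr-\lr\phi(s),\rho(s)\rr$ by weak continuity of $r\mapsto\lr\phi(r),\rho(r)\rr$, while the remaining terms collapse to integrals over $[s,t]$ by dominated convergence, using $Q_T(\rho)<\infty$ and $\hat{\nabla}p\in L^2(d\rho(r)dr)$.

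\textbf{Energy identity: setup and partition argument.} Let $J_\varepsilon=\g_{n(\varepsilon)}$ with $m_{n(\varepsilon)}\uparrow\infty$ be drawn from the sequence of Lemma \ref{lem_rzq2}, and set $\n_\varepsilon:=J_\varepsilon*J_\varepsilon*\n$. This kernel is smooth and even, and a direct computation from $-\Delta\n=\delta_0-1$ gives $\Delta\n_\varepsilon=1-J_\varepsilon*J_\varepsilon$. For a partition $s=t_0<\cdots<t_N=t$, I apply the extended equation (\ref{equ_std_1}) on each $[t_k,t_{k+1}]$ to both constant-in-$r$ test functions $\phi=\n_\varepsilon*\rho(t_k)$ and $\phi=\n_\varepsilon*\rho(t_{k+1})$. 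Using the symmetry $\lr\n_\varepsilon*\rho(t_k),\rho(t_{k+1})\rr=\lr\n_\varepsilon*\rho(t_{k+1}),\rho(t_k)\rr$ (from evenness of $\n_\varepsilon$), adding the two identities cancels the cross-terms, and summing over $k$ telescopes the LHS to $\lr\n_\varepsilon*\rho(t),\rho(t)\rr-\lr\n_\varepsilon*\rho(s),\rho(s)\rr$. Refining the partition and invoking continuity of $\rho\in C([0,T];\p(\t))$ for the linear integrands, and joint weak continuity of $\gamma\mapsto\lr\nabla\n_\varepsilon*\gamma,\op(\gamma)\rr$ (guaranteed by smoothness of $\n_\varepsilon$ and (\ref{equ_opbound})) for the nonlinear one, the RHS Riemann sums converge to $2\int_s^t\bigl[\nu\lr\Delta\n_\varepsilon*\rho(r),\rho(r)\rr+\lr\nabla\n_\varepsilon*\rho(r),\op(\rho(r))\rr+\lr\nabla\n_\varepsilon*\rho(r)\cdot\hat{\nabla}p(r),\rho(r)\rr\bigr]dr$.

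\textbf{Passing to the limit $\varepsilon\to 0$.} The LHS converges to $2[e(\rho(t))-e(\rho(s))]$ by dominated convergence (since $\n$ is bounded below and $\n_\varepsilon\to\n$ off the diagonal). The Laplacian integrand equals $-\nu\|J_\varepsilon*\rho(r)-1\|_2^2$ (using $\int\rho=1$), which converges to $-\nu\|\rho(r)-1\|_2^2$ for a.e.\ $r$ where $\rho(r)\in L^2$, with dominated convergence justified by $\int_s^t\|\rho(r)-1\|_2^2\, dr\le 2Q_T(\rho)/\nu<\infty$. The $\hat{\nabla}p$-term converges to $2\int_s^t\!\int\nabla\n*\rho(r,x)\cdot\hat{\nabla}p(r,x)\rho(r,dx)dr$: Lemma \ref{lem_B_6} gives uniform $L^2(\rho(r))$-bounds on $\nabla\n_\varepsilon*\rho(r)$ and convergence to $\nabla\n*\rho(r)$, paired with $\hat{\nabla}p\in L^2(d\rho(r)dr)$ via Cauchy-Schwarz. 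The singular term vanishes by Lemma \ref{lem_rzq2}: $|\lr\nabla\n_\varepsilon*\rho(r),\op(\rho(r))\rr|\le c_{n(\varepsilon)}\|J_\varepsilon*\rho(r)\|_2^2\le c_{n(\varepsilon)}(\|\rho(r)\|_2^2+C)$ with $c_{n(\varepsilon)}\to 0$, and $\int_s^t\|\rho(r)\|_2^2\, dr<\infty$ by $Q_T(\rho)<\infty$. Dividing by $2$ produces (\ref{equ_etrest1}).

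\textbf{Main obstacle.} The crucial step is the $\varepsilon\to 0$ limit of the nonlinear piece $\int_s^t\lr\nabla\n_\varepsilon*\rho(r),\op(\rho(r))\rr dr$. The formal pointwise orthogonality of $\nabla\n$ and $\k=-\nabla^\perp\n$ is only valid for atomic measures and is unavailable for general $\rho$, so the rigorous vanishing genuinely requires the quantitative singular-kernel estimate of Lemma \ref{lem_rzq2} combined with the $L^2$-in-time bound inherited from $Q_T(\rho)<\infty$. This is precisely where both finite-energy and finite $L^2$-integral hypotheses enter, mirroring the energy-dissipation mechanism emphasized throughout the paper.
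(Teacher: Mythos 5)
Your proposal is correct in substance but reaches the energy identity (\ref{equ_etrest1}) by a genuinely different route than the paper. The paper mollifies the \emph{density}: it tests (\ref{equ_std_1}) with $\phi(\cdot)=J_n(x-\cdot)$ to obtain the a.e.-$t$ equation (\ref{equ_std_3}) for $J_n*\rho$, checks that $t\mapsto e(J_n*\rho(t))$ is Lipschitz, and differentiates in time; you mollify the \emph{kernel}, $\n_\varepsilon=G_n*\n$, and derive the same mollified balance (note $\lr\n_\varepsilon*\rho,\rho\rr=2e(\g_n*\rho)$, so the two regularizations produce identical quantities) by telescoping the weak formulation over a partition with the time-frozen test functions $\n_\varepsilon*\rho(t_k)$ and $\n_\varepsilon*\rho(t_{k+1})$; with (\ref{equ_opbound}) and the integrability in $r$ of $\big(\int_{\t}|\hat{\nabla}p(r)|^2d\rho(r)\big)^{1/2}$ supplying the domination, that refinement limit is legitimate. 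For the vanishing of the nonlinear term you invoke Lemma \ref{lem_rzq2} with the uniform bound $e(\rho(r))\le Q_T(\rho)$ and $\int_s^t\|\rho(r)-1\|_2^2dr<\infty$, whereas the paper writes $\op(\rho(r))=\rho(r)(\k*\rho(r))$ for a.e.\ $r$ (since $\rho(r)\in L^2(\t)$), uses the pointwise identity $(\nabla\n*\rho)\cdot(\k*\rho)=0$, and controls the remainder in $L^4$ via Lemma \ref{lem_B_6}; both are valid, yours using heavier machinery but applying uniformly in $r$. Two corrections: (i) your ``main obstacle'' remark is backwards --- the orthogonality $(\nabla\n*\rho)\cdot(\k*\rho)=0$ is exactly what \emph{is} available for a general $L^2$ density (it is for empirical/atomic measures that it is meaningless, where the paper instead relies on the Delort antisymmetrization), so Lemma \ref{lem_rzq2} is convenient here but not necessary; (ii) for $\lr\n_\varepsilon*\rho(t),\rho(t)\rr\to 2e(\rho(t))$, boundedness below plus pointwise convergence off the diagonal only yields the Fatou lower bound --- for the upper direction you should cite the one-sided domination $G_n*\n\le\n+Cm_n^{-2}$ of (\ref{equ_gnn3}), together with $\n\in L^1(\rho(t)\otimes\rho(t))$ from $e(\rho(t))\le Q_T(\rho)$, so that generalized dominated convergence applies. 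The Riesz-representation step and the extension of (\ref{equ_std_m1}) to (\ref{equ_std_1}) by smooth truncation coincide with the paper's argument.
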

\begin{proof}
Since $\overline{\act}_T(\rho)<\infty$, as discussed in the previous subsection, there exists $\hat{\nabla}p$ satisfying (\ref{equ_ovlact}) and (\ref{equ_std_m1}) holds for each $\phi\in C^\infty([0,T]\times\t)$. Since $\rho\in C([0,T];\p(\t))$, it's standard to see (\ref{equ_std_1}) holds for each $\phi\in C^\infty([0,T]\times\t)$ by smooth approximation of truncated functions only supported on $[s,t]$.

Take a smooth mollifier $J$ with compact support and define $J_n(x):=n^2J\left(\frac{x}{n}\right)$.  Taking $\phi(y)=J_n(x-y)$ in (\ref{equ_std_1}), noting $\rho(t)\in L^2(\t)$ for almost every $t$ and by (\ref{equ_rucv}), we can check that $\hat{\rho}_n(t,x):=(J_n*\rho)(t,x)$ is absolutely continuous with respect to $t$ and for a.e.-$t$
\begin{equation}\label{equ_std_3}
\partial_t \hat{\rho}_n(t,x)-\nu\Delta \hat{\rho}_n(t,x)+[J_n*\div (\rho(t) u(t))](x)+[J_n*\div (\rho(t) \hat{\nabla}p(t))](x)=0,
\end{equation}
where $u(t)=\k*\rho(t)$.

Now we show $e(\hat{\rho}_n(t))$ is absolutely continuous with respect to $t$. Notice that $\|\hat{\rho}_n\|_{\infty}$ and $\|\partial_t\hat{\rho}_n\|_{\infty}$ are bounded by constants which only depend on $J_n$, then for $0\leq s<t\leq T,$ $$
\begin{aligned}
&|e(\hat{\rho}_n(t))-e(\hat{\rho}_n(s))|\\
&=\frac{1}{2}\bigg|\int_{(\t)^2}\n(x-y)\hat{\rho}_n(t,x)\hat{\rho}_n(t,y)dxdy
-\int_{(\t)^2}\n(x-y)\hat{\rho}_n(s,x)\hat{\rho}_n(s,y)dxdy\bigg|\\
&=\frac{1}{2}\bigg|\int_{(\t)^2}\n(x-y)\big[\hat{\rho}_n(t,x)
(\hat{\rho}_n(t,y)-\hat{\rho}_n(s,y))
-(\hat{\rho}_n(t,x)-\hat{\rho}_n(s,x))\hat{\rho}_n(s,y)\big]
dxdy\bigg|\\
&\leq (t-s) \|\hat{\rho}_n\|_{\infty}\|\partial_t\hat{\rho}_n\|_{\infty} \int_{(\t)^2}|\n(x-y)|dxdy\leq C_n(t-s).
\end{aligned}
$$
Then by directly taking derivative and dominated convergence theorem, we have
$$
\partial_t e(\hat{\rho}_n(t,x))=\int_{(\t)^2}\n(x-y)\hat{\rho}_n(t,y)\partial_t \hat{\rho}_n(t,x)dxdy.
$$
By (\ref{equ_std_3}) and noticing $\Delta (J_n*J_n)*\n=1-J_n*J_n$ as well as $\int_{(\t)^2}(J_n*J_n)(x-y)\rho(t,dx)\rho(t,dy)-1=\|J_n*\rho(t)-1\|_{2}^2,$
\begin{equation}\label{equ_lem_etrest_1}
\begin{aligned}
&e(J_n*\rho(t))-e(J_n*\rho(s))=-\nu\int_{s}^{t}\|J_n*\rho(r)-1\|_{2}^2dr\\
&+\int_{s}^{t}\int_{\t}(J_n*J_n*\nabla \n*\rho)(r,x)\cdot u(r,x) \rho(r,x)dxdr\\
&+\int_{s}^{t}\int_{\t}(J_n*J_n*\nabla \n*\rho)(r,x)\cdot \hat{\nabla}p(r,x) \rho(r,dx)ds.
\end{aligned}
\end{equation}
To use (\ref{equ_lem_etrest_1}) to show (\ref{equ_etrest1}), we just need to prove that
\begin{equation}\label{equ_lem_etrest3}
\begin{aligned}
&\lim_{n\to\infty}\int_{s}^{t}\int_{\t}(J_n*J_n*\nabla \n*\rho)(r,x)\cdot u(r,x) \rho(r,x)dxdr=0,
\end{aligned}
\end{equation}
\begin{equation}\label{equ_lem_etrest4}
\begin{aligned}
&\lim_{n\to\infty}\int_{s}^{t}\int_{\t}(J_n*J_n*\nabla \n*\rho)(r,x)\cdot \hat{\nabla}p(r,x) \rho(r,x)dxdr\\
&=\int_{s}^{t}\int_{\t}(\nabla \n*\rho)(r,x)\cdot \hat{\nabla}p(r,x) \rho(r,x)dxdr,
\end{aligned}
\end{equation}
and the convergence of rest terms can be proved by Fatou's Lemma and Jensen's inequality.

Note that $\|\rho(r)\|_2<\infty$ for a.e. $r$ since $Q_T(\rho)<\infty$. By Lemma \ref{lem_B_6}, for a.e. $r$, $\nabla \n*\rho(r)\in L^4(\t)$ so that by property of smooth mollifiers,
\begin{equation}\label{equ_lem_etrest7}
\lim_{n\to\infty}\|(J_n*J_n*\nabla \n*\rho)(r)-\nabla \n*\rho(r)\|_4=0.
\end{equation}
By Holder's inequality, for a.e. $r$,
\begin{equation}\label{equ_lem_etrest6}
\begin{aligned}
&\left|\int_{\t}(J_n*J_n*\nabla \n*\rho)(r,x)\cdot u(r,x) \rho(r,x)dx\right|\\
&=\left|\int_{\t}(J_n*J_n*\nabla \n*\rho-\nabla \n*\rho)(r,x)\cdot u(r,x) \rho(r,x)dx\right|\\
&\leq  \|\rho(r)\|_2\|u(r)\|_4\|(J_n*J_n*\nabla \n*\rho)(r)-\nabla \n*\rho(r)\|_4\to 0
\end{aligned}
\end{equation}
where we used the fact that $(\nabla \n*\rho)(r,x)\cdot u(r,x)=0.$
So far we have proved that
$$
\lim_{n\to\infty}\int_{\t}(J_n*J_n*\nabla \n*\rho)(r,x)\cdot u(r,x) \rho(r,x)dx=0\text{ for a.e. }r.
$$
Apply Lemma \ref{lem_B_6} along with Jensen's inequality to (\ref{equ_lem_etrest6}),
$$
\left|\int_{\t}(J_n*J_n*\nabla \n*\rho)(r,x)\cdot u(r,x) \rho(r,x)dx\right|\leq
C\|\rho(r)\|_2^2.
$$
Since $Q_T(\rho)<\infty$, by dominated convergence theorem, (\ref{equ_lem_etrest3}) follows.

Turing to (\ref{equ_lem_etrest4}), by Holder's inequality,
\begin{equation}\label{equ_lem_etrest5}
\begin{aligned}
&\bigg|\int_{\t}(J_n*J_n*\nabla \n*\rho-\nabla\n*\rho)(r,x)\cdot \hat{\nabla}p(r,x) \rho(r,x)dx\bigg|\\
&\leq  \left(\int_{\t}|\hat{\nabla}p(r,x)|^2 \rho(r,x)dx\right)^{\frac{1}{2}}\|\rho(r)\|_2^{\frac{1}{2}}
\|(J_n*J_n*\nabla \n*\rho-\nabla\n*\rho)(r)\|_4.
\end{aligned}
\end{equation}
The condition $\overline{\act}_T(\rho)<\infty$ along with (\ref{equ_ovlact}) implies $\int_{\t}|\hat{\nabla}p(r,x)|^2 \rho(r,x)dx<\infty$ for a.e. $r$. Thus by (\ref{equ_lem_etrest7}) the point-wise convergence of  $\int_{\t}(J_n*J_n*\nabla \n*\rho)(r,x)\cdot \hat{\nabla}p(r,x) \rho(r,x)dx$ holds. Finally, by Lemma \ref{lem_B_6} and Jensen's inequality, the RHS of (\ref{equ_lem_etrest5}) is bounded by $\int_{\t}|\hat{\nabla}p(r,x)|^2 \rho(r,x)dx+C\|\rho(r)\|_2^2,$ which is integrable. By dominated convergence theorem, we reach (\ref{equ_lem_etrest4}).
\end{proof}
Let the entropy functional be defined by $$
S(\gamma)=\left\{
\begin{aligned}
&\int_{\t}\gamma(x)\log \gamma(x) dx,&\text{if }\gamma(dx)=\gamma(x)dx,\\
&\infty,&\text{otherwise},
\end{aligned}
\right.
$$
and the Fisher information functional be defined by $$I(\gamma)=\left\{
\begin{aligned}
&\int_{\t}\frac{|\nabla \gamma(x)|^2}{\gamma(x)}dx,&\text{if }\gamma(dx)=\gamma(x)dx \text{ and }\nabla \gamma \in L^1(\t),\\
&\infty,&otherwise.
\end{aligned}
\right.
$$
\begin{lem}\label{lem_decay}
If $\rho$ satisfies assumptions in Lemma \ref{lem_etrest}, then for each $t\in(0,T]$,
\begin{equation}\label{equ_decay}
S(\rho(t))\leq 2\overline{\act}_T(\rho)+\log\left(\frac{2Q_T(\rho)}{\nu t}+1\right).
\end{equation}
In addition, for each $\alpha>0$,
\begin{equation}\label{equ_decay1}
\int_{0}^{T}t^\alpha I(\rho(t))dt<\infty.
\end{equation}
\end{lem}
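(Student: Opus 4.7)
The plan is to establish the entropy dissipation inequality
\begin{equation*}
S(\rho(t)) + \tfrac{\nu}{2}\int_s^t I(\rho(r))\,dr \le S(\rho(s)) + 2\overline{\act}_T(\rho), \qquad 0\le s<t\le T,
\end{equation*}
and then optimise over $s$ using the $L^2$-integrability encoded in $Q_T(\rho)<\infty$. To make the formal entropy calculation rigorous I reuse the mollification device from the proof of Lemma \ref{lem_etrest}: set $\hat\rho_m(t,x):=(J_m*\rho(t))(x)$ for a smooth mollifier $J_m(x)=m^2 J(mx)$. By (\ref{equ_std_3}), $\hat\rho_m$ is smooth in $x$, absolutely continuous in $t$, and satisfies classically
\begin{equation*}
\partial_t\hat\rho_m-\nu\Delta\hat\rho_m+\div[J_m*(\rho u)]+\div[J_m*(\rho\hat\nabla p)]=0,\qquad u:=\k*\rho.
\end{equation*}

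For $\epsilon>0$ I multiply by $\log(\hat\rho_m+\epsilon)$ and integrate by parts on $\t$ to obtain the identity
\begin{align*}
S_\epsilon(\hat\rho_m(t))-S_\epsilon(\hat\rho_m(s))
&=-\nu\int_s^t\!\!\int_{\t}\frac{|\nabla\hat\rho_m|^2}{\hat\rho_m+\epsilon}\,dx\,dr\\
&\quad+\int_s^t\!\!\int_{\t}\nabla\log(\hat\rho_m+\epsilon)\cdot J_m*(\rho u)\,dx\,dr\\
&\quad+\int_s^t\!\!\int_{\t}\nabla\log(\hat\rho_m+\epsilon)\cdot J_m*(\rho\hat\nabla p)\,dx\,dr,
\end{align*}
where $S_\epsilon(f):=\int(f+\epsilon)\log(f+\epsilon)\,dx$. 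The pointwise Cauchy--Schwarz bound $|J_m*(\rho\hat\nabla p)|^2\le\hat\rho_m\cdot J_m*(\rho|\hat\nabla p|^2)$ combined with Young's inequality $|a\cdot b|\le\frac{\nu}{2}|a|^2+\frac{1}{2\nu}|b|^2$ controls the last line by $\frac{\nu}{2}\int_s^t\!\!\int\frac{|\nabla\hat\rho_m|^2}{\hat\rho_m+\epsilon}dx\,dr+\frac{1}{2\nu}\int_s^t\!\!\int\rho|\hat\nabla p|^2\,dx\,dr$. For the middle line I exploit $\div u=\div(\k*\rho)=0$: a further integration by parts rewrites it as $-\int_s^t\!\!\int J_m*\log(\hat\rho_m+\epsilon)\,\div(\rho u)\,dx\,dr$, whose formal limit as $m\to\infty$ then $\epsilon\to 0$ reduces to $\int_s^t\!\!\int\nabla\rho\cdot u\,dx\,dr=0$. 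The rigorous passage uses $u\in L^4(\t)$ (Lemma \ref{lem_B_6}) and $\rho(r)\in L^2(\t)$ for a.e.\ $r$ (from $Q_T(\rho)<\infty$) to produce a vanishing mollification error. Combining the three estimates, sending $m\to\infty$ via weak lower semicontinuity of $S$ together with the convolution-convexity inequality $S(J_m*\rho)\le S(\rho)$, then $\epsilon\to 0$ by monotone convergence, and using $\int_0^T\!\int|\hat\nabla p|^2\rho\,dx\,dr=4\nu\overline{\act}_T(\rho)$ from (\ref{equ_ovlact}), I obtain the announced dissipation inequality.

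To prove (\ref{equ_decay}) I average in $s$: from $\int_0^t\|\rho(r)-1\|_2^2\,dr\le\frac{2Q_T(\rho)}{\nu}$ the mean-value theorem supplies some $s\in(0,t]$ with $\|\rho(s)-1\|_2^2\le\frac{2Q_T(\rho)}{\nu t}$, whence $\|\rho(s)\|_2^2=\|\rho(s)-1\|_2^2+1\le\frac{2Q_T(\rho)}{\nu t}+1$. Applying Jensen's inequality to the concave $\log$ under the probability measure $\rho(s)\,dx$ gives $S(\rho(s))\le\log\|\rho(s)\|_2^2$, and dropping the non-negative Fisher term in the dissipation inequality yields the desired bound on $S(\rho(t))$. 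For (\ref{equ_decay1}), using $S(\rho(t))\ge 0$ (Jensen on $\t$) in the same inequality gives, for every $s\in(0,T]$,
\begin{equation*}
\int_s^T I(\rho(r))\,dr \le \tfrac{2}{\nu}\bigl(S(\rho(s))+2\overline{\act}_T(\rho)\bigr)\le C_1+C_2\log(1/s)
\end{equation*}
by (\ref{equ_decay}). Writing $t^\alpha=\alpha\int_0^t s^{\alpha-1}\,ds$ and applying Fubini,
\begin{equation*}
\int_0^T t^\alpha I(\rho(t))\,dt=\alpha\int_0^T s^{\alpha-1}\!\int_s^T I(\rho(t))\,dt\,ds\le \alpha\int_0^T s^{\alpha-1}\bigl(C_1+C_2\log(1/s)\bigr)ds,
\end{equation*}
which is finite for every $\alpha>0$.

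The main obstacle is the rigorous vanishing of the singular velocity term: since $\nabla\rho$ need not belong to any classical Sobolev space, the commutator $\div[J_m*(\rho u)]-J_m*\div(\rho u)$ calls for a DiPerna--Lions-type estimate, and the limit is saved only by the divergence-free structure of $u$ together with the $L^4$ bound of Lemma \ref{lem_B_6} and the finite $L^2$-integral in time from $Q_T(\rho)<\infty$. Once this step is in hand the rest is a clean application of Jensen's inequality, Young's inequality, and Fubini's theorem.
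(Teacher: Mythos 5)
Your overall architecture is the right one (mollify via (\ref{equ_std_3}), write the entropy balance, use Young for the $\hat{\nabla}p$-term together with $\int_0^T\int_{\t}|\hat{\nabla}p|^2\,d\rho\,dt=4\nu\overline{\act}_T(\rho)$, use $\div u=0$ for the transport term, then pass to the limit), and your endgame is a legitimate alternative to the paper's: instead of the paper's Gronwall argument on the weighted quantity $t e^{S(\hat{\rho}_n(t))}$, you prove a two-time dissipation inequality and then pick a good time $s\in(0,t]$ by Chebyshev from $\frac{\nu}{2}\int_0^t\|\rho-1\|_2^2\le Q_T(\rho)$ (valid since $e\ge 0$ by (\ref{equ_prop_n3})) plus Jensen $S(\rho(s))\le\log\|\rho(s)\|_2^2=\log(\|\rho(s)-1\|_2^2+1)$; this yields exactly (\ref{equ_decay}), and your Fubini computation for (\ref{equ_decay1}) is correct since only the logarithmic singularity of $S(\rho(s))$ as $s\to 0$ enters.

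However, there is a genuine gap at what is in fact the core of the lemma: the vanishing of the mollification error in the singular transport term. You integrate by parts so that the term reads $-\int J_m*\log(\hat{\rho}_m+\epsilon)\,\div(\rho u)\,dx$ and assert its limit is ``$\int\nabla\rho\cdot u\,dx=0$'', to be justified by a DiPerna--Lions commutator estimate. Two problems: (i) the limiting identity is not even well-defined at this stage, since for a.e.\ $r$ one only knows $\rho(r)\in L^2(\t)$ and no weak gradient of $\rho(r)$ is available (indeed $I(\rho(r))<\infty$ a.e.\ is part of what the lemma is proving); (ii) with $\rho\in L^2$ and $\nabla u\in L^2$ (Calder\'on--Zygmund from $u=\k*\rho$), the DiPerna--Lions commutator $\div[J_m*(\rho u)]-\div[(J_m*\rho)u]$ converges only in $L^1$, which is not enough to pair against $\log(\hat{\rho}_m+\epsilon)$, whose positive part grows like $\hat{\rho}_m$ and is not uniformly bounded in $m$. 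The paper closes precisely this step differently: it keeps the term in weighted form, uses the exact cancellation $\int_{\t}\nabla\hat{\rho}_n\cdot u\,dx=0$ (legitimate because $\hat{\rho}_n$ is smooth and $\div u=0$ distributionally), applies Cauchy--Schwarz with weight $\hat{\rho}_n$ so that a fraction of the Fisher information absorbs one factor, and shows the remaining error $\varepsilon_n=\int_0^T\int_{\t}\big|\frac{J_n*(\rho u)}{J_n*\rho}-u\big|^2 J_n*\rho\,dx\,dr\to 0$ by expanding the square, invoking Lemma 8.1.10 of \cite{RN217} for the quotient term, Lemma \ref{lem_B_6} for the domination $\int|u|^2\,d\rho\le C\|\rho-1\|_2^2$, and dominated convergence in time. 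Your budgeting of the full $\frac{\nu}{2}$ Young constant to the $\hat{\nabla}p$-term is not itself an obstruction (a smaller fraction of Fisher suffices and only changes the constant in front of $\int I$, irrelevant for (\ref{equ_decay1})), but without replacing your sketched commutator argument by something of this weighted type, the dissipation inequality on which both conclusions rest is not established.
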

\begin{proof}
We start with proving (\ref{equ_decay}). Still using the smooth mollifiers $J_n$ in Lemma \ref{lem_etrest}, let $u=\k*\rho,\hat{\rho}_n=J_n*\rho_n$. By (\ref{equ_std_3}),
\begin{equation}\label{equ_lem_decay_1}
\begin{aligned}
&te^{S(\hat{\rho}_n(t))}=\int_0^t \partial_r\left[re^{S(\hat{\rho}_n(r))}\right]dr\\
&=\int_{0}^{t}e^{S(\hat{\rho}_n(r))}dr
+\int_{0}^{t}re^{S(\hat{\rho}_n(r))}\lr 1+\log(\hat{\rho}_n(r)),\partial_t \hat{\rho}_n(r)\rr dr
\\&=\int_{0}^{t}e^{S(\hat{\rho}_n(r))}dr-\nu\int_{0}^{t}re^{S(\hat{\rho}_n(r))}I(\hat{\rho}_n(r)) dr\\
&+\int_{0}^{t}\int_{\t}re^{S(\hat{\rho}_n(r))}\frac{(\nabla J_n*\rho)(r,x)}{(J_n*\rho)(r,x)}\cdot (J_n*(\rho \hat{\nabla}p ))(r,x)dxdr\\
&+\int_{0}^{t}\int_{\t}re^{S(\hat{\rho}_n(r))}\frac{(\nabla J_n*\rho)(r,x)}{(J_n*\rho)(r,x)}\cdot (J_n*(\rho u ))(r,x)dxdr.\\
\end{aligned}
\end{equation}
Due to mean value inequality,
\begin{equation}\label{equ_lem_decay_3}
\begin{aligned}
&-\frac{\nu^2}{4}I(\hat{\rho}_n(r))+\frac{\nu}{2}\int_{\t}\frac{(\nabla J_n*\rho)(r,x)}{(J_n*\rho)(r,x)}\cdot (J_n*(\rho\hat{\nabla}p ))(r,x)dx\\
&\leq \frac{1}{4} \int_{\t}\left|\frac{J_n*(\rho \hat{\nabla}p)(r,x)}{(J_n*\rho)(r,x)}\right|^2(J_n*\rho)(r,x)dx\leq \frac{1}{4}\int_{\t}|\hat{\nabla}p(r,x)|^2\rho(r,x)dx,
\end{aligned}
\end{equation}
where we used Lemma 8.1.10 of \cite{RN217} to obtain the last inequality. Since $\div u=0$, we have $
\int_{\t}(\nabla J_n*\rho)(r,x)\cdot u(r,x)dx=0$ and
\begin{equation}\label{equ_lem_decay_5}
\begin{aligned}
&
\frac{\nu}{2}\int_{\t}\frac{(\nabla J_n*\rho)(r,x)}{(J_n*\rho)(r,x)}\cdot (J_n*(\rho u))(r,x)dx\\
&=
\frac{\nu}{2}\int_{\t}\frac{(\nabla J_n*\rho)(r,x)}{(J_n*\rho)(r,x)}\cdot \left(\frac{(J_n*(\rho u))(r,x)}{(J_n*\rho)(r,x)}-u(r,x)\right)(J_n*\rho)(r,x)dx
\\
&\leq \frac{1}{4}\int_{\t}\left|\frac{J_n*(\rho u)(r,x)}{(J_n*\rho)(r,x)}-u(r,x)\right|^2(J_n*\rho)(r,x)dx+\frac{\nu^2}{4}I(\hat{\rho}_n(r)).
\end{aligned}
\end{equation}
By Jensen's inequality
\begin{equation}\label{equ_lem_decay_4}
S(J_n*\rho)\leq \log\|J_n*\rho\|_{2}^2=\log(\|J_n*\rho-1\|_{2}^2+1).
\end{equation}
Applying (\ref{equ_lem_decay_3}), (\ref{equ_lem_decay_5}) (\ref{equ_lem_decay_4}) to (\ref{equ_lem_decay_1}),
we have $$
\begin{aligned}
&\frac{\nu}{2}te^{S(\hat{\rho}_n(t))}\leq Q_T(\hat{\rho}_n)+\frac{\nu}{2}t\\
&+\frac{1}{4}\int_{0}^{t}re^{S(\hat{\rho}_n(r))}\int_{\t}\bigg[ \left|\frac{J_n*(\rho u)(r,x)}{(J_n*\rho)(r,x)}-u(r,x)\right|^2(J_n*\rho)(r,x)\\
&+|\hat{\nabla}p(r,x)|^2\rho(r,x) \bigg]dxdr.
\end{aligned}
$$
Let $$\varepsilon_n=\int_{0}^{T} \int_{\t}\left|\frac{J_n*(\rho u)(r,x)}{(J_n*\rho)(r,x)}-u(r,x)\right|^2(J_n*\rho)(r,x)dxdr.$$
By Gronwall inequality $$
te^{S(\hat{\rho}_n(t))}
\leq\left(t+\frac{2}{\nu}Q_T(\rho)\right)e^{2
\overline{\act}_T(\rho)+\frac{1}{2\nu}\varepsilon_n}.
$$
Note that
\begin{equation}\label{equ_lem_decay_14}
\begin{aligned}
&\int_{\t}\left|\frac{J_n*(\rho u)(r,x)}{(J_n*\rho)(r,x)}-u(r,x)\right|^2(J_n*\rho)(r,x)dx\\
&=\int_{\t}\left|\frac{J_n*(\rho u)(r,x)}{(J_n*\rho)(r,x)}\right|^2(J_n*\rho)(r,x)dx+\int_{\t}\left|u(r,x)\right|^2(J_n*\rho)(r,x)dx\\
&-2\int_{\t}J_n*(\rho u)(r,x)\cdot u(r,x)dx.
\end{aligned}
\end{equation}
From Lemma 8.1.10 of \cite{RN217} combined with Lemma \ref{lem_B_6} and Jensen's inequality \cite{RN217}, (\ref{equ_lem_decay_14}) can be controlled by $\|\rho(r)\|_2^2$. In addition, for $r$ satisfying $\rho(r)\in L^2(\t),$ by Fatou's Lemma and Lemma 8.1.10 of \cite{RN217}, the first term converge to $\int_{\t}|u(r,x)|^2\rho(r,x)dx$. By property of mollifiers, the other two terms also converge to $\int_{\t}|u(r,x)|^2\rho(r,x)dx$. Hence (\ref{equ_lem_decay_14}) converge to $0$ for a.e $r$. Taking $n\to \infty$, by dominated convergence theorem we have $\varepsilon_n\to0,$
and by Fatou's lemma we arrive at
$$
S(\rho(t))\leq 2\overline{\act}_T(\rho)+\log\left(1+\frac{2Q_T(\rho)}{\nu t}\right).
$$

Turning to (\ref{equ_decay1}), by (\ref{equ_std_3}), for $\alpha>0,$
$$
\begin{aligned}
&t^\alpha S(\hat{\rho}_n(t))=\int_{0}^{t}r^\alpha \lr 1+\log(\hat{\rho}_n(r)),\partial_t \hat{\rho}_n(r)\rr dr
+\int_{0}^{t}\alpha r^{\alpha-1} S(\hat{\rho}_n(r))dr\\
&=\int_{0}^{t}\alpha r^{\alpha-1} S(\hat{\rho}_n(r))-\nu\int_{0}^{t}r^\alpha I(\hat{\rho}_n(r))dr\\
&+\int_{0}^{t}\int_{\t}r^\alpha\frac{(\nabla J_n*\rho)(r,x)}{(J_n*\rho)(r,x)}\cdot (J_n*(\rho \hat{\nabla}p ))(r,x)dxdr\\
&+\int_{0}^{t}\int_{\t}r^\alpha\frac{(\nabla J_n*\rho)(r,x)}{(J_n*\rho)(r,x)}\cdot (J_n*(\rho u ))(r,x)dxdr.
\end{aligned}
$$
Similarly with (\ref{equ_lem_decay_3}), (\ref{equ_lem_decay_5}), we have $$
\begin{aligned}
&-\frac{\nu}{4}I(\hat{\rho}_n(r))+\int_{\t}\frac{(\nabla J_n*\rho)(r,x)}{(J_n*\rho)(r,x)}\cdot (J_n*(\rho\hat{\nabla}p ))(r,x)dx\\
&\leq \frac{1}{\nu}\int_{\t}|\hat{\nabla}p(r,x)|^2\rho(r,x)dx;\\
&-\frac{\nu}{4}I(\hat{\rho}_n(r))
+\int_{\t}\frac{(\nabla J_n*\rho)(r,x)}{(J_n*\rho)(r,x)}\cdot (J_n*(\rho u))(r,x)dx\\
&\leq \frac{1}{\nu}\int_{\t}\left|\frac{J_n*(\rho u)(r,x)}{(J_n*\rho)(r,x)}-u(r,x)\right|^2(J_n*\rho)(r,x)dx=\frac{\varepsilon_n}{\nu}.
\end{aligned}
$$
Hence,
$$
0\leq t^\alpha S(\hat{\rho}_n(t))\leq \int_{0}^{t}\alpha r^{\alpha-1} S(\hat{\rho}_n(r))dr-\frac{\nu}{2}\int_{0}^{t}r^\alpha I(\hat{\rho}_n(r))dr+t^\alpha\left(4\overline{\act}_T(\rho)+\frac{\varepsilon_n}{\nu}\right).
$$
Then by Jensen's inequality and Fatou's Lemma,
$$
0\leq \int_{0}^{t}\alpha r^{\alpha-1} S(\rho(r))dr-\frac{\nu}{2}\int_{0}^{t}r^\alpha I(\rho(r))dr+4t^\alpha\overline{\act}_T(\rho).
$$
The result is now an immediate consequence of  (\ref{equ_decay}).
\end{proof}
\subsection{Proof of Lemma \ref{lem_vrotrf}}
\begin{proof}
Suppose $\rho\in C([0,T];\p(\t))$, $Q_T(\rho)<\infty$ and $\overline{\act}_T(\rho)<\infty$. Take $\hat{\nabla}p$ in Lemma \ref{lem_etrest}. Let $u=\k*\rho$ and then in distribution sense,
$$
\partial_t \rho +\div (\rho u)-\nu \Delta \rho+\div (\rho\hat{\nabla}p)=0.
$$
By Lemma 8.3.1 of \cite{RN217}, to show $\rho\in AC((0,T);\p(\t))$, we just need to prove
\begin{equation}\label{equ_defpt}
\int_{0}^T\|\nu \Delta \rho(t)-\div (\rho(t)\hat{\nabla}p(t))-\div (\rho(t) u(t))\|_{-1,\rho(t)}dt<\infty.
\end{equation}
By Lemma \ref{lem_hm1r1}, Proposition \ref{prop_hm1r4} and Lemma \ref{lem_B_6},
$$
\begin{aligned}
&\int_{0}^T\|\nu \Delta \rho(t)-\div (\rho(t)\hat{\nabla}p(t))-\div (\rho(t) u(t))\|_{-1,\rho(t)}dt\\
&\leq
\int_{0}^T\bigg(\|\rho(t)\|_2+I^{\frac{1}{2}}(\rho(t))+\left(\int_{\t}|\hat{\nabla}p(t)|^2d\rho(t)\right)^{\frac{1}{2}}\bigg)dt.
\end{aligned}
$$
Then (\ref{equ_defpt}) holds by Holders inequality and Lemma \ref{lem_decay}.\\
To prove the second conclusion, note that if $\rho\in AC((0,T);\p(\t))$, by Lemma \ref{lem_hm1r1},
$$
\begin{aligned}
&\overline{\act}_T(\rho)= \frac{1}{4\nu}\int_{0}^{T}\int_{\t}|\hat{\nabla}p(t)|^2d\rho(t)dt\\
&\geq
\frac{1}{4\nu}\int_0^T\|\partial_t \rho(t)-\nu\Delta \rho(t)+\div (\rho(t) u(t))\|^2_{-1,\rho(t)}dt=\act_T(\rho).
\end{aligned}
$$
By Lemma D.34 of \cite{RN90}, for almost every $t\in[0,T]$, there exists $\tilde{\nabla}p(t)$ such that
$$
\lr \partial_t \rho(t)-\nu\Delta \rho(t)+\div (\rho(t) u(t)),\varphi\rr=\int_{\t}\nabla \varphi\cdot \tilde{\nabla}p(t)d\rho(t),\quad \forall \varphi\in C^\infty(\t),
$$
and $$
\|-\div( \rho(t)\tilde{\nabla}p(t))\|^2_{-1,\rho(t)}=\int_{\t}|\tilde{\nabla}p(t)|^2d\rho(t).
$$
By definition of $\overline{\act}_T(\rho)$ and Cauchy-Schwarz inequality,
$$
\begin{aligned}
&\overline{\act}_T(\rho)\leq  \sup_{\phi}\bigg[
\bigg(\int_0^T\int_{\t}|\tilde{\nabla}p(t)|^2d\rho(t)dt\bigg)^{\frac{1}{2}}\bigg(\int_0^T\int_{\t}|\nabla \phi(t)|^2d\rho(t)dt\bigg)^{\frac{1}{2}}\\
&-\nu\int_{\t}|\nabla \phi(t)|^2d\rho(t)dt\bigg]\leq \frac{1}{4\nu}\int_0^T\int_{\t}|\tilde{\nabla}p(t)|^2d\rho(t)dt\leq
\act_T(\rho).
\end{aligned}
$$
\end{proof}

\section{Perturbed dynamics and nice trajectory approximation}\label{sec_lln}
In this section, we establish the law of large number (LLN) for the perturbed systems (\ref{equ_sde1}) and prove the "nice" trajectory approximation. Section \ref{sec_uniq} investigates the uniqueness of perturbed mean-field equation (\ref{def_weaks}). Section \ref{sec_peest1} provides a prior energy estimation similar to Lemma \ref{lem_Q1plus} which is crucial for proving LLN. Section \ref{sec_mfd} presents the proof of Lemmas \ref{lem_rucv}, \ref{lem_meanfield} and \ref{lem_lmfd}. Finally, we reach Lemma \ref{lem_density} in Section \ref{sec_density}.
\subsection{Uniqueness of perturbed mean-field equation}\label{sec_uniq}
We prove the uniqueness of weak solution of (\ref{def_weaks}), the mean-field equation for the stochastic interacting models perturbed by $v$.
\begin{lem}\label{lem_uniq}
Given $\gamma\in\p(\t)$ with $e(\gamma)<\infty$. Suppose $\rho$ and $\rho'$ are two weak solutions of (\ref{def_weaks}) for $v\in L^\infty([0,T]\times\t;\r^2)$ and $\rho(0)=\rho'(0)=\gamma$. Then $\rho'=\rho.$
\end{lem}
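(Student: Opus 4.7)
The plan is to derive an $H^{-1}$-type energy estimate for the difference $\eta := \rho - \rho'$ and close it by Gronwall, so that $\eta(0) = 0$ forces $\eta \equiv 0$. Because both $\rho$ and $\rho'$ satisfy $Q_T < \infty$, we have $\rho(t),\rho'(t) \in L^2(\t)$ for a.e.\ $t$ with $\int_0^T (\|\rho\|_2^2 + \|\rho'\|_2^2)\,dt < \infty$, so by \eqref{equ_rucv} one has $\op(\rho) = \rho(\k*\rho)$ and $\op(\rho') = \rho'(\k*\rho')$ a.e., and thus
$$\op(\rho) - \op(\rho') = \eta(\k*\rho) + \rho'(\k*\eta).$$
Subtracting the two weak formulations of Definition \ref{def_2_1} yields, in the distributional sense,
$$\partial_t \eta - \nu\Delta\eta + \div\bigl[\eta(\k*\rho) + \rho'(\k*\eta) + \eta v\bigr] = 0,$$
with $\eta(0)=0$ and $\int_\t\eta(t)\,dx = 0$. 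Since $\eta(t)$ has zero mean I set $h(t) := \n*\eta(t)$, so that $-\Delta h = \eta$ and $\|\nabla h\|_2^2 = \lr h,\eta\rr$ is the natural $H^{-1}$-energy.

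The identity I target, obtained formally by testing against $h$, is
$$\tfrac{1}{2}\tfrac{d}{dt}\|\nabla h\|_2^2 + \nu\|\eta\|_2^2 = \int_\t \nabla h\cdot(\k*\rho)\,\eta\,dx + \int_\t \nabla h\cdot v\,\eta\,dx.$$
The crucial observation is that the would-be cross term coming from $\rho'(\k*\eta)$ vanishes identically: since $\k = -\nabla^\perp \n$ and $\eta$ has zero mean, $\k*\eta = -\nabla^\perp h$ pointwise, and $\nabla h\cdot \nabla^\perp h\equiv 0$. To justify the identity rigorously I mollify in space, mirroring the scheme of Lemma \ref{lem_etrest}: with $\eta_m := J_m*\eta$, $h_m := J_m*h$, the PDE for the smooth function $\eta_m$ is obtained by testing the weak formulation against $\phi(y) = J_m(x-y)$; multiplying by $h_m$ and integrating gives a regularised identity to which I pass $m\to\infty$ using $\eta,\rho'\in L^2(\t)$ a.e., $\nabla h\in L^4(\t)$ a.e.\ (via $\eta\in L^2 \Rightarrow h\in H^2$ and Sobolev embedding), and dominated convergence in time. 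This justification is the main technical obstacle, and it is handled exactly as in the proof of Lemma \ref{lem_etrest}.

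With the identity in hand, I estimate the right-hand side using the two-dimensional Gagliardo-Nirenberg inequality
$$\|\nabla h\|_4 \leq C\|\nabla h\|_2^{1/2}\|\nabla^2 h\|_2^{1/2} = C\|\nabla h\|_2^{1/2}\|\eta\|_2^{1/2},$$
together with Lemma \ref{lem_B_6}: taking its second argument equal to $1$ (so that $\k*1 = 0$ by symmetry of $\n$) gives $\|\k*\rho\|_4^2 \leq C(1+\|\rho\|_2)$. Hence
$$\left|\int_\t \nabla h\cdot(\k*\rho)\,\eta\right| \leq C\|\nabla h\|_2^{1/2}\|\eta\|_2^{3/2}(1+\|\rho\|_2)^{1/2},$$
and Young's inequality absorbs a fraction of $\nu\|\eta\|_2^2$ into the dissipation at the cost of $C_\nu(1+\|\rho\|_2^2)\|\nabla h\|_2^2$. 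The $v$-term is handled analogously via $|\int \nabla h\cdot v\,\eta| \leq \|v\|_\infty\|\nabla h\|_2\|\eta\|_2$ and Young. Combining,
$$\tfrac{d}{dt}\|\nabla h(t)\|_2^2 \leq C\bigl(1 + \|\rho(t)\|_2^2 + \|v\|_\infty^2\bigr)\|\nabla h(t)\|_2^2,$$
and the prefactor lies in $L^1([0,T])$ because $Q_T(\rho)<\infty$ and $v\in L^\infty$. Since $\|\nabla h(0)\|_2 = 0$, Gronwall's lemma forces $\nabla h\equiv 0$ on $[0,T]$, hence $\eta\equiv 0$ and $\rho = \rho'$.
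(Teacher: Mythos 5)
Your proposal is correct and is essentially the paper's own argument: both control the same $\dot H^{-1}$-type quantity ($\|\nabla h\|_2=\|\k*(\rho-\rho')\|_2$ is exactly the paper's $\|u-u'\|_2$), justify the energy identity by the same mollification scheme borrowed from Lemma \ref{lem_etrest}, use the Ladyzhenskaya/Gagliardo--Nirenberg interpolation together with Lemma \ref{lem_B_6} and $Q_T<\infty$ to get an $L^1$-in-time Gronwall coefficient, and conclude from $\eta(0)=0$. The only (cosmetic) difference is where the cancellation is harvested: the paper rewrites the nonlinearity in velocity form and kills $\lr u'\cdot\nabla(u-u'),u-u'\rr$ using $\div u'=0$, while you split $\op(\rho)-\op(\rho')=\eta(\k*\rho)+\rho'(\k*\eta)$ and kill the second piece via the pointwise orthogonality $\nabla h\cdot\nabla^{\perp}h=0$ --- two manifestations of the same structural fact, with the surviving terms estimated to the same effect.
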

\begin{proof}
Write $u=\k*\rho,$ $u'=\k*\rho'.$ Take smooth mollifiers $J_n$ defined in Lemma \ref{lem_etrest}. Then by (\ref{equ_prop_n2}) and with the same argument as the steps 2\&3 in the proof of Lemma \ref{lem_etrest}, we obtain
\begin{equation}\label{equ_swh0}
\begin{aligned}
&\frac{1}{2}\|J_n*(u(t)-u'(t))\|_2^2=-\nu\int_{0}^{t}\|J_n*(\rho(s)-\rho'(s))\|^2_{2}ds\\
&-\int_{0}^{t}\lr (\rho(s)-\rho'(s)) v(s),J_n*J_n*\n*(\rho(s)-\rho'(s))\rr ds\\
&-\int_{0}^{t}\lr\rho(s) u(s)-\rho'(s) u'(s),J_n*J_n*\nabla \n*(\rho(s)-\rho'(s))\rr ds
\end{aligned}
\end{equation}
By Lemma \ref{lem_B_6} and Jensen's inequality, $$
\begin{aligned}
&\big|\lr\rho(t) u(t)-\rho'(t) u'(t),J_n*J_n*\nabla \n*(\rho(t)-\rho'(t))\rr \big|\\
&\leq 4\sup_{(\tilde{\rho},\tilde{u})\in \{\rho(t),\rho'(t)\}\times\{u(t),u'(t)\}}\int_{\t} \tilde{u}^2d\tilde{\rho}
\leq C\left(\|\rho(t)-1\|_{2}^2+\|\rho'(t)-1\|_{2}^2\right).
\end{aligned}
$$
Similarly,
$$
\begin{aligned}
&\big|\lr (\rho(t)-\rho'(t)) v(t),J_n*J_n*\n*(\rho(t)-\rho'(t))\rr\big|
\\&\leq C_{v}\left(\|\rho(t)\|_{2}+\|\rho'(t)\|_{2}\right)\left(\|u(t)\|_2+\|u'(t)\|_2)\right).
\end{aligned}
$$
Since $Q_T(\rho)$ and $Q_T(\rho')$ are finite, applying dominated convergence theorem to (\ref{equ_swh0}), we have
\begin{equation}\label{equ_swh1}
\begin{aligned}
&\frac{1}{2}\|u(t)-u'(t)\|_2^2=-\nu\int_{0}^{t}\|\rho(s)-\rho'(s)\|^2_{2}ds\\
&-\int_{0}^{t}\lr (\rho(s)-\rho'(s)) v(s),\nabla \n*(\rho(s)-\rho'(s))\rr ds\\
&-\int_{0}^{t}\lr\rho(s) u(s)-\rho'(s) u'(s),\nabla \n*(\rho(s)-\rho'(s))\rr ds
\end{aligned}
\end{equation}
Noticing that $\div (\rho(t) u(t))=\curl(u(t)\cdot \nabla u(t)),$ we have $$
\begin{aligned}
&\bigg|\int_{0}^{t}\lr\rho(s) u(s)-\rho'(s) u'(s),\nabla \n*(\rho-\rho')(s)\rr ds \bigg|\\
&=\bigg|\int_{0}^{t}\lr (u\cdot \nabla u)(s)-(u'\cdot \nabla u')(s),u(s)-u'(s)\rr ds\bigg|.
\end{aligned}
$$
Since $\div u'(t)=0,$ writing $w=u(t)-u'(t)$,
$$\lr u'(t)\cdot\nabla (u-u')(t),(u-u')(t)\rr=\frac{1}{2}\lr \nabla |w|^2,u'(t)\rr=0.$$
Hence,
\begin{equation}\label{equ_uniquem1}
\begin{aligned}
&\bigg|\int_{0}^{t}\lr (u\cdot \nabla u)(s)-(u'\cdot \nabla u')(s),u(s)-u'(s)\rr ds\bigg|\\
&=\bigg|\int_{0}^{t}\lr (u-u')(s)\cdot \nabla u(s),u(s)-u'(s)\rr ds\bigg|\\
&\leq \int_{0}^{t}\|u(s)-u'(s)\|_{4}^2\|\nabla u(s)\|_{2}ds.
\end{aligned}
\end{equation}
Apply Gagliardo–Nirenberg interpolation inequality to (\ref{equ_uniquem1}),
\begin{equation}\label{equ_unique1}
\begin{aligned}
&\bigg|\int_{0}^{t}\lr\rho(s) u(s)-\rho'(s) u'(s),\nabla \n*(\rho-\rho')(s)\rr ds \bigg|\\
&\leq \int_{0}^{t}\|u(s)-u'(s)\|_{2}\|\nabla u(s)-\nabla u'(s)\|_{2}\|\nabla u(s)\|_{2}ds,\\
&\leq 4 \int_{0}^{t}\|u(s)-u'(s)\|_{2}\|\rho(s)-\rho'(s)\|_{2}\|\rho(s)-1\|_{2}ds,
\end{aligned}
\end{equation}
where we used the fact $$\|\nabla u(t)\|^2_{2}\leq 2\sum_{i,j\in{1,2}}\|\partial_{ij}(-\n*\rho)\|_{2}^2\leq 2\|\Delta (-\n*\rho)\|_{2}^2= 2\|\rho-1\|_{2}^2.$$
On the other hand, by Cauchy--Schwarz inequality and (\ref{equ_prop_n2}),
\begin{equation}\label{equ_unique2}
\begin{aligned}
& \bigg|\int_{0}^{t}\lr (\rho(s)-\rho'(s)  v(s)),\nabla\n*(\rho(s)-\rho'(s))\rr ds\bigg|\\
&\leq C_v\int_{0}^{t}\|u(s)-u'(s)\|_{2}\|\rho(s)-\rho'(s)\|_{2} ds,
\end{aligned}
\end{equation}
Combining (\ref{equ_swh1}) with (\ref{equ_unique1}) and (\ref{equ_unique2}), we have $$
\begin{aligned}
&\frac{1}{2}\|u(t)-u'(t)\|_{2}^2
\leq -\nu\int_{0}^{t}\|\rho(s)-\rho'(s)\|^2_{2}ds\\
&+C \int_{0}^{t}\|u(s)-u'(s)\|_{2}\|\rho(s)-\rho'(s)\|_{2}(\|\rho(s)-1\|_{2}+1)ds\\
&\leq \int_{0}^{t}\frac{C^2}{4\nu}\|u(s)-u'(s)\|_{2}^2(\|\rho(s)-1\|_{2}+1)^2ds,
\end{aligned}
$$
and the uniqueness follows from Gronwall's inequality.
\end{proof}

\subsection{Prior energy estimation of perturbed dynamics}\label{sec_peest1}
To prove the law of large number for perturbed dynamics, we also need a prior energy estimation, whose proof is similar to that of Lemma \ref{lem_Q1plus}.
\begin{lem}\label{lem_Q1plus1}
There exists constants $\lambda>0$, and $C_v$ only dependent on $\|v\|_{\infty}$ such that for each sequence $(\eta_n)$ satisfying $\eta_n\in \x_n$ and $e(\g_n*\eta_n)\leq R,$
\begin{equation}\label{iequ_Q1plus1}
\begin{aligned}
\limsup_{n\to\infty}&\frac{1}{n}\log P^v_{\eta_n}\bigg\{\sup_{0\leq t\leq T} \bigg(e(\g_n*\rho_n(t))
+ \frac{\nu}{2}\int_0^t \|\g_n*\rho_n(s)-1\|_{2}^2ds\bigg)
>l\bigg\}\\
&\leq -\lambda(l-R-C_vT).
\end{aligned}
\end{equation}
\end{lem}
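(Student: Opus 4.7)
The plan is to mimic the exponential martingale argument in the proof of Lemma~\ref{lem_Q1plus}, but now under $P^v$, where by Girsanov the SDEs read
$$dX_i(t)=\frac{1}{n}\sum_{j\neq i}\k(X_i-X_j)dt+v(t,X_i)dt+\sqrt{2\nu}dW_i(t),$$
with $(W_i)$ a family of $P^v$-Brownian motions. First, I would apply It\^o's formula to $\tfrac12\lr\n_n*\rho_n(t),\rho_n(t)\rr$ with $\n_n=G_n*\n$, using the identity $-\Delta\n_n=G_n-1$. Compared with the unperturbed computation, exactly one additional drift term appears, namely
$$\frac{1}{n^2}\sum_{i,j}\nabla\n_n(X_i(t)-X_j(t))\cdot v(t,X_i(t))\,dt=\lr v(t),(\nabla\n_n*\rho_n(t))\rho_n(t)\rr dt.$$

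Second, I would exponentiate and apply Lemma~\ref{lem_ville} to the positive continuous martingale
$$\exp\!\Big\{n\lambda\big[e(\g_n*\rho_n(t))-e(\g_n*\rho_n(0))+\nu\!\int_0^t\|\g_n*\rho_n(s)-1\|_2^2\,ds-\int_0^t\lr\nabla\n_n*\rho_n,\op(\rho_n)\rr ds$$
$$-\int_0^t\lr v(s),(\nabla\n_n*\rho_n(s))\rho_n(s)\rr ds-\lambda\nu\!\int_0^t\lr|\nabla\n_n*\rho_n|^2,\rho_n\rr ds-\omega_n t\big]\Big\}.$$
This yields the analogue of inequality~\eqref{equ_doob1plus} with the extra $v$-term on the left-hand side and the bound $e^{-n\lambda(l-R-\omega_n T)}$ on the right.

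Third, I would estimate the two non-trivial integrands. The singular term $\lr\nabla\n_n*\rho_n,\op(\rho_n)\rr$ is controlled exactly as in Lemma~\ref{lem_Q1plus} by Lemma~\ref{lem_rzq2}, which on the event $\{e(\g_n*\rho_n(s))\leq l\}$ gives a bound of the form $c_n\|\g_n*\rho_n(s)-1\|_2^2+c_n$ with $c_n\downarrow 0$. The quadratic variation term is dominated by $C_2\|\g_n*\rho_n(s)-1\|_2^2$ via Lemma~\ref{lem_B_6} and Jensen's inequality as in \eqref{equ_b6}. The new drift term is handled by Cauchy--Schwarz and Young's inequality:
$$\Big|\lr v(s),(\nabla\n_n*\rho_n(s))\rho_n(s)\rr\Big|\leq \|v\|_\infty\Big(\!\int|\nabla\n_n*\rho_n(s)|^2 d\rho_n(s)\Big)^{1/2}\leq \varepsilon C_2\|\g_n*\rho_n(s)-1\|_2^2+\frac{\|v\|_\infty^2}{4\varepsilon},$$
where again Lemma~\ref{lem_B_6} is used to move to the $L^2$ norm. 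Choosing $\lambda$ and $\varepsilon$ so that $C_2\lambda+\varepsilon C_2<1/3$ (say $\varepsilon=1/(12C_2)$) produces the constant $C_v=\|v\|_\infty^2/(4\varepsilon)$, which depends on $v$ only through $\|v\|_\infty$.

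Finally, as in the end of the proof of Lemma~\ref{lem_Q1plus}, I would introduce the stopping time
$$\tau_l:=\inf\Big\{t:e(\g_n*\rho_n(t))+\tfrac{\nu}{2}\int_0^t\|\g_n*\rho_n(s)-1\|_2^2\,ds>l\Big\}\wedge T,$$
use the contradiction argument to show $\tau_l=T$ on the complement of the rare event controlled by Lemma~\ref{lem_ville}, shift the threshold by $C_vT$ to absorb the new constant, and recall $\omega_n\to 0$ since $nm_n^{-2}\to\infty$. I expect the main (essentially only) new difficulty compared with Lemma~\ref{lem_Q1plus} to be this absorption of the $v$-induced drift: one must split it so that a small multiple of $\|\g_n*\rho_n-1\|_2^2$ is left free to be combined with the dissipation $\nu\|\g_n*\rho_n-1\|_2^2$ and the quadratic variation contribution, while the remainder becomes a constant depending only on $\|v\|_\infty$. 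Everything else is a direct transcription of the unperturbed argument.
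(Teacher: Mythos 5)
Your proposal is correct and follows essentially the same route as the paper: the same exponential martingale plus Lemma \ref{lem_ville}, the same use of Lemma \ref{lem_rzq2} and (\ref{equ_b6}), and the same absorption of the $v$-drift via a Cauchy--Schwarz/Young split (the paper simply packages that split as Corollary \ref{cor_B_6} with $\delta=\nu/6$) before the identical stopping-time contradiction. One small bookkeeping correction: since the available dissipation is $\nu\|\g_n*\rho_n-1\|_2^2$, your Young parameter must scale with $\nu$ (e.g. $\varepsilon C_2\leq\nu/6$ rather than $\varepsilon=1/(12C_2)$), so the resulting constant $C_v$ depends on $\nu$ as well as $\|v\|_\infty$, exactly as in the paper's proof; since $\nu$ is fixed this changes nothing else.
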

\begin{proof}
Recall $\n_n=G_n*\n,\omega_n=\frac{\nu}{n}(G_n(0)-1).$
By Ito's formula, as calculation in Lemma \ref{lem_Q1plus}, for any $\lambda>0,$
\begin{equation}
\begin{aligned}
&\exp\bigg\{
n\lambda\bigg[e(\g_n*\rho_n(t))-e(\g_n*\rho_n(0))
+\nu\int_{0}^{t}\|\g_n*\rho_n(s)-1\|_{2}^2ds\\&-\int_{0}^{t}\lr\nabla \n_n*\rho_n(s),\op(\rho_n(s))\rr ds-\lambda\nu\int_{0}^{t} \lr|\nabla \n_n*\rho_n(s)|^2,\rho_n(s)\rr ds-\omega_nt\\
&-\int_{0}^{t} \lr \nabla \n_n*\rho_n(s)\cdot v(s),\rho_n(s)\rr ds
\bigg]
\bigg\}
\end{aligned}
\end{equation}
is a positive continuous martingale. By Lemma \ref{lem_ville}, for each $\eta_n\in \x_n$ with $e(\g_n*\eta_n)\leq R,$
$$
\begin{aligned}
&P^v_{\eta_n}\bigg\{\sup_{0<t\leq T}\bigg[e(\g_n*\rho_n(t))-\int_{0}^{t}\lr\nabla \n_n*\rho_n(s),\op(\rho_n(s))\rr ds\\
&+\nu\int_{0}^{t}\|\g_n*\rho_n(s)-1\|_{2}^2ds-\nu \lambda\int_{0}^{t}\int_{\t}|\nabla \n_n*\rho_n(s)|^2\rho_n(s,dx)  ds\\
&-\int_{0}^{t} \lr \nabla \n_n*\rho_n(s)\cdot v(s),\rho_n(s)\rr ds\bigg]>l\bigg\}\leq e^{-n\lambda(l-R-\omega_nT)}.
\end{aligned}
$$
By Corollary \ref{cor_B_6}, there exists $C_v>0$ dependent on $\nu$ and $\|v\|_{\infty}$, such that
$$
\lr \nabla \n_n*\rho_n(t)\cdot v(t),\rho_n(t)\rr \leq C_v+\frac{\nu}{6}\|\g_n*\rho_n(t)-1\|_2^2.
$$
Combining this inequality with (\ref{equ_b6}) and taking  $\lambda$ such that $C_2 \lambda=\frac{1}{6}$, we have
$$
\begin{aligned}
&P^v_{\eta_n}\bigg\{\sup_{0<t\leq T}\bigg[e(\g_n*\rho_n(t))-\int_{0}^{t}\lr\nabla \n_n*\rho_n(s),\op(\rho_n(s))\rr ds\\
&+\frac{2\nu}{3}\int_{0}^{t}\|\g_n*\rho_n(s)-1\|_{2}^2 ds\bigg]>l\bigg\}\leq e^{-n\lambda(l-R-\omega_nT-C_vT)}.
\end{aligned}
$$
The rest is similar to the proof of Lemma \ref{lem_Q1plus}.
\end{proof}
Then we verify that there exists a sequence of $(\gamma_n)_{n\geq 1}$ satisfying the conditions of Lemma \ref{lem_Q1plus1}.
\begin{lem}\label{lem_mfd1}
For $\gamma\in\p(\t)$ with $e(\gamma)<\infty$, there exists $\gamma_n\in \x_n$ such that $$
\lim_{n\to\infty}e(\g_n*\gamma_n)=e(\gamma),\quad \lim_{n\to\infty}d(\gamma_n,\gamma)=0.
$$
\end{lem}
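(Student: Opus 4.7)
Proof proposal. The plan is to approximate $\gamma$ first by smooth densities $\gamma^k=J_k*\gamma$ with $e(\gamma^k)\to e(\gamma)$ for a suitable mollifier $J_k\in C^\infty(\t)$, then to approximate each such $\gamma^k$ by empirical measures drawn from i.i.d.\ samples so that both the Wasserstein-$2$ distance and the discrete energy $e_0$ converge, and finally to extract a diagonal sequence. The passage from $e_0(\gamma_n)$ back to $e(\g_n*\gamma_n)$ is handled by the elementary estimate already established in Section \ref{sec_molg}.

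For the smooth approximation step, take $J_k$ to be the heat kernel $H_{1/k}$ on $\t$; then $\gamma^k$ is a smooth probability density, $\gamma^k\to\gamma$ weakly, and since $\t$ is compact, also in the Wasserstein-$2$ distance. On the Fourier side one has $e(\gamma)=c\sum_{\xi\neq 0}|\xi|^{-2}|\hat\gamma(\xi)|^2$ and $e(\gamma^k)=c\sum_{\xi\neq 0}|\xi|^{-2}|\hat J_k(\xi)|^2|\hat\gamma(\xi)|^2$. Because $J_k$ is a probability density we have $|\hat J_k(\xi)|\leq 1$ while $\hat J_k(\xi)\to 1$ pointwise, so dominated convergence with the summable dominator $|\xi|^{-2}|\hat\gamma(\xi)|^2$ (available precisely because $e(\gamma)<\infty$) yields $e(\gamma^k)\to e(\gamma)$. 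This is the only genuinely delicate step, since $\gamma$ may fail to have an $L^2$ density; the Plancherel representation handles it cleanly because $e$ is a weighted $\ell^2$ functional on the Fourier side.

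Next, fix $k$ and draw $Y_1,\ldots,Y_n$ i.i.d.\ from the smooth density $\gamma^k$. The empirical measure $\tilde\gamma_n^k:=\tfrac{1}{n}\sum_{i=1}^n\delta_{Y_i}$ lies in $\x_n$ almost surely, and on the compact torus the law of large numbers gives $d(\tilde\gamma_n^k,\gamma^k)\to 0$ almost surely. Since $\gamma^k$ is bounded and $\n\in L^2(\t)$ (the log singularity is square-integrable in two dimensions), the symmetric kernel $h(y_1,y_2)=\n(y_1-y_2)$ lies in $L^2(\gamma^k\otimes\gamma^k)$, so the $U$-statistic $e_0(\tilde\gamma_n^k)=\tfrac{1}{n^2}\sum_{i<j}\n(Y_i-Y_j)$ has mean $\tfrac{n-1}{n}e(\gamma^k)$ and variance $O(1/n)$. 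Hence $e_0(\tilde\gamma_n^k)\to e(\gamma^k)$ in $L^2$, and in particular for each $k$ there exists $N_k$ such that for every $n\geq N_k$ one can pick a deterministic realisation $\gamma_n^k\in\x_n$ with $d(\gamma_n^k,\gamma^k)<1/k$ and $|e_0(\gamma_n^k)-e(\gamma^k)|<1/k$.

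To finish, choose $k(n)\uparrow\infty$ with $N_{k(n)}\leq n$ and set $\gamma_n:=\gamma_n^{k(n)}\in\x_n$; the triangle inequality gives $d(\gamma_n,\gamma)\to 0$ and $e_0(\gamma_n)\to e(\gamma)$. Applying the inequality
$$e(\g_n*\gamma_n)\leq e_0(\gamma_n)+\frac{\log m_n}{2\pi n}+\frac{C}{2n}+\frac{C}{2m_n^{2}},$$
obtained in Section \ref{sec_molg}, together with the hypothesis $nm_n^{-2}\to\infty$ which forces the three extra terms to vanish, yields $\limsup_{n}e(\g_n*\gamma_n)\leq e(\gamma)$. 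For the converse inequality, $\g_n$ has support shrinking to the origin, so $\|\g_n*\varphi-\varphi\|_\infty\to 0$ for every $\varphi\in C(\t)$; combined with $d(\gamma_n,\gamma)\to 0$ this gives $\g_n*\gamma_n\to\gamma$ weakly, and the lower semi-continuity of $e$ (recalled before Condition \ref{cond_ldp_1}) then yields $\liminf_{n}e(\g_n*\gamma_n)\geq e(\gamma)$. Combining both bounds completes the proof.
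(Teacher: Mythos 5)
Your proof is correct, but it follows a genuinely different route from the paper's. The paper samples $Y_i$ i.i.d.\ directly from $\gamma$ (no preliminary smoothing), computes the second moment $\e\big[\|\g_n*\k*\eta_n-\k*\gamma\|_2^2\big]$ exactly using the energy identities of Proposition \ref{prop_n}, shows it vanishes (the diagonal contribution $\tfrac1n\n_n(0)$ dies because $nm_n^{-2}\to\infty$), and then picks a realisation $\gamma_n$ at or below the mean; both conclusions of the lemma then drop out of a single fact, namely $\k*\g_n*\gamma_n\to\k*\gamma$ in $L^2(\t)$, since $e(\g_n*\gamma_n)=\tfrac12\|\k*\g_n*\gamma_n\|_2^2$ by (\ref{equ_prop_n3}) and weak convergence follows by testing $\lr\gamma_n-\gamma,\phi\rr=\lr\nabla\n*\gamma_n-\nabla\n*\gamma,\nabla\phi\rr$. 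You instead smooth first (heat kernel, with the Fourier-side argument for $e(\gamma^k)\to e(\gamma)$ — note this is exactly the Plancherel form of (\ref{equ_prop_n3}), so you are implicitly leaning on Proposition \ref{prop_n} too, and in fact monotone convergence suffices there since $\hat\Phi_{1/k}(\xi)\uparrow 1$), then sample from the bounded density $\gamma^k$ so that a U-statistic variance bound controls $e_0$, diagonalise, and finally convert $e_0(\gamma_n)$ into $e(\g_n*\gamma_n)$ via the Section \ref{sec_molg} inequality plus lower semicontinuity of $e$. The trade-off: the paper's argument is more compact and delivers both limits from one $L^2$ estimate, while yours decouples the probabilistic sampling step (which only ever sees a bounded smooth density, so only elementary LLN/U-statistic facts are needed) from the singular-kernel analysis, at the cost of a two-stage approximation and reliance on the Section \ref{sec_molg} bound and lower semicontinuity; the diagonal-term mechanism is the same in both proofs, appearing for you as the $\tfrac{\log m_n}{2\pi n}$ term (for which $m_n\to\infty$ with $\log m_n/n\to 0$ already suffices) and in the paper as $\tfrac1n\n_n(0)$. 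One small point of care in your write-up: the paper's $d$ is the squared Wasserstein distance, so invoke the triangle inequality for $d^{1/2}$ (or just the equivalence with weak convergence on the compact torus) in the diagonalisation step.
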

\begin{proof}
Take a sequence of i.i.d. random variables $(Y_i)_{i\geq 1}$ with the law $\gamma$.\\
Let $\eta_n:=\frac{1}{n}\sum_{i=1}^n\delta_{Y_i}$. Write $\n_n=G_n*\n.$ By (\ref{equ_prop_n2}) we have
\begin{equation}
\begin{aligned}
&\e \left[\|\g_n*\k*\eta_n-\k*\gamma\|_{2}^2\right]=\e \left[\frac{1}{n^2}\sum_{1\leq i,j\leq n}\n_n(Y_i-Y_j)\right]+\|\k*\gamma\|_2^2\\
&-2 \e \left[\lr \g_n*\n*\eta_n,\gamma\rr\right]\\
&=\frac{1}{n^2}\sum_{ i\neq j}\e \n_n(Y_i-Y_j)+\frac{1}{n}\n_n(0)+\|\k*\gamma\|_2^2
\\
&-\sum_{i=1}^n\frac{2}{n}\int_{\t} \e (\g_n*\n)(x-Y_i)\gamma(dx)\\
&=\frac{n-1}{n}\|\k*\g_n*\gamma\|_2^2+\|\k*\gamma\|_2^2+\frac{1}{n}\n_n(0)\\
&-2\lr(\g_n*\n)*\gamma,\gamma\rr.
\end{aligned}
\end{equation}
Note that $\lim_{n\to\infty}\left|\frac{1}{n}\n_n(0)\right|=\lim_{n\to\infty} \frac{m_n^2\|G\|_{\infty}\|\n\|_{1}}{n}= 0.$ By Jensen's inequality and Fatou's Lemma, along with (\ref{equ_prop_n3}), we have
\begin{equation}\label{equ_58}
\limsup_{n\to\infty}\e \left[\|\k*(\g_n*\eta_n-\gamma)\|_{2}^2\right]\leq 2\|\k*\gamma\|_2^2-2\lr\n*\gamma,\gamma\rr=0.
\end{equation}
Pick $\gamma_n\in \x_n$, such that
$0\leq \|\g_n*\k*\gamma_n-\k*\gamma\|_{2}^2\leq \e \left[\|\g_n*\k*\eta_n-\k*\gamma\|_{2}^2\right]$.
In view of (\ref{equ_58}) and (\ref{equ_prop_n3}), $\lim_{n\to\infty}e(\gamma_n)=e(\gamma),$ and $\nabla \n*\gamma_n\to \nabla \n*\gamma$ in $L^2(\t)$. Hence for each $\varphi\in C^\infty(\t)$, $$
\lr \gamma_n-\gamma,\phi\rr=\lr \gamma_n-\gamma,-\Delta \n*\phi\rr=\lr \nabla \n*\gamma_n- \nabla \n*\gamma,\nabla \phi\rr\to 0,
$$
implying $\lim_{n\to\infty}d(\gamma_n,\gamma)=0.$
\end{proof}
\subsection{Proof of Lemmas \ref{lem_rucv}, \ref{lem_meanfield} and \ref{lem_lmfd}}\label{sec_mfd}

Before proving the law of large number, we give the proof for the fact that $\op$ is continuous with finite energy, which is Lemma \ref{lem_rucv}.
\begin{proof}[Proof of Lemma \ref{lem_rucv}]
By Jensen's inequality, we only need to consider the case that $\sup_{n\geq 1}e(\g_n*\gamma)<\infty.$

First we consider the case that $\varphi_n=\varphi.$ Recall $w(x,y)=r(x,y)\k(x-y)$ and write
$$f(x,y):=\left\{
\begin{aligned}
&\frac{1}{2}\frac{\varphi(x)-\varphi(y)}{r(x,y)}w(x-y),&\text{if }x\neq y;\\
&0,&\text{if }x=y.
\end{aligned}
\right.
$$
In view of (\ref{equ_rucv1}), we just need to show
$$
\int_{(\t)^2}f(x,y) \gamma_n(dx)\gamma_n(dy)\to \int_{(\t)^2}f(x,y) \gamma(dx)\gamma(dy).
$$
By Tietze extension theorem, we can construct a continuous function $f_\delta$ such that
$f_\delta(x,y)=f(x,y),$ for $r(x,y)\geq\delta$ and $\sup |f_\delta|\leq \sup |f|.$\\
By Lemma \ref{lem_pest_fen} for $\delta<\frac{1}{2}$, there exists $n_\delta$ such that for $n\geq n_\delta$,
$$
\begin{aligned}
&\bigg|\int_{(\t)^2}(f-f_\delta)(x,y) \gamma_n(dx)\gamma_n(dy)\bigg|
\leq 2\sup |f| \int_{r(x,y)<\delta}\gamma_n(dx)\gamma_n(dy)\\
&\leq 4\sup |f| \frac{4\pi e(\g_n*\gamma_n)+C_{\n}}{-\log \delta}.
\end{aligned}
$$
Similarly,
$$
\begin{aligned}
&\bigg|\int_{(\t)^2}(f-f_\delta)(x,y) \gamma(dx)\gamma(dy)\bigg|
\leq 2\sup |f| \int_{r(x,y)<\delta}\gamma(dx)\gamma(dy)\\
&\leq 2\sup |f| \frac{4\pi e(\gamma)+C_{\n}}{-\log \delta}.
\end{aligned}
$$
By lower semi-continuity of $e$ in $\p(\t)$, $e(\gamma)\leq \sup_n e(\g_n*\gamma_n)<\infty$. Hence we can take $\delta$ small enough such that
\begin{equation}\label{equ101}
\begin{aligned}
&\limsup_{n\to\infty}\bigg|\int_{(\t)^2}(f-f_\delta)(x,y) \gamma_n(dx)\gamma_n(dy)\bigg|<\frac{\varepsilon}{3},\\ &\bigg|\int_{(\t)^2}(f-f_\delta)(x,y)
\gamma(dx)\gamma(dy)\bigg|<\frac{\varepsilon}{3}.
\end{aligned}
\end{equation}
Since $f_\delta(x,y)$ is a continuous function, and $\gamma_n\otimes \gamma_n\to \gamma\otimes\gamma$ in narrow topology,
\begin{equation}{\label{equ103}}
\lim_{n\to \infty}\int_{(\t)^2}f_\delta (x,y) \gamma_n(dx)\gamma_n(dy)= \int_{(\t)^2}f_\delta (x,y)\gamma(dx)\gamma(dy).
\end{equation}
Combining (\ref{equ101}) and (\ref{equ103}), we have $$
\limsup_{n\to \infty}\left|\int_{(\t)^2}f(x,y) \gamma_n(dx)\gamma_n(dy)- \int_{(\t)^2}f(x,y) \gamma(dx)\gamma(dy)\right|<\frac{2}{3}\varepsilon.
$$
Let $\varepsilon\to 0$ and we conclude the proof for the case $\varphi_n=\varphi$.

For the general case, let
$$
f_n(x,y):=\left\{
\begin{aligned}
&\frac{1}{2}\frac{\varphi_n(x)-\varphi_n(y)}{r(x,y)}w(x-y),&\text{if }x\neq y;\\
&0,&\text{if }x=y.
\end{aligned}
\right.
$$
Note that
$$
\begin{aligned}
&|\lr \varphi_n,\op(\gamma_n)\rr-
\lr \varphi,\op(\gamma_n)\rr|
\leq \sup_{r(x,y)\geq \delta}|f(x,y)-f_n(x,y)|\\
&+\sup_{r(x,y)<\delta}|f(x,y)+f_n(x,y)|\int_{r(x,y)<\delta}\gamma_n(dx)\gamma_n(dy).
\end{aligned}
$$
Take $n\to\infty$, then we have
$$
\begin{aligned}
&\limsup_{n\to\infty}|\lr \varphi_n,\op(\gamma_n)\rr-
\lr \varphi,\op(\gamma_n)\rr|\\
&\leq \frac{1}{2} \left(\|\nabla \varphi\|_{\infty}+\|\nabla \varphi_n\|_{\infty}\right)\sup_{n\geq 1}\int_{r(x,y)<\delta}\gamma_n(dx)\gamma_n(dy).
\end{aligned}
$$
By Lemma \ref{lem_pest_fen}, take $\delta\downarrow0$ and we complete the proof.
\end{proof}
\begin{proof}[Proof of Lemma \ref{lem_meanfield}]
Given $\gamma_n\in \mathcal{X}_n$ and $e(\g_n*\gamma_n)\leq R$ and $\lim_{n\to\infty}d(\gamma_n,\gamma)=0.$
By Ito's formula, for $\phi\in C^\infty(\t)$,
\begin{equation}\label{equ_varM}
\begin{aligned}
&d\lr \phi,\rho_n(t)\rr=\lr\nu\Delta\phi,\rho_n(t)\rr dt+\lr\nabla \phi,\op(\rho_n(t))\rr dt+\lr v(t)\cdot \nabla \phi,\rho_n(t)\rr dt\\
&+\frac{\sqrt{2\nu}}{n}\sum_{i=1}^n\nabla \phi(X_i)dW_i(t).
\end{aligned}
\end{equation}
Note that by (\ref{equ_opbound})
\begin{equation}\label{equ_phirho}
\e^v_{\gamma_n}[|\lr\phi,\rho_n(t+h)-\rho_n(t)\rr|]\leq C\left(\|\nabla^2\phi\|_{\infty}+\|v\|_{\infty}\|\nabla \phi\|_{\infty}\right)h.
\end{equation}
Then by \cite{RN246} and Theorems 8.6 and 8.8 in Chapter 3 of \cite{RN151}, we conclude that $\rho_n$ is tight in $C([0,T];\p(\t))$.

By Pohorov theorem, the family of probability laws
is relatively compact in the topology of weak convergence of probability measures. We select
a convergent subsequence indexed by $n_k$. By the Skorohod representation theorem, we can construct a canonical probability space (write $\hat{P}$ for the corresponding probability measures) on which random variables $\rho,\hat{\rho}_{n}$ are defined, with the property that $\hat{\rho}_k$ has the same law as $\rho_{n_k}$ for $k = 1, 2, \cdots$ such that
\begin{equation}\label{equ_mfd2}
\hat{\rho}_k\to \rho \quad a.s.~ \hat{P} \text{ in }C([0,T];\p(\t)) \text{ as }k\to \infty.
\end{equation}
By Lemma \ref{lem_Q1plus1} and Borel-Cantelli Lemma, $$\limsup_{k\to\infty}Q_T(\g_{n_k}*\hat{\rho}_k)\leq R+C_v T \quad a.s. ~\hat{P}.$$ Then by the lower semi-continuities of $Q_T$ under the topology of $C([0,T];\p(\t))$,
\begin{equation}\label{equ_exrho0}
Q_T(\rho)\leq R+C_v T,\quad a.s.~ \hat{P}.
\end{equation}
By Lemma \ref{lem_rucv},
$$
\lim_{k\to\infty}\lr\nabla \phi(t),\op(\hat{\rho}_k(t))\rr=\lr\nabla \phi(t),\op(\rho(t))\rr \quad a.s.\; \hat{P}.
$$
By (\ref{equ_varM}) and dominated convergence theorem, for each $\phi\in C^\infty([0,T]\times \t),$
\begin{equation}\label{equ_exrho2}
\begin{aligned}
&\lim_{k\to\infty}\bigg[\lr\phi(t),\hat{\rho}_k(t)\rr-\lr\phi(s),\hat{\rho}_k(s)\rr-\int_{s}^{t}\lr\nabla \phi(r),\op(\hat{\rho}_k(r))\rr dr\\
&-\int_{s}^{t}\lr\partial_t\phi(r)+\nabla \phi(r)\cdot v(r)+\nu\Delta \phi(r),\hat{\rho}_k(r)\rr dr\bigg]\\
&=\lr\phi(t),\rho(t)\rr-\lr\phi(s),\rho(s)\rr-\int_{s}^{t}\lr\partial_t\phi(r)+\nabla \phi(r)\cdot v(r)+\nu\Delta \phi(r),\rho(r)\rr dr\\
&-\int_{s}^{t}\lr\nabla \phi(r),\op(\rho(r))\rr dr\quad a.s.\; \hat{P},
\end{aligned}
\end{equation}
and
\begin{equation}\label{equ_exrho1}
\begin{aligned}
&\hat{\e}\bigg[\lr\phi(t),\rho(t)\rr-\lr\phi(s),\rho(s)\rr-\int_{s}^{t}\lr\partial_t\phi(r)+\nabla \phi(r)\cdot v(r)+\nu\Delta \phi(r),\rho(r)\rr dr\\
&-\int_{s}^{t}\lr\nabla \phi(r),\op(\rho(r))\rr dr\bigg]=0.
\end{aligned}
\end{equation}
By Fatou's Lemma and (\ref{equ_varM}),
\begin{equation}\label{equ_exrho3}
\begin{aligned}
&\text{Var}\bigg[\lr\phi(t),\rho(t)\rr-\lr\phi(s),\rho(s)\rr-\int_{s}^{t}\lr\partial_t\phi(r)+\nabla \phi(r)\cdot v(r)+\nu\Delta \phi(r),\rho(r)\rr dr\\
&-\int_{s}^{t}\lr\nabla \phi(r),\op(\rho(r))\rr dr\bigg]\\
&\leq \liminf_{k\to\infty}\text{Var}\bigg[-\int_{s}^{t}\lr\partial_t\phi(r)+\nabla \phi(r)\cdot v(r)+\nu\Delta \phi(r),\hat{\rho}_k(r)\rr dr\\
&+\lr\phi(t),\hat{\rho}_k(t)\rr-\lr\phi(s),\hat{\rho}_k(s)\rr-\int_{s}^{t}\lr\nabla \phi(r),\op(\hat{\rho}_k(r))\rr dr\bigg]\\
&\leq \lim_{k\to\infty}\frac{2\nu}{n_k}\|\nabla \phi\|_{\infty}^2=0.
\end{aligned}
\end{equation}
Since $\lim_{n\to\infty}d(\rho_n(0),\gamma)=\lim_{n\to\infty}d(\gamma_n,\gamma)=0$, we have $\rho(0)=\gamma$. Then by (\ref{equ_exrho0}), (\ref{equ_exrho1}), (\ref{equ_exrho2}) and (\ref{equ_exrho3}), $\rho$ is almost surely a weak solution of (\ref{def_weaks}) for $v$ in Definition \ref{def_2_1}.

By Lemma \ref{lem_mfd1}, the sequence $\gamma_n$ does exists for $R> e(\gamma)$. According to the uniqueness,
i.e. Lemma \ref{lem_uniq}, $\rho$ is not stochastic and we have determined an unique weak solution $\rho^{\gamma,v}$. Finally we conclude the proof by the relatively compactness of law of $\rho_n$.
\end{proof}

\begin{proof}[Proof of Lemma \ref{lem_lmfd}]

We start with proving that there exists $\delta \in (0,\varepsilon)$ such that for any $\gamma_n\in B_\delta(\gamma)\cap \mathcal{X}_n$ and $e(\g_n*\gamma_n)\leq R$,
\begin{equation}\label{equ_mlfd1}
\lim_{n\to \infty}P^v_{\gamma_n}\left(\sup_{0\leq t\leq T} d(\rho_n(t),\rho^{\gamma,v}(t))>\varepsilon\right)= 0.
\end{equation}
To prove it by contradiction, suppose that it's not true. Then, there exists $\varepsilon>0,R>0,$ so that for any $\delta$ we can find $\gamma_n\in B_\delta(\gamma)\cap \mathcal{X}_n$ and $e(\g_n*\gamma_n)\leq R$ such that
$$
\limsup_{n\to \infty}P^v_{\gamma_n}\left(\sup_{0\leq t\leq T} d(\rho_n(t),\rho^{\gamma,v}(t))>\varepsilon\right)>0.
$$
Taking subsequence if necessary, assume $\lim_{n\to\infty}d(\gamma_n,\gamma')=0$, where $\gamma'=\gamma'(\delta)$ depend on $\delta$. Then by Lemma \ref{lem_meanfield},
$$
\lim_{n\to \infty}P^v_{\gamma_n}\left(\sup_{0\leq t\leq T} d(\rho_n(t),\rho^{\gamma',v}(t))>\frac{\varepsilon}{2}\right)=0.
$$
Hence we obtain
\begin{equation}\label{equ_mfd3}
\sup_{0\leq t\leq T} d\left(\rho^{\gamma,v}(t),\rho^{\gamma',v}(t)\right)\geq \frac{\varepsilon}{2}.
\end{equation}
For $\delta=\frac{1}{n}$, we write $\eta_n=\gamma'(\frac{1}{n})\in \overline{B}_{\frac{1}{n}}(\gamma)$ satisfying (\ref{equ_mfd3}). Since we have the estimation $$Q_T(\rho^{\eta_n,v})\leq R+C_v T,
$$
with a similar argument as the proof of Lemma \ref{lem_meanfield},
$$
\rho^{\eta_n,v}\to \rho \text{ in }C([0,T];\p(\t)),
$$
and $\rho$ is a weak solution of (\ref{def_weaks}). By Lemma \ref{lem_uniq}, $\rho=\rho^{\gamma,v}$. However, it contradicts (\ref{equ_mfd3}), so we have proved (\ref{equ_mlfd1}).

Now turn to (\ref{equ_mfd1}). By Lemma 1.14 of \cite{RN187}, $$
\begin{aligned}
&\left|\e^v_{\gamma_n}\left[\int_0^T \int_{\t} |v(t)|^2 d\rho_n(t)dt\right]-\int_0^T \int_{\t} |v(t)|^2 d\rho^{\gamma,v}(t)dt\right|
\leq \varepsilon T\sup_{0\leq t\leq T} \|v^2(t)\|_{\text{Lip}}\\
&+T\|v\|_{\infty}^2P\left(\sup_{0\leq t\leq T} d(\rho_n(t),\rho^{\gamma,v}(t))>\varepsilon\right).
\end{aligned}
$$
By definition, $\frac{1}{4\nu}\int_0^T \int_{\t} |v(t)|^2 d\rho^{\gamma,v}(t)dt=\frac{1}{4\nu}\int_0^T\int_{\t} |\nabla p(t)|^2 d\rho^{\gamma,v}(t)dt=\act_T(\rho^{\gamma,v}),$
and then the conclusion follows by (\ref{equ_mlfd1}).
\end{proof}

\subsection{Proof of Lemma \ref{lem_density}}\label{sec_density}
To prove Lemma \ref{lem_density}, we need certain properties of the special case with $v=0$ in (\ref{def_weaks}).
\begin{lem}\label{lem_ns1}
For each $\varepsilon>0$, $\rho^{\gamma,0}$ is smooth on $([\varepsilon,T]\times \t)$ and $\inf_{x} \rho^{\gamma,0}(\varepsilon,x)>0.$
\end{lem}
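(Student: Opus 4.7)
The plan is to first enter the $L^2$ regime at a small positive time and then run the standard 2D vorticity Navier--Stokes bootstrap. Since $\rho := \rho^{\gamma,0}$ satisfies $Q_T(\rho) \leq e(\gamma) + C_v T < \infty$ by Lemma \ref{lem_meanfield}, we have in particular $\int_0^T \|\rho(s)-1\|_2^2\, ds < \infty$, so for almost every $s_0 \in (0,\varepsilon/2)$ one has $\rho(s_0) \in L^2(\t)$. I would fix such an $s_0$ and work on $[s_0, T]$.

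Starting from $\rho(s_0) \in L^2(\t)$, I would run the classical parabolic bootstrap on $\partial_t \rho - \nu \Delta \rho + \div(\rho u) = 0$ with $u = \k * \rho$ (which on $\t$ is exactly the vorticity form of 2D Navier--Stokes). A first energy estimate, testing against $\rho$ and using $\div u = 0$, yields $\rho \in L^\infty_t L^2_x \cap L^2_t H^1_x$ on $[s_0, T]$; Lemma \ref{lem_B_6} then upgrades $u$ to $L^\infty_t H^1_x$, and iterating with higher derivative tests together with the 2D embedding $H^2(\t) \hookrightarrow L^\infty(\t)$ and Ladyzhenskaya-type interpolation promotes $\rho$ to $C^\infty((s_0, T] \times \t)$. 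Uniqueness (Lemma \ref{lem_uniq}) identifies this classical solution with $\rho^{\gamma,0}$, yielding the claimed smoothness on $[\varepsilon, T] \times \t$.

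For the strict positivity, Lemma \ref{lem_meanfield} exhibits $\rho(t)$ as a weak limit of empirical measures, hence $\rho(t) \geq 0$ as a measure, and in the smooth regime this forces $\rho \geq 0$ pointwise on $[s_0+\delta,T]\times\t$ for any small $\delta>0$. I would then rewrite the equation as the linear parabolic problem $\partial_t \rho + u \cdot \nabla \rho - \nu \Delta \rho = 0$ (using $\div u = 0$) with smooth divergence-free drift and apply the strong maximum principle on the compact torus: if $\rho(\varepsilon, x_0) = 0$ for some $x_0$, then $\rho \equiv 0$ on $[s_0 + \delta, \varepsilon] \times \t$, contradicting mass conservation $\int_\t \rho(t, x)\,dx = 1$. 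Continuity on the compact set $\{\varepsilon\} \times \t$ then yields $\inf_x \rho(\varepsilon, x) > 0$. The main obstacle will be justifying the bootstrap rigorously: although classical for 2D vorticity NS with $L^2$ data, the passage from the distributional formulation of Definition \ref{def_2_1} to a valid $H^1$ energy identity starting at $t = s_0$ needs care, and the crucial two-dimensional structural input is the control $\|\nabla u\|_2 \leq C\|\rho-1\|_2$ (via Calder\'on--Zygmund applied to $u = \k*\rho$) combined with the Ladyzhenskaya inequality which together prevent blow-up and fuel the iteration to $C^\infty$.
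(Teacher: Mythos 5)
Your route is correct in outline but genuinely different from the paper's. You enter the $L^2$ regime at an almost-every time $s_0$ (using $Q_T(\rho^{\gamma,0})<\infty$) and then invoke the classical 2D vorticity Navier--Stokes theory for $L^2$ data -- energy bootstrap, Calder\'on--Zygmund bound $\|\nabla u\|_2\leq C\|\rho-1\|_2$, Ladyzhenskaya interpolation -- followed by identification with $\rho^{\gamma,0}$ through the uniqueness Lemma \ref{lem_uniq} applied on the shifted interval $[s_0,T]$ with initial datum $\rho(s_0)$ (note $e(\rho(s_0))<\infty$ since $\rho(s_0)\in L^2(\t)$, and $\op(\rho)=\rho(\k*\rho)$ for $L^2$ densities, so your classical solution is indeed a weak solution in the sense of Definition \ref{def_2_1}). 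The paper instead never re-enters the $L^2$ bootstrap: it uses the entropy/Fisher-information production estimate of Lemma \ref{lem_decay} to get $S(\rho^{\gamma,0}(\varepsilon_1))<\infty$ and $\int_{\varepsilon_1}^T I(\rho^{\gamma,0}(t))\,dt<\infty$, converts this into $\nabla\rho^{\gamma,0}\in L^{2q/(3q-2)}((\varepsilon_1,T);L^q)$ via the bound $\|\nabla\rho\|_q\leq C_q I(\rho)^{3/2-1/q}$, and then cites the smoothing results of Fournier--Hauray--Mischler and of Ben-Artzi/Brezis-type theorems to conclude $\rho^{\gamma,0}$ is a smooth classical solution for positive times. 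What the paper's route buys is that it recycles machinery already established (Lemma \ref{lem_decay}) and leans on results tailored to rough vorticity data; what your route buys is conceptual simplicity and independence from the Fisher-information estimates, at the cost that the bootstrap itself is only sketched -- in practice you would either cite the standard $L^2$ regularity theory (no more elementary than the paper's citations) or spend real effort justifying the energy identities from the distributional formulation, which you rightly flag as the delicate point. Your positivity argument is essentially the paper's: both rewrite the equation as a linear parabolic problem $\partial_t\rho+u\cdot\nabla\rho-\nu\Delta\rho=0$ with smooth coefficients and apply the strong maximum principle; you add the explicit contradiction with mass conservation $\int_{\t}\rho(t,x)\,dx=1$, which the paper leaves implicit.
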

\begin{proof}
Let $u(t):=\k*\rho^{\gamma,0}$. When $v=0$, it's a classical result that the solution of (\ref{def_weaks}) can be obtained by (\ref{equ_ns}) on $\t$. $Q_T(\rho^{\gamma,0})<\infty$ implies $u(t)\in L^2(\t)$ and $\rho^{\gamma,0}\in L^2([0,T];L^2(\t))$. Actually, $Q_T(\rho^{\gamma,0})<\infty$ implies $\rho^{\gamma,0}$ is the Leray solution \cite{RN205} for two dimensional Navier Stokes equation, which is very regular. We will use the results in \cite{RN192} and \cite{RN190,RN268}, which work on $\r^2$ as well as on $\t$, to prove this Lemma.

By Lemma \ref{lem_decay}, for each $T$, take $\varepsilon_1\in (0,\varepsilon)$ such that $S(\rho^{\gamma,0}(\varepsilon_1))<\infty$ and $$
\int_{\varepsilon_1}^{T}I(\rho^{\gamma,0}(t))dt<\infty.
$$
Similar with Lemma 3.2 of \cite{RN192}, for $q\in [1,2),$
$$
\|\nabla \rho^{\gamma,0}(t)\|_q\leq C_q I(\rho^{\gamma,0}(t))^{3/2-1/q}.
$$
Hence $$\nabla \rho^{\gamma,0}\in L^{2q/(3q-2)}((\varepsilon_1,T);L^q(\t)).$$
Then by Theorem 2.5 of \cite{RN192}, $$
\rho^{\gamma,0}\in C([\varepsilon_1,\infty);L^1(\t)\cap C((\varepsilon_1,\infty);L^\infty(\t)).
$$
This meets the assumptions of the theorem of \cite{RN268} (which improves Theorem B of \cite{RN190}), so $\rho^{\gamma,0}$ is a smooth classical solution on $(\varepsilon_1,T]\times \t$.

To show $\rho^{\gamma,0}(\varepsilon)>0$, note that for $\rho^{\gamma,0}$ can be see as a solution of second-order parabolic equation
$$
\partial_t \rho -\nu\Delta \rho+\nabla \rho\cdot u=0.
$$
Since we have proved $\rho^{\gamma,0}$ is smooth on $\left(\left[\frac{\varepsilon}{2},T\right]\times \t\right)$ and $\rho^{\gamma,0}(\frac{\varepsilon}{2})\geq 0$, we conclude the proof by strong maximum principle of parabolic equation.
\end{proof}
\begin{lem}\label{lem_ns2}
\begin{equation}\label{equ_ns1}
e(\gamma)-e(\rho^{\gamma,0}(t))=\nu\int_{0}^{t}\|\rho^{\gamma,0}(s)-1\|_2^2ds;
\end{equation}
There exists a constant $C>0$, such that for each smooth mollifier $J$,
\begin{equation}\label{equ_ns2}
\int_0^t I(J*\rho^{\gamma,0}(s))ds\leq\frac{2}{\nu} (S(J*\gamma)-S(J*\rho^{\gamma,0}(t)))+C(e(\gamma)-e(\rho^{\gamma,0}(t))).
\end{equation}
\end{lem}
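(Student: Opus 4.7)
The plan is to derive (\ref{equ_ns1}) as an immediate corollary of Lemma \ref{lem_etrest} with vanishing potential, and to obtain (\ref{equ_ns2}) via a classical entropy--Fisher information balance for $J*\rho^{\gamma,0}$ in which the mollifier commutator is controlled uniformly in $J$ by Lemma \ref{lem_B_6}.

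For (\ref{equ_ns1}): with $v = 0$, Definition \ref{def_2_1} says the functional $L_{\rho^{\gamma,0}}\phi$ from Section 4.1 vanishes identically on smooth test functions, so $\overline{\act}_T(\rho^{\gamma,0}) = 0$ and the Riesz representative $\hat\nabla p$ is zero in $L^2_{\rho^{\gamma,0}}$. The hypotheses of Lemma \ref{lem_etrest} are then met ($Q_T(\rho^{\gamma,0}) < \infty$ by Lemma \ref{lem_meanfield}), and (\ref{equ_etrest1}) evaluated with $s=0$ and $\hat\nabla p\equiv 0$ is exactly (\ref{equ_ns1}).

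For (\ref{equ_ns2}), I would fix $0<\varepsilon<t$ and write $\rho := \rho^{\gamma,0}$, $u:=\k*\rho$, $\hat\rho := J*\rho$. By Lemma \ref{lem_ns1}, $\rho$ is smooth and strictly positive on $[\varepsilon,T]\times\t$, hence so is $\hat\rho$, and every manipulation below is classical. Using $\partial_t\rho = \nu\Delta\rho - \div(\rho u)$ together with $\div u = 0$, integration by parts yields
\[
\tfrac{d}{dt}S(\hat\rho) = -\nu I(\hat\rho) + \int_{\t}\tfrac{\nabla\hat\rho}{\hat\rho}\cdot J*(\rho u)\,dx.
\]
Splitting $J*(\rho u) = u\hat\rho + (J*(\rho u) - u\hat\rho)$, the contribution of the first piece is $\int u\cdot\nabla\hat\rho = -\int\hat\rho\,\div u = 0$, while Cauchy--Schwarz with weight $2ab \le \nu a^2 + b^2/\nu$ applied to the commutator gives
\[
\tfrac{d}{dt}S(\hat\rho) \le -\tfrac{\nu}{2}I(\hat\rho) + \tfrac{1}{2\nu}\int_{\t}\tfrac{|J*(\rho u) - u\hat\rho|^2}{\hat\rho}\,dx.
\]

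The main obstacle is the $J$-uniform commutator bound $\int |J*(\rho u)-u\hat\rho|^2/\hat\rho\,dx \le C_3\|\rho - 1\|_2^2$. Using $|a-b|^2 \le 2|a|^2 + 2|b|^2$, it suffices to control $\int |J*(\rho u)|^2/\hat\rho$ and $\int \hat\rho |u|^2$ separately. For the former, Jensen applied to the probability kernel $J(x-\cdot)\rho(\cdot)/\hat\rho(x)$ with convex function $|\cdot|^2$ (i.e.\ Lemma 8.1.10 of \cite{RN217}) gives the pointwise bound $|J*(\rho u)|^2 \le \hat\rho\cdot J*(\rho|u|^2)$, so the integral is at most $\int \rho|u|^2\,dx \le C_2\|\rho-1\|_2^2$ by Lemma \ref{lem_B_6}. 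For the latter, decompose $\hat\rho = 1 + J*(\rho - 1)$ to obtain $\int \hat\rho|u|^2 = \|u\|_2^2 + \int J*(\rho-1)|u|^2$; since $\k * 1 = 0$ we have $u = \k*(\rho-1)$, so $\|u\|_2 \le \|\k\|_1\|\rho-1\|_2$ by Young's inequality, and the remaining piece is bounded by $\|J*(\rho-1)\|_2\,\|u\|_4^2 \le \|\rho-1\|_2\cdot C_1\|\rho-1\|_2$ again via Young and Lemma \ref{lem_B_6}.

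Combining, $\tfrac{d}{dt}S(\hat\rho) \le -\tfrac{\nu}{2}I(\hat\rho) + \tfrac{C_3}{2\nu}\|\rho-1\|_2^2$. Integrating on $[\varepsilon,t]$ and invoking (\ref{equ_ns1}) to rewrite $\int_\varepsilon^t\|\rho-1\|_2^2\,ds = \nu^{-1}(e(\rho(\varepsilon)) - e(\rho(t)))$ gives
\[
\int_\varepsilon^t I(\hat\rho(s))\,ds \le \tfrac{2}{\nu}(S(\hat\rho(\varepsilon)) - S(\hat\rho(t))) + \tfrac{C_3}{\nu^3}(e(\rho(\varepsilon)) - e(\rho(t))).
\]
Letting $\varepsilon\downarrow 0$ finishes the proof: $\rho(\varepsilon)\to\gamma$ weakly, so $J*\rho(\varepsilon)(x) \to J*\gamma(x)$ pointwise with uniform bound $\|J\|_\infty$, whence dominated convergence yields $S(\hat\rho(\varepsilon))\to S(J*\gamma)$; continuity of the right-hand side in (\ref{equ_ns1}) gives $e(\rho(\varepsilon))\to e(\gamma)$; and monotone convergence lifts $\int_\varepsilon^t I(\hat\rho(s))\,ds$ to $\int_0^t I(\hat\rho(s))\,ds$, establishing (\ref{equ_ns2}) with $C = C_3/\nu^3$.
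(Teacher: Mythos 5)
Your proof is correct, and its skeleton coincides with the paper's: entropy balance for $J*\rho^{\gamma,0}$, a weighted Cauchy--Schwarz, Jensen's inequality for the convolution kernel (Lemma 8.1.10 of \cite{RN217}), Lemma \ref{lem_B_6}, and then (\ref{equ_ns1}) to convert $\int\|\rho-1\|_2^2$ into an energy difference; your treatment of (\ref{equ_ns1}) via Lemma \ref{lem_etrest} with $\hat{\nabla}p\equiv 0$ is exactly the intended one. The execution differs in two places. First, for the drift term the paper simply bounds $\int_{\t}|J*(\rho u)|^2/(J*\rho)\,dx\leq \int_{\t}|u|^2\rho\,dx\leq C_2\|\rho-1\|_2^2$ directly, with no use of $\div u=0$; you instead split $J*(\rho u)=u\,(J*\rho)+\text{commutator}$, cancel the first piece by incompressibility, and then need the extra (correctly supplied) estimate $\int_{\t}(J*\rho)|u|^2\,dx\leq(\|\k\|_1^2+C_1)\|\rho-1\|_2^2$ — this cancellation is precisely what the paper uses in Lemma \ref{lem_decay} (see (\ref{equ_lem_decay_5})), but here it is dispensable and your route is a bit longer for the same constant up to factors. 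Second, the paper writes the entropy identity on $[0,t]$ directly from the mollified equation (\ref{equ_std_3}), whereas you work on $[\varepsilon,t]$ using the smoothness and positivity from Lemma \ref{lem_ns1} and then pass $\varepsilon\downarrow 0$ (with $S(J*\rho(\varepsilon))\to S(J*\gamma)$ by bounded convergence and $e(\rho(\varepsilon))\to e(\gamma)$ from (\ref{equ_ns1})); this buys you a fully classical calculus at the price of the limiting argument, and both are legitimate.
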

\begin{proof}
(\ref{equ_ns1}) follows by Lemma \ref{lem_etrest}. Write $u(t)=\k*\rho^{\gamma,0}(t)$. By (\ref{equ_std_3}),
$$
\begin{aligned}
&S(J*\rho^{\gamma,0}(t))-S(J*\gamma)=-\nu\int_{0}^{t} I(J*\rho^{\gamma,0}(s))ds\\
&+\int_{0}^{t}\int_{\t}\frac{(\nabla J*\rho^{\gamma,0})(t,x)}{(J*\rho^{\gamma,0})(t,x)}\cdot \frac{J*(\rho^{\gamma,0} u)(t,x)}{(J*\rho^{\gamma,0})(t,x)}(J*\rho^{\gamma,0})(t,x)dxdt.
\end{aligned}
$$
Using the inequality $ab\leq \frac{\nu}{2}a^2+\frac{1}{2\nu}b^2$ along with Lemma 8.1.10 of \cite{RN217}, we have
$$
\begin{aligned}
&S(J*\rho^{\gamma,0}(t))-S(J*\gamma)\leq -\frac{\nu}{2}\int_{0}^{t} I(J*\rho^{\gamma,0}(s))ds
\\
&
+\frac{1}{2\nu}\int_{0}^{t}\int_{\t}\bigg| \frac{J*(\rho^{\gamma,0} u)(t,x)}{(J*\rho^{\gamma,0})(t,x)}\bigg|^2(J*\rho^{\gamma,0})(t,x)dxdt\\
&\leq -\frac{\nu}{2}\int_{0}^{t} I(J*\rho^{\gamma,0}(s))ds
+\frac{1}{2\nu}\int_{0}^{t}\int_{\t} |u(t,x)|^2\rho^{\gamma,0}(t,x)dxdt.
\end{aligned}
$$
Then (\ref{equ_ns2}) follows Lemma \ref{lem_B_6} and (\ref{equ_ns1}).
\end{proof}

\begin{proof}[Proof of Lemma \ref{lem_density}]
We split the proof into three steps.\\
Step 1. Construction.\\
Write $\Phi_t$ as the heat kernel on $\t$, defined in Appendix \ref{sec_htkernel}. Given $t_1,t_2>0$ and $2t_1+t_2\leq T$, define
$$
\tilde{\rho}(t):=\tilde{\rho}_{t_1,t_2}(t)=
\left\{
\begin{aligned}
&\rho^{\gamma,0}(t),&\text{ if }0\leq t<t_1,\\
&\Phi_{\nu (t-t_1)}*\rho^{\gamma,0}(t_1),&\text{ if }t_1\leq t<t_1+t_2,\\
&\Phi_{\nu t_2}*\rho^{\gamma,0}(2t_1+t_2-t),&\text{ if }t_1+t_2\leq t<2t_1+t_2\\
&\Phi_{\nu t_2}*\rho(t-2t_1-t_2),&\text{ if }2t_1+t_2\leq t\leq T.
\end{aligned}
\right.
$$
One can easily check that
$\tilde{\rho}\in C([0,T];\p(\t))$. Writing $\tilde{u}=\k*\tilde{\rho}$, we construct $v(t)=\nabla p(t)$ below corresponding to $\tilde{\rho}(t)$ such that
\begin{equation}\label{equ_ell}
\partial_t \tilde{\rho}(t)-\nu \Delta \tilde{\rho}(t)+\div (\tilde{\rho}(t)\tilde{u}(t))
+\div (\tilde{\rho}(t) \nabla p(t))=0.
\end{equation}
For $t\in [0,t_1]$, take $p(t)=0$ and (\ref{equ_ell}) holds. Since we only care the weak solution, we just define $p(t_1+t_2)=p(2t_1+t_2)=0$, which would not bring any problems.\\
By Lemma \ref{lem_ns1}, $\tilde{\rho}(t_1)$ is smooth and $\inf_{x}\tilde{\rho}(t_1,x)>0$, so for $t_1< t<t_1+t_2$, $\tilde{\rho}(t)$ is uniform elliptic. Hence (\ref{equ_ell}) can be seen as a second-order elliptic equation for $p(t)$.  We expect all the coefficients in (\ref{equ_ell}) as an equation for $p(t)$ is regular (i.e. their derivatives of arbitrary order are uniform bounded), which ensures an unique weak solution $p(t)$ such that $\|\nabla p(t)\|_{\infty}+\|\nabla^2p(t) \|_{\infty}$ is uniform bounded. However, $\partial_t \tilde{\rho}(t)$ may not be regular when $t$ is approaching $t_1$. But we notice that $\Phi_t$ is the heat kernel, which implies $$
\partial_t \tilde{\rho}(t)=\nu\Delta \tilde{\rho}(t),\quad \forall t\in (t_1,t_2).
$$
Then (\ref{equ_ell}) reduces to
\begin{equation}\label{equ_ell1}
\div (\tilde{\rho}(t)\tilde{u}(t))+\nabla \tilde{\rho}(t)\cdot \nabla p(t)+\tilde{\rho}(t)\Delta p(t)=0,\quad \forall t\in (t_1,t_2).
\end{equation}
Pointwisely for $t$, take $p(t)$ as the weak solution of (\ref{equ_ell1}). Due to smoothness of $\tilde{\rho}(t_1)$, all the coefficients in (\ref{equ_ell1}) is regular, so $\|\nabla p(t)\|_{\infty}+\|\nabla^2p(t)
\|_{\infty}$ is uniform bounded and for $t_1< t<t_1+t_2$, (\ref{equ_ell}) holds.

For $t\geq t_1+t_2,$ because of the convolution by $\Phi_{\nu t_2}$, $\tilde{\rho}(t)$ is uniform elliptic. Regard (\ref{equ_ell}) as a second-order elliptic equations for $p(t)$ and take $p(t)$ as its weak solution. For $t\in (t_1+t_2,2t_1+t_2)$,
\begin{equation}\label{equ_ell3}
\begin{aligned}
&\partial_t \tilde{\rho}(t)=-\Phi_{\nu t_2}*\partial_t\rho^{\gamma,0}(2t_1+t_2-t)\\
&=-\nu\Phi_{\nu t_2}*\Delta \rho^{\gamma,0}(2t_1+t_2-t)+\Phi_{\nu t_2}*\div(\op(\rho^{\gamma,0}(2t_1+t_2-t)))\\
&=-\nu\Delta \tilde{\rho}(t)+\Phi_{\nu t_2}*\div(\op(\rho^{\gamma,0}(2t_1+t_2-t))),
\end{aligned}
\end{equation}
which is regular, and by (\ref{equ_std_3}), $\partial_t \tilde{\rho}(t)$ is also regular in $(2t_1+t_2,T)$, so  $\|\nabla p(t)\|_{\infty}+\|\nabla^2p(t)
\|_{\infty}$ is uniform bounded when $t\geq t_1+t_2$.

Therefore (\ref{equ_ell}) holds for the constructed pairs $(\tilde{\rho},p)$ and $\|\nabla p(t)\|_{\infty}+\|\nabla^2p(t)
\|_{\infty}$ is uniform bounded. As a consequence, $\nabla p\in \mathcal{J}$ and $\tilde{\rho}$ is a weak solution of (\ref{def_weaks}) for $\nabla p.$

Step 2. Verify that the limit of $\act_T(\tilde{\rho})$, when taking $t_1\downarrow 0$ first and then letting $t_2\downarrow 0$, is bounded above by $\act_T(\rho).$\\
Without loss of generality, we assume $2t_1+t_2<T.$
Write $\tilde{u}(t)=\k*\tilde{\rho}(t),u(t)=\k*\rho(t)$. By definition,
\begin{equation}\label{equ_final1}
\begin{aligned}
&4\nu\act_T(\tilde{\rho})=
\left(\int_{t_1}^{t_1+t_2}
+\int_{t_1+t_2}^{2t_1+t_2}
+\int_{2t_1+t_2}^{T}\right)
\|\partial_t \tilde{\rho}(t)-\nu \Delta \tilde{\rho}(t)+\div (\tilde{\rho}(t)\tilde{u}(t))
\|^2_{-1,\tilde{\rho}(t)}dt\\
&=\uppercase\expandafter{\romannumeral1}+\uppercase\expandafter{\romannumeral2}+\uppercase\expandafter{\romannumeral3}.
\end{aligned}
\end{equation}
For $t\in (t_1,t_1+t_2)$, $\partial_t \tilde{\rho}(t)=\nu \Delta \tilde{\rho}(t),$ we have
$$\uppercase\expandafter{\romannumeral1}=\int_{t_1}^{t_1+t_2}\|\div (\tilde{\rho}(t)\tilde{u}(t)\|^2_{-1,\tilde{\rho}(t)}dt.$$
By Proportion \ref{prop_hm1r4} and Lemma \ref{lem_B_6},
$$
\uppercase\expandafter{\romannumeral1}\leq C\int_{t_1}^{t_1+t_2}\|\tilde{\rho}(t)\|_2^2dt.
$$
Since the result of Lemma \ref{lem_ns2} also holds for the solution of heat equation, $$\uppercase\expandafter{\romannumeral1}\leq C(e(\rho^{\gamma,0}(t_1))-e(\Phi_{\nu t_2}*\rho^{\gamma,0}(t_1))).$$
Noting $\rho^{\gamma,0}$ is continuous under weak topology, by Fatou's Lemma and continuity of $e(\rho^{\gamma,0})$ (as a consequence of Lemma \ref{lem_ns2}),
$$
\limsup_{t_1\to0+}\uppercase\expandafter{\romannumeral1}\leq C(e(\gamma)-e(\Phi_{\nu t_2}*\gamma)).
$$
By (\ref{equ_ell3}),
$$
\begin{aligned}
&\uppercase\expandafter{\romannumeral2}=\int_{t_1+t_2}^{2t_1+t_2}\|
-2\nu \Delta \tilde{\rho}(t)+\div (\tilde{\rho}(t)\tilde{u}(t))+\Phi_{\nu t_2}*\div(\op(\rho^{\gamma,0}(2t_1+t_2-t)))
\|_{-1,\tilde{\rho}(t)}^2dt\\
&\leq 12\int_{t_1+t_2}^{2t_1+t_2}\|\nu \Delta \tilde{\rho}(t)\|_{-1,\tilde{\rho}(t)}^2dt+3\int_{t_1+t_2}^{2t_1+t_2}\|\div(\tilde{\rho}(t)\tilde{u}(t))\|^2_{-1,\tilde{\rho}(t)}dt\\
&+3\int_{t_1+t_2}^{2t_1+t_2}\|\Phi_{\nu t_2}*\div\left(\op(\rho^{\gamma,0}(2t_1+t_2-t))\right)\|^2_{-1,\tilde{\rho}(t)}dt\\
&\leq 12\int_{t_1+t_2}^{2t_1+t_2}\|\nu \Delta \tilde{\rho}(t)\|_{-1,\tilde{\rho}(t)}^2dt+3\int_{t_1+t_2}^{2t_1+t_2}\|\div(\tilde{\rho}(t)\tilde{u}(t))\|^2_{-1,\tilde{\rho}(t)}dt\\
&+3\int_{t_1+t_2}^{2t_1+t_2}\|\div\op(\rho^{\gamma,0}(2t_1+t_2-t))\|^2_{-1,\rho^{\gamma,0}(2t_1+t_2-t)}dt,
\end{aligned}
$$
where we used Lemma \ref{lem_hm1r5} to get the last inequality. Then by Proposition \ref{prop_hm1r4} and Lemma \ref{lem_B_6} along with Jensen's inequality,
$$
\begin{aligned}
&\uppercase\expandafter{\romannumeral2}
= 12\int_{t_1+t_2}^{2t_1+t_2}I(\tilde{\rho}(t))dt
+3\int_{t_1+t_2}^{2t_1+t_2}\int_{\t}|\tilde{u}(t)|^2d\tilde{\rho}(t)dt\\
&+3\int_{t_1+t_2}^{2t_1+t_2}\int_{\t}|\k*\rho^{\gamma,0}(2t_1+t_2-t)|^2d\rho^{\gamma,0}(2t_1+t_2-t)dt
\\
&\leq  12\nu^2\int_{0}^{t_1}I( \rho^{\gamma,0}(t))dt+
C\int_{0}^{t_1}\|\rho^{\gamma,0}(t)-1\|_2^2dt,
\end{aligned}
$$
which is, in view of Lemma \ref{lem_ns2}, bounded by
$$
24\nu(S(\Phi_{\nu t_2}*\gamma)-S(\Phi_{\nu t_2}*\rho^{\gamma,0}(t_1)))+C(e(\gamma)-e(\rho^{\gamma,0}(t_1))).
$$
Also by Fatou's Lemma,
$$
\limsup_{t_1\to\infty}\uppercase\expandafter{\romannumeral2}\leq 0.
$$
By Lemma \ref{lem_hm1r5}, for each $\delta>0$
$$
\begin{aligned}
&\uppercase\expandafter{\romannumeral3}
\leq (1-\delta)^{-1}\int_{0}^{T-2t_1-t_2}\|\partial_t \rho(t)-\nu\Delta \rho(t)+\div(\rho(t) u(t))\|^2_{-1,\rho(t)}dt\\
&+\delta^{-1} \int_{0}^{T-2t_1-t_2}\int_{\t}|\Phi_{\nu t_2}*u(t)-u(t)|^2d\rho(t)dt.
\end{aligned}
$$
Combining these estimations, we have
$$
\begin{aligned}
&\limsup_{t_1\to 0+}\act_T(\tilde{\rho})\leq (1-\delta)^{-1}\act_T(\rho)
+(4\nu\delta)^{-1} \int_{0}^{T}\int_{\t}|\Phi_{\nu t_2}*u(t)-u(t)|^2d\rho(t)dt\\
&+C(e(\gamma)-e(\Phi_{\nu t_2}*\gamma)).
\end{aligned}
$$
Note that $Q_T(\rho)<\infty$ implies $\int_0^T \|\rho(t)\|_2^2dt<\infty,$ so by Lemma \ref{lem_B_6} and dominated convergence theorem, the second term vanishes when $t_2\downarrow 0$. The limit of third term is also non-positive by Fatou's Lemma. Thus
$$
\limsup_{t_2\to 0+} \limsup_{t_1\to 0+}\act_T(\tilde{\rho})\leq (1-\delta)^{-1}\act_T(\rho).$$
Due to the arbitrariness of $\delta$ we conclude that
$$
\lim_{t_2\to 0+} \lim_{t_1\to 0+}\act_T(\tilde{\rho})\leq \act_T(\rho).$$

Step 3. Convergence.

It's easy to check for each $t\in[0,T]$, $\lim_{t_2\to 0+} \lim_{t_1\to 0+}d(\tilde{\rho}(t),\rho(t))=0.$ To conclude the proof, we need strength this point-wise convergence into $$\lim_{t_2\to 0+} \lim_{t_1\to 0+}\sup_{0\leq t\leq T}d(\tilde{\rho}(t),\rho(t))=0,$$ which can be obtained by the compactness result below.
\end{proof}
\begin{lem}\label{lem_rcomp}
If $(\rho_n)_{n\geq 1}$ is a sequence of weak solutions of (\ref{def_weaks}) for $v_n=\nabla p_n\in \mathcal{J}$ with $$
\sup_{n\geq 1}\int_{0}^{T}\int_{\t}v_n^2(t,x)\rho(t,x)dxdt<\infty,
$$
then $(\rho_n)_{n\geq 1}$ is relatively compact set in $C([0,T];\p(\t)).$
\end{lem}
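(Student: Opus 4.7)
Since $\p(\t)$ is compact under the weak topology and the metric $d$ metrizes this topology (as $\t$ is compact), the Arzel\`a--Ascoli theorem reduces the problem to establishing equicontinuity in $t$ of the family $\{\rho_n\}$, uniformly in $n$. The plan is to derive a uniform modulus of continuity for $t\mapsto\rho_n(t)$ in a dual-type metric that is equivalent to $d$ on $\p(\t)$, and then transfer it back to $d$.

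The starting point is the weak formulation in Definition \ref{def_2_1}. Taking a time-independent test function $\phi\in C^\infty(\t)$, for $0\le s<t\le T$ one obtains
$$\lr\phi,\rho_n(t)-\rho_n(s)\rr=\nu\int_s^t\lr\Delta\phi,\rho_n(r)\rr dr+\int_s^t\lr\nabla\phi,\op(\rho_n(r))\rr dr+\int_s^t\int_{\t}\nabla\phi\cdot v_n(r)d\rho_n(r)dr.$$
The first term is bounded by $\nu\|\Delta\phi\|_{\infty}(t-s)$ since each $\rho_n(r)$ is a probability measure. The second term is handled by the a priori bound (\ref{equ_opbound}), which gives $|\lr\nabla\phi,\op(\rho_n(r))\rr|\le\tfrac12 C_{\k}\|\nabla^2\phi\|_{\infty}$ independently of $\rho_n(r)$, so this contribution is at most $\tfrac12 C_{\k}\|\nabla^2\phi\|_{\infty}(t-s)$. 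The third term is controlled by Cauchy--Schwarz and the hypothesis $M:=\sup_n\int_0^T\int_{\t}|v_n|^2d\rho_ndt<\infty$:
$$\Bigl|\int_s^t\int_{\t}\nabla\phi\cdot v_nd\rho_ndr\Bigr|\le\|\nabla\phi\|_{\infty}\sqrt{(t-s)M}.$$
Combining the three bounds gives $|\lr\phi,\rho_n(t)-\rho_n(s)\rr|\le C_{\phi}\sqrt{t-s}$, with $C_\phi$ depending only on $\|\phi\|_{C^2}$ and $M$, uniformly in $n$.

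To convert this into equicontinuity in $d$, I would fix a countable family $\{\phi_k\}_{k\ge 1}\subset C^\infty(\t)$ dense in $C(\t)$ and introduce the auxiliary metric $d_0(\mu,\nu):=\sum_{k\ge 1}2^{-k}(|\lr\phi_k,\mu-\nu\rr|\wedge 1)$. On the compact set $\p(\t)$ this $d_0$ metrizes the weak topology, which coincides with the topology induced by $d$. The uniform H\"older-$1/2$ estimate above, combined with splitting the sum at a finite $K$ (using $|\lr\phi_k,\mu-\nu\rr|\wedge 1\le 1$ for $k>K$), shows $\sup_nd_0(\rho_n(t),\rho_n(s))\to 0$ as $|t-s|\to 0$. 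Equivalence of $d_0$ and $d$ on the compact space $\p(\t)$ yields the same modulus for $d$, and Arzel\`a--Ascoli then gives the claim.

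The main a priori obstacle is the singular term $\op(\rho_n)$: since $\op(\gamma)$ need not lie in any $L^p$ for a generic $\gamma\in\p(\t)$, one cannot directly pair it with $\nabla\phi$ via an energy estimate uniform in $n$. The ingredient that bypasses this difficulty is precisely the symmetrized representation introduced in (\ref{equ_rucv1}), which delivers the bound (\ref{equ_opbound}) valid for any $\gamma\in\p(\t)$ and depending only on $\|\nabla\varphi\|_\infty$. The drift contribution, although $v_n$ is only controlled in an $L^2$ sense weighted by $\rho_n$, is tamed by Cauchy--Schwarz and the extra $\sqrt{t-s}$ factor, which downgrades the modulus from Lipschitz to H\"older-$1/2$ but is harmless for compactness.
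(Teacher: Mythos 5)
Your proof is correct and follows essentially the same route as the paper: the same weak-formulation estimate, with the Laplacian and the symmetrized singular term bounded via (\ref{equ_opbound}) to give an $O(t-s)$ contribution and the drift handled by Cauchy--Schwarz to give the $O(\sqrt{t-s})$ term. The only difference is cosmetic: where you spell out the transfer to the metric $d$ and invoke Arzel\`a--Ascoli explicitly, the paper delegates this standard compactness step to cited criteria (Ethier--Kurtz type tightness results).
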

\begin{proof}
Suppose $$\int_{0}^{T}\int_{\t}v_n^2(t,x)\rho(t,x)dxdt\leq K$$ for certain $K\geq 0.$
Note that by (\ref{equ_opbound}) and Holder's inequality
$$
\begin{aligned}
&|\lr\phi,\rho_n(t+h)-\rho_n(t)\rr|\leq C\|\nabla^2\phi\|_{\infty}h+\int_{t}^{t+h}\int_{\t}v_n(s,x)\cdot\nabla\phi(s,x)\rho_n(s,x) dxds\\
&\leq C_{\phi}h+\left(\int_{t}^{t+h}\int_{\t}|\nabla \phi(s,x)|^2\rho(s,x)dxds\right)^{\frac{1}{2}} K^{\frac{1}{2}}\\
&\leq C_{\phi}\left(h+h^{\frac{1}{2}}K^{\frac{1}{2}}\right).
\end{aligned}
$$
Then we conclude the proof by \cite{RN246} and Theorems 8.6 and 8.8 in Chapter 3 of \cite{RN151}.
\end{proof}
\begin{appendix}
\section{Weighted Sobolev space}\label{sec_wsp}
In this section, we give some property of $\|\cdot\|_{-1,\mu}$ for $\mu\in \p(\t)$. Recall that
$$\|m\|_{-1,\mu}^2=\sup_{\phi \in C^\infty(\t)}\left\{2\lr\phi,m\rr-\int_{\t}|\nabla\phi(x)|^2dm\right\}.$$
\begin{lem}\label{lem_hm1r1}
Suppose $\int_{\t} |v|^2d\mu<\infty,$ then $$
\|-\div (\mu v)\|_{-1,\mu}^2\leq \int_{\t} |v|^2d\mu.$$
\end{lem}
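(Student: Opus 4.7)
The plan is to unfold the definition of $\|\cdot\|_{-1,\mu}$ and reduce to a pointwise inequality. Fix any $\phi \in C^\infty(\t)$. Interpreting $-\div(\mu v)$ in the distribution sense against the test function $\phi$ (which is just integration by parts since $\mu v$ is a vector measure with $L^2(\mu)$ density $v$), we have
$$2\lr\phi, -\div(\mu v)\rr = 2\int_{\t} \nabla\phi(x)\cdot v(x)\, d\mu(x).$$

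Next I would use the elementary pointwise inequality $2\nabla\phi\cdot v - |\nabla\phi|^2 \le |v|^2$, which is just the expansion of $|\nabla\phi - v|^2 \ge 0$. Integrating against $\mu$ gives
$$2\lr\phi, -\div(\mu v)\rr - \int_{\t}|\nabla\phi|^2 d\mu = \int_{\t}\bigl(2\nabla\phi\cdot v - |\nabla\phi|^2\bigr)d\mu \le \int_{\t}|v|^2 d\mu,$$
where finiteness of the right-hand side is guaranteed by hypothesis. Since the bound is uniform in $\phi$, taking the supremum over $\phi \in C^\infty(\t)$ in the definition of $\|-\div(\mu v)\|_{-1,\mu}^2$ yields the claim.

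There is essentially no obstacle here: the only thing to be careful about is justifying the integration by parts, but this is immediate because $v \in L^2(\mu)$ and $\nabla\phi$ is bounded and continuous, so $\nabla\phi \cdot v \in L^1(\mu)$ and the pairing $\lr\phi,-\div(\mu v)\rr$ is by definition $\int \nabla\phi\cdot v\, d\mu$ (this is the standard distributional meaning of $\div(\mu v)$ for an $L^2(\mu)$ vector field). So the lemma reduces to a one-line application of $2ab \le a^2 + b^2$.
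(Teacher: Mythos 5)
Your proof is correct and follows essentially the same route as the paper: interpret $\lr\phi,-\div(\mu v)\rr$ as $\int_{\t}\nabla\phi\cdot v\,d\mu$, apply the pointwise bound $2\nabla\phi\cdot v-|\nabla\phi|^2\leq |v|^2$, and take the supremum over $\phi\in C^\infty(\t)$ in the definition of $\|\cdot\|_{-1,\mu}$. The paper's argument is exactly this one-line application of $2ab-a^2\leq b^2$, so there is nothing further to add.
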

\begin{proof}
By definition of $\|\cdot\|_{-1,\mu}$ and the inequality $2ab-a^2\leq b^2$, $$
\|-\div (\mu v)\|^2_{-1,\mu}= \sup_{\phi\in C^\infty(
\t)}\left\{2\int_{\t} v\cdot \nabla \phi d\mu-\int_{\t}|\nabla \phi|^2d\mu\right\}
\leq \int_{\t}|v|^2d\mu.
$$
\end{proof}
Recall $$S(\mu)=\left\{
\begin{aligned}
&\int_{\t}\mu(x)\log \mu(x)dx,&\text{if }\mu(dx)=\mu(x)dx,\\
&\infty,&otherwise,
\end{aligned}
\right.$$ and
$$I(\mu)=\left\{
\begin{aligned}
&\int_{\t}\frac{|\nabla \mu(x)|^2}{\mu(x)}dx,&\text{if }\mu(dx)=\mu(x)dx \text{ and }\nabla \mu \in L^1(\t),\\
&\infty,&otherwise.
\end{aligned}
\right.
$$
\begin{prop}\label{prop_hm1r4}
For $\mu\in \p(\t),$
$$\|\Delta \mu\|^2_{-1,\mu}=I(\mu) \text{ if }S(\mu)<\infty;$$
$$\|\div[\mu (\k*\mu)]\|^2_{-1,\mu}\leq \int_{\t}|\k*\mu|^2d\mu\text{ if }\mu\in L^2(\t).$$
\end{prop}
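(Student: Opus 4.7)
The two claims are independent.

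For the second bound, since $\mu\in L^2(\t)$, Lemma~\ref{lem_B_6} gives $\k*\mu\in L^2(\t)$, so $\int_{\t}|\k*\mu|^2 d\mu<\infty$. For $\phi\in C^\infty(\t)$, integration by parts yields $\lr\phi,\div[\mu(\k*\mu)]\rr = -\int_{\t}\nabla\phi\cdot(\k*\mu)\,d\mu$, and the pointwise bound $2|a\cdot b|\leq|a|^2+|b|^2$ gives
\[
2\lr\phi,\div[\mu(\k*\mu)]\rr - \int_{\t}|\nabla\phi|^2 d\mu \leq \int_{\t}|\k*\mu|^2 d\mu;
\]
taking the supremum over $\phi$ concludes.

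For the identity $\|\Delta\mu\|_{-1,\mu}^2 = I(\mu)$, I establish the two inequalities separately. For the upper bound, if $I(\mu)=\infty$ there is nothing to show; if $I(\mu)<\infty$, then $\mu$ is a density with $\nabla\mu\in L^1(\t)$ and $\nabla\mu=0$ a.e.\ on $\{\mu=0\}$, so writing $\nabla\mu=\sqrt\mu\cdot(\nabla\mu/\sqrt\mu)$ (with $0/0:=0$) and applying $2|a\cdot b|\leq|a|^2+|b|^2$ after integration by parts gives
\[
2\lr\phi,\Delta\mu\rr = -2\int_{\t}\nabla\phi\cdot\nabla\mu\,dx \leq \int_{\t}|\nabla\phi|^2 d\mu + I(\mu),
\]
establishing $\|\Delta\mu\|_{-1,\mu}^2 \leq I(\mu)$.

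For the matching lower bound, my plan is to saturate the supremum by smooth approximations of the formal optimizer $-\log\mu$. Fix a smooth radial mollifier $J_k$ with $J_k\to\delta_0$ and a small $\epsilon>0$, and set $\phi_{k,\epsilon}:=-\log(J_k*\mu+\epsilon)\in C^\infty(\t)$; integration by parts produces
\[
\lr\phi_{k,\epsilon},\Delta\mu\rr = \int_{\t}\frac{(J_k*\nabla\mu)\cdot\nabla\mu}{J_k*\mu+\epsilon}\,dx,\quad \int_{\t}|\nabla\phi_{k,\epsilon}|^2 d\mu = \int_{\t}\frac{|J_k*\nabla\mu|^2}{(J_k*\mu+\epsilon)^2}\mu\,dx.
\]
Holding $\epsilon>0$ fixed and sending $k\to\infty$, the Cauchy--Schwarz/Jensen inequality $|J_k*\nabla\mu|^2\leq(J_k*\mu)\cdot J_k*(|\nabla\mu|^2/\mu)$ (in the spirit of Lemma~8.1.10 of \cite{RN217}) together with dominated convergence drive the two integrals to $\int_{\t}|\nabla\mu|^2/(\mu+\epsilon)\,dx$ and $\int_{\t}|\nabla\mu|^2\mu/(\mu+\epsilon)^2\,dx$, so their appropriate linear combination tends to $\int_{\t}|\nabla\mu|^2(\mu+2\epsilon)/(\mu+\epsilon)^2\,dx$. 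Letting $\epsilon\downarrow 0$ and invoking monotone convergence on $\{\mu>0\}$ delivers $I(\mu)$, regardless of whether $I(\mu)$ is finite or infinite.

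The central technical difficulty is the passage to the limit $k\to\infty$ in $\int|\nabla\phi_{k,\epsilon}|^2 d\mu$: the integrand $|J_k*\nabla\mu|^2/(J_k*\mu+\epsilon)^2$ is a priori only controlled by $\epsilon^{-2}|J_k*\nabla\mu|^2$, which need not be uniformly $L^1(\mu)$-bounded in $k$. The Cauchy--Schwarz/Jensen inequality above, combined if necessary with a truncation $\mu\mapsto\mu\wedge N$ to control $\|J_k*\mu\|_\infty$, supplies the needed integrable dominant so that dominated convergence applies. The hypothesis $S(\mu)<\infty$ enters only to ensure that $\mu$ is an $L^1$-density, making the pointwise manipulations above meaningful.
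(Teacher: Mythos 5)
Your treatment of the second inequality is correct and coincides with the paper's route (the paper just invokes Lemma \ref{lem_hm1r1} together with Lemma \ref{lem_B_6}; your inline argument is the same computation). Your upper bound $\|\Delta\mu\|^2_{-1,\mu}\leq I(\mu)$ is also fine. Note that for the identity itself the paper does not argue from scratch: it cites Theorem D.45 of \cite{RN90}, so your attempt at a self-contained proof of the lower bound is where the comparison matters.

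That lower bound is where there is a genuine gap. Your test-function computation with $\phi_{k,\epsilon}=-\log(J_k*\mu+\epsilon)$ writes $\lr\phi_{k,\epsilon},\Delta\mu\rr=-\int\nabla\phi_{k,\epsilon}\cdot\nabla\mu\,dx$ and produces integrands containing $\nabla\mu$ itself; this presupposes that $\mu$ has a weak gradient in $L^1(\t)$. But $S(\mu)<\infty$ only gives an $L^1$ (indeed $L\log L$) density, not a $W^{1,1}$ one, and the case in which $\mu$ is \emph{not} weakly differentiable is exactly the case $I(\mu)=\infty$ where the equality still forces you to show $\|\Delta\mu\|^2_{-1,\mu}=\infty$; your formulas cannot even be written there (only $J_k*\nabla\mu=\nabla J_k*\mu$ makes sense, not the factor $\nabla\mu$ paired against it). Moreover, even when $\nabla\mu\in L^1$, the domination you propose, $|J_k*\nabla\mu|^2\leq(J_k*\mu)\,J_k*(|\nabla\mu|^2/\mu)$, is only useful when $|\nabla\mu|^2/\mu\in L^1$, i.e. when $I(\mu)<\infty$; in the subcase $I(\mu)=\infty$ the passage $k\to\infty$ is therefore circular, and Fatou is not a substitute because the cross term has no sign and the quadratic term $\int|\nabla\phi_{k,\epsilon}|^2d\mu$ enters with a minus sign, so you would need a $\limsup$ bound on it, not a $\liminf$. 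The parenthetical truncation $\mu\wedge N$ is not developed and changes both $\Delta\mu$ and the reference measure in $\|\cdot\|_{-1,\mu}$, so it does not obviously help. A repair consistent with the paper's toolbox: mollify $\mu$ rather than the test function. By Lemma \ref{lem_hm1r5} with $v=0$ one has $\|\Delta(\Phi_t*\mu)\|^2_{-1,\Phi_t*\mu}\leq\|\Delta\mu\|^2_{-1,\mu}$ for the heat kernel $\Phi_t$ of Appendix \ref{sec_htkernel}; since $\Phi_t*\mu$ is smooth and strictly positive, testing with $-\log(\Phi_t*\mu)$ gives $I(\Phi_t*\mu)=\|\Delta(\Phi_t*\mu)\|^2_{-1,\Phi_t*\mu}$ by an elementary computation, and lower semicontinuity of the Fisher information (which you would need to prove or cite) yields $I(\mu)\leq\liminf_{t\to0}I(\Phi_t*\mu)\leq\|\Delta\mu\|^2_{-1,\mu}$ in all cases. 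Alternatively, simply cite Theorem D.45 of \cite{RN90}, as the paper does.
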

\begin{proof}
The first equality is proved by Theorem D.45. in \cite{RN90} in $\r^2$, which can be adapted to $\t$. The second inequality follows from Lemma \ref{lem_hm1r1} and Lemma \ref{lem_B_6}.
\end{proof}
\begin{lem}\label{lem_hm1r5}
Let $J$ be an arbitrary smooth mollifier. Then for each $\delta>0,$
\begin{equation*}
\|J*m+\div (J*\mu( J*v))\|^2_{-1,J*\mu}\leq (1-\delta)^{-1}\|m+\div(\mu v)\|^2_{-1,\mu}+\delta^{-1} \int_{\t}|J*v-v|^2d\mu.
\end{equation*}
\end{lem}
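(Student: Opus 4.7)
The plan rests on the variational definition of $\|\cdot\|_{-1,J*\mu}$, the Hilbert-like parallelogram inequality $\|A+B\|^2 \le (1-\delta)^{-1}\|A\|^2 + \delta^{-1}\|B\|^2$ for this seminorm, and one algebraic identity that isolates the commutator between convolution by $J$ and the nonlinear map $(\mu,v)\mapsto \mu v$. Since $J*\div=\div(J*\,\cdot\,)$, I start from
\[
J*m + \div\bigl((J*\mu)(J*v)\bigr) \;=\; J*[m+\div(\mu v)]\;+\;\div\bigl((J*\mu)(J*v)-J*(\mu v)\bigr),
\]
call these two summands $A$ and $B$, and apply the parallelogram inequality with splitting parameter $\delta$ to reduce to estimating $\|A\|^2_{-1,J*\mu}$ and $\|B\|^2_{-1,J*\mu}$ separately.

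For $A = J*[m+\div(\mu v)]$, I test against an arbitrary $\phi$: by evenness of $J$, $\lr\phi,A\rr = \lr J*\phi,\,m+\div(\mu v)\rr$. Plugging $J*\phi$ into the definition of $\|m+\div(\mu v)\|_{-1,\mu}^2$ gives the bound $2\lr\phi,A\rr \le \|m+\div(\mu v)\|_{-1,\mu}^2 + \int|J*\nabla\phi|^2\,d\mu$, because $\nabla(J*\phi)=J*\nabla\phi$. The pointwise Jensen inequality $|J*\nabla\phi|^2\le J*|\nabla\phi|^2$ combined with Fubini yields $\int|J*\nabla\phi|^2\,d\mu\le\int|\nabla\phi|^2\,d(J*\mu)$, so that taking the supremum over $\phi$ in the variational definition produces the clean estimate $\|A\|^2_{-1,J*\mu}\le \|m+\div(\mu v)\|^2_{-1,\mu}$.

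For $B = \div W$ with $W := (J*\mu)(J*v) - J*(\mu v)$, Lemma \ref{lem_hm1r1} applied with the vector field $W/(J*\mu)$ gives $\|B\|^2_{-1,J*\mu}\le \int|W|^2/(J*\mu)\,dx$. The key algebraic step is to rewrite
\[
W(x) = \int J(x-y)\bigl[(J*v)(x)-v(y)\bigr]\,\mu(dy),
\]
and to invoke the sharp Cauchy--Schwarz bound $|J*(\mu g)(x)|^2 \le (J*\mu)(x)\cdot J*(|g|^2\mu)(x)$, which integrates (using $\int J = 1$) to $\int|J*(\mu g)|^2/(J*\mu)\,dx \le \int|g|^2\,d\mu$. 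Applying this with $g = J*v - v$ to the principal part $T'(x):=J*(\mu(J*v-v))(x)$ of $W$ produces exactly $\int|J*v-v|^2\,d\mu$. The main obstacle will be the arrangement of the decomposition so that the modulus-of-continuity remainder $\int J(x-y)(v(x)-v(y))\,\mu(dy)$ is absorbed into $T'$ rather than contributing a separate term against the wrong measure $d(J*\mu)$: this is precisely what pins down the clean form of the right-hand side against $d\mu$. Combining the two estimates yields the lemma.
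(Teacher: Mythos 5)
Your skeleton is fine as far as it goes: the weighted triangle inequality $\|A+B\|^2_{-1,J*\mu}\leq(1-\delta)^{-1}\|A\|^2_{-1,J*\mu}+\delta^{-1}\|B\|^2_{-1,J*\mu}$ does hold for this variational seminorm (rescale the test function), and your estimate $\|J*[m+\div(\mu v)]\|^2_{-1,J*\mu}\leq\|m+\div(\mu v)\|^2_{-1,\mu}$ via evenness of $J$ and Jensen is correct; these two steps are a dual repackaging of the paper's own computation, which works directly in the variational formula (move $J$ onto the test function, Jensen on the quadratic term, split the cross term by $2ab\leq\delta a^2+\delta^{-1}b^2$, rescale).

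The gap is in your treatment of $B=\div W$. You read the mollified drift as the product $(J*\mu)(J*v)$, so $W=(J*\mu)(J*v)-J*(\mu v)$, and your plan requires $\int_{\t}|W|^2/(J*\mu)\,dx\leq\int_{\t}|J*v-v|^2d\mu$. But $W$ is not the term $T'=J*\bigl(\mu(J*v-v)\bigr)$ that your Cauchy--Schwarz bound controls: the difference is the commutator $R(x)=\int J(x-y)\bigl[(J*v)(x)-(J*v)(y)\bigr]\mu(dy)$ (it involves $J*v$, not $v$ as in the remainder you wrote down), and it cannot be ``absorbed''. Indeed the intermediate inequality is false: take $\mu=\delta_{x_0}$ and $v$ smooth, non-constant and affine on a neighborhood of $x_0$ containing $x_0+\mathrm{supp}\,J$; then $(J*v)(x_0)=v(x_0)$, so $\int|J*v-v|^2d\mu=0$, while $\int_{\t}|W|^2/(J*\mu)\,dx=\int J(x-x_0)|v(x)-v(x_0)|^2dx>0$. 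So the step you yourself flag as ``the main obstacle'' is exactly the step that does not go through, and your argument as proposed does not close (whether the product-form statement is true by some other route is a separate question your proof does not settle). The paper's proof never meets this commutator because it treats the expression $J*\mu(J*v)$ as $J*\bigl(\mu(J*v)\bigr)$, the mollification of the weighted measure: then $\lr\varphi,\div\bigl(J*(\mu(J*v))\bigr)\rr=\lr J*\varphi,\div(\mu(J*v))\rr$ exactly, the only inequality needed at that stage is Jensen for $\int|\nabla\varphi|^2d(J*\mu)$, and the error term is precisely $T'$ with no remainder. Under that reading your scheme closes and coincides, in dual form, with the paper's argument; under the product reading it has a genuine hole.
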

\begin{proof}
By Jensen's inequality, for each $\varphi\in C^\infty(\t)$,
$$
\begin{aligned}
&2\lr J*m+\div (J*\mu( J*v)),\varphi\rr -\int_{\t}|\nabla \varphi|^2d(J*\mu)\\
&\leq 2\lr m+\div(\mu(J*v)),J*\varphi\rr -\int_{\t}|\nabla J*\varphi|^2d\mu\\
&= 2\lr m+\div(\mu v),J*\varphi\rr
+2\lr \div(\mu (J*v-v)),J*\varphi\rr-\int_{\t}|\nabla J*\varphi|^2d\mu.
\end{aligned}
$$
Using $2ab\leq \delta a^2+\frac{1}{\delta} b^2,$
$$
2\lr \div(\mu (J*v-v)),J*\varphi\rr\leq
\delta^{-1}\int_{\t}|J*v-v|^2d\mu+\delta\int_{\t}|\nabla J*\varphi|^2d\mu.
$$
Hence,
$$
\begin{aligned}
&2\lr J*m+\div (J*\mu( J*v)),\varphi\rr -\int_{\t}|\nabla \varphi|^2d(J*\mu)\\
&\leq 2\lr m+\div(\mu v),J*\varphi\rr
+\delta^{-1}\int_{\t}|J*v-v|^2d\mu-(1-\delta)\int_{\t}|\nabla J*\varphi|^2d\mu\\
&=(1-\delta)^{-1}\left(2\lr m+\div(\mu v),(1-\delta)J*\varphi\rr-\int_{\t}|(1-\delta)\nabla J*\varphi|^2d\mu\right)\\
&+\delta^{-1}\int_{\t}|J*v-v|^2d\mu\leq \delta^{-1}\int_{\t}|J*v-v|^2d\mu+(1-\delta)^{-1} \|m+\div(\mu v)\|^2_{-1,\mu}.
\end{aligned}
$$
Taking the supremum for $\varphi$ we conclude the result.
\end{proof}
\section{Heat kernel on torus}\label{sec_htkernel}
We define the heat kernel on $\t$,
\begin{equation}\label{heatk}
\Phi(t,x)=\sum_{n,m=-\infty}^{\infty} e^{-4\pi^2(n^2+m^2)t}\exp(2\pi i (nx_1+mx_2)),\quad x=(x_1,x_2),
\end{equation}
for $t>0.$
\begin{thm}\label{heatmain}
{\rm(1)} For any $t>0,$
$\Phi_t(x):=\Phi(t,x)$ is a well defined smooth mollifier.\\
{\rm(2)} $\Phi_t(x)*\Phi_s(x)=\Phi_{t+s}(x).$\\
{\rm(3)} For $\gamma\in \p(\t)$ and $\nu>0$, $\rho(t)=\Phi_{\nu t}*\gamma$ satisfies the heat equation:
\begin{equation}\label{heat_pro1}
\frac{\partial}{\partial t}\rho(t,x)=\nu \Delta \rho(t,x),\quad t>0,
\end{equation}
and $\rho\in C([0,T];\p(\t))$.
\end{thm}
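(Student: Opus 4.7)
The plan is to verify the three claims by exploiting the Fourier series representation together with the identification of $\Phi_t$ as the periodization of the Euclidean heat kernel on $\r^2$.

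For part (1), fix $t>0$. The Fourier coefficients $e^{-4\pi^2(n^2+m^2)t}$ decay superpolynomially, so the series in (\ref{heatk}) and all of its termwise derivatives converge absolutely and uniformly on $\t$ by the Weierstrass $M$-test. This makes $\Phi_t \in C^\infty(\t)$. Evenness in $x$ follows from pairing the terms indexed by $(n,m)$ and $(-n,-m)$, which combine into $\cos$'s. For non-negativity and unit mass, I would invoke Poisson summation: the series (\ref{heatk}) equals the periodization
\[
\Phi_t(x) = \sum_{k\in\mathbb{Z}^2} \frac{1}{4\pi t}\exp\!\left(-\frac{|x-k|^2}{4t}\right),
\]
which is manifestly non-negative and, by swapping sum and integral, satisfies $\int_\t \Phi_t(x)\,dx = \int_{\r^2}\frac{1}{4\pi t}e^{-|y|^2/(4t)}dy = 1$. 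Hence $\Phi_t$ meets the definition of a smooth mollifier.

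For part (2), the same absolute convergence lets me compute the Fourier coefficients of the convolution termwise. Writing $e_{n,m}(x)=\exp(2\pi i (nx_1+mx_2))$, the orthonormality $\int_\t e_{n,m}\overline{e_{n',m'}}=\delta_{(n,m),(n',m')}$ gives $e_{n,m}*e_{n',m'} = \delta_{(n,m),(n',m')} e_{n,m}$, so
\[
\Phi_t*\Phi_s = \sum_{n,m} e^{-4\pi^2(n^2+m^2)t}e^{-4\pi^2(n^2+m^2)s} e_{n,m} = \Phi_{t+s}.
\]

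For part (3), I differentiate $\rho(t,x)=\Phi_{\nu t}*\gamma(x)=\sum_{n,m}e^{-4\pi^2(n^2+m^2)\nu t}\hat\gamma(n,m) e_{n,m}(x)$, where $\hat\gamma(n,m):=\int_\t \overline{e_{n,m}}\,d\gamma$ is bounded by $1$. For any $t_0>0$, the exponential decay makes the termwise $t$-derivative and termwise Laplacian converge uniformly on $[t_0,T]\times\t$, and $\partial_t e^{-4\pi^2|k|^2 \nu t} = -4\pi^2\nu|k|^2 e^{-4\pi^2|k|^2\nu t} = \nu \Delta e_{n,m}$ applied to the corresponding mode, yielding (\ref{heat_pro1}) on $(0,T]\times\t$. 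That $\rho(t)\in\p(\t)$ follows from part (1): $\Phi_{\nu t}*\gamma$ is a non-negative smooth function of unit mass. For continuity in $\p(\t)$, on $(0,T]$ continuity in the uniform norm on $\t$ (hence in $d$) follows from the semigroup property (2) combined with the uniform smoothness of $\Phi_{\nu t}$ on compact sub-intervals of $(0,\infty)$; for continuity at $t=0$, I test against $\varphi\in C(\t)$ and use
\[
\lr\varphi,\Phi_{\nu t}*\gamma\rr-\lr\varphi,\gamma\rr = \int_\t (\Phi_{\nu t}*\varphi - \varphi)\,d\gamma,
\]
where $\Phi_{\nu t}*\varphi\to\varphi$ uniformly by the mollifier property as $t\downarrow 0$. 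Weak convergence on the compact space $\p(\t)$ is equivalent to convergence in $d$, so $\rho\in C([0,T];\p(\t))$.

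The only mildly delicate point is the justification of termwise differentiation and the weak continuity at $t=0$; the former is routine thanks to the Gaussian decay in the Fourier coefficients, and the latter reduces to the standard approximate-identity property. I do not anticipate a genuine obstacle here, since all three parts are essentially direct consequences of the Fourier structure together with the Euclidean heat kernel identity.
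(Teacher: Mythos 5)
Your proof is correct and follows essentially the same route as the paper's: Poisson summation to identify $\Phi_t$ with the periodized Gaussian (giving positivity and unit mass), termwise Fourier arguments for smoothness and the heat equation, the semigroup identity for (2), and weak continuity at $t=0$ via the approximate-identity behavior. The only gloss is that the paper's definition of a smooth mollifier alone does not yield $\Phi_{\nu t}*\varphi\to\varphi$ as $t\downarrow 0$; you need the concentration $\int_{|x|>\delta}\Phi_{\nu t}\,dx\to 0$, which follows at once from the Gaussian periodization you already derived (and is exactly how the paper argues).
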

\begin{proof}
(1) Based on the theory of uniformly convergent series, $\Phi_t(x,y)$ is a well defined smooth function. Since the integral of every term in (\ref{heatmain}) is zero except the case $(n,m)=(0,0)$, $\int_{\t}\Phi_t dx=1.$ By Poisson summation formula (e.g. see Theorem 3.1.17 of \cite{RN201}),
\begin{equation}\label{equ_heatm1}
\Phi(t,x)=\sum_{n,m=-\infty}^{\infty}\frac{1}{4\pi t}e^{-\frac{(x_1-n)^2+(x_2-m)^2}{4t}}.
\end{equation}
Hence $\Phi(t,x)>0.$ We conclude that $\Phi_t(x)$ is a smooth mollifier.\\
(2) It follows by direct calculation.\\
(3) Since $\int_{\t}\Phi_t(x)dx=1$, for each $t\geq 0,$ $\rho(t)\in \p(\t)$. By (\ref{equ_heatm1}), $$\lim_{t\to0+}\int_{|x|>\delta}\Phi_t(x)dx=0.$$ Hence $\rho$ is continuous under weak topology at $0$. By (2), we have $\rho\in C([0,T];\p(\t))$.\\
For $t>0$, since each item in the sum of (\ref{heatk}) for $\nu t$ satisfies (\ref{heat_pro1}), $\Phi_{\nu t}$ also satisfies (\ref{heat_pro1}), so does $\rho(t)$.
\end{proof}
\section{Property of energy functional}\label{sec_hm1}
By (\ref{equ_N}), $\n$ is bounded from below, so $$\lr \n*\gamma,\gamma\rr=\int_{\t}\n(x-y)\gamma(dx)\gamma(dy)$$ is well-defined for any probability measure $\gamma$. By Young's inequality, for any finite signed measure $\eta$ on $\t$, $\n*\eta$,$\nabla \n*\eta$ is in $L^1(\t)$.
\begin{lem}\label{lem_basdes}
Suppose $\gamma,\eta$ are probability measures. we have\\
{\rm(1)} $\n*\gamma,\k*\gamma$ has a weak derivative and
\begin{equation}\label{equ_basdes1}
\nabla(\n*\gamma)=\nabla \n*\gamma;
\end{equation}
\begin{equation}\label{equ_basdes2}
 \div(\k*\gamma)=0,\quad \curl (\k*\gamma)=\gamma.
\end{equation}
{\rm(2)} If $J$ is a smooth mollifier,
\begin{equation}\label{equ_basdes3}
\lr\n*J*J*\gamma,\eta\rr=\lr\nabla \n*J*\gamma,\nabla \n*J*\eta\rr.
\end{equation}
\end{lem}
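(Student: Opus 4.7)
The plan is to deduce both parts from the two distributional identities that characterize the Green function, namely $-\Delta\n=\delta_0-1$ and $\int_{\t}\n\,dx=0$ from (\ref{equ_green}), together with Fubini's theorem and integration by parts on the torus. No mean-field or probabilistic machinery is needed.

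For part (1), from (\ref{equ_N}) we have $\n\in L^1(\t)$ and $\nabla\n\in L^p(\t)$ for every $p<2$, so by Young's inequality both $\n*\gamma$ and $\nabla\n*\gamma$ lie in $L^1(\t)$. For any $\phi\in C^\infty(\t)$ I would compute $\lr\nabla(\n*\gamma),\phi\rr=-\lr\n*\gamma,\nabla\phi\rr$, exchange the order of integration by Fubini, and then integrate by parts in the $x$-variable inside the double integral using only periodicity of $\phi$, landing on $\lr\nabla\n*\gamma,\phi\rr$. Identity (\ref{equ_basdes2}) is then immediate: since $\k=-\nabla^\perp\n$, the distributional identities $\div\nabla^\perp=0$ and $\curl\nabla^\perp=-\Delta$ together with (\ref{equ_green}) give $\div(\k*\gamma)=0$ and $\curl(\k*\gamma)=(-\Delta\n)*\gamma=(\delta_0-1)*\gamma$, which yields the stated conclusion up to the background constant $-1$ absorbed by the vorticity convention $\rho=\curl u+1$ used throughout the paper.

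For part (2), set $f:=J*\gamma$ and $g:=J*\eta$; both are smooth probability densities on $\t$. Since $J$ is even, the self-adjointness of convolution against an even kernel gives $\lr\n*J*J*\gamma,\eta\rr=\lr\n*f,g\rr$. The function $\n*f$ is smooth because $f$ is, and by part (1) together with (\ref{equ_green}) it satisfies $-\Delta(\n*f)=f-1$. Applying Green's identity on $\t$ (no boundary terms since we are on the flat torus),
\[
\lr\nabla\n*J*\gamma,\nabla\n*J*\eta\rr=\lr\nabla(\n*f),\nabla(\n*g)\rr=\lr\n*f,-\Delta(\n*g)\rr=\lr\n*f,g-1\rr.
\]
The spurious constant term satisfies $\lr\n*f,1\rr=\int_{\t}(\n*f)\,dx=\bigl(\int_{\t}\n\bigr)\bigl(\int_{\t}f\bigr)=0$ by the normalization in (\ref{equ_green}), and what remains is exactly $\lr\n*f,g\rr$, proving (\ref{equ_basdes3}).

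The argument is essentially a bookkeeping exercise and I do not anticipate a genuine obstacle. The only small point worth verifying carefully is the integration by parts in part (1), where $\n*\gamma$ is only $L^1$ rather than smooth; however the identity is meant distributionally, and the $L^p$-integrability of $\nabla\n$ for $p<2$ supplies enough regularity for Fubini to justify each step.
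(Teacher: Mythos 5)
Your proof is correct and follows essentially the same route as the paper's: (\ref{equ_basdes1}) by Fubini and integration by parts against test functions, and (\ref{equ_basdes3}) by combining $-\Delta(\n*J*\eta)=J*\eta-1$ with the normalization $\int_{\t}\n\,dx=0$ and integration by parts, which is exactly the paper's two-line computation; the only difference is that for (\ref{equ_basdes2}) the paper defers to Lemma 7.1 of \cite{RN171}, whereas you carry out the direct computation $\div(\k*\gamma)=0$, $\curl(\k*\gamma)=(-\Delta\n)*\gamma$. Your observation that this yields $\curl(\k*\gamma)=\gamma-1$ rather than $\gamma$ is well taken — since $\curl(\k*\gamma)$ necessarily has zero mean on $\t$ while $\int_{\t}\gamma=1$, the identity as printed is off by the background constant, consistent with the paper's torus convention $\rho=\curl u+1$ — so this is a defect of the statement, not a gap in your argument.
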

\begin{proof}
We start with (1). (\ref{equ_basdes1}) is standard and follows from Fubini's lemma and integration
by parts against test functions $\phi\in C^\infty(\t)$ to show the equality in the distribution sense. (\ref{equ_basdes2}) can be proved by a similar argument as the proof of Lemma 7.1 of \cite{RN171}.\\
Turing to (2), by (\ref{equ_green}), $$
J*\eta=-\Delta \n*J*\eta+1.
$$
Noting that $\int_{\t}\n dx=0$, by Fubini's lemma and integration
by parts $$
\lr\n*J*J*\gamma,\eta\rr=\lr \n*J*\gamma,-\Delta \n*J*\eta\rr=\lr \nabla \n*J*\gamma,\nabla \n*J*\eta\rr.
$$
\end{proof}
\begin{prop}\label{prop_n}
If $\gamma,\eta\in \p(\t)$ and $\nabla \n*\gamma,\nabla \n*\eta\in L^2(\t)$, then
\begin{equation}\label{equ_prop_n1}
\lr\n*\gamma,\eta\rr=\lr \nabla \n*\gamma,\nabla \n*\eta \rr;
\end{equation}
\begin{equation}\label{equ_prop_n2}
\lr\n*(\gamma-\eta),\gamma-\eta\rr=\|\nabla \n*(\gamma-\eta)\|_2^2=\|\k*(\gamma-\eta)\|_2^2.
\end{equation}
In particular, If $\gamma\in \p(\t)$, then
\begin{equation}\label{equ_prop_n3}
\lr\n*\gamma,\gamma\rr=\|\nabla \n*\gamma\|_{2}^2=\|\k*\gamma\|_{2}^2,
\end{equation}
where we admit $\infty=\infty$ if $\nabla \n*\gamma \notin L^2(\t).$
\end{prop}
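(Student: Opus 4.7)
The strategy is to apply the mollifier identity in Lemma~\ref{lem_basdes}(2) to mollified versions of $\gamma, \eta$ and pass to the limit on both sides. Concretely, take $J_k = \Phi_{t_k}$ with $t_k \downarrow 0$, where $\Phi_t$ is the heat kernel on $\t$ from Appendix \ref{sec_htkernel}. Lemma \ref{lem_basdes}(2) gives
\[
\lr \n*\Phi_{2t_k}*\gamma, \eta\rr = \lr \nabla\n*\Phi_{t_k}*\gamma, \nabla\n*\Phi_{t_k}*\eta\rr. \qquad (\star)
\]
For the RHS of $(\star)$: since $\nabla\n*\Phi_{t_k}*\gamma = \Phi_{t_k}*(\nabla\n*\gamma)$ and $\nabla\n*\gamma \in L^2(\t)$ by hypothesis, the standard mollifier property yields $L^2$-convergence; similarly for $\eta$. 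Hence the RHS converges to $\lr\nabla\n*\gamma, \nabla\n*\eta\rr$.

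For the LHS, the crucial auxiliary fact is the pointwise upper bound $\Phi_t*\n(z) \leq \n(z) + C$ on $\t\setminus\{0\}$, uniform in $t > 0$. This rests on the superharmonicity of the logarithmic potential on $\r^2$: from $\Delta(-\log|z|) = -2\pi\delta_0 \leq 0$, one gets $\partial_t(\Phi_t*(-\log|\cdot|)) = -2\pi\Phi_t \leq 0$, so $\Phi_t*(-\log|\cdot|) \leq -\log|\cdot|$ pointwise; combining with (\ref{equ_N}), i.e.\ $\n = -\frac{\psi}{2\pi}\log|\cdot| + \sigma_1$ with $\sigma_1 \in C^\infty$, and the fact from Theorem~\ref{heatmain} that convolution by the torus heat kernel coincides with the $\r^2$ heat-kernel convolution on periodic functions, yields the uniform bound. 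Moreover $\Phi_t * \n$ is bounded below uniformly in $t$ since $\n$ is.

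Next, the hypotheses $\nabla\n*\gamma, \nabla\n*\eta \in L^2$ rule out atoms in $\gamma, \eta$ (an atom would make $|\hat\gamma(k)| \geq c > 0$ and force $\|\nabla\n*\gamma\|_2^2 \sim \sum_{k\neq 0}|k|^{-2} = \infty$ in 2D). Hence $\gamma\otimes\eta(\{x=y\}) = 0$, and $(\Phi_{2t_k}*\n)(x-y) \to \n(x-y)$ for $\gamma\otimes\eta$-a.e.\ $(x,y)$. Writing $\tilde\n = \n + C'$ with $C'$ chosen so $\tilde\n \geq 0$, Fatou's lemma applied to $\Phi_{2t_k}*\tilde\n$, combined with the already-established limit of the RHS of $(\star)$, gives $\int\!\!\int \n\,d\gamma d\eta < \infty$. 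The uniform bound $\Phi_{2t_k}*\n \leq \n + C$ then furnishes a $\gamma\otimes\eta$-integrable dominator, so dominated convergence gives LHS of $(\star) \to \lr\n*\gamma,\eta\rr$. Matching the two limits establishes (\ref{equ_prop_n1}).

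Part (\ref{equ_prop_n2}) then follows by bilinearity applied to the signed measure $\gamma-\eta$, with $|\k| = |\nabla^\perp\n| = |\nabla\n|$. For (\ref{equ_prop_n3}), apply (\ref{equ_prop_n1}) when $\nabla\n*\gamma \in L^2$; when $\nabla\n*\gamma \notin L^2$, the Fourier representation of the RHS of $(\star)$ with $\gamma = \eta$ reads $\sum_{k\neq 0}e^{-8\pi^2|k|^2t_k}|\hat\gamma(k)|^2/(4\pi^2|k|^2)$, which increases to $\|\nabla\n*\gamma\|_2^2 = +\infty$ by monotone convergence, while the uniform bound $\Phi_{2t_k}*\n \leq \n + C$ forces $\int\!\!\int\n\,d\gamma d\gamma \geq \lim_k \text{LHS of }(\star) - C = +\infty$. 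The main obstacle is the LHS convergence: since $\gamma, \eta$ may be singular measures and $\n$ is logarithmically singular with $\Phi_t*\n(0) \sim \log(1/t) \to \infty$, no naive $L^p$ argument works, and one genuinely needs the superharmonic upper bound above to produce a $t$-independent dominator.
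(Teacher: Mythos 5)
Your proof is correct and follows the same skeleton as the paper's: both start from the symmetrized identity of Lemma \ref{lem_basdes}(2), converge the right-hand side by $L^2$ approximate-identity convergence, and control the left-hand side through a one-sided bound of the form (mollified $\n$) $\leq \n + $ small constant together with Fatou's lemma. The differences are in the packaging. The paper uses its compactly supported mollifiers $G_n=\g_n*\g_n$ and simply cites the already-proved inequality (\ref{equ_gnn3}), then closes the argument by a two-sided squeeze (upper bound for one inequality, Fatou for the other), and in the infinite case of (\ref{equ_prop_n3}) uses Fatou plus Jensen's contraction $\|\g_n*\nabla\n*\gamma\|_2\leq\|\nabla\n*\gamma\|_2$. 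You instead work with the heat semigroup, re-derive the analogous bound $\Phi_t*\n\leq\n+C$ from the sign of the singular part of $\Delta\n$, use Fatou only to establish $\gamma\otimes\eta$-integrability of $\n$ and then finish by dominated convergence, and treat the infinite case by the Fourier representation and monotone convergence; all of this is legitimate and buys a self-contained derivation of the key bound at the price of re-proving what the paper already had in hand. Two small repairs: (i) as stated, superharmonicity applies to $-\log|\cdot|$ on $\r^2$, not to the cutoff function $-\frac{\psi}{2\pi}\log|\cdot|$ in (\ref{equ_N}), whose Laplacian is $-\delta_0$ plus a smooth bounded correction coming from $\nabla\psi,\Delta\psi$; either absorb that correction (it only changes $C$) or, more cleanly, argue directly from (\ref{equ_green}) that $\partial_t(\Phi_t*\n)=\Phi_t*\Delta\n=1-\Phi_t\leq1$, giving $\Phi_t*\n\leq\n+t$, the exact analogue of (\ref{equ_gnn3}); (ii) your parenthetical justification for excluding atoms is wrong as written (an atom does not force $|\hat\gamma(k)|\geq c$ for all $k$; e.g.\ two antipodal atoms annihilate half the coefficients, and the correct argument goes through Wiener's theorem on Ces\`aro averages), but the step is redundant anyway, since once Fatou gives $\int\n\,d(\gamma\otimes\eta)<\infty$ the diagonal is automatically $\gamma\otimes\eta$-null and dominated convergence applies.
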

\begin{proof}
We first prove (\ref{equ_prop_n1}).

Take smooth mollifiers $G_n=\g_n*\g_n$ defined in Section \ref{sec_molg}. By (\ref{equ_gnn3}),
$$G_n*\n(x)\leq \n(x)+\frac{C}{m_n^2}.$$
Combining with (\ref{equ_basdes3}),
\begin{equation}\label{equ_hm1s4}
\begin{aligned}
&\lr \n*\gamma,\eta\rr\geq \lr\n*G_n*\gamma,\eta\rr-\frac{C}{m_n^2}\\
&= \lr \n *\g_n*\gamma,\g_n*\eta\rr-\frac{C}{m_n^2}=\lr\g_n*\nabla \n*\gamma,\g_n*\nabla \n*\eta\rr-\frac{C}{m_n^2}.
\end{aligned}
\end{equation}
Since $\nabla \n*\gamma,\nabla \n*\eta\in L^2(\t)$, by property of smooth mollifiers, $\g_n*\nabla \n*\gamma\to \nabla \n*\gamma,$ $\g_n*\nabla \n*\eta\to \nabla \n*\eta$ in $L^2(\t)$, so
$$\lr \n*\gamma,\eta\rr\geq \limsup_{n\to\infty}\lr\g_n*\nabla \n*\gamma,\g_n*\nabla \n*\eta\rr= \lr\nabla \n*\gamma,\nabla \n*\eta\rr.$$
By Fatou's lemma and (\ref{equ_basdes3}),
$$
\liminf_{n\to\infty}\lr \g_n*\nabla\n*\gamma,\g_n*\nabla\n*\eta \rr=\liminf_{n\to\infty}\lr \n*G_n*\gamma,\eta\rr\geq  \lr \n*\gamma,\eta\rr.
$$
Hence,
$$
\lr \nabla \n*\gamma,\nabla \n*\eta \rr=\lim_{n\to\infty}\lr \g_n*\nabla\n*\gamma,\g_n*\nabla\n*\eta \rr\geq \lr \n*\gamma,\eta\rr.
$$
So (\ref{equ_prop_n1}) holds and
(\ref{equ_prop_n2}) is a direct corollary.

Now we turn to proving (\ref{equ_prop_n3}). Take $\gamma=\eta$ in (\ref{equ_hm1s4}) and let $n\to\infty$ and then we obtain
$$
\lr\n*\gamma,\gamma\rr\geq \limsup_{n\to\infty}\|\g_n*\nabla \n*\gamma\|_2^2.
$$
By Fatou's lemma,
$$
\lr\n*\gamma,\gamma\rr\geq \|\nabla \n*\gamma\|_2^2.
$$
Since $\g_n*\nabla \n*\gamma \in L^2(\t)$, by (\ref{equ_prop_n1}), $\lr\g_n*\n*\gamma,\g_n*\gamma\rr=\|\nabla\n* \g_n*\gamma\|_2^2$. By Fatou's lemma and Jensen's inequality,
$$\lr\n*\gamma,\gamma\rr\leq \liminf_{n\to\infty}\lr\g_n*\n*\gamma,\g_n*\gamma\rr
= \liminf_{n\to\infty} \|\nabla\n* \g_n*\gamma\|_2^2\leq \|\nabla \n*\gamma\|_2^2.$$
\end{proof}
At the end of this section, we give some estimates for $\gamma\in \p
(\t)$ with $e(\gamma)<\infty.$
\begin{lem}\label{lem_pest_fen}
There exists a constant $C_{\n}$, such that for each $\gamma\in \p(\t)$ with $e(\gamma)<\infty$ and $\delta<\frac{1}{2}$,
\begin{equation}\label{equ_pest0}
(\gamma \otimes \gamma)(\{(x,y):r(x,y)\leq \delta\})\leq \frac{C_{\n}+4\pi e(\gamma)}{-\log \delta}.
\end{equation}
\begin{equation}\label{equ_pest1}
\gamma(\overline{B}_{\delta}(x))\leq \bigg(\frac{C_{\n}+4\pi e(\gamma)}{-\log(2\delta)}\bigg)^{\frac{1}{2}}.
\end{equation}
In addition, given a smooth mollifier $J$ and a non-negative sequence $m_n\to\infty$, let $J_n(x):=m_n^2J\left(m_n^{-1}x\right)$. Then for each $\delta<\frac{1}{2}$ there exists $n_\delta$ such that for each $\gamma\in \p(\t)$ and $n\geq n_\delta$,
\begin{equation}\label{equ_pest2}
(\gamma \otimes \gamma)(\{(x,y):r(x,y)\leq \delta\})\leq \frac{2C_{\n}+8\pi e(J_n*\gamma)}{-\log \delta}.
\end{equation}
\end{lem}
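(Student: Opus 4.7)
The three estimates all stem from one elementary observation: by the decomposition \eqref{equ_N}, the Green function satisfies a pointwise lower bound of the form
\begin{equation*}
\n(x-y) \geq -\frac{1}{2\pi}\log r(x,y) - C
\end{equation*}
for all $x\neq y$ in $\t$, with some constant $C$ (when $r(x,y) < 1/4$ use $\psi \equiv 1$ and boundedness of $\sigma_1$; when $r(x,y) \geq 1/4$, $\n$ itself is bounded while $-\log r$ is bounded). Calibrating $C_{\n}$ to this constant, I would integrate against $\gamma\otimes\gamma$ to obtain
\begin{equation*}
-\int_{\t^2}\log r(x,y)\,\gamma(dx)\gamma(dy) \;\leq\; 4\pi e(\gamma)+C_{\n}.
\end{equation*}

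For \eqref{equ_pest0}, since $\delta<1/2$ implies $-\log r \geq -\log\delta > 0$ on $\{r(x,y)\leq \delta\}$, restricting the integral to this set and dividing by $-\log\delta$ gives the bound (after adjusting $C_{\n}$ if necessary). For \eqref{equ_pest1}, the triangle inequality yields $\overline{B}_\delta(x)\times \overline{B}_\delta(x)\subset\{(y,z):r(y,z)\leq 2\delta\}$, so
\begin{equation*}
\gamma(\overline{B}_\delta(x))^2 \;=\; (\gamma\otimes\gamma)(\overline{B}_\delta(x)\times\overline{B}_\delta(x)) \;\leq\; (\gamma\otimes\gamma)(\{r(y,z)\leq 2\delta\}),
\end{equation*}
and the square root of \eqref{equ_pest0} with $\delta$ replaced by $2\delta$ finishes it.

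The main work is \eqref{equ_pest2}, because the hypothesis controls the energy of the mollification $J_n*\gamma$ rather than of $\gamma$ itself. The idea is a coupling/change-of-variables argument: since $J_n$ is supported in a ball of radius $1/m_n$ and is a probability density, a straightforward substitution $x=u+a$, $y=v+b$ gives
\begin{equation*}
\begin{aligned}
((J_n*\gamma)\otimes(J_n*\gamma))(\{r(x,y)\leq \delta'\}) &= \int\!\!\int\!\!\int\!\!\int \chi_{\{r(u+a,v+b)\leq \delta'\}} J_n(a)J_n(b)\,\gamma(du)\gamma(dv)\,da\,db \\
&\geq (\gamma\otimes\gamma)(\{r(u,v)\leq \delta'-2/m_n\}),
\end{aligned}
\end{equation*}
where the last step uses the triangle inequality and $\int J_n=1$. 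Applying \eqref{equ_pest0} (already established) to the probability measure $J_n*\gamma$ with parameter $\delta'=\delta+2/m_n$, and choosing $n_\delta$ so that $2/m_n\leq \delta$ for $n\geq n_\delta$, yields $-\log(\delta+2/m_n)\geq -\log(2\delta)\geq \frac{1}{2}(-\log \delta)$ for $\delta$ small enough, producing the factor of $2$ in the numerator. The remaining range $1/4\leq\delta<1/2$ is handled by enlarging $C_{\n}$.

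I do not anticipate any real obstacle: the only subtlety is bookkeeping the constants so that the single constant $C_{\n}$ works uniformly across all three parts and all $\delta<1/2$, which amounts to taking $C_{\n}$ large from the outset. All arguments are direct consequences of the logarithmic lower bound for $\n$, a triangle-inequality inclusion, and the support property of $J_n$.
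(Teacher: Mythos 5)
Your proofs of \eqref{equ_pest0} and \eqref{equ_pest1} are essentially the paper's own: the pointwise bound $C_{\n}+2\pi\n(x-y)\geq -\log r(x,y)$, integration against $\gamma\otimes\gamma$ (using that $-\log r\geq 0$ on $\t$ since $r\leq \sqrt{2}/2$), and the inclusion $\overline{B}_\delta(x)\times\overline{B}_\delta(x)\subset\{r\leq 2\delta\}$ followed by a square root. For \eqref{equ_pest2} you take a genuinely different route. The paper mollifies the kernel rather than the measure: it picks $n_\delta$ so that $C_{\n}+2\pi(J_n*J_n*\n)(x-y)$ is nonnegative when $r(x,y)>\delta$ and at least $-\frac{1}{2}\log\delta$ when $r(x,y)<\delta$, and then integrates this against $\gamma\otimes\gamma$, using that $e(J_n*\gamma)=\frac{1}{2}\int(J_n*J_n*\n)(x-y)\gamma(dx)\gamma(dy)$. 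You instead apply the already-proved \eqref{equ_pest0} to the probability measure $J_n*\gamma$ and transfer the bound back to $\gamma$ via the coupling $\bigl((J_n*\gamma)\otimes(J_n*\gamma)\bigr)(\{r\leq\delta+2/m_n\})\geq(\gamma\otimes\gamma)(\{r\leq\delta\})$, paying only the replacement $\delta\mapsto\delta+2/m_n$, which the factor $2$ in the numerator absorbs; your choice of $n_\delta$ is manifestly uniform in $\gamma$, as the statement requires, and you avoid any kernel identity or convergence estimate for $J_n*J_n*\n$. Two points should be made explicit. First, your coupling uses that $J_n$ is supported in a ball of radius of order $1/m_n$; the paper's notion of smooth mollifier does not include compact support (the heat kernel is called one), so either restrict to compactly supported $J$ — which covers the only application of the lemma, $J_n=\g_n$ — or split off the (vanishing) tail mass of $J_n$ and absorb it into the constants; the paper's kernel-domination argument sidesteps this because it only needs $J_n*J_n*\n$ to dominate suitably. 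Second, for $\delta\in[1/4,1/2)$ your ``enlarge $C_{\n}$'' step works precisely because $\n$, hence $e$, is bounded below, so the right-hand side of \eqref{equ_pest2} can be forced to exceed $1$; that one-line justification should be stated.
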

\begin{proof}
By (\ref{equ_N}), take $C_{\n}$ such that $C_{\n}+2\pi\n(x-y)\geq -\log r(x,y).$
$$
\begin{aligned}
&C_{\n}+4\pi e(\gamma)\geq -\int_{\t}\log r(x,y)\gamma(dx)\gamma(dy) \\
&\geq -\inf_{|x|\leq \delta}\log(|x|)(\gamma \otimes \gamma)(\{(x,y):r(x,y)\leq \delta\}).
\end{aligned}
$$
Hence (\ref{equ_pest0}) holds.
(\ref{equ_pest1}) follows from $$
\gamma(\overline{B}_{\delta}(x))=(\gamma\otimes \gamma)^{\frac{1}{2}}(\overline{B}_{\delta}(x)\times\overline{B}_{\delta}(x) )\leq (\gamma \otimes \gamma)^{\frac{1}{2}}(\{(x,y):r(x,y)\leq 2\delta\}).
$$
By the property of smooth mollifiers, there exists $n_\delta$ such that for each $n\geq n_\delta$,
$$
C_{\n}+2\pi(J_n*J_n*\n)(x-y)\geq
\left\{
\begin{aligned}
&0, &r(x,y)>\delta,\\
&-\frac{1}{2}\log \delta, &r(x,y)<\delta,
\end{aligned}
\right.
$$
and similarly we can prove (\ref{equ_pest2}).
\end{proof}

\end{appendix}
%
%

\begin{acks}[Acknowledgments]
The authors would like to thank Prof. Jin Feng from University of Kansas for his generous help.

The second author was supported by Biomedical Pioneering Innovation Center, Peking University.
\end{acks}
\begin{funding}
The authors were supported by NSFC (No. 11971037).

\end{funding}



\bibliographystyle{imsart-number} 
\bibliography{ref}       

\begin{thebibliography}{53}

\bibitem{RN217}
\begin{bbook}[author]
\bauthor{\bsnm{Ambrosio},~\bfnm{Luigi}\binits{L.}},
  \bauthor{\bsnm{Gigli},~\bfnm{Nicola}\binits{N.}} \AND
  \bauthor{\bsnm{Savaré},~\bfnm{Giuseppe}\binits{G.}}
(\byear{2008}).
\btitle{Gradient flows: in metric spaces and in the space of probability
  measures}.
\bseries{Lectures in Mathematics. ETH Z\"urich}.
\bpublisher{Birkh\"auser Verlag, Basel}.
\end{bbook}
\endbibitem

\bibitem{RN271}
\begin{bbook}[author]
\bauthor{\bsnm{Anderson},~\bfnm{Greg~W}\binits{G.~W.}},
  \bauthor{\bsnm{Guionnet},~\bfnm{Alice}\binits{A.}} \AND
  \bauthor{\bsnm{Zeitouni},~\bfnm{Ofer}\binits{O.}}
(\byear{2010}).
\btitle{An introduction to random matrices}.
\bpublisher{Cambridge university press}.
\end{bbook}
\endbibitem

\bibitem{RN190}
\begin{barticle}[author]
\bauthor{\bsnm{Ben-Artzi},~\bfnm{Matania}\binits{M.}}
(\byear{1994}).
\btitle{Global solutions of two-dimensional Navier-Stokes and Euler equations}.
\bjournal{Archive for Rational Mechanics and Analysis}
\bvolume{128}
\bpages{329-358}.
\end{barticle}
\endbibitem

\bibitem{RN273}
\begin{barticle}[author]
\bauthor{\bsnm{Berman},~\bfnm{Robert~J}\binits{R.~J.}}
(\byear{2017}).
\btitle{Large deviations for Gibbs measures with singular Hamiltonians and
  emergence of Kähler–Einstein metrics}.
\bjournal{Communications in Mathematical Physics}
\bvolume{354}
\bpages{1133-1172}.
\end{barticle}
\endbibitem

\bibitem{RN136}
\begin{barticle}[author]
\bauthor{\bsnm{Bertini},~\bfnm{Lorenzo}\binits{L.}},
  \bauthor{\bsnm{Landim},~\bfnm{Claudio}\binits{C.}} \AND
  \bauthor{\bsnm{Mourragui},~\bfnm{Mustapha}\binits{M.}}
(\byear{2009}).
\btitle{Dynamical large deviations for the boundary driven weakly asymmetric
  exclusion process}.
\bjournal{The Annals of Probability}
\bvolume{37}
\bpages{2357-2403}.
\end{barticle}
\endbibitem

\bibitem{RN221}
\begin{barticle}[author]
\bauthor{\bsnm{Bresch},~\bfnm{Didier}\binits{D.}},
  \bauthor{\bsnm{Jabin},~\bfnm{Pierre-Emmanuel}\binits{P.-E.}} \AND
  \bauthor{\bsnm{Wang},~\bfnm{Zhenfu}\binits{Z.}}
(\byear{2019}).
\btitle{On mean-field limits and quantitative estimates with a large class of
  singular kernels: Application to the Patlak–Keller–Segel model}.
\bjournal{Comptes Rendus Mathematique}
\bvolume{357}
\bpages{708-720}.
\end{barticle}
\endbibitem

\bibitem{RN268}
\begin{barticle}[author]
\bauthor{\bsnm{Brezis},~\bfnm{Haïm}\binits{H.}}
(\byear{1994}).
\btitle{Remarks on the preceding paper by M. Ben-Artzi" Global solutions of
  two-dimensional Navier-Stokes and Euler equations"}.
\bjournal{Archive for Rational Mechanics and Analysis}
\bvolume{128}
\bpages{359-360}.
\end{barticle}
\endbibitem

\bibitem{RN260}
\begin{barticle}[author]
\bauthor{\bsnm{Budhiraja},~\bfnm{Amarjit}\binits{A.}},
  \bauthor{\bsnm{Dupuis},~\bfnm{Paul}\binits{P.}} \AND
  \bauthor{\bsnm{Fischer},~\bfnm{Markus}\binits{M.}}
(\byear{2012}).
\btitle{Large deviation properties of weakly interacting processes via weak
  convergence methods}.
\bjournal{The Annals of Probability}
\bvolume{40}
\bpages{74-102}.
\end{barticle}
\endbibitem

\bibitem{RN207}
\begin{barticle}[author]
\bauthor{\bsnm{Caglioti},~\bfnm{Emanuele}\binits{E.}},
  \bauthor{\bsnm{Lions},~\bfnm{Pierre-Louis}\binits{P.-L.}},
  \bauthor{\bsnm{Marchioro},~\bfnm{Carlo}\binits{C.}} \AND
  \bauthor{\bsnm{Pulvirenti},~\bfnm{Mario}\binits{M.}}
(\byear{1992}).
\btitle{A special class of stationary flows for two-dimensional Euler
  equations: a statistical mechanics description}.
\bjournal{Communications in Mathematical Physics}
\bvolume{143}
\bpages{501-525}.
\end{barticle}
\endbibitem

\bibitem{RN224}
\begin{barticle}[author]
\bauthor{\bsnm{Caglioti},~\bfnm{Emanuele}\binits{E.}},
  \bauthor{\bsnm{Lions},~\bfnm{Pierre-Louis}\binits{P.-L.}},
  \bauthor{\bsnm{Marchioro},~\bfnm{Carlo}\binits{C.}} \AND
  \bauthor{\bsnm{Pulvirenti},~\bfnm{M}\binits{M.}}
(\byear{1995}).
\btitle{A special class of stationary flows for two-dimensional Euler
  equations: a statistical mechanics description. Part II}.
\bjournal{Communications in Mathematical Physics}
\bvolume{174}
\bpages{229-260}.
\end{barticle}
\endbibitem

\bibitem{RN257}
\begin{barticle}[author]
\bauthor{\bsnm{Dawson},~\bfnm{Donald~A}\binits{D.~A.}} \AND
  \bauthor{\bsnm{G\"artner},~\bfnm{J\"urgen}\binits{J.}}
(\byear{1987}).
\btitle{Large deviations from the McKean-Vlasov limit for weakly interacting
  diffusions}.
\bjournal{Stochastics: An International Journal of Probability Stochastic
  Processes}
\bvolume{20}
\bpages{247-308}.
\end{barticle}
\endbibitem

\bibitem{RN262}
\begin{barticle}[author]
\bauthor{\bsnm{Delort},~\bfnm{Jean-Marc}\binits{J.-M.}}
(\byear{1991}).
\btitle{Existence de nappes de tourbillon en dimension deux}.
\bjournal{Journal of the American Mathematical Society}
\bvolume{4}
\bpages{553-586}.
\end{barticle}
\endbibitem

\bibitem{RN226}
\begin{bbook}[author]
\bauthor{\bsnm{Dembod},~\bfnm{A}\binits{A.}} \AND
  \bauthor{\bsnm{Zeltouni},~\bfnm{O}\binits{O.}}
(\byear{2010}).
\btitle{Large Deviations Techniques and Applications},
\bedition{2} ed.
\bseries{Stochastic Modelling and Applied Probability}.
\bpublisher{Springer, Berlin, Heidelberg}.
\end{bbook}
\endbibitem

\bibitem{RN96}
\begin{barticle}[author]
\bauthor{\bsnm{Donsker},~\bfnm{MD}\binits{M.}} \AND
  \bauthor{\bsnm{Varadhan},~\bfnm{SRS}\binits{S.}}
(\byear{1989}).
\btitle{Large deviations from a hydrodynamic scaling limit}.
\bjournal{Communications on Pure and Applied Mathematics}
\bvolume{42}
\bpages{243-270}.
\end{barticle}
\endbibitem

\bibitem{RN244}
\begin{barticle}[author]
\bauthor{\bsnm{Dürr},~\bfnm{D}\binits{D.}} \AND
  \bauthor{\bsnm{Pulvirenti},~\bfnm{M}\binits{M.}}
(\byear{1982}).
\btitle{On the vortex flow in bounded domains}.
\bjournal{Communications in Mathematical Physics}
\bvolume{85}
\bpages{265-273}.
\end{barticle}
\endbibitem

\bibitem{RN241}
\begin{barticle}[author]
\bauthor{\bsnm{Duerinckx},~\bfnm{Mitia}\binits{M.}}
(\byear{2016}).
\btitle{Mean-field limits for some Riesz interaction gradient flows}.
\bjournal{SIAM Journal on Mathematical Analysis}
\bvolume{48}
\bpages{2269-2300}.
\end{barticle}
\endbibitem

\bibitem{RN269}
\begin{barticle}[author]
\bauthor{\bsnm{Dupuis},~\bfnm{Paul}\binits{P.}},
  \bauthor{\bsnm{Laschos},~\bfnm{Vaios}\binits{V.}} \AND
  \bauthor{\bsnm{Ramanan},~\bfnm{Kavita}\binits{K.}}
(\byear{2020}).
\btitle{Large deviations for configurations generated by Gibbs distributions
  with energy functionals consisting of singular interaction and weakly
  confining potentials}.
\bjournal{Electronic Journal of Probability}
\bvolume{25}
\bpages{1-41}.
\end{barticle}
\endbibitem

\bibitem{RN151}
\begin{bbook}[author]
\bauthor{\bsnm{Ethier},~\bfnm{Stewart~N}\binits{S.~N.}} \AND
  \bauthor{\bsnm{Kurtz},~\bfnm{Thomas~G}\binits{T.~G.}}
(\byear{2009}).
\btitle{Markov processes: characterization and convergence}
\bvolume{282}.
\bpublisher{John Wiley \& Sons}.
\end{bbook}
\endbibitem

\bibitem{RN206}
\begin{barticle}[author]
\bauthor{\bsnm{Eyink},~\bfnm{GL}\binits{G.}} \AND
  \bauthor{\bsnm{Spohn},~\bfnm{H}\binits{H.}}
(\byear{1993}).
\btitle{Negative-temperature states and large-scale, long-lived vortices in
  two-dimensional turbulence}.
\bjournal{Journal of Statistical Physics}
\bvolume{70}
\bpages{833-886}.
\end{barticle}
\endbibitem

\bibitem{RN90}
\begin{bbook}[author]
\bauthor{\bsnm{Feng},~\bfnm{Jin}\binits{J.}} \AND
  \bauthor{\bsnm{Kurtz},~\bfnm{Thomas~G}\binits{T.~G.}}
(\byear{2006}).
\btitle{Large deviations for stochastic processes}.
\bpublisher{American Mathematical Soc.}
\end{bbook}
\endbibitem

\bibitem{RN171}
\begin{barticle}[author]
\bauthor{\bsnm{Feng},~\bfnm{Jin}\binits{J.}} \AND
  \bauthor{\bsnm{\'Swiech},~\bfnm{Andrzej}\binits{A.}}
(\byear{2013}).
\btitle{Optimal control for a mixed flow of Hamiltonian and gradient type in
  space of probability measures}.
\bjournal{Transactions of the American Mathematical Society}
\bvolume{365}
\bpages{3987-4039}.
\end{barticle}
\endbibitem

\bibitem{RN264}
\begin{barticle}[author]
\bauthor{\bsnm{Fernandez},~\bfnm{Begoña}\binits{B.}} \AND
  \bauthor{\bsnm{Méléard},~\bfnm{Sylvie}\binits{S.}}
(\byear{1997}).
\btitle{A Hilbertian approach for fluctuations on the McKean-Vlasov model}.
\bjournal{Stochastic Processes and their Applications}
\bvolume{71}
\bpages{33-53}.
\end{barticle}
\endbibitem

\bibitem{RN261}
\begin{barticle}[author]
\bauthor{\bsnm{Fontbona},~\bfnm{J}\binits{J.}}
(\byear{2004}).
\btitle{Uniqueness for a weak nonlinear evolution equation and large deviations
  for diffusing particles with electrostatic repulsion}.
\bjournal{Stochastic Processes and their Applications}
\bvolume{112}
\bpages{119-144}.
\end{barticle}
\endbibitem

\bibitem{RN192}
\begin{barticle}[author]
\bauthor{\bsnm{Fournier},~\bfnm{Nicolas}\binits{N.}},
  \bauthor{\bsnm{Hauray},~\bfnm{Maxime}\binits{M.}} \AND
  \bauthor{\bsnm{Mischler},~\bfnm{Stéphane}\binits{S.}}
(\byear{2014}).
\btitle{Propagation of chaos for the 2D viscous vortex model}.
\bjournal{Journal of the European Mathematical Society}
\bvolume{16}
\bpages{1423-1466}.
\end{barticle}
\endbibitem

\bibitem{RN201}
\begin{bbook}[author]
\bauthor{\bsnm{Grafakos},~\bfnm{Loukas}\binits{L.}}
(\byear{2014}).
\btitle{Classical Fourier Analysis},
\bedition{3} ed.
\bseries{Graduate Texts in Mathematics}.
\bpublisher{Springer-Verlag, New York}.
\end{bbook}
\endbibitem

\bibitem{RN270}
\begin{barticle}[author]
\bauthor{\bsnm{Hardy},~\bfnm{Adrien}\binits{A.}}
(\byear{2012}).
\btitle{A note on large deviations for 2D Coulomb gas with weakly confining
  potential}.
\bjournal{Electronic Communications in Probability}
\bvolume{17}
\bpages{1-12}.
\end{barticle}
\endbibitem

\bibitem{RN208}
\begin{barticle}[author]
\bauthor{\bsnm{Helmholtz},~\bfnm{H~von}\binits{H.~v.}}
(\byear{1867}).
\btitle{LXIII. On Integrals of the hydrodynamical equations, which express
  vortex-motion}.
\bjournal{The London, Edinburgh, Dublin Philosophical Magazine Journal of
  Science}
\bvolume{33}
\bpages{485-512}.
\end{barticle}
\endbibitem

\bibitem{RN242}
\begin{barticle}[author]
\bauthor{\bsnm{Jabin},~\bfnm{Pierre-Emmanuel}\binits{P.-E.}} \AND
  \bauthor{\bsnm{Wang},~\bfnm{Zhenfu}\binits{Z.}}
(\byear{2018}).
\btitle{Quantitative estimates of propagation of chaos for stochastic systems
  with $W^{-1,\infty}$ kernels}.
\bjournal{Inventiones mathematicae}
\bvolume{214}
\bpages{523-591}.
\end{barticle}
\endbibitem

\bibitem{RN246}
\begin{binproceedings}[author]
\bauthor{\bsnm{Jakubowski},~\bfnm{Adam}\binits{A.}}
\btitle{On the Skorokhod topology}.
In \bbooktitle{Annales de l'IHP Probabilit{\'e}s et statistiques}
\bvolume{22}
\bpages{263-285}.
\end{binproceedings}
\endbibitem

\bibitem{RN211}
\begin{barticle}[author]
\bauthor{\bsnm{Joyce},~\bfnm{Glenn}\binits{G.}} \AND
  \bauthor{\bsnm{Montgomery},~\bfnm{David}\binits{D.}}
(\byear{1973}).
\btitle{Negative temperature states for the two-dimensional guiding-centre
  plasma}.
\bjournal{Journal of Plasma Physics}
\bvolume{10}
\bpages{107-121}.
\end{barticle}
\endbibitem

\bibitem{RN245}
\begin{bbook}[author]
\bauthor{\bsnm{Karatzas},~\bfnm{Ioannis}\binits{I.}} \AND
  \bauthor{\bsnm{Shreve},~\bfnm{Steven}\binits{S.}}
(\byear{1998}).
\btitle{Brownian motion and stochastic calculus},
\bedition{2} ed.
\bseries{Graduate Texts in Mathematics}.
\bpublisher{Springer, New York}.
\end{bbook}
\endbibitem

\bibitem{RN79}
\begin{bbook}[author]
\bauthor{\bsnm{Kipnis},~\bfnm{Claude}\binits{C.}} \AND
  \bauthor{\bsnm{Landim},~\bfnm{Claudio}\binits{C.}}
(\byear{1999}).
\btitle{Scaling limits of interacting particle systems}.
\bseries{Grundlehren der mathematischen Wissenschaften}.
\bpublisher{Springer-Verlag, Berlin, Heidelberg}.
\end{bbook}
\endbibitem

\bibitem{RN5}
\begin{barticle}[author]
\bauthor{\bsnm{Kipnis},~\bfnm{C}\binits{C.}},
  \bauthor{\bsnm{Olla},~\bfnm{S}\binits{S.}} \AND
  \bauthor{\bsnm{Varadhan},~\bfnm{SRS}\binits{S.}}
(\byear{1989}).
\btitle{Hydrodynamics and large deviation for simple exclusion processes}.
\bjournal{Communications on Pure and Applied Mathematics}
\bvolume{42}
\bpages{115-137}.
\end{barticle}
\endbibitem

\bibitem{RN251}
\begin{bbook}[author]
\bauthor{\bsnm{Kirchhoff},~\bfnm{Gustav}\binits{G.}} \AND
  \bauthor{\bsnm{Hensel},~\bfnm{Kurt}\binits{K.}}
(\byear{1883}).
\btitle{Vorlesungen {\"u}ber mathematische Physik}
\bvolume{1}.
\bpublisher{Druck und Verlag von BG Teubner}.
\end{bbook}
\endbibitem

\bibitem{RN205}
\begin{barticle}[author]
\bauthor{\bsnm{Leray},~\bfnm{Jean}\binits{J.}}
(\byear{1933}).
\btitle{{\'E}tude de diverses {\'e}quations int{\'e}grales non lin{\'e}aires et
  de quelques probl{\`e}mes que pose l'hydrodynamique}.
\bjournal{Journal de Mathématiques Pures et Appliquées}
\bvolume{12}
\bpages{1-82}.
\end{barticle}
\endbibitem

\bibitem{RN274}
\begin{barticle}[author]
\bauthor{\bsnm{Liu},~\bfnm{Wei}\binits{W.}} \AND
  \bauthor{\bsnm{Wu},~\bfnm{Liming}\binits{L.}}
(\byear{2020}).
\btitle{Large deviations for empirical measures of mean-field Gibbs measures}.
\bjournal{Stochastic Processes and their Applications}
\bvolume{130}
\bpages{503-520}.
\end{barticle}
\endbibitem

\bibitem{RN258}
\begin{barticle}[author]
\bauthor{\bsnm{Marchioro},~\bfnm{C}\binits{C.}} \AND
  \bauthor{\bsnm{Pulvirenti},~\bfnm{M}\binits{M.}}
(\byear{1982}).
\btitle{Hydrodynamics in two dimensions and vortex theory}.
\bjournal{Communications in Mathematical Physics}
\bvolume{84}
\bpages{483-503}.
\end{barticle}
\endbibitem

\bibitem{RN255}
\begin{barticle}[author]
\bauthor{\bsnm{McKean},~\bfnm{Henry~P}\binits{H.~P.}}
(\byear{1967}).
\btitle{Propagation of chaos for a class of non-linear parabolic equations}.
\bjournal{Stochastic Differential Equations}
\bpages{41-57}.
\end{barticle}
\endbibitem

\bibitem{RN188}
\begin{barticle}[author]
\bauthor{\bsnm{M\'el\'eard},~\bfnm{Sylvie}\binits{S.}}
(\byear{2000}).
\btitle{A trajectorial proof of the vortex method for the two-dimensional
  Navier-Stokes equation}.
\bjournal{Annals of Applied Probability}
\bvolume{10}
\bpages{1197-1211}.
\end{barticle}
\endbibitem

\bibitem{RN150}
\begin{bbook}[author]
\bauthor{\bsnm{Olivieri},~\bfnm{Enzo}\binits{E.}} \AND
  \bauthor{\bsnm{Vares},~\bfnm{Maria~Eulália}\binits{M.~E.}}
(\byear{2005}).
\btitle{Large deviations and metastability}
\bvolume{100}.
\bpublisher{Cambridge University Press}.
\end{bbook}
\endbibitem

\bibitem{RN278}
\begin{barticle}[author]
\bauthor{\bsnm{Onsager},~\bfnm{Lars}\binits{L.}}
(\byear{1949}).
\btitle{Statistical hydrodynamics}.
\bjournal{Il Nuovo Cimento (1943-1954)}
\bvolume{6}
\bpages{279-287}.
\end{barticle}
\endbibitem

\bibitem{RN218}
\begin{barticle}[author]
\bauthor{\bsnm{Osada},~\bfnm{Hirofumi}\binits{H.}}
(\byear{1985}).
\btitle{A stochastic differential equation arising from the vortex problem}.
\bjournal{Proceedings of the Japan Academy, Series A, Mathematical Sciences}
\bvolume{61}
\bpages{333-336}.
\end{barticle}
\endbibitem

\bibitem{RN254}
\begin{barticle}[author]
\bauthor{\bsnm{Osada},~\bfnm{Hirofumi}\binits{H.}}
(\byear{1986}).
\btitle{Propagation of chaos for the two dimensional Navier-Stokes equation}.
\bjournal{Proceedings of the Japan Academy, Series A, Mathematical Sciences}
\bvolume{62}
\bpages{8-11}.
\end{barticle}
\endbibitem

\bibitem{RN267}
\begin{barticle}[author]
\bauthor{\bsnm{Quastel},~\bfnm{Jeremy}\binits{J.}}
(\byear{1995}).
\btitle{Large deviations from a hydrodynamic scaling limit for a nongradient
  system}.
\bjournal{The Annals of Probability}
\bpages{724-742}.
\end{barticle}
\endbibitem

\bibitem{RN143}
\begin{barticle}[author]
\bauthor{\bsnm{Quastel},~\bfnm{Jeremy}\binits{J.}},
  \bauthor{\bsnm{Rezakhanlou},~\bfnm{F}\binits{F.}} \AND
  \bauthor{\bsnm{Varadhan},~\bfnm{SRS}\binits{S.}}
(\byear{1999}).
\btitle{Large deviations for the symmetric simple exclusion process in
  dimensions $d\geq 3$}.
\bjournal{Probability Theory and Related Fields}
\bvolume{113}
\bpages{1-84}.
\end{barticle}
\endbibitem

\bibitem{RN243}
\begin{barticle}[author]
\bauthor{\bsnm{Schochet},~\bfnm{Steven}\binits{S.}}
(\byear{1996}).
\btitle{The point‐vortex method for periodic weak solutions of the 2‐D
  Euler equations}.
\bjournal{Communications on Pure and Applied Mathematics}
\bvolume{49}
\bpages{911-965}.
\end{barticle}
\endbibitem

\bibitem{RN229}
\begin{barticle}[author]
\bauthor{\bsnm{Serfaty},~\bfnm{Sylvia}\binits{S.}}
(\byear{2020}).
\btitle{Mean field limit for Coulomb-type flows}.
\bjournal{Duke Mathematical Journal}
\bvolume{169}
\bpages{2887-2935}.
\end{barticle}
\endbibitem

\bibitem{RN250}
\begin{barticle}[author]
\bauthor{\bsnm{Takanobu},~\bfnm{Satoshi}\binits{S.}}
(\byear{1985}).
\btitle{On the existence and uniqueness of SDE describing an $ n $-particle
  system interacting via a singular potential}.
\bjournal{Proceedings of the Japan Academy, Series A, Mathematical Sciences}
\bvolume{61}
\bpages{287-290}.
\end{barticle}
\endbibitem

\bibitem{RN266}
\begin{binbook}[author]
\bauthor{\bsnm{Tanaka},~\bfnm{Hiroshi}\binits{H.}}
(\byear{1984}).
\btitle{Limit theorems for certain diffusion processes with interaction}
In \bbooktitle{North-Holland Mathematical Library}
\bvolume{32}
\bpages{469-488}.
\bpublisher{Elsevier}.
\end{binbook}
\endbibitem

\bibitem{RN265}
\begin{barticle}[author]
\bauthor{\bsnm{Tanaka},~\bfnm{Hiroshi}\binits{H.}} \AND
  \bauthor{\bsnm{Hitsuda},~\bfnm{Masuyuki}\binits{M.}}
(\byear{1981}).
\btitle{Central limit theorem for a simple diffusion model of interacting
  particles}.
\bjournal{Hiroshima Mathematical Journal}
\bvolume{11}
\bpages{415-423}.
\end{barticle}
\endbibitem

\bibitem{RN275}
\begin{bbook}[author]
\bauthor{\bsnm{Temam},~\bfnm{Roger}\binits{R.}}
(\byear{1995}).
\btitle{Navier–Stokes equations and nonlinear functional analysis}.
\bpublisher{SIAM}.
\end{bbook}
\endbibitem

\bibitem{RN187}
\begin{bbook}[author]
\bauthor{\bsnm{Villani},~\bfnm{Cédric}\binits{C.}}
(\byear{2003}).
\btitle{Topics in optimal transportation}.
\bpublisher{American Mathematical Soc.}
\end{bbook}
\endbibitem

\bibitem{RN263}
\begin{barticle}[author]
\bauthor{\bsnm{Wang},~\bfnm{Zhenfu}\binits{Z.}},
  \bauthor{\bsnm{Zhao},~\bfnm{Xianliang}\binits{X.}} \AND
  \bauthor{\bsnm{Zhu},~\bfnm{Rongchan}\binits{R.}}
(\byear{2021}).
\btitle{Gaussian fluctuations for interacting particle systems with singular
  kernels}.
\bjournal{arXiv preprint arXiv:2105.13201}.
\end{barticle}
\endbibitem

\end{thebibliography}

\end{document}